\newcommand{\pp}{\partial}
\newcommand{\rr}{\mathbb{R}}
\renewcommand{\Re}{\mathrm{Re}}
\newcommand{\dom}{\operatorname{dom}}
\newtheorem{theorem}{Theorem}[section]
\newtheorem{lemma}[theorem]{Lemma}
\newtheorem{proposition}[theorem]{Proposition}
\theoremstyle{definition}
\newtheorem{definition}[theorem]{Definition}
\theoremstyle{remark}
\newtheorem{remark}[theorem]{Remark}
\crefname{enumi}{}{}
\title{\bf The $\mathcal{F}$-resolvent equation and Riesz projectors \\ for the $\mathcal{F}$-functional calculus}
\author[F. Colombo]{Fabrizio Colombo}
\address{(FC)
Politecnico di Milano\\Dipartimento di Matematica\\Via E. Bonardi, 9\\20133
Milano, Italy}
\email{fabrizio.colombo@polimi.it}
\author[A. De Martino]{Antonino De Martino}
\address{(ADM)
 Politecnico di Milano\\Dipartimento di Matematica\\Via E. Bonardi, 9\\20133
Milano, Italy
} \email{antoninodemartino@polimi.it}
\author[I. Sabadini]{Irene Sabadini}
\address{(SP)
Politecnico di Milano\\Dipartimento di Matematica\\Via E. Bonardi, 9\\20133
Milano, Italy
} \email{irene.sabadini@polimi.it}
\begin{document}
\maketitle

\begin{abstract}
The Fueter-Sce-Qian mapping theorem
is a two steps procedure to extend holomorphic functions of one complex variable to quaternionic or Clifford algebra-valued functions in the kernel of a suitable generalized Cauchy-Riemann operator.
Using the Cauchy formula of slice monogenic functions it is possible to give the
 Fueter-Sce-Qian extension theorem an integral form and to define the $\mathcal{F}$-functional calculus for $n$-tuples of commuting operators. This functional calculus is defined on the $S$-spectrum
 but it generates a monogenic functional calculus in the spirit of McIntosh and collaborators.
One of the main goals of this paper is to show that the $\mathcal{F}$-functional calculus generates the
 Riesz projectors. The existence of such projectors is obtained via the $\mathcal{F}$-resolvent equation that we have generalized to the Clifford algebra setting.
 This equation was known in the quaternionic setting, but the Clifford algebras setting turned out to be much more complicated.

\end{abstract}

\noindent AMS Classification.

\noindent Keywords: Spectral theory on the
$S$-spectrum, $\mathcal{F}$-resolvent operators, $\mathcal{F}$-resolvent equation,
Riesz projectors for the $\mathcal{F}$-functional calculus.

\noindent {\em }
\date{today}

\section{\bf Introduction}
\setcounter{equation}{0}

Holomorphic functions of one complex variable
$f:\Omega \subseteq \mathbb{C} \to \mathbb{C}$  can be extended to quaternionic-valued functions
or, more in general, to Clifford algebra-valued functions using the Fueter-Sce-Qian extension theorem.
This extension theorem is also called the
Fueter-Sce-Qian construction and gives two different notions of hyperholomorphic functions, see  \cite{bookSCE} for more details.

Consider functions defined on an open set $U$ in the quaternions $\mathbb{H}$ or in $\mathbb{R}^{n+1}$ for Clifford algebra-valued functions,
then the Fueter-Sce-Qian extension consists of two steps.

\medskip
Step (A) extends holomorphic function to the class of slice hyperholomorphic functions (later denoted by $SH(U)$).
 These functions are also called slice monogenic for Clifford algebra-valued functions
  (see for example \cite{6Global,6STRUC,CAUMON,6cssisrael,6css,6ISRAEL2,DKS}) and slice regular in the quaternionic case (see \cite{6adv2,6gentilistruppa}).
This function theory is nowadays well developed and, among other works, we mention e.g.
\cite{SOREN1,Cnudde,REN4} or the books \cite{6EntireBook,6css,6GSBook,6GSSbook} and the reference therein.
The case of slice hyperholomorphic functions with values in a real alternative algebra is treated in \cite{6ghiloniperotti}.

\medskip
Step (B) extends slice hyperholomorphic functions to monogenic functions (denoted by $M(U)$) or Fueter regular functions in the case of the quaternions.
The theory of monogenic functions is widely studied and the literature is rich, see e.g.
the books \cite{6DIRAC1,6DIRAC2,6DIRAC3,6DIRAC4,6DIRAC5} and references therein.

\medskip
Both the classes of hyperholomorphic functions have a Cauchy formula that can be used to define
functions of quaternionic operators or of $n$-tuples of
operators.

\medskip
The Cauchy formula of slice hyperholomorphic functions generates the $S$-functional calculus for quaternionic linear operators
or for $n$-tuples of not necessarily commuting operators.
This calculus is based on the notion of
$S$-spectrum, see \cite{6newresol,6hinfty,6MILANO,6Transaction,CAUMON,6FUNC3,JGA,CLOSED}.
This notion was discovered in 2006 by
  F. Colombo and I. Sabadini as is well explained in the introduction of the book \cite{6CKG}. This existence of an appropriate quaternionic spectrum was suggested by the formulation of quaternionic quantum mechanics given by G. Birkhoff and J. von Neumann \cite{BF}. We note that in this framework a spectral theorem for quaternionic operators is necessary, and this was proved in \cite{6SpecThm1} (see also the particular cases \cite{specMILAN,spectcomp,JONAQS}) and further generalized to fully Clifford operators in \cite{CLIFST}.
Preliminary attempts to prove the spectral theorem for quaternionic operators, without a precise notion of quaternionic spectrum, were given in \cite{Teichmueller,Viswanath}, while in \cite{fpp} the spectral theorem for quaternionic matrices is treated on the right spectrum, a subset of the $S$-spectrum.

Step (A) has generated the following research directions:
the foundation of the quaternionic spectral theory on the $S$-spectrum, see the books \cite{6CG,6CKG} and, for paravector operators, \cite{6css}; quaternionic evolution operators; Phillips functional calculus; $H^\infty$-functional calculus, see \cite{6CG}; the characteristic operator functions and applications to linear system theory \cite{6COFBook}; quaternionic spectral operators \cite{6JONAME}; quaternionic perturbation theory and invariant subspaces \cite{6CCKS}; Schur analysis in this setting \cite{6ACSBOOK}.
For some new classes of fractional diffusion problems based on fractional powers of quaternionic linear operators, see the book \cite{6CG} and the more recent contributions \cite{frac4,frac5,frac1,frac2,frac3}.
 For recent advances on the $S$-functional calculus see \cite{UNIV,ClifFUN}.

\medskip
Step (B) generates the monogenic functional calculus based on the monogenic spectrum.
Some of the research directions in this area are:
monogenic spectral theory and applications \cite{6jefferies} and also some of the original contributions
 \cite{6MQ,6JM,6JMP,6MP,6qian1};
    harmonic analysis in higher dimension, singular integrals and Fourier transform,
    see the recent book \cite{BOOKTAO};
boundary value problems treated with quaternionic techniques \cite{6GURLYSPROSS};
 Dirac operator on manifolds and spectral theory \cite{bookTF,6DIRAC4}.

The first mathematicians who understood the importance of hypercomplex analysis to define functions of noncommuting operators on Banach spaces have been
A. McIntosh and his collaborators, staring from preliminary results in \cite{KISIL}.
 Using the theory of monogenic functions they developed the monogenic functional calculus and several of its applications, see \cite{6jefferies}.
 
 \medskip
 The main goal of this paper is to study several crucial properties of
  the $\mathcal{F}$-functional calculus, which is
  is a bridge between the spectral theory on the $S$-spectrum and the monogenic spectral theory.
   For this reason this calculus will be discussed in detail in the sequel.

  \medskip
  Before to state our main results we want to make some considerations on the
  resolvent equations in various settings. These equations have many crucial consequences
  one of those leads to the Riesz projectors and it applies
  to the Riesz-Dunford functional calculus,
   to the $S$-functional calculus and also to the $\mathcal{F}$-functional calculus
   because of the following considerations.

    \medskip
    If $A$ is a complex operator on a complex Banach space then the
   resolvent equation is given by:
   {\small
  \begin{equation}\label{Creseq}
(\lambda I-A)^{-1}(\mu I-A)^{-1}=\frac{(\lambda I-A)^{-1}-(\mu I-A)^{-1}}{\mu-\lambda},\ \ \ \lambda, \ \mu\in \mathbb{C}\setminus \sigma(A),
\end{equation}}
where $\sigma(A)$ is the spectrum of $A$.
 To study the Riesz projectors
 that are defined as
{\small
$$
P_\Omega=\int_{\partial \Omega}(\lambda I-A)^{-1}d\lambda,
$$
}
where $\Omega$ contains part of the spectrum of $A$,
or  to prove the product rule for
 the holomorphic functional calculus we use
some properties of the resolvent equation that we list in the following.

\begin{itemize}
\item[(I)] The product of the resolvent operators
$(\lambda I-A)^{-1}(\mu I-A)^{-1}$, at two different points $\lambda, \ \mu\in \mathbb{C}\setminus \sigma(A)$, is transformed into the difference
 $(\lambda I-A)^{-1}-(\mu I-A)^{-1}$.

 \item[(II)]
 The difference
 $(\lambda I-A)^{-1}-(\mu I-A)^{-1}$
 is  entangled with the Cauchy kernel
 $
 1/(\mu-\lambda)
 $
 of holomorphic functions as follows
{\small
$$\frac{(\lambda I-A)^{-1}-(\mu I-A)^{-1}}{\mu-\lambda}.$$
}
\item[(III)] The resolvent equation preserves the holomorphicity both in
$\lambda$ and in $\mu\in \mathbb{C}\setminus \sigma(A)$.
\end{itemize}
 The above properties of the resolvent equation in the
hyperholomorphic spectral theories can be restored
but with nontrivial considerations.
We discuss first the case of the $S$-resolvent equation,
 but first we need some notations for the Clifford algebras.

  \medskip
Let $\rr_n$ be the real Clifford algebra over $n$ imaginary units $e_1,\ldots ,e_n$
satisfying the relations $e_\ell e_m+e_me_\ell=0$,\  $\ell\not= m$, $e_\ell^2=-1.$
 An element in the Clifford algebra will be denoted by $\sum_A e_Ax_A$ where
$A=\{ \ell_1\ldots \ell_r\}\in \mathcal{P}\{1,2,\ldots, n\},\ \  \ell_1<\ldots <\ell_r$
 is a multi-index
and $e_A=e_{\ell_1} e_{\ell_2}\ldots e_{\ell_r}$, $e_\emptyset =1$.
A point $(x_0,x_1,\ldots,x_n)\in \mathbb{R}^{n+1}$  will be identified with the element
$
 x=x_0+\underline{x}=x_0+ \sum_{j=1}^n x_j e_j\in\mathbb{R}_n
$
called paravector and the real part $x_0$ of $x$ will also be denoted by $\Re(x)$.
The vector part of $x$ is defined by
 $\underline{x}=x_1e_1+\ldots+x_ne_n$.
 The conjugate of $x$ is denoted by $\overline{x}=x_0-\underline{x}$
and the Euclidean modulus of $x$ is given by $|x|^2=x_0^2+\ldots +x_n^2$.

  In this paper we work in a Clifford algebra setting, and in the sequel
 we denote by $\mathcal{B}(V_n)$ the Banach space of all bounded right linear operators acting on a the two sided  Clifford Banach module $V_n=V\otimes \mathbb{R}_n$, where $V$ is a real Banach space.
 Then the appropriate notion of spectrum is the $S$-spectrum,
   that is defined in a very counterintuitive way because it involves
   the square of the linear operator $T$, see \cite{CSSFUNCANAL} and the book \cite{6css} for more details. We now describe how the complex resolvent equation extends to the two case of the $S$-resolvent operators and of the $\mathcal{F}$-resolvent operators.

 \subsection{ The $S$-resolvent equation for the $S$-resolvent operators}
Let $V_n$ be a two sided Clifford Banach module and
let $T:V_n\to V_n$ be a bounded right (or left) linear operator.
 The  S-spectrum of $T$ is defined as
{\small
$$
\sigma_S(T)=\{ s\in \mathbb{R}^{n+1}\ \ :\ \ T^2-2 \Re(s)T+|s|^2\mathcal{I}\ \ \
{\rm is\ not\  invertible\ in \ }\mathcal{B}(V_n)\},
$$}
and the $S$-resolvent set
{\small
$$
\rho_S(T):=\mathbb{R}^{n+1}\setminus\sigma_S(T).
$$}
In the slice hyperholomorphic setting there exist two different resolvent
 operators according to left or right slice hyperholomorphicity.
The left  and the right $S$-resolvent operators are defined as
{\small
\begin{equation}
S_L^{-1}(s,T):=-(T^2-2\Re(s) T+|s|^2\mathcal{I})^{-1}(T-\overline{s}\mathcal{I}),\ \ \ s \in  \rho_S(T)
\end{equation}
}
and
{\small
\begin{equation}
S_R^{-1}(s,T):=-(T-\overline{s}\mathcal{I})(T^2-2\Re(s) T+|s|^2\mathcal{I})^{-1},\ \ \ s \in  \rho_S(T),
\end{equation}}
respectively.
The operator
{\small
$$
\mathcal{Q}_s(T):=(T^2-2\Re(s) T+|s|^2\mathcal{I})^{-1},\ \ \ s \in  \rho_S(T),
$$}
is called the pseudo $S$-resolvent operator.
The left $S$-resolvent
operator satisfies the equation
{\small
\begin{equation}\label{eSR1}
S_L^{-1}(s,T)s-TS_L^{-1}(s,T)=\mathcal{I},\ \ \ s\in\rho_S(T),
\end{equation}}
and the right $S$-resolvent
operator satisfies
{\small
\begin{equation}\label{eSR2}
sS_R^{-1}(s,T)-S_R^{-1}(s,T)T=\mathcal{I},\ \ \ s \in \rho_S(T).
\end{equation}
}

The equations (\ref{eSR1}) and (\ref{eSR2})
cannot be considered the resolvent equations for the $S$-functional calculus
because they do not satisfy the properties of the classical resolvent equation.
 The $S$-resolvent equation involves both the $S$-resolvent operators. Precisely, we have
 {\small
\begin{equation}
\label{reso}
S_R^{-1}(s,T)S_L^{-1}(p,T)=\big[[S_R^{-1}(s,T)-S_L^{-1}(p,T)]p
-\overline{s}[S_R^{-1}(s,T)-S_L^{-1}(p,T)]\big](p^2-2s_0p+|s|^2)^{-1},
\end{equation}
}
for  $s$, $p \in  \rho_S(T)$.

 For the $S$-functional calculus we have the following major
 differences with respect to the holomorphic functional calculus.
 \begin{itemize}
 \item[(I-S)] The $S$-resolvent equation contains both the $S$-resolvent operators and an important fact to point out is that
 $S_L^{-1}(s,T)$ is right slice hyperholomorphic
 and $S_R^{-1}(s,T)$ left slice hyperholomorphic.
  The product $S_R^{-1}(s,T)S_L^{-1}(p,T)$ preserves the right slice hyperholomorphicity in $s$ and the left slice hyperholomorphicity in $p$.

\item[(II-S)]
The product $S_R^{-1}(s,T)S_L^{-1}(p,T)$ is transformed into the difference $S_R^{-1}(s,T)-S_L^{-1}(p,T)$
of the two $S$-resolvent operators, as in the complex case.

\item[(III-S)]
The difference $S_R^{-1}(s,T)-S_L^{-1}(p,T)$ is  entangled with
 $(p-\overline{s})(p^2-2s_0p+|s|^2)^{-1}$,
  which is the Cauchy kernel of slice hyperholomorphic functions, and
  the map
  {\small
 $$(s,p)\mapsto\big[[S_R^{-1}(s,T)-S_L^{-1}(p,T)]p
-\overline{s}[S_R^{-1}(s,T)-S_L^{-1}(p,T)]\big](p^2-2s_0p+|s|^2)^{-1},$$
}
for $s$, $p \in  \mathbb{R}^{n+1}\setminus\sigma_S(T)$,  preserves  the right slice hyperholomorphicity in $s$ and the left slice hyperholomorphicity in $p$.

 \end{itemize}

\begin{remark}
  The product $S_L^{-1}(p,T) S_R^{-1}(s,T)$ cannot be used in the $S$-resolvent equation because it
 destroys slice  hyperholomorphicity.
\end{remark}

The $\mathcal{F}$-functional calculus  is defined for $n$-tuples of commuting operators, $n$ odd, and
 it generates a monogenic functional calculus. It is based on the Fueter-Sce-Qian mapping theorem in integral form which, in the case $n$ odd corresponds to the Fueter-Sce version of the result.

\subsection{ The commutative $S$-resolvent operators and the $\mathcal{F}$-resolvent operators}

In the sequel, we will consider bounded paravector operators $T$,
with commuting components $T_\ell\in\mathcal{B}(V)$ for $\ell=0,1,\ldots ,n$, $n$ odd. By $\mathcal{BC}(V_n)$ we will denote the subset of  ${\mathcal{B}(V_n)}$ consisting of Clifford operators with commuting components, i.e., operators of the type
$\sum_A e_AT_A$ where
$A=\{ \ell_1\ldots \ell_r\}\in \mathcal{P}\{1,2,\ldots, n\},\ \  \ell_1<\ldots <\ell_r$
 is a multi-index and the operators $T_A$ commute among themselves.
The $S$-functional calculus admits a commutative version.
For paravector operators $T=T_0+e_1T_1+\ldots +e_nT_n$ such that  $T\in \mathcal{BC}(V_n)$
 the $\mathcal{F}$-spectrum  of $T$ is defined as
{\small
$$
\sigma_{\mathcal{F}}(T)=\{ s\in \mathbb{R}^{n+1}\ \ :\ \ \ \ \ s^2\mathcal{I}-s(T+\overline{T})+T\overline{T}\ \ \
{\rm is\ not\  invertible \ in \ }\mathcal{B}(V_n)\},
$$}
where we have set $\overline{T}:=T_0-e_1T_1-\ldots -e_nT_n$,
and the $\mathcal{F}$-resolvent set
{\small
$$
\rho_{\mathcal{F}}(T):=\mathbb{R}^{n+1}\setminus\sigma_F(T).
$$}
It turns out that the $\mathcal{F}$-spectrum is the commutative
 version of the $S$-spectrum, i.e., we have
$$
\sigma_{\mathcal{F}}(T)=\sigma_{S}(T),\ \ {\rm for } \ \ T_0+e_1T_1+\ldots +e_nT_n\in \mathcal{BC}(V_n).
$$
In the literature for the commutative version of the $S$-functional calculus and for the $\mathcal{F}$-functional calculus we will use the symbol $\sigma_{F}(T)$
to denote the commutative version of the $S$-spectrum.
The definition comes from the structure of the commutative $S$-resolvent operators. In fact,
for paravector operators $T\in \mathcal{BC}(V_n)$,
  the commutative version of left $S$-resolvent operator is defined as
{\small
\begin{equation}
S_{L}^{-1}(s,T):=(s\mathcal{I}- \overline{T})(s^2\mathcal{I}-s(T+\overline{T})+T\overline{T})^{-1},\ \ \ s \in  \rho_{\mathcal{F}}(T),
\end{equation}}
and the commutative version of the right $S$-resolvent operator is
{\small
\begin{equation}
S_{R}^{-1}(s,T):=(s^2\mathcal{I}-s(T+\overline{T})+T\overline{T})^{-1}(s\mathcal{I}- \overline{T}),\ \ \ s \in \rho_{\mathcal{F}}(T).
\end{equation}}
For the sake of simplicity we have still denoted the commutative version of the $S$-resolvent operators with the same symbols as for the noncommutative ones.
The operator
{\small
$$
\mathcal{Q}_s(T):=(s^2\mathcal{I}-s(T+\overline{T})+T\overline{T})^{-1},\ \ \ s \in  \rho_{\mathcal{F}}(T),
$$
}
is called commutative pseudo $S$-resolvent operator (for short, it is called
pseudo resolvent operator) and also here we use the same symbol for the commutative version of the
pseudo $S$-resolvent operator.

In the sequel, treating the $\mathcal{F}$-functional calculus, when we mention the
$S$-resolvent operators and the pseudo $S$-resolvent operator we intend their commutative versions.

The Fueter-Sce mapping theorem in integral form is the crucial object for the definition of the $\mathcal{F}$-functional calculus and will be discussed in the next section.
Here we just recall that the $\mathcal{F}$-functional calculus was introduced in \cite{6CoSaSo} and further investigated in \cite{CG,6FUNC2}.

\medskip
We now define the $\mathcal{F}$-resolvent operators.
Let  $n$ be an odd number, we define the left $\mathcal{F}$-resolvent operator as
{\small
\begin{equation}
\mathcal{F}_n^L(s,T):=\gamma_n(s\mathcal{I}-\overline{ T})(s^2\mathcal{I}-s(T+\overline{T})+T\overline{T})^{-\frac{n+1}{2}}
,\ \ \ s \in  \rho_{\mathcal{F}}(T)
\end{equation}
}
and the right $\mathcal{F}$-resolvent operator as
{\small
\begin{equation}
\mathcal{F}_n^R(s,T):=\gamma_n(s^2\mathcal{I}-s(T+\overline{T})+T\overline{T})^{-\frac{n+1}{2}}(s\mathcal{I}-\overline{ T}),\ \ \ s \in  \rho_{\mathcal{F}}(T),
\end{equation}
}
where the constant $\gamma_n$ is defined by
{\small
\begin{equation}\label{const}
\gamma_n:=(-1)^{\frac{n-1}{2}}2^{n-1} \left[ \left(\frac{n-1}{2}\right)!\right]^2.
\end{equation}
}

The $\mathcal{F}$-resolvent equation has further differences with respect to the complex resolvent equation and with respect to the $S$-resolvent equation. This is a consequence of the fact that the $\mathcal{F}$-functional calculus is based on an integral transform and not on a Cauchy formula.

The $\mathcal{F}$-resolvent equation for $n=3$ is known since some years and it coincides with the quaternionic $\mathcal{F}$-resolvent equation, precisely it is given by (see \cite{CG})
{\small
\begin{equation}\label{FREQ}
\begin{split}
& \mathcal{F}_3^R(s,T)S_{L}^{-1}(p,T)+S_{R}^{-1}(s,T)\mathcal{F}_3^L(p,T)
\\
&
- \frac{1}{4}\Big( s\mathcal{F}_3^R(s,T)\mathcal{F}_3^L(p,T)p -s\mathcal{F}_3^R(s,T)T\mathcal{F}_3^L(p,T)
  -\mathcal{F}_3^R(s,T)T\mathcal{F}_3^L(p,T)p+\mathcal{F}_3^R(s,T)T^2\mathcal{F}_3^L(p,T)\Big)\\
 = &\big[ \left( \mathcal{F}_3^R(s,T) - \mathcal{F}_3^L(p,T)\right)p-\bar{s}\left( \mathcal{F}_3^R(s,T) - \mathcal{F}_3^L(p,T)\right) \big](p^2-2s_0 p+|s|^2)^{-1}.
\end{split}
\end{equation}
}
for $T\in\mathcal{BC}(V_3)$ and for any  $p,s\in \rho_{\mathcal{F}}(T)$, with $s\not\in[p]$.
As we can see the equation is written just in terms of $S$-resolvent operators and of   $\mathcal{F}$-resolvent operators.
The general case is not so simple, it has been an open
 problem for some years and the reason will be clear in the outline of our main results.

\subsection{Outline of the main results}

Keeping in mind the preliminary considerations,
 we can now state our main results and compare the structure of the $\mathcal{F}$-resolvent equations with
 the ones discussed above.

\medskip
Section \ref{Prelmat} contains preliminary material of functions and operator theories.
In Section \ref{newserF}
we obtain the new series expansion of the $\mathcal{F}$-resolvent operators written  in terms of $T$ and $\overline{T}$.
Precisely for $\|T\| < |s|$, we have
{\small
$$ \mathcal{F}^L_n(s,T)= \sum_{m=2h}^{+\infty} \sum_{\ell=1}^{m-2h+1}
K_\ell(m,h) \ T^{m-2h- \ell+1}\ \overline{T}^{\ell-1}\ s^{-1-m},$$
}
and
{\small
$$ \mathcal{F}^R_n(s,x)= \sum_{m=2h}^{+\infty}\sum_{\ell=1}^{m-2h+1} K_\ell(m,h)\ s^{-1-m}\  T^{m-2h- \ell+1} \ \overline{T}^{\ell-1},$$
}
where the constants $ K_\ell(m,h)$ are explicitly computed in
(\ref{costKL}) and we have set $h:= \frac{n-1}{2}$, the Sce exponent.

\medskip
Since the structure of the $\mathcal{F}$-resolvent equation, of an arbitrary odd dimension $n$
is complicated we  have to consider first the cases $n=5$ and $n=7$.
The case $n=3$ discussed above is a very simple case and does not involve explicitly the pseudo $S$-resolvent operators and similarly for the case $n=5$.
However, the case $n=7$ clearly shows that we cannot pursue such strategy
  without getting a relation which is rather complicated and difficult to handle.

 The structure of the $\mathcal{F}$-resolvent equation has been obtained by considering
 a suitable form that allows to prove that suitable operators are the Riesz projectors for the $\mathcal{F}$-functional calculus.

For $n=5$, the $\mathcal{F}$-resolvent equation with pseudo $S$-resolvent operators
 and for a paravector operator $T \in \mathcal{BC}(V_5)$ is given by
{\small
\[
\begin{split}
& \mathcal{F}_5^R(s,T)S^{-1}_L(p,T)+S^{-1}_R(s,T)\mathcal{F}_{5}^L(p,T)+\gamma_5 \mathcal{Q}_s(T) S^{-1}_R(s,T)S^{-1}_{L}(p,T)\mathcal{Q}_p(T)
\\
&+ \gamma_5[\mathcal{Q}_s^2(T) \mathcal{Q}_p(T)+\mathcal{Q}_s(T) \mathcal{Q}_p^2(T)]
\\ \nonumber
&
= \bigl[ [\mathcal{F}_5^R(s,T)- \mathcal{F}_5^L(p,T)]p- \bar{s}[\mathcal{F}_5^R(s,T)- \mathcal{F}_5^L(p,T)]\bigl] (p^2-2s_0 p+|s|^2)^{-1},
\end{split}
\]
}
for $p, s \in \rho_{\mathcal{F}}(T)$ and where $\gamma_5$ is given by (\ref{const}) for $n=5$.
As one can note the above equation, proved in Lemma \ref{res1},
contains also the pseudo $S$-resolvent operators.
This equation can still be written
 using  the $\mathcal{F}$-resolvent operators only, see Theorem \ref{res3}.

 The case of $n=7$  becomes clearly too complicated to be
 written just in terms of the
$\mathcal{F}$- resolvent operators. This case shows that the general case of
 an arbitrary number of operators must be kept
  in the form with the pseudo $S$-resolvent operators.
This form can be achieved
thanks to the relations:
{\small
$$
\mathcal{F}_{n}^L(s,T)s-T \mathcal{F}_{n}^L(s,T)= \gamma_n Q_s(T)^{\frac{n-1}{2}}
 \ \ {\rm and} \ \ s \mathcal{F}_{n}^R(s,T)- \mathcal{F}_{n}^R(s,T)T= \gamma_n Q_s(T)^{\frac{n-1}{2}},
 $$
 }
where the constants $\gamma_n$ are given by (\ref{const}).

The $\mathcal{F}$-resolvent equation in the two cases $n=5$ and $n=7$ is treated in detail in Section \ref{casi5e7}.
Keeping in mind the cases $n=5$ and for $n=7$ one can better understand
 the structure of the general $\mathcal{F}$-resolvent equation.

 The general case is treated in Section \ref{GENERALCASE},
 where we prove the $\mathcal{F}$-resolvent equation and in its version with pseudo $S$-resolvent operators.
The general structure of the
 $\mathcal{F}$-resolvent equation involving the pseudo $S$-resolvent operators is obtained in Lemma \ref{PSGEN}: for $n>3$, $n$ odd, and Sce exponent $h= \frac{n-1}{2}$ the  equation is
{\small
\begingroup\allowdisplaybreaks\begin{align}
\label{resnfff}
& \, \, \, \,\mathcal{F}_n^R(s,T)S^{-1}_L(p,T)+S^{-1}_R(s,T)\mathcal{F}_n^L(p,T)
\\
&+ \gamma_n \biggl[ \sum_{i=0}^{h-2} \mathcal{Q}^{h-i-1}_s(T) S^{-1}_R(s,T)S^{-1}_L(p,T) \mathcal{Q}_p^{i+1}(T)
\nonumber
+ \sum_{i=0}^{h-1}  \mathcal{Q}_s^{h-i}(T) \mathcal{Q}_p^{i+1}(T) \biggl]
\\
&
=\bigl \{ \bigl[\mathcal{F}_n^R(s,T)-\mathcal{F}_n^L(p,T) \bigl]p- \bar{s}\bigl[\mathcal{F}_n^R(s,T)-\mathcal{F}_n^L(p,T) \bigl] \bigl \} (p^2-2s_0p+|s|^2)^{-1}
\nonumber
\end{align}\endgroup
}
for paravector operators $T \in \mathcal{BC}(V_n)$ and for $p,s \in \rho_{\mathcal{F}}(T)$.

The general structure of the
pseudo $\mathcal{F}$-resolvent equation is deduced in the spirit of the special case $n=5$
and $n=7$ and it is useful to study the Riesz projectors of the $\mathcal{F}$-functional calculus.
This equation
has two different expressions according to the fact
 the Sce exponent $h:=(n-1)/2$ is an even or odd number.
Precisely, for $h$ odd is obtained in Theorem \ref{FREhodd}
while the case in which $h$ is even is obtained in Theorem \ref{FREeven}.

 Let us summarize, for the $\mathcal{F}$-resolvent equation, the major similarities and the main differences
 with respect to the previous cases.
 \begin{itemize}
 \item[(I-Fa)]  There are two different $\mathcal{F}$-resolvent operators
 $\mathcal{F}_n^{L}(s,T)$ and $\mathcal{F}_n^{R}(p,T)$ which are right slice hyperholomorphic in $s$ and left slice hyperholomorphic in $p$, respectively.

 \item[(I-Fb)]
 There are additional terms containing:

 \begin{itemize}
 \item[(i)]
  The operator
   $\mathcal{F}_n^{L}(s,T) B\mathcal{F}_n^{R}(s,T)$ which, for any bounded operator $B$,
   preserves the right slice hyperholomorphicity in $s$ and the left slice hyperholomorphicity in $p$ .

\item[(ii)]
   The commutative version of the $S$-resolvent operators which appears in the terms:
   $$\mathcal{F}_n^R(s,T)S_{L}^{-1}(p,T) \ \ \ \ {\rm and} \ \ \ \ S_{R}^{-1}(s,T)\mathcal{F}_n^L(p,T).$$
 \item[(iii)]
    The commutative pseudo $S$-resolvent operators $\mathcal{Q}_s(T)$ and $\mathcal{Q}_p(T)$.
\end{itemize}
\item[(II-F)]
The difference $\mathcal{F}_n^{L}(s,T) -\mathcal{F}_n^{R}(s,T)$
 is  entangled with of the Cauchy kernel of slice hyperholomorphic functions, in the same way as the $S$-resolvent equation, i.e.,
 {\small
 $$
 (s,p)\mapsto\big[[\mathcal{F}_n^{L}(s,T) -\mathcal{F}_n^{R}(s,T)]p
-\overline{s}[\mathcal{F}_n^{L}(s,T) -\mathcal{F}_n^{R}(s,T)]\big](p^2-2s_0p+|s|^2)^{-1}
$$
}
and it preserves  the slice hyperholomorphicity on the right in $s$ and on the left in $p$.
\item[(III-F)]
 The term
{\small
 $$
 \big[[\mathcal{F}_n^{L}(s,T) -\mathcal{F}_n^{R}(s,T)]p
-\overline{s}[\mathcal{F}_n^{L}(s,T) -\mathcal{F}_n^{R}(s,T)]\big](p^2-2s_0p+|s|^2)^{-1}
$$}
 equals not only terms that involve suitable  product of the $\mathcal{F}$-resolvent operators, but also the terms described in the above
items (i), (ii) and (iii).
\end{itemize}
\begin{remark} Similarly to the case of the $S$-resolvent equation,
the products of the form
  {\small$$
  \mathcal{F}_n^{R}(s,T) B\mathcal{F}_n^{L}(s,T)
  $$}
   cannot be used in the $\mathcal{F}$-resolvent equation because they
 destroys slice hyperholomorphicity.
\end{remark}

\medskip
Even though the $\mathcal{F}$-resolvent equation is very involved  it is still the correct tool to show that the $\mathcal{F}$-functional calculus
  generate the Riesz projectors, and in Section \ref{RPROJ} we prove the following result.
Let $n>3$ be an odd number and assume that the  vector operators
$T=e_1T_1+\ldots +e_nT_n \in \mathcal{BC}(V_n)$ is such that
 $ \sigma_{\mathcal{F}}(T)= \sigma_{\mathcal{F},1}(T) \cup \sigma_{\mathcal{F},2}(T)$ with
{\small
$$
 \hbox{dist} \left(\sigma_{\mathcal{F},1}(T),\sigma_{\mathcal{F},2}(T)\right)>0,
$$}
and with
{\small
$$
\sigma(T_\ell)\subset \mathbb{R}\ \  {\rm for \ all} \ \ell=1,...,n.
$$}
Let $G_1$, $G_2$ be two admissible sets for $T$ such that $ \sigma_{\mathcal{F},1}(T) \subset G_1$ and $ \bar{G}_1 \subset G_2$ and with $dist \left(G_2, \sigma_{\mathcal{F},2}(T) \right)>0$. Then the operator
{\small
\begin{equation}
\check{P}= \frac{1}{2 \pi\gamma_n } \int_{\partial(G_1 \cap \mathbb{C}_I)} \mathcal{F}_n^L(p,T) dp_I p^{n-1}=\frac{1}{2 \pi\gamma_n} \int_{\partial(G_2 \cap \mathbb{C}_I)}  s^{n-1} ds_I \mathcal{F}_n^R(s,T),
\end{equation}
}
is a projector and the constants $\gamma_n$ are given in (\ref{const}).

\section{\bf Preliminary material}\label{Prelmat}
\setcounter{equation}{0}

\medskip
The real Clifford algebra  $\rr_n$ over $n$ imaginary units $e_1,\ldots ,e_n$ has been defined in the Introduction.
Slice hyperholomorphic and monogenic functions
can be seen as the two classes of functions appearing in the two steps in the Fueter-Sce-Qian constructions, which extends holomorphic functions to
 quaternionic or Clifford algebra valued-functions satisfying a suitable analyticity condition.

In fact, starting from holomorphic functions, R. Fueter  in 1934, see \cite{6fueter}, showed
 an interesting way to generate quaternionic Cauchy-Fueter regular functions.
More then 20 years later in 1957  M. Sce, see \cite{6sce2},  extended this result to Clifford algebras in a very pioneering and general way,
see the English translation of his works in hypercomplex analysis with commentaries collected in the recent book \cite{bookSCE}.

\medskip
In the original construction of R. Fueter the holomorphic
  functions are defined on open sets of the complex upper half plane. This condition can be relaxed
by taking functions
$$
g(z) = g_0(u,v)+ig_1(u,v),\ \ \ z=u+iv
$$
 defined in a set $D\subseteq\mathbb C$, symmetric with respect to the real axis such that
 $$
g_0(u,-v)= g_0(u,v) \ \ \ {\rm and} \ \ \  g_1(u,-v)= -g_1(u,v)
$$
 namely if $g_0$ and $g_1$ are, respectively, even and odd functions in the variable $v$.
 Additionally the pair $(g_0,g_1)$ satisfies the Cauchy-Riemann system.
 The same conditions are required also in the higher dimension case originally proved by Sce:

 \begin{theorem}[Sce \cite{6sce2}]\label{SCETH}
Consider the Euclidean space $\rr^{n+1}$, $n$ odd, whose elements are identified with
 paravectors $x=x_0+\underline{x}$.
Let $\tilde{f}(z) = f_0(u,v)+if_1(u,v)$ be a  holomorphic function
defined in a domain (open and connected) $D$ in the upper-half complex plane and let
$$
\Omega _D= \{x =x_0+\underline{x}\ \ |\ \  (x_0, |\underline{x}|) \in D\}
$$
be the open set induced by $D$ in $\mathbb{R}^{n+1}$.
The map
{\small $$
f(x)=T_{FS1}(\tilde{f}):=\textcolor{black}{f_0(x_0,|\underline{x}|)+\frac{\underline{x}}{|\underline{x}|}f_1(x_0,|\underline{x}|)}
$$}
takes the holomorphic functions $\tilde{f}(z)$ and gives the Clifford-valued function $f(x)$.
Then the function
{\small
$$
\breve{f}(x):=\textcolor{black}{T_{FS2}} \Big(\textcolor{black}{f_0(x_0,|\underline{x}|)+\frac{\underline{x}}{|\underline{x}|}f_1(x_0,|\underline{x}|)}\Big),
$$}
where $T_{FS2}:=\Delta_{n+1}^{\frac{n-1}{2}}$ and $\Delta_{n+1}$ is the Laplacian in $n+1$ dimensions,
is in the kernel of the Dirac operator, i.e.,
$$
D\breve{f}(x)=0 \ \ \ {\it on} \ \ \Omega_D.
$$

\end{theorem}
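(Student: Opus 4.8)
The plan is to reduce the statement to a purely scalar identity and then prove that identity by a dimension-lowering recursion. Throughout write $r=|\underline{x}|$ and $\omega=\underline{x}/|\underline{x}|$ (so that $\omega^2=-1$) on the set where $\underline{x}\neq 0$, put $\underline{\partial}:=\sum_{j=1}^n e_j\partial_{x_j}$, and let $D=\partial_{x_0}+\underline{\partial}$ be the Dirac (generalized Cauchy--Riemann) operator, with conjugate $\overline{D}=\partial_{x_0}-\underline{\partial}$. Since the scalar partial derivatives commute and $\underline{\partial}^2=-\sum_{j=1}^n\partial_{x_j}^2$, one has the factorization $\Delta_{n+1}=\overline{D}D=D\overline{D}$; consequently $D\Delta_{n+1}=D\overline{D}D=\Delta_{n+1}D$, i.e. $D$ and $\Delta_{n+1}$ commute. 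Writing $h=\tfrac{n-1}{2}$, this gives $D\breve{f}=D\,\Delta_{n+1}^{\,h}f=\Delta_{n+1}^{\,h}(Df)$, so it suffices to compute $Df$ and then to iterate the Laplacian on the outcome.

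For the second step I would compute $Df$ explicitly for the slice function $f=f_0(x_0,r)+\omega f_1(x_0,r)$. Using $\partial_{x_j}r=x_j/r$ one finds $\underline{\partial}f_0=(\partial_r f_0)\,\omega$, and a direct calculation, using $\underline{x}^2=-r^2$, gives $\underline{\partial}(\omega f_1)=-\tfrac{n-1}{r}f_1-\partial_r f_1$. Adding the contribution of $\partial_{x_0}$ yields
$$Df=(\partial_{x_0}f_0-\partial_r f_1)+\omega\,(\partial_{x_0}f_1+\partial_r f_0)-\tfrac{n-1}{r}\,f_1.$$
The Cauchy--Riemann equations for $\tilde f=f_0+if_1$ in $z=x_0+ir$, namely $\partial_{x_0}f_0=\partial_r f_1$ and $\partial_{x_0}f_1=-\partial_r f_0$, annihilate the first two brackets, leaving the clean identity $Df=-(n-1)\,f_1/r$. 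Hence $D\breve{f}=-(n-1)\,\Delta_{n+1}^{\,h}\!\big(f_1/r\big)$, and the whole theorem reduces to the scalar claim $\Delta_{n+1}^{\,h}(f_1/r)=0$.

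To prove this scalar claim I would use that $f_1=\Im\tilde f$ is harmonic in the two variables $(x_0,r)$, i.e. $\partial_{x_0}^2 f_1+\partial_r^2 f_1=0$, and that on functions depending only on $(x_0,r)$ the operator $\Delta_{n+1}$ acts as $\tilde D_m:=\partial_{x_0}^2+\partial_r^2+\tfrac{m-1}{r}\partial_r$ with $m=n$. Setting $M:=\tfrac1r\partial_r$, two elementary computations drive the argument: first, using only the $2$-dimensional harmonicity of $f_1$, one gets the single-step identity $\tilde D_m(f_1/r)=(m-3)\,M(f_1/r)$ valid for every $m$; second, one verifies the intertwining relation $\tilde D_m M=M\,\tilde D_{m-2}$ (the $x_0$-derivatives commute with $M$, and the radial parts agree after a short computation). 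Iterating these two facts, one obtains by induction
$$\tilde D_n^{\,j}(f_1/r)=\Big[\prod_{i=1}^{j}\big(n-2i-1\big)\Big]\,M^{\,j}(f_1/r).$$
Taking $j=h=\tfrac{n-1}{2}$, the factor with $i=h$ equals $n-2h-1=0$, so the product vanishes and $\Delta_{n+1}^{\,h}(f_1/r)=\tilde D_n^{\,h}(f_1/r)=0$, completing the proof.

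I expect the main obstacle to be the bookkeeping in this scalar reduction: establishing the single-step identity $\tilde D_m(f_1/r)=(m-3)M(f_1/r)$ and, above all, the dimension-shifting intertwining $\tilde D_m M=M\tilde D_{m-2}$ in a clean form, since these are exactly what produce the telescoping product and force the vanishing factor to occur precisely at the Sce exponent $h=(n-1)/2$ (which is an integer because $n$ is odd). A minor technical point to handle separately is the behaviour on the real axis $\underline{x}=0$, where $\omega$ and $1/r$ are singular; there one uses that $f_1$ is odd in $r$, so that $f_1/r$ and $f$ extend smoothly across $\underline{x}=0$, and the identities above persist by continuity.
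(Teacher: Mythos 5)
The paper does not prove Theorem \ref{SCETH}: it is quoted as Sce's classical result with a citation to \cite{6sce2} (see also the commented translation in \cite{bookSCE}), so there is no in-paper argument to compare against. Your proof is nonetheless correct and self-contained, and it is a clean elementary route. Both pillars check out. First, $\Delta_{n+1}=\overline{D}D=D\overline{D}$ gives $D\Delta_{n+1}^{h}=\Delta_{n+1}^{h}D$, and the computation $Df=-(n-1)f_1/r$ is right: $\underline{\partial}f_0=(\partial_rf_0)\omega$, while $\underline{\partial}(\omega f_1)=-\tfrac{n-1}{r}f_1-\partial_rf_1$ follows from $\sum_j e_j^2=-n$ together with $\bigl(\sum_j x_je_j\bigr)\underline{x}=\underline{x}^2=-r^2$, and the Cauchy--Riemann system for $(f_0,f_1)$ kills the two bracketed terms. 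Second, the scalar identities $\tilde D_m(f_1/r)=(m-3)M(f_1/r)$ (which uses only the two-dimensional harmonicity of $f_1$) and the intertwining $\tilde D_mM=M\tilde D_{m-2}$ both verify by direct computation, and the resulting telescoping product $\prod_{i=1}^{h}(n-2i-1)$ contains the factor $n-2h-1=0$ precisely because $h=(n-1)/2$ is the Sce exponent; this is exactly the mechanism that makes the theorem work only for that power of the Laplacian. Two small remarks: since $D$ is an open subset of the upper half-plane, $r=|\underline{x}|>0$ on $\Omega_D$, so the precaution about the real axis is not needed for the theorem as stated (it matters only for the relaxed symmetric version discussed just before the theorem); and your reduction in fact proves slightly more than quoted, namely the explicit identity $D\bigl(\Delta_{n+1}^{j}f\bigr)=-(n-1)\bigl[\prod_{i=1}^{j}(n-2i-1)\bigr]M^{j}(f_1/r)$ for every $j$, which exhibits the vanishing at $j=h$ as a removable degeneracy rather than an accident.
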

Functions kernel of the Dirac operator are called monogenic functions and their precise definition is as follows.
\begin{definition}[Monogenic functions]\index{Monogenic functions} Let $U$ be an open set in $\mathbb{R}^{n+1}$.
A real differentiable function $\breve{f}: U\to \mathbb{R}_n$ is left monogenic if
{\small$$
D\breve{f}(x):=\frac{\pp}{\pp x_0}\breve{f}(x)+\sum_{i=1}^n e_i\frac{\pp}{\pp x_i}\breve{f}(x)=0.
$$
}
It is right monogenic if
{\small
$$
\breve{f}(x)D:=\frac{\pp}{\pp x_0}\breve{f}(x)+\sum_{i=1}^n \frac{\pp}{\pp x_i}\breve{f}(x)e_i=0.
$$}
\end{definition}

The case where the operator $\Delta_{n+1}^{\frac{n-1}{2}}$ has a fractional index has been treated by T. Qian in \cite{6qian}.
Observe that for the Fueter's theorem  the operator $T_{FS2}$ is equal to the Laplacian $\Delta$ in $4$ dimensions.
Further developments can be found in \cite{6DIX1,6DIX2,6DIX4} see also the survey \cite{6qianSR}.

Recently also the problem of construction the inversion of the maps that appear in the Fueter-Sce-Qian extension has been treated, see
the papers \cite{6BKQS,6BKQS1,6DIX3, 6CoSaSo1,6csso1}.

To define the notion of slice monogenic functions, we need to introduce more notations. The sphere of purely imaginary paravectors with modulus $1$, is defined by
$$
\mathbb{S}=\{x=e_1x_1+\ldots +e_nx_n\ |\  x_1^2+\ldots +x_n^2=1\}.
$$
The element  $I\in \mathbb{S}$ is such that $I^2=-1$, so $I$ is an imaginary unit, and we will denote the complex space with imaginary unit $I$ by $\mathbb{C}_I$.
Given a non-real paravector $x=x_0+\underline{x}=x_0+J_x |\underline{x}|$, we set $J_x:=\underline{x}/|\underline{x}|\in\mathbb{S}$, and we associate to $x$ the sphere defined by
$$
[x]=\{x_0+J  |\underline{x}| \ \ | \ \ J \in\mathbb{S}\}.
$$
\begin{definition}
Let $U \subseteq \mathbb R^{n+1}$ . We say that $U$ is
\textnormal{axially symmetric}  if, for every $u+Iv \in U$,  all the elements $u+Jv$ for $J\in\mathbb{S}$ are contained in $U$.
\end{definition}
For operator theory the most appropriate definition of
 slice hyperholomorphic functions is the one that comes from the Fueter-Sce-Qian mapping theorem
 because it allows to define functions on axially symmetric open sets. It can be given as follows, see \cite{6ghiloniperotti}.
\begin{definition}\label{CAUSLICE}
 Let $U\subseteq\mathbb{R}^{n+1}$ be an axially symmetric open set
 and let $\mathcal{U}\subseteq\mathbb{R}\times \mathbb{R}$ be such that $x=u+J v\in U$ for all $(u,v)\in\mathcal{U}$.
We say that a function $f:U \to \mathbb{R}_n$ of the form
$$
f(x)=f_0(u,v)+J f_1(u,v)
$$
is left slice hyperholomorphic if
 $f_0$, $f_1$ are $\mathbb{R}_n$-valued differentiable functions such that
 $$
 f_0(u,v)=f_0(u,-v), \ \ \ f_1(u,v)=-f_1(u,-v)\ \ \ {\rm for \ all}\ \  (u,v)\in \mathcal{U}
 $$
 and if $f_0$ and $f_1$ satisfy the Cauchy-Riemann system
 $$
\partial_u f_0-\partial_vf_1=0,\ \ \ \ \
\partial_vf_0+\partial_u f_1=0.
$$
We recall that right slice hyperholomorphic  functions are of the form
$$
f(x)=f_0(u,v)+f_1(u,v)J
$$
where $f_0$, $f_1$ satisfy the above conditions.
\end{definition}
The set of left (resp. right) slice hyperholomorphic function
on $U$ is denoted with the symbol ${SH}_L(U)$ (resp. ${SH}_R(U)$).
The subset of intrinsic functions consist of those slice hyperholomorphic functions such that
$f_0$, $f_1$ are real-valued and is denoted by $N(U)$.

\medskip
With this language we can reformulate the Fueter-Sce theorem since $T_{FS2}$ acts on slice hyperholomorphic functions. We note that a different method to connect slice hyperholomorphic and monogenic functions is the
Radon and dual Radon transform, see \cite{6Radon}.

\medskip

We now recall the hyperholomorphic Cauchy formulas that are the heart of the hyperholomorphic spectral theories.
It is important to remark that the hypotheses of the following Cauchy formula
are related to the Definition \ref{CAUSLICE} of slice hyperholomorphic functions.

\begin{definition}
Let  $x\not\in [s]$. We define and
\begin{equation}\label{LformI}
S_L^{-1}(s,x):=-(x^2 -2 \Re  (s) x+|s|^2)^{-1}(x-\overline s)
\end{equation}
\begin{equation}\label{LformII}
\ \ \ \ \ \ \ \ \ \ \ =(s-\bar x)(s^2-2\Re (x) s+|x|^2)^{-1},
\end{equation}
\begin{equation}\label{RformI}
S_R^{-1}(s,x):=-(x-\bar s)(x^2-2\Re (s)x+|s|^2)^{-1}
\end{equation}
\begin{equation}\label{RformII}
\ \ \ \ \ \ \ \ \ \ \ =(s^2-2\Re (x)s+|x|^2)^{-1}(s-\bar x).
\end{equation}
We say that \eqref{LformI} is the Cauchy kernel $S_L^{-1}(s,x)$ in form I, while \eqref{LformII} is in the form II.\\
Analogously, we say that \eqref{RformI} is the Cauchy kernel $S_R^{-1}(s,x)$ in form I, while \eqref{RformII} is in the form II.\\
\end{definition}
The following result is crucial:
\begin{lemma} Let  $s\notin [x]$.
The left slice hyperholomorphic Cauchy kernel $S_L^{-1}(s,x)$ is left slice hyperholomorphic in $x$ and right slice hyperholomorphic in $s$.
The right slice hyperholomorphic Cauchy kernel $S_R^{-1}(s,x)$ is left slice hyperholomorphic in $s$ and right slice hyperholomorphic in $x$.
\end{lemma}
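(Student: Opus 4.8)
The plan is to verify each of the four assertions directly from Definition \ref{CAUSLICE}, that is, by writing the kernel in the intrinsic form $f_0(u,v)+Jf_1(u,v)$ relative to the variable under consideration and checking that the pair $(f_0,f_1)$ is even--odd in the second variable and solves the Cauchy--Riemann system. The device that makes the computation clean is to choose, for each assertion, the representation (form I or form II) in which the differentiation variable enters the scalar factor $(\,\cdot\,)^{-1}$ only through the real quantities $\Re(\cdot)$ and $|\cdot|$. With that choice the imaginary unit of the active variable factors out on the correct side, and the inverse is the reciprocal of a real-coefficient polynomial in a single paravector, hence commutes with that paravector.

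I would carry out first the assertion that $x\mapsto S_L^{-1}(s,x)$ is left slice hyperholomorphic, using form \eqref{LformII}. Writing $x=u+J_xv$ and setting $P(u,v):=s^2-2us+u^2+v^2$ (so that $\Re(x)=u$, $|x|^2=u^2+v^2$), one has $S_L^{-1}(s,x)=(s-u+J_xv)P^{-1}$, whence
$$f_0=(s-u)P^{-1},\qquad f_1=vP^{-1}.$$
Since $P$ depends on $v$ only through $v^2$, the function $f_0$ is even and $f_1$ is odd in $v$, as required. Using $\partial_uP=2(u-s)$, $\partial_vP=2v$, the commutativity of $s$ with $P^{-1}$, and the identity $(s-u)^2+v^2=P$, a short computation gives $\partial_uf_0-\partial_vf_1=0$ and $\partial_vf_0+\partial_uf_1=0$, which is precisely the Cauchy--Riemann system of Definition \ref{CAUSLICE}.

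The remaining three statements follow by the same scheme with the roles of the variables and of the two forms interchanged. For right slice hyperholomorphicity of $S_L^{-1}(s,x)$ in $s$ I would use form \eqref{LformI}, write $s=u'+J_sv'$, and separate the factor $x-\bar s=(x-u')+J_sv'$ on the \emph{right} of the scalar inverse $E:=(x^2-2u'x+(u')^2+(v')^2)^{-1}$, obtaining the right-slice pair $(g_0,g_1)=\bigl(-(x-u')E,\,-v'E\bigr)$; the even--odd property and the Cauchy--Riemann system then follow exactly as before, now from $(x-u')^2+(v')^2=E^{-1}$ and from the fact that $x$ commutes with $E$. The two assertions for $S_R^{-1}(s,x)$ are obtained verbatim from \eqref{RformI}--\eqref{RformII} by the left--right symmetry of the definitions.

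The computations are elementary; the only point requiring care---and the single place where the non-commutativity of the Clifford algebra could cause trouble---is to keep each imaginary unit on its correct side. This is exactly what the strategic use of forms I and II guarantees: in each case the active variable sits, in the chosen form, inside a scalar polynomial with real coefficients, so its paravector commutes with the corresponding inverse and the factorization $f_0+Jf_1$ (respectively $f_0+f_1J$) is legitimate. I do not expect any genuine obstacle beyond this bookkeeping.
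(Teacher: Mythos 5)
Your argument is correct. Note that the paper itself gives no proof of this lemma: it is recalled as a known preliminary fact from the slice hyperholomorphic literature (cf.\ the book \cite{6css}), so there is no internal proof to compare against; your direct verification from Definition \ref{CAUSLICE} is the standard one. The computations check out. For the variable $x$, form \eqref{LformII} gives $f_0=(s-u)P^{-1}$ and $f_1=vP^{-1}$ with $P=s^2-2us+u^2+v^2$; since $P$ is a real-coefficient polynomial in $s$, $P^{-1}$ commutes with $s$, whence $\partial_u P^{-1}=2(s-u)P^{-2}$ and $\partial_v P^{-1}=-2vP^{-2}$, and both Cauchy--Riemann equations reduce precisely to your identity $(s-u)^2+v^2=P$. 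The mirrored computation with $E^{-1}=x^2-2u'x+u'^2+v'^2$ and $(x-u')^2+v'^2=E^{-1}$ settles the $s$-variable via form \eqref{LformI}, and the two statements for $S_R^{-1}(s,x)$ use the same pairs $(f_0,f_1)$ with $J$ on the opposite side, where the only transposition needed is moving the real scalar $v'$ past $J$, which is harmless. Two points worth stating explicitly for completeness: the hypothesis $s\notin[x]$ is exactly the invertibility of $P$ (equivalently of $E^{-1}$), so that $f_0,f_1$ are well defined and real-differentiable on the axially symmetric set in question; and at real points $v=0$ the decomposition remains consistent because $f_1$ vanishes there. Neither affects the validity of your proof.
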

\begin{theorem}[The Cauchy formulas for slice monogenic functions]
\label{CauchygeneraleMONOG}
Let $U\subset\mathbb{R}^{n+1}$ be a bounded slice Cauchy domain, let $J\in\mathbb{S}$ and set  $ds_J=ds (-J)$.
If $f$ is a (left) slice monogenic function on a set that contains $\overline{U}$ then
{\small
\begin{equation}\label{cauchynuovo}
 f(x)=\frac{1}{2 \pi}\int_{\partial (U\cap \mathbb{C}_J)} S_L^{-1}(s,x)\, ds_J\,  f(s),\qquad\text{for any }\ \  x\in U.
\end{equation}}
If $f$ is a right slice hyperholomorphic function on a set that contains $\overline{U}$,
then
{\small
\begin{equation}\label{Cauchyright}
 f(x)=\frac{1}{2 \pi}\int_{\partial (U\cap \mathbb{C}_J)}  f(s)\, ds_J\, S_R^{-1}(s,x),\qquad\text{for any }\ \  x\in U.
 \end{equation}
 }
These integrals  depend neither on $U$ nor on the imaginary unit $J\in\mathbb{S}$.
\end{theorem}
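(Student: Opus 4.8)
The plan is to reduce the Clifford-valued, axially symmetric statement to the classical holomorphic Cauchy formula on a single complex slice $\mathbb{C}_J$, and then to propagate the result off that slice using the slice structure of $f$. I would prove the left formula \eqref{cauchynuovo} in detail; the right formula \eqref{Cauchyright} is entirely analogous, using the kernel $S_R^{-1}(s,x)$ in the form \eqref{RformII} and placing the scalar factors $ds_J$ and $f(s)$ on the left of the kernel.

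First I would establish \eqref{cauchynuovo} for $x$ lying on the same slice as the contour, i.e. $x\in U\cap\mathbb{C}_J$. The key algebraic observation is that when both $s$ and $x$ belong to the commutative plane $\mathbb{C}_J$ the slice hyperholomorphic Cauchy kernel collapses to the classical one: by \eqref{LformII},
\[
S_L^{-1}(s,x)=(s-\bar x)(s^2-2\Re(x)s+|x|^2)^{-1}=(s-\bar x)\bigl[(s-x)(s-\bar x)\bigr]^{-1}=(s-x)^{-1},
\]
since in $\mathbb{C}_J$ one has the factorization $s^2-2\Re(x)s+|x|^2=(s-x)(s-\bar x)$. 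Writing $f(u+Jv)=f_0(u,v)+Jf_1(u,v)$, the Cauchy-Riemann system of Definition \ref{CAUSLICE} says precisely that $\tfrac12(\partial_u+J\partial_v)f=0$, so the restriction $z=u+Jv\mapsto f(u+Jv)$ is an $\mathbb{R}_n$-valued holomorphic function on the planar Cauchy domain $U\cap\mathbb{C}_J$. The classical (vector-valued) Cauchy integral formula then applies; rewriting the factor $\tfrac{1}{2\pi J}\,ds$ as $\tfrac{1}{2\pi}\,ds_J$ via $J^{-1}=-J$ and $ds_J=ds(-J)$, and noting that on $\mathbb{C}_J$ the factors $(s-x)^{-1}$, $ds$ and $J$ commute among themselves, yields exactly $f(x)=\tfrac{1}{2\pi}\int_{\partial(U\cap\mathbb{C}_J)}S_L^{-1}(s,x)\,ds_J\,f(s)$ for every $x\in U\cap\mathbb{C}_J$.

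Next I would extend the identity to an arbitrary $x=u+J_xv\in U$ with $J_x\neq J$. Both sides of \eqref{cauchynuovo} are left slice hyperholomorphic functions of $x$ on $U$: the right-hand side because $S_L^{-1}(s,x)$ is left slice hyperholomorphic in $x$ by the Lemma preceding the theorem, and this property is preserved under the contour integral in $s$. Two left slice hyperholomorphic functions that agree on the slice $U\cap\mathbb{C}_J$ coincide on the whole axially symmetric set $U$, because the Representation (Structure) Formula reconstructs the value at $u+J_xv$ from the two slice values at $u\pm Jv$; concretely, applying that formula to both $f$ and to the integral reduces the off-slice identity to the two on-slice identities already proved at $u+Jv$ and $u-Jv$. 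Finally, the independence statements follow from the slice hyperholomorphic Cauchy theorem: if $U_1\subseteq U_2$ are bounded slice Cauchy domains both containing $x$ and $f$ is slice monogenic on a neighborhood of $\overline{U_2}$, then $s\mapsto S_L^{-1}(s,x)f(s)$ is slice hyperholomorphic on $\overline{U_2\setminus U_1}$, so the integrals over $\partial(U_1\cap\mathbb{C}_J)$ and $\partial(U_2\cap\mathbb{C}_J)$ agree, giving independence of $U$; independence of $J\in\mathbb{S}$ is then automatic since the left-hand side $f(x)$ does not depend on $J$.

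I expect the main obstacle to be the bookkeeping in the off-slice step. One must verify that the Representation Formula intertwines correctly with the non-commuting scalar factors $J$ and $ds_J$ and the Clifford-valued $f(s)$, and that the left slice hyperholomorphicity of the integral in $x$ is genuinely inherited from that of the kernel rather than destroyed by the ordering of $ds_J$ and $f(s)$. All the surrounding algebra — the on-slice collapse of the kernel and the factorization of the characteristic quadratic — is routine once the correct placement of $J$ is fixed.
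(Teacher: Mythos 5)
Your proposal is essentially correct, and it follows the standard route: note that the paper itself states Theorem \ref{CauchygeneraleMONOG} \emph{without} proof, recalling it as known background from the slice hyperholomorphic literature (e.g. \cite{6css,6CKG}), where the proof runs precisely along your lines. Your on-slice step is sound: for $s,x\in\mathbb{C}_J$ the factorization $s^2-2\Re(x)s+|x|^2=(s-x)(s-\bar x)$ holds in the commutative plane $\mathbb{C}_J$, so the form II kernel \eqref{LformII} collapses to $(s-x)^{-1}$ (here you are tacitly inverting $s-\bar x$, which is legitimate because $\bar x\in[x]\subset U$ by axial symmetry of the slice Cauchy domain, so $\bar x$ never meets the contour); the restriction of $f$ is annihilated by $\tfrac12(\partial_u+J\partial_v)$ acting on the left, hence the classical Cauchy formula for functions valued in the complex Banach space $(\mathbb{R}_n,\ \text{left multiplication by }J)$ applies, and your bookkeeping with $ds_J=ds(-J)$ and $J^{-1}=-J$ is correct since $(s-x)^{-1}$, $ds$ and $J$ all lie in $\mathbb{C}_J$. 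The off-slice extension via the Representation Formula is also the standard mechanism, and the right-hand side of \eqref{cauchynuovo} is indeed left slice hyperholomorphic in $x$, because right multiplication by the $x$-independent Clifford constants $ds_J\,f(s)$ and integration in $s$ preserve that property.

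The one step you should repair is the justification of independence of $U$. The map $s\mapsto S_L^{-1}(s,x)f(s)$ is in general \emph{not} slice hyperholomorphic on $\overline{U_2\setminus U_1}$: the kernel is right slice hyperholomorphic in $s$ while $f$ is left slice hyperholomorphic, and the pointwise product of a right and a left slice hyperholomorphic function has neither property (the paper's own remarks about products destroying slice hyperholomorphicity make the same point). What makes the two contour integrals agree is not holomorphy of the product but the Stokes-type Cauchy theorem for exactly such mixed pairs, namely Lemma \ref{cau} of the paper, applied on $(U_2\setminus\overline{U_1})\cap\mathbb{C}_J$: since $x\in U_1$ and $U_1$ is axially symmetric, $[x]\subset U_1$, so $S_L^{-1}(\cdot,x)$ is right slice hyperholomorphic on a neighborhood of $\overline{U_2}\setminus U_1$ and $\int_{\partial((U_2\setminus\overline{U_1})\cap\mathbb{C}_J)}S_L^{-1}(s,x)\,ds_J\,f(s)=0$. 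With that substitution your argument is complete, and the independence of $J$ does follow for free, as you say, because the left-hand side $f(x)$ does not depend on $J$.
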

We are now ready to observe that the $\mathcal{F}$-functional calculus is based on the following strategy.
We apply the second Fueter-Sce operator $T_{FS2}:=\Delta^{(n-1)/2}$,
where $\Delta=\sum_{i=0}^n\frac{\partial^2}{\partial x_i^2}$ is the Laplace operator in the variables $(x_0,x_1,...,x_n)$, to the slice hyperholomorphic Cauchy kernel as illustrated by the diagram:
{\small
\begin{equation*}
\begin{CD}
{SH(U)} @.  {AM(U)} \\   @V  VV
  @.
\\
{{\rm  Slice\ Cauchy \ Formula}}  @> T_{FS2}>> {{\rm Fueter-Sce\ theorem \ in \  integral\  from}}
\\
@V VV    @V VV
\\
$S$-{{\rm Functional \ calculus}} @.\mathcal{F}-{{\rm functional \ calculus}}
\end{CD}
\end{equation*}
}
\begin{remark}
Observe that in the above diagram the arrow from the space of axially monogenic function $AM(U)$ is missing because the $\mathcal{F}$-functional calculus is deduced from the slice hyperholomorphic Cauchy formula.
\end{remark}

\begin{proposition}\label{Laplacian}
Let $n$ be an odd number and
let $x$,
$s\in \rr^{n+1}$
be such that
 $x\not\in [s]$.
 Let $S_L^{-1}(s,x)$ and $S_R^{-1}(s,x)$ be the slice hyperholomorphic Cauchy kernels in form II. Then:
 \begin{itemize}
 \item
The function $\Delta^{\frac{n-1}{2}}S_L^{-1}(s,x)$ is a left monogenic function in the variable $x$ and right slice hyperholomorphic in $s$.
\item
The function  $\Delta^{\frac{n-1}{2}}S_R^{-1}(s,x)$ is a right monogenic function in the variable $x$ and left slice hyperholomorphic in $s$.
\end{itemize}
\end{proposition}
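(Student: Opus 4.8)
The plan is to deduce both statements directly from the Fueter--Sce theorem (Theorem \ref{SCETH}), treating $x$ as the active variable and $s$ as a Clifford-valued parameter, while the slice hyperholomorphicity in $s$ is recovered by a commutation argument. The two ingredients are the slice hyperholomorphicity of the Cauchy kernels recorded in the Lemma preceding Theorem \ref{CauchygeneraleMONOG}, and the fact encoded in the diagram that $T_{FS2}=\Delta^{\frac{n-1}{2}}$ maps slice hyperholomorphic functions to monogenic ones.

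First I would settle the monogenicity in $x$. By the quoted Lemma, for fixed $s$ the kernel $S_L^{-1}(s,x)$ is left slice hyperholomorphic in $x$ on the axially symmetric set $\{x : x\notin[s]\}$. Writing $x=x_0+\underline{x}=u+J_xv$ with $u=x_0$, $v=|\underline{x}|$ and $J_x=\underline{x}/|\underline{x}|$, the form II expression exhibits $S_L^{-1}(s,x)=f_0(u,v)+J_xf_1(u,v)$, where $f_0,f_1$ are the $\rr_n$-valued (and $s$-dependent) coefficients satisfying the even/odd parity in $v$ and the Cauchy--Riemann system of Definition \ref{CAUSLICE}. This is precisely the slice form to which $T_{FS2}=\Delta^{\frac{n-1}{2}}$ is applied in Theorem \ref{SCETH}; since the Fueter--Sce operator sends left slice hyperholomorphic functions into the kernel of the Dirac operator $D$ acting in $x$, the function $\Delta^{\frac{n-1}{2}}S_L^{-1}(s,x)$ is left monogenic in $x$. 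The identical argument applied to the right slice hyperholomorphic kernel $S_R^{-1}(s,x)$ yields a right monogenic function in $x$.

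Next I would handle the slice hyperholomorphicity in $s$. The key observation is that $\Delta=\sum_{i=0}^n\partial^2/\partial x_i^2$ differentiates only in the $x$ variables, hence commutes with every differential operation in $s$ and leaves the imaginary unit $J_s=\underline{s}/|\underline{s}|$, which is constant in $x$, untouched. By the Lemma, $S_L^{-1}(s,x)$ is right slice hyperholomorphic in $s$, so it can be written as $g_0(s_0,|\underline{s}|)+g_1(s_0,|\underline{s}|)J_s$ with $g_0,g_1$ satisfying the parity and Cauchy--Riemann conditions in $(s_0,|\underline{s}|)$. Using that $\Delta^{\frac{n-1}{2}}$ is a real scalar operator commuting with right multiplication by the constant $J_s$, one gets $\Delta^{\frac{n-1}{2}}S_L^{-1}(s,x)=(\Delta^{\frac{n-1}{2}}g_0)+(\Delta^{\frac{n-1}{2}}g_1)J_s$; since $\Delta^{\frac{n-1}{2}}$ commutes with $\partial_{s_0}$ and with the $|\underline{s}|$-derivative and preserves parity in $|\underline{s}|$, the new coefficients still satisfy Definition \ref{CAUSLICE} in $s$. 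Hence the result stays right slice hyperholomorphic in $s$, and symmetrically $\Delta^{\frac{n-1}{2}}S_R^{-1}(s,x)$ stays left slice hyperholomorphic in $s$.

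I expect the main obstacle to be the correct invocation of Theorem \ref{SCETH} for a kernel that is $\rr_n$-valued through its dependence on the parameter $s$: one must check that the Fueter--Sce construction, stated there for scalar holomorphic data, still produces a monogenic function when the coefficients $f_0,f_1$ carry Clifford values depending on $s$. This is legitimate because $T_{FS2}=\Delta^{\frac{n-1}{2}}$ is a scalar differential operator acting componentwise and commuting with the left multiplication by $J_x$ supplied by the slice structure, so the $s$-dependent Clifford coefficients do not interfere with monogenicity in $x$. The remaining, more routine point is the careful bookkeeping in the second part, namely that $\Delta_x$ passes through the right multiplication by the $x$-independent unit $J_s$ without disturbing the Cauchy--Riemann system in $s$.
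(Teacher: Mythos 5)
Your argument is essentially correct, and it is worth noting that the paper itself does not prove Proposition \ref{Laplacian}: it is recalled as preliminary material from the earlier literature, and the route implicitly taken there is the ``hard'' one, namely the explicit closed-form computation of $\Delta^{\frac{n-1}{2}}S_L^{-1}(s,x)$ and $\Delta^{\frac{n-1}{2}}S_R^{-1}(s,x)$ recorded in Theorem \ref{DSF}, from which both the monogenicity in $x$ and the slice hyperholomorphicity in $s$ can be read off directly (the factor $(s^2-2\Re(x)s+|x|^2)^{-\frac{n+1}{2}}$ is intrinsic in $s$, and $(s-\bar x)$ is a degree-one slice polynomial). Your ``soft'' proof via the Fueter--Sce mapping theorem plus a commutation argument in $s$ is a legitimate alternative and has the advantage of not requiring the explicit kernel; the second half of your argument, that $\Delta_x$ commutes with $\partial_{s_0}$, with $\partial_{|\underline{s}|}$, and with right multiplication by the $x$-independent unit $J_s$, and hence preserves the Cauchy--Riemann system and the parity conditions in $s$, is clean and complete.

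One sentence in your first half is imprecise and should be repaired: you justify applying Theorem \ref{SCETH} to the Clifford-valued coefficients by saying that $\Delta^{\frac{n-1}{2}}$ ``commutes with the left multiplication by $J_x$,'' which is false as stated, since $J_x=\underline{x}/|\underline{x}|$ depends on $x$ and $\Delta$ certainly does not commute with multiplication by it. The correct reduction is to expand the Clifford-valued coefficients over the basis $e_A$ placed on the \emph{right}: writing $S_L^{-1}(s,x)=\sum_A\bigl(f_{0,A}(u,v)+J_x f_{1,A}(u,v)\bigr)e_A$ with real-valued $f_{0,A},f_{1,A}$ satisfying the parity and Cauchy--Riemann conditions, Sce's theorem applies to each bracket and yields a function in $\ker D$, and since $D$ acts from the left, right multiplication by the constants $e_A$ preserves left monogenicity; the mirror decomposition (constants on the left) handles $S_R^{-1}$. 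With that adjustment your proof is sound.
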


Based on the explicit computations of functions
{\small
$$
(s,x)\mapsto\Delta^{\frac{n-1}{2}}S_L^{-1}(s,x) \ \ \  \ \text{and}\ \ \ \ \ (s,x)\mapsto\Delta^{\frac{n-1}{2}}S_R^{-1}(s,x),
$$}
for $s\not\in[x]$,  we define the $\mathcal{F}$-kernels.
\begin{theorem}\label{DSF}
Let $n$ be an odd number and  let $x$, $s\in \rr^{n+1}$.
For $s\not\in[x]$, we have
{\small
$$
\Delta^{\frac{n-1}{2}}S_L^{-1}(s,x)=\gamma_n(s-\bar x)(s^2-2\Re (x)s +|x|^2)^{-\frac{n+1}{2}},
$$}
and
{\small
$$
\Delta^{\frac{n-1}{2}}S_R^{-1}(s,x)=\gamma_n(s^2-2\Re (x)s +|x|^2)^{-\frac{n+1}{2}}(s-\bar x),
$$}
where the constants $\gamma_n$ are defined in
(\ref{const}).
\end{theorem}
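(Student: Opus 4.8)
The plan is to compute $\Delta^{(n-1)/2}$ on the explicit kernel by establishing a one-step reduction formula and iterating it. Throughout set $h:=\frac{n-1}{2}$, write $c:=s-x_0=(s_0-x_0)+\underline{s}$ and $r:=|\underline{x}|$, and abbreviate the denominator as $D:=s^2-2\Re(x)s+|x|^2$, so that $S_L^{-1}(s,x)=(s-\overline{x})D^{-1}$. Two algebraic facts drive everything. First, $D$ depends on $\underline{x}$ only through $r^2=|\underline{x}|^2$, and $\partial_{x_0}D=2x_0-2s=-2c$. Second, there is the identity $D=(s-x_0)^2+r^2=c^2+r^2$. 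Because the coefficients $\Re(x)$ and $|x|^2$ are real, both $D$ and $c$ lie in the commutative plane $\mathbb{C}_{J_s}$ with $J_s=\underline{s}/|\underline{s}|$ and commute with each other, whereas the vector $\underline{x}$ appearing in the numerator does not; I will therefore keep the numerator consistently on the left throughout.

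First I would prove the key recursion
\[ \Delta\bigl[(s-\overline{x})D^{-m}\bigr]=2m(2m-n-1)\,(s-\overline{x})\,D^{-m-1},\qquad m\geq 1. \]
To do this I would split the numerator as $s-\overline{x}=c+\underline{x}$ and treat the two pieces separately, using $\Delta=\partial_{x_0}^2+\Delta_{\underline{x}}$. For the scalar-type piece $c\,D^{-m}$ I would apply the radial identity $\Delta_{\underline{x}}(g(r))=g''+\frac{n-1}{r}g'$ to $g=D^{-m}$, together with $\partial_{x_0}D=-2c$ for the $x_0$-derivatives. For the vector piece $\underline{x}\,D^{-m}$ I would use that each coordinate $x_i$ is a degree-one harmonic polynomial, so that $\Delta_{\underline{x}}(\underline{x}\,g(r))=\underline{x}\bigl(g''+\frac{n+1}{r}g'\bigr)$, again combined with the $x_0$-derivatives. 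In both computations the terms of order $D^{-m-2}$ carry a factor $c^2+r^2$, which by the identity above equals exactly $D$ and hence collapses back to order $D^{-m-1}$; collecting coefficients yields the same scalar factor $2m(2m-n-1)$ for each piece, so the two pieces recombine into $(s-\overline{x})D^{-m-1}$.

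Next I would iterate the recursion from $m=1$ (which corresponds to $S_L^{-1}(s,x)=(s-\overline{x})D^{-1}$) up to $m=h$, obtaining
\[ \Delta^{h}S_L^{-1}(s,x)=\Bigl(\prod_{j=1}^{h}2j\,(2j-n-1)\Bigr)(s-\overline{x})\,D^{-(h+1)}, \]
whose denominator exponent $h+1=\frac{n+1}{2}$ already matches the claimed form. It then remains to evaluate the product constant. Writing $n=2h+1$ gives $2j-n-1=2(j-h-1)$, so the product equals $4^{h}\,h!\,\prod_{j=1}^{h}(j-h-1)=4^{h}\,h!\,\bigl((-1)^{h}h!\bigr)=(-1)^{h}2^{n-1}(h!)^2=\gamma_n$, which is precisely \eqref{const}. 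The right-kernel formula follows from the identical computation carried out with the numerator $s-\overline{x}$ kept on the right at every step, producing the same constant $\gamma_n$.

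The main obstacle is the bookkeeping: one must track the non-commutativity between the numerator vector $\underline{x}$ and the factors $D^{-k},c\in\mathbb{C}_{J_s}$ (resolved by always writing the numerator on the same side), and one must verify that the apparently higher-order terms of order $D^{-m-2}$ genuinely cancel through $c^2+r^2=D$. This cancellation is exactly what preserves the clean shape $(\text{numerator})\times D^{-\text{power}}$ at every stage, and is the reason the accumulated constant closes up into the tidy expression $\gamma_n$ rather than an unmanageable one. The qualitative assertions of Proposition \ref{Laplacian} (that the outcome is monogenic in $x$ and right slice hyperholomorphic in $s$) are consistent with, but not needed for, this direct computation.
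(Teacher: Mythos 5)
Your proposal is correct. The paper itself states Theorem \ref{DSF} without proof (it is recalled from the earlier literature on the $\mathcal{F}$-functional calculus), and your argument is essentially the standard derivation: writing $S_L^{-1}(s,x)=(s-\bar x)D^{-1}$ in form II with $D=(s-x_0)^2+|\underline{x}|^2$, one establishes the one-step identity $\Delta\bigl[(s-\bar x)D^{-m}\bigr]=2m(2m-n-1)(s-\bar x)D^{-m-1}$ and iterates $h=\frac{n-1}{2}$ times. I checked the two pieces of the recursion: for $cD^{-m}$ the radial Laplacian contributes $-2mn\,D^{-m-1}+4m(m+1)r^2D^{-m-2}$ and the $x_0$-part contributes $-6mc\,D^{-m-1}+4m(m+1)c^3D^{-m-2}$, while for $\underline{x}\,D^{-m}$ the harmonicity of the coordinates upgrades $\frac{n-1}{r}$ to $\frac{n+1}{r}$; in both cases the $D^{-m-2}$ terms carry the factor $c^2+r^2=D$ and collapse, giving the common coefficient $2m(2m-n-1)$. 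The product $\prod_{j=1}^{h}2j\,(2j-n-1)=4^h h!\,(-1)^h h!=(-1)^{\frac{n-1}{2}}2^{n-1}\bigl[\bigl(\tfrac{n-1}{2}\bigr)!\bigr]^2$ is exactly $\gamma_n$ from \eqref{const}, and keeping the numerator on one fixed side handles the noncommutativity correctly, so the right-kernel case follows symmetrically.
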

\begin{definition}[The $\mathcal{F}$-kernels]
Let $n$ be an odd number and  let $x$, $s\in \rr^{n+1}$.
We define, for $s\not\in[x]$, the $\mathcal{F}_n^L$-kernel as
{\small
$$
\mathcal{F}_n^L(s,x):=\Delta^{\frac{n-1}{2}}S_L^{-1}(s,x)=\gamma_n(s-\bar x)(s^2-2\Re (x)s +|x|^2)^{-\frac{n+1}{2}},
$$}
and the $\mathcal{F}_n^R$-kernel as
{\small
$$
\mathcal{F}_n^R(s,x):=\Delta^{\frac{n-1}{2}}S_R^{-1}(s,x)=\gamma_n(s^2-2\Re (x)s +|x|^2)^{-\frac{n+1}{2}}(s-\bar x),
$$}
where the constant $\gamma_n$  are defined in
(\ref{const}).
\end{definition}

\begin{theorem}[The Fueter-Sce mapping theorem in integral form]
Let $U\subset\mathbb{R}^{n+1}$ be a bounded slice Cauchy domain, let $J\in\mathbb{S}$ and set  $ds_J=ds (-J)$.

\begin{enumerate}[(a)]
\item
If $f$ is a (left) slice monogenic function on a set that contains $\overline{U}$, then
the left monogenic function  $\breve{f}(x)=\Delta^{\frac{n-1}{2}}f(x)$
admits the integral representation
{\small
\begin{equation}\label{FuetLSEC}
\breve{f}(x)=\frac{1}{2 \pi}\int_{\pp (U\cap \mathbb{C}_J)} \mathcal{F}_n^L(s,x)ds_J f(s).
\end{equation}}
\item
If $f$ is a right slice monogenic function on a set that contains $\overline{U}$, then
the right monogenic function $\breve{f}(x)=\Delta^{\frac{n-1}{2}}f(x)$
 admits the integral representation
{\small
\begin{equation}\label{FuetRSCE}
  \breve{f}(x)=\frac{1}{2 \pi}\int_{\pp (U\cap \mathbb{C}_J)} f(s)ds_J \mathcal{F}_n^R(s,x).
\end{equation}
}
\end{enumerate}
The integrals  depend neither on $U$ and nor on the imaginary unit $J\in\mathbb{S}$.
\end{theorem}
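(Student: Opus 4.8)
The plan is to derive both integral representations directly from the slice monogenic Cauchy formula of Theorem~\ref{CauchygeneraleMONOG} by applying the second Fueter--Sce operator $\Delta^{\frac{n-1}{2}}$ in the variable $x$ and exchanging it with the integration over $s$. Concretely, for part (a) I would start from
\begin{equation*}
f(x)=\frac{1}{2\pi}\int_{\pp(U\cap\mathbb{C}_J)} S_L^{-1}(s,x)\,ds_J\,f(s),\qquad x\in U,
\end{equation*}
which holds because $f$ is left slice monogenic on a set containing $\overline{U}$. Applying $\Delta^{\frac{n-1}{2}}$ to both sides and observing that $ds_J$ and $f(s)$ are independent of $x$, the operator acts only on the Cauchy kernel, giving
\begin{equation*}
\breve f(x)=\Delta^{\frac{n-1}{2}}f(x)=\frac{1}{2\pi}\int_{\pp(U\cap\mathbb{C}_J)} \Big(\Delta^{\frac{n-1}{2}}S_L^{-1}(s,x)\Big)\,ds_J\,f(s).
\end{equation*}

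The main step, and the point that needs care, is the interchange of the differential operator $\Delta^{\frac{n-1}{2}}$ with the integral. I would justify this by fixing $x$ in the interior of $U$ and noting that $\pp(U\cap\mathbb{C}_J)$ is a compact set on which, for such fixed $x$, one has $s\neq x$ and in fact $s\not\in[x]$; consequently the map $s\mapsto S_L^{-1}(s,x)$, together with all of its $x$-partial derivatives up to order $n-1$, is continuous and uniformly bounded on $\pp(U\cap\mathbb{C}_J)$. The classical Leibniz rule for differentiation under the integral sign then applies and the exchange is legitimate. Once the interchange is performed, I invoke Theorem~\ref{DSF} (equivalently, the definition of the $\mathcal{F}$-kernels), which computes $\Delta^{\frac{n-1}{2}}S_L^{-1}(s,x)=\mathcal{F}_n^L(s,x)$, and substituting this identity into the integrand yields precisely \eqref{FuetLSEC}.

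For part (b) the argument is identical: one starts from the right slice monogenic Cauchy formula \eqref{Cauchyright}, applies $\Delta^{\frac{n-1}{2}}$ in $x$, exchanges it with the integral by the same boundedness argument, and replaces $\Delta^{\frac{n-1}{2}}S_R^{-1}(s,x)$ by $\mathcal{F}_n^R(s,x)$ via Theorem~\ref{DSF}, obtaining \eqref{FuetRSCE}. That $\breve f$ is genuinely left (respectively right) monogenic follows from Proposition~\ref{Laplacian}: the integrand $\mathcal{F}_n^L(s,x)=\Delta^{\frac{n-1}{2}}S_L^{-1}(s,x)$ is left monogenic in $x$, and since the Dirac operator $D$ acts in the variable $x$ and commutes with the $s$-integration, one has $D\breve f=0$. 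Finally, the independence of the integral from the choice of the bounded slice Cauchy domain $U$ and of the imaginary unit $J\in\mathbb{S}$ is inherited directly from the corresponding independence already established for the slice monogenic Cauchy formula in Theorem~\ref{CauchygeneraleMONOG}, read off before $\Delta^{\frac{n-1}{2}}$ is brought inside the integral; alternatively it follows from the right slice hyperholomorphicity of $\mathcal{F}_n^L(s,x)$ in $s$ asserted in Proposition~\ref{Laplacian}, combined with the Cauchy integral theorem for slice hyperholomorphic functions.
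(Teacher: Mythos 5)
Your proposal is correct and follows exactly the strategy this paper indicates (and which the cited source \cite{6CoSaSo} carries out): apply $T_{FS2}=\Delta^{\frac{n-1}{2}}$ in $x$ under the integral sign of the slice monogenic Cauchy formula, identify the resulting kernel via Theorem~\ref{DSF}, and read off monogenicity and the independence of $U$ and $J$ from Proposition~\ref{Laplacian} and the Cauchy formula itself. The paper states this theorem without proof as preliminary material, and your justification of the interchange of $\Delta^{\frac{n-1}{2}}$ with the contour integral (compactness of $\pp(U\cap\mathbb{C}_J)$, $s\not\in[x]$ by axial symmetry, joint smoothness of the kernel on a neighborhood of the fixed $x$) supplies the only step that needed care.
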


 In the sequel, we will consider bounded paravector operators $T=e_1T_1+\ldots +e_nT_n$,
   with commuting components $T_\ell$ acting on a real vector space $V$, i.e. $T_\ell\in\mathcal{B}(V)$ for $\ell=0,1,\ldots ,n$. The set of bounded paravector operators is denoted by $\mathcal{BC}^{0,1}(V_n)$ where $V_n=V\otimes \mathbb{R}_n$.
The subset of  ${\mathcal{B}(V_n)}$ given by the operators $T$ with commuting components $T_\ell$
will be denoted by $\mathcal{BC}(V_n)$.

\medskip

Let $T\in \mathcal{BC}^{0,1}(V_n)$. We denote by $\mathcal{SM}_L(\sigma_{S}(T))$, $\mathcal{SM}_R(\sigma_{S}(T))$
the set of all  left (or right)  slice hyperholomorphic functions $f$ with $\sigma_{S}(T)\subset \dom(f)$. We now recall the following definition:

\begin{definition}[The  $S$-functional calculus for $n$-tuples of operators]
Let $V_n$ be a two sided  Banach module and  $T\in\mathcal{B}^{0,1}(V_n)$.
  Let $U\subset  \mathbb{R}^{n+1}$  be a bounded slice Cauchy domain that contains $\sigma_{S}(T)$ and set $ds_J=- ds J$. We define
{\small
\begin{equation}\label{Scalleft}
f(T)={{1}\over{2\pi }} \int_{\partial (U\cap \mathbb{C}_J)} S_L^{-1} (s,T)\  ds_J \ f(s), \ \ {\it for}\ \ f\in \mathcal{SM}_L(\sigma_S(T)),
\end{equation}}
and
{\small
\begin{equation}\label{Scalright}
f(T)={{1}\over{2\pi }} \int_{\partial (U\cap \mathbb{C}_J)} \  f(s)\ ds_J
 \ S_R^{-1} (s,T),\ \  {\it for}\ \ f\in \mathcal{SM}_R(\sigma_S(T)).
\end{equation}
}
\end{definition}
The definition of the $S$-functional calculus is well posed since
the integrals in (\ref{Scalleft}) and (\ref{Scalright}) depend neither on $U$ and nor on the imaginary unit $J\in\mathbb{S}$.
\begin{remark}
The $S$-functional calculus works more in general for fully Clifford operators with non commuting components.
\end{remark}

The $\mathcal{F}$-functional calculus  is limited to paravector operators with commuting components. It is based on the commutative version of the $S$-spectrum, the so-called $\mathcal{F}$-spectrum, and on the $\mathcal{F}$-resolvent operators that we shall define below.
\begin{definition}[The $\mathcal{F}$-functional calculus for bounded operators]
Let $n$ be an odd number, let
 $T= e_1T_1 + \dots  +  e_n T_n\in\mathcal{BC}(V_n)$, assume that the operators $T_{\ell}$, $\ell=1,..,n$ have real spectrum
 and set $ds_J=ds/J$.
 For any function $f\in\mathcal{SM}_L(\sigma_S(T))$, we define
{\small
\begin{equation}\label{DefFCLUb}
\breve{f}(T):=\frac{1}{2\pi}\int_{\pp(U\cap \mathbb{C}_J)} \mathcal{F}_n^L(s,T) \, ds_J\, f(s).
\end{equation}
}
For any $f\in\mathcal{SM}_R(\sigma_S(T))$, we define
{\small
\begin{equation}\label{SCalcMON}
\breve{f}(T):=\frac{1}{2\pi}\int_{\pp(U\cap \mathbb{C}_J)} f(s) \, ds_J\, \mathcal{F}_n^R(s,T),
\end{equation}
}
where $J\in \mathbb{S}$ and  $U$ is a slice Cauchy domain $U$.
\end{definition}
The definition of the $\mathcal{F}$-functional calculus is well posed since
the integrals in (\ref{DefFCLUb}) and (\ref{SCalcMON}) depend neither on $U$ and nor on the imaginary unit $J\in\mathbb{S}$.

Observe that the left $\mathcal{F}$-resolvent operator can be written in terms of the pseudo $S$-resolvent operators as follows.

\begin{definition}[$ \mathcal{F}$- resolvent operators]
Let $n$ be an odd number. Let us consider $T=T_0+ T_1e_1 + \dots  + T_n e_n \in \mathcal{BC}^{0,1}(V_n)$. For $s \in \rho_\mathcal{F}(T)$, we define the left $ \mathcal{F}$- resolvent operator as
{\small
\begin{equation}
\label{0pre0}
\mathcal{F}_{n}^L(s,T)= \gamma_n(s \mathcal{I}- \overline{T}) Q_s(T)^{\frac{n+1}{2}},
\end{equation}
}
and the right $ \mathcal{F}$-resolvent operator as
{\small
\begin{equation}
\label{1pre1}
\mathcal{F}_{n}^R(s,T)= \gamma_n Q_s(T)^{\frac{n+1}{2}}(s \mathcal{I}- \overline{T}),
\end{equation}
}
where $\gamma_n$ are defined in \eqref{const}.
\end{definition}
For these operators hold the following relations, proved in \cite[Thm. 5.1]{CG}
\begin{theorem}[The left and right $ \mathcal{F}$-resolvent equations]\label{LRFRESEQ}
\label{FRE}
Let $n$ be an odd number and let $T \in \mathcal{B}^{0,1}(V_n)$. Let $s \in \rho_{\mathcal{F}}(T)$. Then the $ \mathcal{F}$-resolvent operators satisfy the equations
{\small
\begin{equation}
\label{eq2}
\mathcal{F}_{n}^L(s,T)s-T \mathcal{F}_{n}^L(s,T)= \gamma_n Q_s(T)^{\frac{n-1}{2}}
\end{equation}
}
and
{\small
\begin{equation}
\label{eq3}
s \mathcal{F}_{n}^R(s,T)- \mathcal{F}_{n}^R(s,T)T= \gamma_n Q_s(T)^{\frac{n-1}{2}},
\end{equation}
}
where the constants $\gamma_n$ are given by (\ref{const}).
\end{theorem}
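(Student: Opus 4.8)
The plan is to reduce both identities to a single structural fact: in the commuting-components setting the pseudo $S$-resolvent operator $Q_s(T)$ commutes with the paravector $s$, after which the equations follow from a one-line telescoping cancellation. First I would record the key commutation relation. Because $T=T_0+e_1T_1+\cdots+e_nT_n$ has commuting components, one has $T+\overline{T}=2T_0$ and $T\overline{T}=T_0^2+T_1^2+\cdots+T_n^2$; both are \emph{real} scalar operators and hence lie in the center, commuting with every Clifford number and in particular with $s$. Consequently
$$
Q_s(T)^{-1}=s^2\mathcal{I}-s(T+\overline{T})+T\overline{T}
$$
is a polynomial in $s$ with central operator coefficients, so $s\,Q_s(T)^{-1}=Q_s(T)^{-1}\,s$, and therefore $s\,Q_s(T)=Q_s(T)\,s$; by induction $s$ commutes with every power $Q_s(T)^m$. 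This is the crucial step, and the only place where the hypothesis of commuting components is genuinely used: without it $T\overline{T}$ would carry a nonscalar Clifford part and the commutation would break down.

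For the left equation \eqref{eq2} I would substitute the definition $\mathcal{F}_n^L(s,T)=\gamma_n(s\mathcal{I}-\overline{T})\,Q_s(T)^{\frac{n+1}{2}}$ and compute, writing $k=\frac{n-1}{2}$ so that $\frac{n+1}{2}=k+1$:
\begin{align*}
\mathcal{F}_n^L(s,T)s-T\,\mathcal{F}_n^L(s,T)
&=\gamma_n\big[(s\mathcal{I}-\overline{T})\,Q_s(T)^{k+1}\,s-T(s\mathcal{I}-\overline{T})\,Q_s(T)^{k+1}\big].
\end{align*}
Using the commutation relation to pull the trailing $s$ to the left through $Q_s(T)^{k+1}$ and expanding $T(s\mathcal{I}-\overline{T})=Ts-T\overline{T}$, the bracket collapses to $\big(s^2\mathcal{I}-\overline{T}s-Ts+T\overline{T}\big)Q_s(T)^{k+1}$. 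Since $(T+\overline{T})s=2T_0s=s(T+\overline{T})$, this is exactly $\big(s^2\mathcal{I}-s(T+\overline{T})+T\overline{T}\big)Q_s(T)^{k+1}=Q_s(T)^{-1}Q_s(T)^{k+1}=Q_s(T)^{k}$, which yields $\gamma_n\,Q_s(T)^{\frac{n-1}{2}}$, as claimed.

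Finally, the right equation \eqref{eq3} follows by the mirror computation starting from $\mathcal{F}_n^R(s,T)=\gamma_n\,Q_s(T)^{\frac{n+1}{2}}(s\mathcal{I}-\overline{T})$, this time moving $s$ to the \emph{right} through the powers of $Q_s(T)$ and factoring $Q_s(T)^{-1}$ off on the left, producing again $\gamma_n\,Q_s(T)^{\frac{n-1}{2}}$. I do not expect any serious obstacle here: the entire content is the central-coefficient commutation of the first paragraph, and the remaining cancellation is precisely the one that, for exponent $1$, produces the commutative $S$-resolvent equations \eqref{eSR1}--\eqref{eSR2}. Equivalently, one may view the statement as obtained by right-multiplying \eqref{eSR1} (respectively left-multiplying \eqref{eSR2}) by $Q_s(T)^{\frac{n-1}{2}}$ and invoking that $s$ commutes with $Q_s(T)$, which is why the argument is uniform in the odd integer $n$.
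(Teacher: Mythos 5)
Your argument is correct: since the components of $T$ commute, $T+\overline{T}=2T_0$ and $T\overline{T}=\sum_{j}T_j^2$ are central, so $s$ commutes with $Q_s(T)$ and its powers, and the telescoping $\bigl(s^2\mathcal{I}-s(T+\overline{T})+T\overline{T}\bigr)Q_s(T)^{\frac{n+1}{2}}=Q_s(T)^{\frac{n-1}{2}}$ gives both \eqref{eq2} and \eqref{eq3}. The paper does not prove this statement itself but imports it from \cite[Thm. 5.1]{CG}, and the proof there is the same direct computation you give; your closing observation that the identities are just \eqref{eSR1}--\eqref{eSR2} (in their commutative form) multiplied by $\gamma_n Q_s(T)^{\frac{n-1}{2}}$, using the centrality of the coefficients of $Q_s(T)^{-1}$, is exactly the right way to see why the result is uniform in $n$.
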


\section{\bf New series expansions for the $\mathcal{F}$-resolvent operators}\label{newserF}
\setcounter{equation}{0}

The Cauchy kernel of slice monogenic functions was deduced by computing the  series expansions
for the Cauchy kernels
{\small \begin{equation}\label{CKSer}
 \sum_{m = 0}^{+\infty} x^ms^{-m-1} \ \ {\rm and } \ \ \sum_{m = 0}^{+\infty} s^{-m-1}x^m
 \end{equation}
 }
which converge for $x,s\in \mathbb{R}^{n+1}$ with $|x|<|s|$. The following result can be found e.g. in \cite{6css}:
\begin{lemma}
Let $x,s\in \mathbb{R}^{n+1}$ with $|x|<|s|$. Then we have
{\small \begin{equation}\label{CKSerL}
\sum_{m = 0}^{+\infty} x^ms^{-m-1} = -(x^2 - 2\Re(s) x + |s|^2)^{-1}(x-\overline{s}),
\end{equation}}
and
{\small
\begin{equation}\label{CKSerR}
\sum_{m = 0}^{+\infty} s^{-m-1} x^m= -(x-\overline{s})(x^2 - 2\Re(s) x + |s|^2)^{-1}.
\end{equation}
}

\end{lemma}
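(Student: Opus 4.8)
The plan is to prove the first identity by multiplying the series on the left by the real-coefficient quadratic $\mathcal{Q}_s(x):=x^2-2\Re(s)x+|s|^2$ and telescoping; the second identity then follows by the symmetric argument with multiplication on the right.

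First I would settle convergence. Any paravector $z$ generates a commutative real subalgebra of $\rr_n$ isomorphic to $\cc$, because $\underline z^2=-|\underline z|^2$ forces the quadratic relation $z^2-2\Re(z)z+|z|^2=0$; under this isomorphism $z$ corresponds to $\Re(z)+\uI|\underline z|$, so $|z^m|=|z|^m$ for all $m$. Applying this to $x$ and to $s$ and using the submultiplicativity of the Clifford norm, $|x^m s^{-m-1}|\le C\,|x|^m|s|^{-m-1}$ for a dimensional constant $C$. Since $|x|<|s|$ this is dominated by a convergent geometric series, so $\sum_{m=0}^{+\infty}x^m s^{-m-1}$ converges absolutely in the finite-dimensional algebra $\rr_n$; call its sum $\Sigma_L$, and likewise set $\Sigma_R:=\sum_{m=0}^{+\infty}s^{-m-1}x^m$, whose convergence follows from the same bound.

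Next I would exploit the quadratic relation applied to $s$, namely $s^2-2\Re(s)s+|s|^2=0$, equivalently $s^2-(s+\overline s)s+s\overline s=0$. Multiplying $\Sigma_L$ on the left by $\mathcal{Q}_s(x)$ and using that $2\Re(s)$ and $|s|^2$ are real (hence commute with every $x^m$), I would write $\mathcal{Q}_s(x)\,x^m=x^{m+2}-2\Re(s)x^{m+1}+|s|^2x^m$ and reindex by collecting the coefficient of each power $x^k$ acting on the right. For $k\ge 2$ this coefficient is $\bigl(s^2-2\Re(s)s+|s|^2\bigr)s^{-k-1}=0$ by the quadratic relation, so the entire tail telescopes away. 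The only survivors are the boundary terms: the coefficient of $x^0$ equals $|s|^2 s^{-1}=\overline s$, and the coefficient of $x^1$ equals $\bigl(-2\Re(s)s+|s|^2\bigr)s^{-2}=-s^2 s^{-2}=-1$. Hence $\mathcal{Q}_s(x)\,\Sigma_L=\overline s-x=-(x-\overline s)$.

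Finally, since $|x|<|s|$ forces $x\notin[s]$, the element $\mathcal{Q}_s(x)=x^2-2\Re(s)x+|s|^2$ is invertible (this is precisely the condition under which the Cauchy kernel in form I is defined), so multiplying on the left by $\mathcal{Q}_s(x)^{-1}$ yields $\Sigma_L=-(x^2-2\Re(s)x+|s|^2)^{-1}(x-\overline s)$, which is the first claim. For the second identity I would repeat the computation multiplying $\Sigma_R$ by $\mathcal{Q}_s(x)$ on the right; the identical bookkeeping gives $\Sigma_R\,\mathcal{Q}_s(x)=-(x-\overline s)$, whence $\Sigma_R=-(x-\overline s)(x^2-2\Re(s)x+|s|^2)^{-1}$. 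The one point requiring genuine care, and the main obstacle, is the non-commutativity of $x$ and $s$: one must keep all $x$-powers and all $s$-powers on fixed sides throughout the reindexing, and verify that the surviving degree-$0$ and degree-$1$ terms land on the correct side, which is exactly what distinguishes the left expansion from the right one.
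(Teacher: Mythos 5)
Your proof is correct. For the record, the paper does not prove this lemma at all: it states it with the remark that the result ``can be found e.g.\ in \cite{6css}'', and the standard argument in that reference runs differently from yours. There one first observes, by the telescoping you could equally have used, that the sum $\Sigma_L$ of the left series satisfies the functional equation $\Sigma_L s - x\,\Sigma_L = 1$, then verifies that $-(x^2-2\Re(s)x+|s|^2)^{-1}(x-\overline{s})$ satisfies the same equation (using that $x$ commutes with the real-coefficient polynomial $\mathcal{Q}_s(x)=x^2-2\Re(s)x+|s|^2$, so that $\mathcal{Q}_s(x)^{-1}[x(x-\overline s)-(x-\overline s)s]=\mathcal{Q}_s(x)^{-1}\mathcal{Q}_s(x)=1$), and finally proves uniqueness: a difference $D$ of two solutions satisfies $Ds=xD$, hence $Dq(s)=q(x)D$ for every real polynomial $q$, and choosing $q(t)=t^2-2\Re(s)t+|s|^2$ gives $\mathcal{Q}_s(x)D=0$, so $D=0$. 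Your route short-circuits the uniqueness step: you multiply the series directly by the companion polynomial $\mathcal{Q}_s(x)$ and let the characteristic relation $s^2-2\Re(s)s+|s|^2=0$ kill every coefficient of $x^k$ for $k\geq 2$, with the boundary terms correctly computed as $|s|^2s^{-1}=\overline{s}$ and $(-2\Re(s)s+|s|^2)s^{-2}=-1$. Both proofs hinge on the same two algebraic facts --- real coefficients commute with everything, and each paravector annihilates its own characteristic quadratic --- but your version is more self-contained (no separate existence-plus-uniqueness bookkeeping), at the cost of requiring the absolute convergence you duly established in order to justify the regrouping by powers of $x$. Your remaining ingredients are also sound: $|x|<|s|$ forces $x\notin[s]$ because every element of $[s]$ has Euclidean modulus $|s|$, and for $x\notin[s]$ the element $\mathcal{Q}_s(x)$ is a nonzero element of the commutative plane $\mathbb{C}_{J_x}$ and hence invertible in $\rr_n$; the right-handed identity \eqref{CKSerR} then follows by the mirror computation exactly as you describe.
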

One of the crucial facts in the spectral theory of the $S$-spectrum is that the above series expansions and the sum of the series keeps the same structure also when we replace the paravector $x$ by a Clifford operators $T$. From here and further crucial considerations follows the $S$-functional calculus.

In this section we want to give a new series expansions of the $\mathcal{F}$-resolvent operators
starting from the explicit computations of the terms $\Delta^{\frac{n-1}{2}} x^m$ in the series expansion
{\small
$$
\sum_{m = 0}^{+\infty} \Delta^{\frac{n-1}{2}} x^ms^{-m-1}
$$}
in terms of the powers of $x$ and of $\overline{x}$.
 In order to do this we need the following result, see \cite{DDG}.
 See also the interesting paper \cite{B}.
\begin{theorem}
\label{aak}
Let $x \in \mathbb{R}^{n+1}$ and set $h:= \frac{n-1}{2}$. Then:
\\
if $m> 2h$ we have
{\small
\begin{equation}
\label{g1}
\Delta^{h} x^m= \sum_{\ell=1}^{m-2h+1} \, K_\ell(m,h) \ x^{m-2h- \ell+1}\ \bar{x}^{\ell-1},
\end{equation}
}
where
{\small
\begin{equation}\label{costKL}
K_\ell(m,h):=4^h (-1)^h h\ \frac{(m- \ell-h+1)!( \ell+h-2)!}{(\ell-1)! (m- \ell-2h+1)!};
\end{equation}
}
if $m= 2h$ we have
{\small
$$ \Delta^{h} x^{2h}=h4^h(-1)^h h!(h-1)!,$$
}
and if $m< 2h$
{\small
$$ \Delta^{h} x^m=0.$$
}
\end{theorem}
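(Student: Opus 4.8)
The plan is to reduce everything to the commutative slice generated by $x$ and then to a one–dimensional radial computation. Writing $x=x_0+\underline{x}=x_0+Jr$ with $r=|\underline{x}|$ and $J=\underline{x}/|\underline{x}|\in\mathbb{S}$, the elements $x$ and $\bar x=x_0-Jr$ commute and $J^2=-1$, so every monomial $x^a\bar x^b$ is of slice type $P(x_0,r)+\tfrac{\underline x}{r}Q(x_0,r)$, where $P+iQ=z^a\bar z^b$ with $z=x_0+ir$. In particular $x^m=A_m(x_0,r)+\tfrac{\underline x}{r}B_m(x_0,r)$ with $A_m+iB_m=(x_0+ir)^m$. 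Thus the whole statement is a statement about stem functions, and the computation can be carried out with the commuting variables $z,\bar z$.

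First I would establish the single Laplacian identity for slice–type functions: a direct computation of the derivatives $\partial_{x_i}$ (the variables $x_1,\dots,x_n$ enter only through $r$ and the overall factor $\underline{x}$) gives
\[
\Delta\Big(\alpha+\tfrac{\underline{x}}{r}\beta\Big)=\Big(\partial_{x_0}^2+\partial_r^2+\tfrac{n-1}{r}\partial_r\Big)\alpha+\tfrac{\underline{x}}{r}\Big(\partial_{x_0}^2+\partial_r^2+\tfrac{n-1}{r}\partial_r-\tfrac{n-1}{r^2}\Big)\beta,
\]
for $\alpha=\alpha(x_0,r)$, $\beta=\beta(x_0,r)$. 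The two apparently singular terms $\tfrac{n-1}{r}\partial_r\beta$ and $-\tfrac{n-1}{r^2}\beta$ combine into a smooth (polynomial) expression because $\beta$ is odd in $r$; this guarantees that $\Delta(x^a\bar x^b)$ is again a polynomial in $x,\bar x$ of total degree $a+b-2$. Applied to the holomorphic stem $z^m$, the operator $\partial_{x_0}^2+\partial_r^2$ vanishes, and a short manipulation yields the closed form
\[
\Delta(x^m)\ \longleftrightarrow\ 2(n-1)\,\frac{z^m-\bar z^m-m(z-\bar z)\,z^{m-1}}{(z-\bar z)^2},
\]
a polynomial of bidegree $m-2$, which already exhibits the drop of two in degree produced by each Laplacian.

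With this identity the three regimes fall out as follows. Since $x^m$ has total degree $m$ and each Laplacian lowers the degree by two, $\Delta^h x^m$ has degree $m-2h$; hence it vanishes for $m<2h$, and for $m=2h$ it is a constant, whose value $h4^h(-1)^h h!(h-1)!$ I would extract by tracking only the top/constant coefficient through the $h$ applications. For the main case $m>2h$ I would fix the dimension $n+1$ (so that $n-1=2h$ stays fixed) and argue by induction on the number $k=1,\dots,h$ of applied Laplacians, using the ansatz $\Delta^k x^m=\sum_{a+b=m-2k}c^{(k)}_{a,b}\,x^a\bar x^b$. The single–Laplacian identity, written on the stems $z^a\bar z^b$ via the term $4\,ab\,z^{a-1}\bar z^{b-1}$ together with the polynomial part of $\tfrac{n-1}{r}\partial_r-\tfrac{n-1}{r^2}$ (which I would expand in powers of $w=z-\bar z$ after substituting $\bar z=z-w$, the negative powers cancelling), produces an explicit linear recursion for the coefficients $c^{(k)}_{a,b}$. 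Specializing at $k=h$ and matching with the factorials in \eqref{costKL} completes the argument.

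The main obstacle is precisely this last step: verifying that the explicit constants $K_\ell(m,h)$ of \eqref{costKL} solve the coefficient recursion, including the correct behaviour at the two endpoints $\ell=1$ and $\ell=m-2h+1$. This requires non-trivial binomial/factorial identities, and the bookkeeping is delicate because the recursion mixes neighbouring exponents (the $\tfrac1r\partial_r$ and $\tfrac{1}{r^2}$ terms shift $(a,b)$ while the $\partial_z\partial_{\bar z}$ term lowers both), and because the dimension and the number of iterations are tied through $h=(n-1)/2$. As an independent check I would compare with the generating function: summing $\sum_m (\Delta^h x^m)s^{-m-1}=\Delta^h S_L^{-1}(s,x)$ and using Theorem \ref{DSF} gives $\gamma_n(s-\bar x)(s^2-2\Re(x)s+|x|^2)^{-(n+1)/2}$, whose expansion in powers of $s^{-1}$ must reproduce the same coefficients, providing a convenient way to validate the combinatorial identity for $K_\ell(m,h)$.
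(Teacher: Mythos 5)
First, a point of reference: the paper does not prove Theorem \ref{aak} at all --- it is imported from \cite{DDG} (with a pointer to \cite{B}), so there is no in-paper argument to compare yours against. Judged on its own terms, your reduction is correct as far as it goes: the slice decomposition $x^m=A_m+\tfrac{\underline{x}}{r}B_m$ with stem $z^m$, the action of $\Delta$ on functions of the form $\alpha+\tfrac{\underline{x}}{r}\beta$, and the resulting one-Laplacian identity
\[
\Delta x^m\ \longleftrightarrow\ 2(n-1)\,\frac{z^m-\bar z^m-m(z-\bar z)z^{m-1}}{(z-\bar z)^2}
\]
all check out (for $n=3$, $h=1$ it expands to $-4\sum_{j=0}^{m-2}(m-1-j)x^{m-2-j}\bar x^{\,j}$, in agreement with \eqref{g1}), and the degree count correctly settles the cases $m<2h$ and reduces $m=2h$ to the evaluation of a single constant.

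The genuine gap is that the entire content of the theorem is the explicit value of $K_\ell(m,h)$ in \eqref{costKL}, and that is precisely the step you do not carry out. You propose an induction on the number $k$ of applied Laplacians with undetermined coefficients $c^{(k)}_{a,b}$, but you neither write the recursion down explicitly nor verify that the factorial expression solves it (including the endpoints $\ell=1$, $\ell=m-2h+1$ and the constant $h4^h(-1)^hh!(h-1)!$ at $m=2h$); you yourself label this ``the main obstacle.'' As written, the argument only establishes that $\Delta^hx^m$ is \emph{some} polynomial of bidegree $m-2h$ in $x,\bar x$. If you want to close the gap along your own lines, the generating-function idea you relegate to a ``check'' is actually the shortest complete route and is not circular here, since Theorem \ref{DSF} is established independently of Theorem \ref{aak}: on the slice of $x$ the negative binomial series gives
\[
(s^2-2\Re(x)s+|x|^2)^{-(h+1)}=(s-x)^{-(h+1)}(s-\bar x)^{-(h+1)}=\sum_{j,k\ge0}\binom{h+j}{h}\binom{h+k}{h}x^j\bar x^{\,k}s^{-2h-2-j-k},
\]
and multiplying by $\gamma_n(s-\bar x)$ and reading off the coefficient of $s^{-1-m}$ yields exactly $\gamma_n\binom{m-h-\ell+1}{h}\binom{h+\ell-2}{h-1}=K_\ell(m,h)$ after Pascal's rule, with no coefficient recursion to solve; one only has to justify termwise application of $\Delta^h$ to the Cauchy kernel series, which is standard for locally uniformly convergent power series.
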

We can now give the following
\begin{definition}[$\mathcal{F}$-kernel series]
Let $s,x \in \mathbb{R}^{n+1}$ and $h= \frac{n-1}{2}$ be the Sce exponent. We define the
left $\mathcal{F}$-kernel series as
{\small
$$ \sum_{m = 2h}^{+\infty} \Delta^h x^m s^{-1-m},$$
}
and the right $\mathcal{F}$-kernel series as
{\small
$$ \sum_{m =2h}^{+\infty}  s^{-1-m}\Delta^h x^m.$$
}
\end{definition}

\begin{proposition}
\label{g2}
For $s,x \in \mathbb{R}^{n+1}$ with $|x| < |s|$, the $ \mathcal{F}$-kernel series converge.
\end{proposition}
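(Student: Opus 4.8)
The plan is to prove \emph{absolute} convergence by constructing an explicit convergent majorant, using the closed formula for $\Delta^h x^m$ supplied by Theorem \ref{aak}. First I would discard the terms with $m<2h$, which vanish, and observe that the term $m=2h$ is the single constant $h4^h(-1)^h h!(h-1)!$; hence only the tail $m>2h$, where $\Delta^h x^m=\sum_{\ell=1}^{m-2h+1}K_\ell(m,h)\,x^{m-2h-\ell+1}\bar{x}^{\ell-1}$, needs to be estimated. The goal is then to bound each summand in norm and to recognize a polynomial-times-geometric series.

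The key structural observation is that for a fixed paravector $x=x_0+\underline{x}$ both $x$ and $\bar{x}$ lie in the commutative complex plane $\mathbb{C}_{J_x}$ with $J_x=\underline{x}/|\underline{x}|\in\mathbb{S}$; consequently every monomial $x^{m-2h-\ell+1}\bar{x}^{\ell-1}$ lies in $\mathbb{C}_{J_x}$ and has modulus exactly $|x|^{m-2h}$, independently of $\ell$. The same remark gives $|s^{-1-m}|=|s|^{-1-m}$. Using the triangle inequality inside $\mathbb{C}_{J_x}$ together with the submultiplicativity of the Clifford norm (with a constant $C_n$ depending only on $n$), one obtains
$$
|\Delta^h x^m\, s^{-1-m}|\le C_n\,|x|^{m-2h}\,|s|^{-1-m}\sum_{\ell=1}^{m-2h+1}|K_\ell(m,h)|,
$$
and the identical bound holds for the right series $s^{-1-m}\Delta^h x^m$, so the two cases are treated symmetrically.

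It then remains to control the coefficient sum. Here I would exploit that the factorial quotients appearing in \eqref{costKL} are products of a fixed number of consecutive integers: $(m-\ell-h+1)!/(m-\ell-2h+1)!$ is a product of $h$ integers, hence bounded by $m^h$, while $(\ell+h-2)!/(\ell-1)!$ is a product of $h-1$ integers, hence bounded by $(2m)^{h-1}$ since $\ell\le m-2h+1$. Thus each $|K_\ell(m,h)|$ is dominated by a polynomial in $m$ of degree $2h-1$ with constant depending only on $h$, and summing the at most $m-2h+1$ terms yields $\sum_\ell|K_\ell(m,h)|\le C\,m^{2h}$. Substituting this and writing $|x|^{m-2h}|s|^{-1-m}=|x|^{-2h}|s|^{-1}(|x|/|s|)^m$, the majorant becomes a constant multiple of $\sum_m m^{2h}(|x|/|s|)^m$, which converges precisely because $|x|/|s|<1$.

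The main obstacle is this coefficient estimate: one must recognize that the factorial ratios in $K_\ell(m,h)$ grow only polynomially in $m$, so that a polynomial-times-geometric comparison suffices; a crude bound on the factorials would not close the argument. Equivalently, the same polynomial growth yields $|\Delta^h x^m\,s^{-1-m}|^{1/m}\to|x|/|s|<1$, so a root-test argument gives convergence directly. Everything else is routine bookkeeping.
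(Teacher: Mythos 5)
Your proof is correct, but it takes a genuinely different route from the paper's. The paper evaluates the inner coefficient sum \emph{exactly}: it invokes the technical Lemma \ref{app2} from the Appendix (proved via Pochhammer symbols and an identity from \cite{CFM}) to get
$\sum_{\ell=1}^{m-2h+1}\frac{(m-\ell-h+1)!(\ell+h-2)!}{(\ell-1)!(m-\ell-2h+1)!}=\frac{(h-1)!\,h!\,m!}{(2h)!\,(m-2h)!}$,
i.e.\ a constant times $\binom{m}{2h}$, and then applies the ratio test to $\sum_m \frac{m!}{(m-2h)!}|x|^{m-2h}|s|^{-1-m}$. You instead bypass the exact identity entirely: you bound each factorial quotient by a product of a fixed number ($h$, resp.\ $h-1$) of integers each at most $O(m)$, getting $\sum_\ell|K_\ell(m,h)|\le C\,m^{2h}$, and conclude by a polynomial-times-geometric comparison (equivalently the root test). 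Both arguments are sound and both ultimately rest on the same fact --- the coefficient sum grows only polynomially of degree $2h$ in $m$ --- but your version is more elementary and self-contained (it makes the Appendix lemma unnecessary for this proposition), while the paper's exact evaluation is sharper and packages the combinatorics once and for all. Your observation that $x$ and $\bar{x}$ commute in $\mathbb{C}_{J_x}$ so that $|x^{a}\bar{x}^{b}|=|x|^{a+b}$ is correct, though the paper gets the same estimate from plain submultiplicativity without needing it.
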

\begin{proof}
By Theorem \ref{aak} for $m \geq 2h$ we have that
{\small
 \begingroup\allowdisplaybreaks\begin{align}
| \Delta^h x^m| &\leq 4^h (-1)^h h\sum_{\ell=1}^{m-2h+1} \frac{(m- \ell-h+1)!( \ell+h-2)!}{(\ell-1)! (m- \ell-2h+1)!} |x|^{m-2h- \ell+1} |x|^{\ell-1}
\\ \nonumber
&= 4^h (-1)^h h \left( \sum_{\ell=1}^{m-2h+1} \frac{(m- \ell-h+1)!( \ell+h-2)!}{(\ell-1)! (m- \ell-2h+1)!} \right) |x|^{m-2h}.
\end{align}\endgroup
}
By the technical Lemma \ref{app2} (proved in the Appendix) we get
{\small
$$ \sum_{\ell=1}^{m-2h+1} \frac{(m- \ell-h+1)!( \ell+h-2)!}{(\ell-1)! (m- \ell-2h+1)!}= \frac{(h-1)! h! m!}{(2h)!(m-2h)!}, \qquad m \geq 2h,$$
}
and so
{\small
$$ \sum_{m = 2h}^{+\infty} |\Delta^h x^m s^{-1-m}| \leq \frac{4^h (-1)^h (h!)^2}{(2 h)!}\sum_{m = 2h}^{+\infty} \frac{m!}{(m-2h)!} |x|^{m-2h}|s|^{-1-m}.$$
}
The last series converge, by the ratio test since $|x| < |s|$. Indeed
{\small
$$
\lim_{m \to + \infty} \frac{(m+1)!}{(m+1-2h)!}\frac{(m-2h)!}{m!} \frac{|x|^{m+1-2h}|s|^{-2-m}}{ |x|^{m-2h}|s|^{-1-m}}= \lim_{m \to + \infty} \frac{(m+1)}{(m+1-2h)} |x| |s|^{-1}=|x| |s|^{-1}<1.
$$
}
The convergence of the right $ \mathcal{F}$-kernel series can be proved with similar computations.
\end{proof}

\begin{lemma}
Let $x, s \in \mathbb{R}^{n+1}$. For $|x| < |s|$, we have
{\small
$$ \mathcal{F}^L_n(s,x)= \sum_{m=0}^{+ \infty} \Delta^h x^m s^{-1-m},$$
}
and
{\small
$$ \mathcal{F}^R_n(s,x)= \sum_{m=0}^{+ \infty}  s^{-1-m}\Delta^h x^m.$$
}
\end{lemma}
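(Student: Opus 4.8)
The plan is to obtain the expansion by applying the second Fueter--Sce operator $\Delta^{h}$, with $h=\tfrac{n-1}{2}$, term by term to the power series of the slice hyperholomorphic Cauchy kernel. Indeed, by \eqref{CKSerL} we have, for $|x|<|s|$, the identity $S_L^{-1}(s,x)=\sum_{m=0}^{\infty}x^m s^{-m-1}$, while by the definition of the $\mathcal{F}_n^L$-kernel $\mathcal{F}_n^L(s,x)=\Delta^{h}S_L^{-1}(s,x)$. Thus the whole statement reduces to justifying the interchange
$$\Delta^{h}\sum_{m=0}^{\infty}x^m s^{-m-1}=\sum_{m=0}^{\infty}\bigl(\Delta^{h}x^m\bigr)s^{-m-1},$$
after which the explicit coefficients $K_\ell(m,h)$ of Theorem \ref{aak} produce the announced closed form; the right-hand identity is handled identically, the scalar factors $s^{-m-1}$ being kept on the left.

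The main obstacle is precisely this interchange: since we differentiate a function of the real variables $(x_0,\dots,x_n)$ rather than a holomorphic function, uniform convergence of $\sum_m x^m s^{-m-1}$ alone does not license term-by-term differentiation, and $\Delta^{h}$ is a differential operator of order $2h$. I would therefore invoke the standard criterion for differentiating a series: if $\sum_m g_m$ converges pointwise and, for every multi-index $\alpha$ with $|\alpha|\le 2h$, the series $\sum_m D^\alpha g_m$ converges uniformly on compact sets, then $\sum_m g_m\in C^{2h}$ and $D^\alpha\sum_m g_m=\sum_m D^\alpha g_m$. With $g_m(x)=x^m s^{-m-1}$ this amounts to bounding the derivatives of the monomials $x^m$. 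Since $x_0$ is central, $\partial_{x_0}x=1$, whence $\partial_{x_0}x^m=m\,x^{m-1}$; for $i\ge 1$ one has $\partial_{x_i}x^m=\sum_{j=0}^{m-1}x^{j}e_i x^{m-1-j}$, and because every paravector power satisfies $|x^{j}|=|x|^{j}$ and Clifford multiplication is bounded by a constant $c_n$, one gets $|\partial_{x_i}x^m|\le c_n\,m\,|x|^{m-1}$. Iterating yields $|D^\alpha x^m|\le c_n^{|\alpha|}m^{|\alpha|}|x|^{m-|\alpha|}$, so on any compact subset of $\{|x|<|s|\}$, say $|x|\le r<R\le|s|$, the series $\sum_m|D^\alpha x^m|\,|s|^{-m-1}$ is dominated by a constant times $\sum_m m^{2h}\,r^{m-|\alpha|}R^{-m-1}$, which converges by the ratio test exactly as in the proof of Proposition \ref{g2}. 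This gives the required locally uniform convergence for all $|\alpha|\le 2h$, hence for $\Delta^{h}$, and legitimizes the displayed interchange.

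Having justified the interchange, I would conclude $\mathcal{F}_n^L(s,x)=\Delta^{h}S_L^{-1}(s,x)=\sum_{m=0}^{\infty}(\Delta^{h}x^m)s^{-m-1}$. Finally, by Theorem \ref{aak} one has $\Delta^{h}x^m=0$ whenever $m<2h$, so the summation may equivalently start at $m=2h$; together with the explicit values of $\Delta^{h}x^{2h}$ and of the coefficients $K_\ell(m,h)$ for $m>2h$, this identifies the right-hand side with the left $\mathcal{F}$-kernel series. The expansion for $\mathcal{F}_n^R(s,x)$ is proved verbatim, starting instead from \eqref{CKSerR}, namely $S_R^{-1}(s,x)=\sum_{m=0}^{\infty}s^{-m-1}x^m$, and applying $\Delta^{h}$ to the series with the scalar factors $s^{-m-1}$ standing on the left.
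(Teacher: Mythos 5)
Your proposal is correct and follows essentially the same route as the paper: write $S_L^{-1}(s,x)=\sum_m x^m s^{-m-1}$, apply $\Delta^{h}$ term by term, and invoke the convergence estimate of Proposition \ref{g2}. In fact you supply a more complete justification of the interchange than the paper does (the paper simply cites Proposition \ref{g2}, which only gives convergence of the differentiated series, whereas you add the locally uniform bounds $|D^\alpha x^m|\le c_n^{|\alpha|}m^{|\alpha|}|x|^{m-|\alpha|}$ for all $|\alpha|\le 2h$ that the classical term-by-term differentiation theorem actually requires), so your argument is, if anything, tighter.
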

\begin{proof}
We rewrite the left Cauchy kernel using the Taylor expansion
{\small
$$ S^{-1}_L(s,x)= \sum_{m=0}^{+\infty} x^m s^{-1-m},$$
}
and from (\ref{CKSer}) we get
{\small
$$ \mathcal{F}^L_n(s,x)= \Delta^h S^{-1}_L(s,x)=\sum_{m=0}^{+\infty} \left( \Delta^h x^m \right) s^{-1-m},$$
}
where we can exchange the sum and the Laplacian by Proposition \ref{g2}.
\\ A similar reasoning holds for the right $ \mathcal{F}$-kernel series.
\end{proof}

\begin{proposition}
Let $x, s \in \mathbb{R}^{n+1}$ and set $h=\frac{n-1}{2}$. Then, for $|x| < |s|$, we have
{\small
\begin{equation}
\sum_{m=2h}^{+\infty} \Delta^h x^m s^{-1-m}=\gamma_n (s - \bar{x}) (s^2-2\hbox{Re}(x)s+|x|^2)^{-(h+1)},
\end{equation}
}
and
{\small
\begin{equation}
\label{g31}
\sum_{m=2h}^{+\infty} s^{-1-m} \Delta^h x^m =\gamma_n (s^2-2\hbox{Re}(x)s+|x|^2)^{-(h+1)} (s- \bar{x}).
\end{equation}
}
\end{proposition}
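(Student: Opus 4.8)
The plan is to deduce this identity directly from the immediately preceding Lemma, combined with the vanishing of the low-order powers of the Laplacian recorded in Theorem \ref{aak}. First I would observe that Theorem \ref{aak} gives $\Delta^h x^m=0$ for every $m<2h$. Hence, in the series $\sum_{m=0}^{+\infty}\Delta^h x^m s^{-1-m}$ all the terms of index $m<2h$ vanish identically, so that
\[
\sum_{m=2h}^{+\infty}\Delta^h x^m s^{-1-m}=\sum_{m=0}^{+\infty}\Delta^h x^m s^{-1-m},
\]
where convergence for $|x|<|s|$ is guaranteed by Proposition \ref{g2}.

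Next I would invoke the preceding Lemma, which identifies the full series with the left $\mathcal{F}$-kernel, namely $\sum_{m=0}^{+\infty}\Delta^h x^m s^{-1-m}=\mathcal{F}^L_n(s,x)$. By the definition of the $\mathcal{F}_n^L$-kernel (equivalently, by Theorem \ref{DSF}) this equals $\gamma_n(s-\bar{x})(s^2-2\Re(x)s+|x|^2)^{-\frac{n+1}{2}}$. Since $h=\frac{n-1}{2}$ forces $\frac{n+1}{2}=h+1$, the exponent matches the one in the statement, and chaining the three equalities yields the first identity.

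For the second identity I would repeat the argument verbatim, merely interchanging the order of the factors $s^{-1-m}$ and $\Delta^h x^m$, using the right $\mathcal{F}$-kernel series and the closed form of $\mathcal{F}_n^R(s,x)$ supplied by the same definition. The only point meriting a line of care is that, because $\mathbb{R}_n$ is noncommutative, the multiplication order must be preserved throughout; this is harmless here since each step is an equality of convergent series whose terms are merely reindexed, with no rearrangement inside any product. In truth there is no genuine obstacle: the analytic content is already contained in the preceding Lemma, and the present Proposition only records the truncation of the summation to $m\geq 2h$ together with the explicit closed form obtained from Theorem \ref{DSF}.
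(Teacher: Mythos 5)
Your proposal is correct and follows essentially the same route as the paper: the paper's proof likewise combines the interchange of $\Delta^h$ with the series (justified by Proposition \ref{g2}, and already packaged in the preceding Lemma) with the closed form from Theorem \ref{DSF}, the truncation to $m\geq 2h$ being exactly the vanishing $\Delta^h x^m=0$ for $m<2h$ from Theorem \ref{aak} that you cite. No gaps.
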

\begin{proof}
By Proposition \ref{g2} and Theorem \ref{DSF} we have that
{\small
 \begingroup\allowdisplaybreaks\begin{align}
\sum_{m = 2h}^{+\infty} \Delta^h x^m s^{-1-m} &= \Delta^h \sum_{m=0}^{+ \infty} x^m s^{-1-m}
\\ \nonumber
&= \Delta^h S^{-1}_{L}(s,x)= \gamma_n (s- \bar{x}) (s^2- 2 \hbox{Re}(x) s+|x|^2)^{-(h+1)},
\end{align}\endgroup
}
and similarly we can prove \eqref{g31}.
\end{proof}

\begin{definition}[Series expansions of the $\mathcal{F}$-kernels]
Let $x, s \in \mathbb{R}^{n+1}$. We set $h:= \frac{n-1}{2}$. For $|x| < |s|$, we have
$$ \mathcal{F}^L_n(s,x)= \sum_{m=2h}^{+\infty} \sum_{\ell=1}^{m-2h+1}
K_\ell(m,h) \ x^{m-2h- \ell+1}\ \bar{x}^{\ell-1}\ s^{-1-m},$$
and
$$ \mathcal{F}^R_n(s,x)= \sum_{m=2h}^{+\infty}\sum_{\ell=1}^{m-2h+1} K_\ell(m,h)\ s^{-1-m}\  x^{m-2h- \ell+1} \ \bar{x}^{\ell-1},$$
where $ K_\ell(m,h)$
is defined in (\ref{costKL}).
\end{definition}
Using the new expression of the $\mathcal{F}$-kernels as functions of $x$ and $\overline{x}$ we obtain at the following new expansions of the $\mathcal{F}$-resolvent operators in terms on $T$ and $\overline{T}$.

\begin{definition}[Series expansions of the $\mathcal{F}$-resolvent operators]
Let $ s \in \mathbb{R}^{n+1}$. We set $h:= \frac{n-1}{2}$. For $\|T\| < |s|$, we have
{\small
$$ \mathcal{F}^L_n(s,T)= \sum_{m=2h}^{+\infty}  \sum_{\ell=1}^{m-2h+1}
K_\ell(m,h) \ T^{m-2h- \ell+1}\ \overline{T}^{\ell-1}\ s^{-1-m},$$
}
and
{\small
$$ \mathcal{F}^R_n(s,x)= \sum_{m=2h}^{+\infty}\sum_{\ell=1}^{m-2h+1} K_\ell(m,h)\ s^{-1-m}\  T^{m-2h- \ell+1} \ \overline{T}^{\ell-1}$$
}
where $ K_\ell(m,h)$
is as in (\ref{costKL}).
\end{definition}

\begin{theorem}[{\bf Series expansions of the $\mathcal{F}$-resolvent operators as functions of $T$ and $\overline{T}$}] Let $s \in \mathbb{R}^{n+1}$ and set $h:= \frac{n-1}{2}$.
For $\|T\| < |s|$, we have
{\small
\begin{equation}
\sum_{m=2h}^{+\infty}  \sum_{\ell=1}^{m-2h+1}
K_\ell(m,h) \ T^{m-2h- \ell+1}\ \overline{T}^{\ell-1}\ s^{-1-m}
=\gamma_n (s\mathcal{I} - \overline{T}) (s^2\mathcal{I}-(T+\overline{T})s +T\overline{T})^{-(h+1)},
\end{equation}
}
and
{\small
\begin{equation}
\label{g3}
\sum_{m=2h}^{+\infty}\sum_{\ell=1}^{m-2h+1} K_\ell(m,h)\ s^{-1-m}\  T^{m-2h- \ell+1} \ \overline{T}^{\ell-1} =\gamma_n (s^2\mathcal{I}-(T+\overline{T})s +T\overline{T})^{-(h+1)} (s\mathcal{I}- \overline{T}),
\end{equation}
}
where $ K_\ell(m,h)$
is defined in (\ref{costKL}) and $\gamma_n$ are as in (\ref{const}).
\end{theorem}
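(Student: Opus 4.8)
The plan is to lift the scalar identity already established for the $\mathcal{F}$-kernels to the operator level, exploiting the fact that the components $T_\ell$ commute, so that $T$ and $\overline{T}$ commute exactly as a paravector $x$ commutes with its conjugate $\bar{x}$. First I note that the asserted right-hand side is nothing but the left $\mathcal{F}$-resolvent operator: since $(n+1)/2=h+1$ and $s^2\mathcal{I}-s(T+\overline{T})+T\overline{T}=Q_s(T)^{-1}$, the closed form $\gamma_n(s\mathcal{I}-\overline{T})(s^2\mathcal{I}-(T+\overline{T})s+T\overline{T})^{-(h+1)}$ coincides with $\mathcal{F}_n^L(s,T)=\gamma_n(s\mathcal{I}-\overline{T})Q_s(T)^{h+1}$ of \eqref{0pre0}. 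Hence the theorem amounts to summing the operator series to the $\mathcal{F}$-resolvent operator.

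First I would prove convergence in the operator norm. By Theorem \ref{aak} the inner sum $\sum_{\ell=1}^{m-2h+1}K_\ell(m,h)\,T^{m-2h-\ell+1}\overline{T}^{\ell-1}$ is exactly the polynomial $\Delta^h x^m$ with $x$ replaced by $T$ and $\bar{x}$ by $\overline{T}$, a substitution that is legitimate precisely because $T$ and $\overline{T}$ commute. Using submultiplicativity, $\|T^{m-2h-\ell+1}\overline{T}^{\ell-1}\|\le\|T\|^{m-2h-\ell+1}\|\overline{T}\|^{\ell-1}$, and since the estimate in Proposition \ref{g2} depends only on the total degree $m-2h$ of the monomials and conjugation is a norm isometry (so $\|\overline{T}\|=\|T\|$), the tail of the operator series is dominated term-by-term by the scalar series of Proposition \ref{g2} with $|x|$ replaced by $\|T\|$. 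The ratio-test argument used there then yields absolute convergence in $\mathcal{B}(V_n)$ whenever $\|T\|<|s|$.

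Next I would identify the sum. The scalar computation preceding this statement, namely the combination of Theorem \ref{aak}, Proposition \ref{g2} and Theorem \ref{DSF}, gives for every paravector $x$ with $|x|<|s|$ the identity $\sum_{m=2h}^{+\infty}\sum_{\ell=1}^{m-2h+1}K_\ell(m,h)\,x^{m-2h-\ell+1}\bar{x}^{\ell-1}s^{-1-m}=\gamma_n(s-\bar{x})(s^2-(x+\bar{x})s+x\bar{x})^{-(h+1)}$. This is an identity of convergent power series in the two commuting quantities $x$ and $\bar{x}$, with the Clifford factors $s^{-1-m}$ entering on the right. The key step is to transport it to operators: I would expand $Q_s(T)^{h+1}$ by the Neumann-type series afforded by $\|T\|<|s|$, multiply by $\gamma_n(s\mathcal{I}-\overline{T})$, and check that the operator coefficient of each power $s^{-1-m}$ equals $\sum_\ell K_\ell(m,h)\,T^{m-2h-\ell+1}\overline{T}^{\ell-1}$. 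Because the scalar identity already matches these coefficients for the commuting pair $(x,\bar{x})$, and because the substitution $x\mapsto T$, $\bar{x}\mapsto\overline{T}$ acts as an algebra homomorphism on the commutative subalgebra generated by the components of $T$, carrying the invertible scalar $(s^2-(x+\bar{x})s+x\bar{x})$ to the invertible operator $Q_s(T)^{-1}$ and inverses to inverses, the two expansions agree coefficientwise; the norm convergence from the previous step then upgrades this to the asserted operator equality.

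The main obstacle is exactly this last transport: one must make rigorous that the formal substitution $x\mapsto T$, $\bar{x}\mapsto\overline{T}$ commutes both with the summation of the series and with the inversion producing $Q_s(T)^{h+1}$. The commutativity of the components of $T$ is what makes the substitution well defined, while the strict inequality $\|T\|<|s|$ is what guarantees that both the $S$-resolvent--type expansion $\sum_{m}T^m s^{-1-m}$ and the Neumann expansion of $Q_s(T)^{h+1}$ converge in norm, so that rearranging and collecting the powers $s^{-1-m}$ is justified. The right-hand statement, for $\mathcal{F}_n^R(s,T)$, follows by the identical argument with the factors $s^{-1-m}$ placed on the left and $(s\mathcal{I}-\overline{T})$ on the right, using the right scalar identity \eqref{g31} in place of its left counterpart.
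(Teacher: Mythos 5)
Your proposal is correct and follows essentially the same route as the paper, whose entire proof is the one-line remark that the result ``follows by the previous results by replacing $x$ by the paravector operator $T$''; you have simply spelled out the convergence estimate and the coefficientwise transport that make this substitution legitimate. The extra care you take (norm convergence via the ratio test, commutativity of the components justifying the homomorphism $x\mapsto T$, $\bar{x}\mapsto\overline{T}$) is a faithful elaboration of what the authors leave implicit, not a different argument.
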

\begin{proof}
It follows by the previous results by replacing $x$ by the paravector operator $T$.
\end{proof}

\section{\bf The $\mathcal{F}$-resolvent equation for $n=5$ and for $n=7$}\label{casi5e7}
\setcounter{equation}{0}

The $\mathcal{F}$-resolvent equation is a quite complicated object. In order to explain
how to obtain it in the general case we treat separately the cases $n=5$ and $n=7$.
In the case $n=5$ it is clear that the equation can be written in
 a quite reasonable way in terms of the $\mathcal{F}$-resolvent operators.
 But starting from $n=7$ this choice cannot be made anymore because it
leads to an equation that is ways too complicated.
 This is the reason for which we cannot
replace the pseudo $S$-resolvent operators.

\subsection{\bf The $\mathcal{F}$-resolvent equation for $n=5$}

In this case we show the $ \mathcal{F}$- resolvent equation
   establishes a link between the difference $\mathcal{F}_5^R(s,T) -\mathcal{F}_5^L(p,T)$, the slice Cauchy kernel and suitable operators as listed in  the introduction.

To prove the $ \mathcal{F}$ resolvent equation we need the following technical result involving the pseudo $S$-resolvent operators.
\begin{lemma}[The $ \mathcal{F}$- resolvent equation for $n=5$, with the pseudo $S$-resolvent operators]
\label{res1}
Let $T \in \mathcal{BC}^{0,1}(V_5)$. Then for $p, s \in \rho_{\mathcal{F}}(T)$ the following equation holds
{\small
\begin{eqnarray}
\label{F5}
& \mathcal{F}_5^R(s,T)S^{-1}_L(p,T)+S^{-1}_R(s,T)\mathcal{F}_{5}^L(p,T)+\gamma_5 \mathcal{Q}_s(T) S^{-1}_R(s,T)S^{-1}_{L}(p,T)\mathcal{Q}_p(T)
\\ \nonumber
&+ \gamma_5[\mathcal{Q}_s^2(T) \mathcal{Q}_p(T)+\mathcal{Q}_s(T) \mathcal{Q}_p^2(T)]
\\ \nonumber
&
= \bigl \{ [\mathcal{F}_5^R(s,T)- \mathcal{F}_5^L(p,T)]p- \bar{s}[\mathcal{F}_5^R(s,T)- \mathcal{F}_5^L(p,T)]\} (p^2-2s_0 p+|s|^2)^{-1}.
\end{eqnarray}
}
where $\gamma_5$ is given by (\ref{const}) for $n=5$.
\end{lemma}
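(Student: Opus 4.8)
The strategy is to convert the two mixed $\mathcal{F}/S$ products into a difference by means of the left and right resolvent equations, and then to reconstruct the Cauchy-kernel factor $(p^2-2s_0p+|s|^2)^{-1}$ exactly as in the $S$-resolvent equation. I would first record the factorizations $\mathcal{F}_5^L(p,T)=\gamma_5 S^{-1}_L(p,T)\mathcal{Q}_p(T)^2$ and $\mathcal{F}_5^R(s,T)=\gamma_5\mathcal{Q}_s(T)^2 S^{-1}_R(s,T)$, immediate from the definitions since $\tfrac{n+1}{2}=3$ for $n=5$ and the pseudo $S$-resolvent factors compose on the correct side. The whole identity \eqref{F5} will be proved by multiplying on the right by $(p^2-2s_0p+|s|^2)$, checking the resulting numerator identity, and using the two scalar relations $s^2-2s_0s+|s|^2=0$ (the paravector $s$ annihilates its companion polynomial) and $s-2s_0=-\bar s$ to restore the inverse factor.

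Set $A=\mathcal{F}_5^R(s,T)S^{-1}_L(p,T)$ and $B=S^{-1}_R(s,T)\mathcal{F}_5^L(p,T)$. Left-multiplying the left $S$-resolvent equation $S^{-1}_L(p,T)p-TS^{-1}_L(p,T)=\mathcal{I}$ by $\mathcal{F}_5^R(s,T)$, right-multiplying the right $\mathcal{F}$-resolvent equation \eqref{eq3} (which for $n=5$ reads $s\mathcal{F}_5^R(s,T)-\mathcal{F}_5^R(s,T)T=\gamma_5\mathcal{Q}_s(T)^2$) by $S^{-1}_L(p,T)$, and subtracting, yields $Ap-sA=\mathcal{F}_5^R(s,T)-\gamma_5\mathcal{Q}_s(T)^2 S^{-1}_L(p,T)$. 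The symmetric manipulation, using the left $\mathcal{F}$-resolvent equation \eqref{eq2} and the right $S$-resolvent equation, gives $Bp-sB=\gamma_5 S^{-1}_R(s,T)\mathcal{Q}_p(T)^2-\mathcal{F}_5^L(p,T)$. Writing $Y=A+B$, $D=\mathcal{F}_5^R(s,T)-\mathcal{F}_5^L(p,T)$ and $E=S^{-1}_R(s,T)\mathcal{Q}_p(T)^2-\mathcal{Q}_s(T)^2 S^{-1}_L(p,T)$, the sum is $Yp-sY=D+\gamma_5 E$. Multiplying this once more by $p$ gives $Yp^2=s^2Y+s(D+\gamma_5E)+(D+\gamma_5E)p$, and the two scalar relations collapse the $Y$-contribution, leaving
\[
Y(p^2-2s_0p+|s|^2)=\big[Dp-\bar s D\big]+\gamma_5\big[Ep-\bar s E\big].
\]

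What remains, and is the technical heart of the proof, is to show that the three pseudo $S$-resolvent terms in \eqref{F5} cancel the spurious correction $\gamma_5[Ep-\bar sE]$, that is,
\[
\big[\mathcal{Q}_s(T)S^{-1}_R(s,T)S^{-1}_L(p,T)\mathcal{Q}_p(T)+\mathcal{Q}_s(T)^2\mathcal{Q}_p(T)+\mathcal{Q}_s(T)\mathcal{Q}_p(T)^2\big](p^2-2s_0p+|s|^2)=-\big[Ep-\bar s E\big].
\]
Here I would substitute the commutative $S$-resolvent equation \eqref{reso} into the first term (legitimate because $\mathcal{Q}_p(T)$ commutes with the scalar $p^2-2s_0p+|s|^2$), turning it into $\mathcal{Q}_s(T)\{[S^{-1}_R(s,T)-S^{-1}_L(p,T)]p-\bar s[S^{-1}_R(s,T)-S^{-1}_L(p,T)]\}\mathcal{Q}_p(T)$, and reduce the two purely pseudo-resolvent terms through the decomposition $p^2-2s_0p+|s|^2=\mathcal{Q}_p(T)^{-1}+2(T_0-s_0)p+(|s|^2\mathcal{I}-T\overline{T})$. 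The difficulty is entirely computational: it comes from the non-commutativity of $\mathcal{Q}_s(T)$ and $\mathcal{Q}_p(T)$ with $\overline{T}$, and of the scalars $s,\bar s,p$ with $T$ and $\overline{T}$, so the cancellation has to be tracked termwise using the structural identities $S^{-1}_L(p,T)=(p\mathcal{I}-\overline{T})\mathcal{Q}_p(T)$, $S^{-1}_R(s,T)=\mathcal{Q}_s(T)(s\mathcal{I}-\overline{T})$ and the definition of $\mathcal{Q}$. This is exactly the place where the quaternionic ($n=3$) argument, which never sees these pseudo-resolvent terms, fails to generalize.

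Adding the last two displays, all correction terms drop out and one obtains $(A+B+\text{pseudo terms})(p^2-2s_0p+|s|^2)=Dp-\bar s D$; multiplying on the right by $(p^2-2s_0p+|s|^2)^{-1}$ gives precisely \eqref{F5}. Since every step is an algebraic identity valid wherever the resolvent operators are defined, the equation holds for all $p,s\in\rho_{\mathcal{F}}(T)$.
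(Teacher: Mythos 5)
Your proposal is correct and follows essentially the same route as the paper: your identities $Ap-sA=\mathcal{F}_5^R(s,T)-\gamma_5\mathcal{Q}_s^2(T)S_L^{-1}(p,T)$ and $Bp-sB=\gamma_5 S_R^{-1}(s,T)\mathcal{Q}_p^2(T)-\mathcal{F}_5^L(p,T)$, once converted via $s^2-2s_0s+|s|^2=0$ and $s-2s_0=-\bar s$, reproduce exactly the paper's equations obtained by multiplying the $S$-resolvent equation \eqref{reso} by $\gamma_5\mathcal{Q}_s^2(T)$ on the left and by $\gamma_5\mathcal{Q}_p^2(T)$ on the right, and your final cancellation (substituting \eqref{reso} into $\mathcal{Q}_s(T)S_R^{-1}(s,T)S_L^{-1}(p,T)\mathcal{Q}_p(T)$ and collapsing $(E+E')p-\bar s(E+E')$ into $-[\mathcal{Q}_s^2(T)\mathcal{Q}_p(T)+\mathcal{Q}_s(T)\mathcal{Q}_p^2(T)](p^2-2s_0p+|s|^2)$ using $S_L^{-1}(p,T)=(p\mathcal{I}-\overline{T})\mathcal{Q}_p(T)$ and $S_R^{-1}(s,T)=\mathcal{Q}_s(T)(s\mathcal{I}-\overline{T})$) is precisely the paper's concluding computation, merely written with the factor $(p^2-2s_0p+|s|^2)$ cleared rather than as an inverse.
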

\begin{proof}
Let us start by left multiplying the S-resolvent equation \eqref{reso} by $ \gamma_5 \mathcal{Q}_s^2(T)$ so that we get
{\small
\begin{eqnarray}
\nonumber
\label{eq4}
\mathcal{F}_5^R(s,T)S^{-1}_L(p,T)&= \bigl\{[\mathcal{F}_5^R(s,T)-\gamma_5 \mathcal{Q}_s^2(T) S^{-1}_L(p,T)]p-\bar{s}[\mathcal{F}_5^R(s,T)-\gamma_5 \mathcal{Q}_s^2(T)S^{-1}_L(p,T)]\bigl\} \cdot\\
& \cdot (p^2-2s_0p+|s|^2)^{-1}.
\end{eqnarray}
}
Now, we multiply the S-resolvent equation on the right by $ \gamma_5 \mathcal{Q}_p^2(T)$ and we obtain
{\small
\begin{eqnarray}
\nonumber
\label{eq5}
S^{-1}_R(s,T)\mathcal{F}_{5}^L(p,T)&= \bigl\{[\gamma_5 S^{-1}_R(s,T)\mathcal{Q}^2_p(T)-\mathcal{F}_{5}^L(p,T)]p-\bar{s}[\gamma_5S^{-1}_R(s,T)\mathcal{Q}^2_p(T)-\mathcal{F}_{5}^L(p,T)]\bigl\}\cdot\\
&\cdot(p^2-2s_0 p+|s|^2)^{-1}.
\end{eqnarray}
}
We multiply the S-resolvent equation on the left by $  \mathcal{Q}_s(T)$ and on the right by $\mathcal{Q}_p(T)$, we get
{\small
\begin{eqnarray}
\label{eq6}
\mathcal{Q}_s(T) S^{-1}_R(s,T)  S^{-1}_L(p,T) \mathcal{Q}_p(T) &=\bigl \{[
 \mathcal{Q}_s(T)S^{-1}_R(s,T) \mathcal{Q}_p(T)- \mathcal{Q}_s(T)S^{-1}_L(p,T)\mathcal{Q}_p(T)]p\\ \nonumber
& - \bar{s}[\mathcal{Q}_s(T)S^{-1}_R(s,T) \mathcal{Q}_p(T)- \mathcal{Q}_s(T)S^{-1}_L(p,T)\mathcal{Q}_p(T)] \bigl\}\cdot \\ \nonumber
&\cdot (p^2-2s_0 p+|s|^2)^{-1}.
\end{eqnarray}
}
Now, we sum \eqref{eq4}, \eqref{eq5} and \eqref{eq6}  multiplied by $ \gamma_5$ to get
{\small
\begingroup\allowdisplaybreaks\begin{align}
& \mathcal{F}_5^R(s,T)S^{-1}_L(p,T)+S^{-1}_R(s,T)\mathcal{F}_{5}^L(p,T)+\gamma_5 \mathcal{Q}_s(T)S^{-1}_R(s,T)S^{-1}_L(s,T)\mathcal{Q}_p(T)
\\ \nonumber
&= \bigl \{ [\mathcal{F}_5^R(s,T)- \mathcal{F}_5^L(p,T)]p- \bar{s}[\mathcal{F}_5^R(s,T)- \mathcal{F}_5^L(p,T)]\}(p^2-2s_0 p+|s|^2)^{-1}
\\ \nonumber
& + \bigl\{[\gamma_5 S^{-1}_R(s,T) \mathcal{Q}_p^2(T)-\gamma_5 \mathcal{Q}_s(T)S^{-1}_L(p,T) \mathcal{Q}_p(T)- \gamma_5 \mathcal{Q}_s^2(T) S^{-1}_{L}(p,T)
\\ \nonumber
&+\gamma_5 \mathcal{Q}_s(T)S^{-1}_R(s,T) \mathcal{Q}_p(T)]p - \bar{s}[\gamma_5 S^{-1}_R(s,T) \mathcal{Q}_p^2(T)-\gamma_5 \mathcal{Q}_s(T)S^{-1}_L(p,T) \mathcal{Q}_p(T)
\\ \nonumber
&-\gamma_5 \mathcal{Q}_s^2(T) S^{-1}_{L}(p,T)+\gamma_5 \mathcal{Q}_s(T)S^{-1}_R(s,T) \mathcal{Q}_p(T)]         \bigl\}(p^2-2s_0 p+|s|^2)^{-1}.
\end{align}\endgroup

}
Finally, we verify that
{\small

\begingroup\allowdisplaybreaks\begin{align}
& \bigl\{[\gamma_5 S^{-1}_R(s,T) \mathcal{Q}_p^2(T)-\gamma_5 \mathcal{Q}_s(T)S^{-1}_L(p,T) \mathcal{Q}_p(T)- \gamma_5 \mathcal{Q}_s^2(T) S^{-1}_{L}(p,T)+\gamma_5 \mathcal{Q}_s(T)S^{-1}_R(s,T) \mathcal{Q}_p(T)]p
\\ \nonumber
& - \bar{s}[\gamma_5 S^{-1}_R(s,T) \mathcal{Q}_p^2(T)-\gamma_5 \mathcal{Q}_s(T)S^{-1}_L(p,T) \mathcal{Q}_p(T)- \gamma_5 \mathcal{Q}_s^2(T) S^{-1}_{L}(p,T)+\gamma_5 \mathcal{Q}_s(T)S^{-1}_R(s,T) \mathcal{Q}_p(T)]        \bigl\}\cdot\\ \nonumber
&\cdot(p^2-2s_0 p+|s|^2)^{-1}=-\gamma_5[\mathcal{Q}_s^2(T)\mathcal{Q}_p(T)+ \mathcal{Q}_s(T)\mathcal{Q}^2_p(T)].
\end{align}\endgroup

}
Now observe that by the definitions of the S-resolvent operators we have
{\small
 \begingroup\allowdisplaybreaks\begin{align}
& \gamma_5\bigl(S^{-1}_R(s,T) \mathcal{Q}_p^2(T)- \mathcal{Q}_s(T)S^{-1}_L(p,T) \mathcal{Q}_p(T)-  \mathcal{Q}_s^2(T) S^{-1}_{L}(p,T)+ \mathcal{Q}_s(T)S^{-1}_R(s,T)\mathcal{Q}_p(T)\bigl)\\ \nonumber
& = \gamma_5\biggl(\mathcal{Q}_s(T)(s \mathcal{I}- \overline{T}) \mathcal{Q}_p^2(T)-\mathcal{Q}_s(T)(p \mathcal{I}- \overline{T}) \mathcal{Q}_p^{2}(T)- \mathcal{Q}_s^2(T)(p \mathcal{I}- \overline{T}) \mathcal{Q}_{p}(T)+\\ \nonumber
&+\mathcal{Q}_s^2(T)(s \mathcal{I}- \overline{T}) \mathcal{Q}_p(T) \biggl) = \gamma_5[\mathcal{Q}_s(T)(s-p)\mathcal{Q}_p^2(T)+ \mathcal{Q}_s^2(T)(s-p) \mathcal{Q}_p(T)],
\end{align}\endgroup
}
and this implies that
{\small

\begingroup\allowdisplaybreaks\begin{align}
& \bigl\{[\gamma_5 S^{-1}_R(s,T) \mathcal{Q}_p^2(T)-\gamma_5 \mathcal{Q}_s(T)S^{-1}_L(p,T)\mathcal{Q}_p- \gamma_5 \mathcal{Q}_s^2(T) S^{-1}_{L}(p,T)
\\ \nonumber
&+\gamma_5 \mathcal{Q}_s(T)S^{-1}_R(s,T) \mathcal{Q}_p(T)]p - \bar{s}[\gamma_5 S^{-1}_R(s,T) \mathcal{Q}_p^2(T)-\gamma_5 \mathcal{Q}_s(T)S^{-1}_L(p,T) \mathcal{Q}_p(T)+\gamma_5 \mathcal{Q}_s^2(T) S^{-1}_{L}(p,T)
\\ \nonumber
&+\gamma_5 \mathcal{Q}_s(T)S^{-1}_R(s,T) \mathcal{Q}_p(T)] \bigl\}(p^2-2s_0 p+|s|^2)^{-1}\\ \nonumber
& = \gamma_5 \bigl\{[\mathcal{Q}_s(T)(s-p)\mathcal{Q}_p^2(T)+ \mathcal{Q}_s^2(T)(s-p) \mathcal{Q}_p(T)]p- \bar{s}[\mathcal{Q}_s(T)(s-p)\mathcal{Q}_p^2(T)+ \mathcal{Q}_s^2(T)(s-p) \mathcal{Q}_p(T)] \bigl\}\cdot\\ \nonumber
& \cdot (p^2-2s_0 p+|s|^2)^{-1}\\ \nonumber
& = \gamma_5 \bigl\{[\mathcal{Q}_s(T)(sp-p^2)\mathcal{Q}_p^2(T)+ \mathcal{Q}_s^2(T)(sp-p^2) \mathcal{Q}_p(T)]- [\mathcal{Q}_s(T)(\bar{s}s-\bar{s}p)\mathcal{Q}_p^2(T) + \mathcal{Q}_s^2(T)(\bar{s}s-\bar{s}p) \mathcal{Q}_p(T)] \bigl\}\cdot\\ \nonumber
&\cdot(p^2-2s_0 p+|s|^2)^{-1}\\ \nonumber
& =\gamma_5[\mathcal{Q}_s^2(T)(sp-p^2- \bar{s}s+ \bar{s} p) \mathcal{Q}_p(T)+ \mathcal{Q}_s(T)(sp-p^2-s \bar{s}+ \bar{s} p) \mathcal{Q}_p^2(T)](p^2-2s_0 p+|s|^2)^{-1}\\ \nonumber
& =- \gamma_5[\mathcal{Q}_s^2(T)(p^2-2s_0 p+|s|^2)\mathcal{Q}_p(T)+ \mathcal{Q}_s(T)(p^2-2s_0 p+|s|^2)\mathcal{Q}_p^2(T)](p^2-2s_0 p+|s|^2)^{-1}\\ \nonumber
& =- \gamma_5[\mathcal{Q}_s^2(T) \mathcal{Q}_p(T)+ \mathcal{Q}_s(T) \mathcal{Q}_p^2(T)].
\end{align}\endgroup

}
\end{proof}
Using the above preliminary lemma and the relations between the pseuso $S$-resolvent operators and the $\mathcal{F}$-resolvent operators we obtain for $n=5$ the $\mathcal{F}$- resolvent equation.
This equation has strong similarities with the case $n=3$.
\begin{theorem}[The $\mathcal{F}$- resolvent equation for $n=5$]
\label{res3}
Let $T \in \mathcal{BC}^{0,1}(V_5)$. Then, for $p,s \in \rho_{\mathcal{F}}(T)$, the following equation holds
{\small

\begingroup\allowdisplaybreaks\begin{align}
&               \mathcal{F}_5^R(s,T)S^{-1}_L(p,T)+S^{-1}_R(s,T)\mathcal{F}_{5}^L(p,T)+\gamma_5^{-1} \bigl( s^{2} \mathcal{F}_{5}^R(s,T)\mathcal{F}_{5}^L(p,T) p^2- 3s^{2} \mathcal{F}_{5}^R(s,T)T\mathcal{F}_{5}^L(p,T) p  \\ \nonumber
&
-3s \mathcal{F}_{5}^R(s,T)T\mathcal{F}_{5}^L(p,T) p^2 +3s\mathcal{F}_{5}^R(s,T) T^2\mathcal{F}_{5}^L(p,T)p -2s\mathcal{F}_{5}^R(s,T)|T|^2T\mathcal{F}_{5}^L(p,T)  \\ \nonumber
&
+2s\mathcal{F}_{5}^R(s,T)|T|^2\mathcal{F}_{5}^L(p,T)p-2\mathcal{F}_{5}^R(s,T)|T|^2T\mathcal{F}_{5}^L(p,T)p+s \mathcal{F}_{5}^R(s,T) \overline{T}^2\mathcal{F}_{5}^L(p,T) p- s \mathcal{F}_{5}^R(s,T)|T|^2 \overline{T}\mathcal{F}_{5}^L(p,T)\\ \nonumber
&
- \mathcal{F}_{5}^R(s,T)|T|^2 \overline{T}\mathcal{F}_{5}^L(p,T)p+\mathcal{F}_{5}^R(s,T)|T|^4\mathcal{F}_{5}^L(p,T)+s \mathcal{F}_5^R(s,T) \mathcal{F}_5^L(p,T)p^3-\mathcal{F}_5^R(s,T) T\mathcal{F}_5^L(p,T)p^3\\ \nonumber
&
+2 \mathcal{F}_5^R(s,T) T^2\mathcal{F}_5^L(p,T)p^2 -\mathcal{F}_5^R(s,T) T^3 \mathcal{F}_5^L(p,T)p+2\mathcal{F}_5^R(s,T) T^2|T|^2 \mathcal{F}_5^L(p,T)+s^3\mathcal{F}_5^R(s,T) \mathcal{F}_5^L(p,T)p+\\ \nonumber
&
-s^3\mathcal{F}_5^R(s,T) T\mathcal{F}_5^L(p,T) +2s^2\mathcal{F}_5^R(s,T) T^2 \mathcal{F}_5^L(p,T)-s\mathcal{F}_5^R(s,T) T^3 \mathcal{F}_5^L(p,T)\bigl)
\\ \nonumber
&= \bigl \{ [\mathcal{F}_5^R(s,T)- \mathcal{F}_5^L(p,T)]p
- \bar{s}[\mathcal{F}_5^R(s,T)- \mathcal{F}_5^L(p,T)]\}  (p^2-2s_0 p+|s|^2)^{-1},
\end{align}\endgroup

}
where we have set for the sake of simplicity
$$
|T|^2=\overline{T}T.
$$
\end{theorem}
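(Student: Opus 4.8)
The plan is to take the pseudo-$S$-resolvent form of the equation already proved in Lemma \ref{res1} as the starting point, and to rewrite, one at a time, the three ``correction'' terms
{\small
\[
\gamma_5\,\mathcal{Q}_s(T)S^{-1}_R(s,T)S^{-1}_L(p,T)\mathcal{Q}_p(T),\qquad \gamma_5\,\mathcal{Q}_s^2(T)\mathcal{Q}_p(T),\qquad \gamma_5\,\mathcal{Q}_s(T)\mathcal{Q}_p^2(T)
\]
}
purely in terms of the $\mathcal{F}$-resolvent operators $\mathcal{F}_5^R(s,T)$ and $\mathcal{F}_5^L(p,T)$. Once this reformulation is achieved, substituting it back into \eqref{F5} turns the left-hand side into the long combination in the statement, while the right-hand side (the difference entangled with the slice Cauchy kernel) is carried over unchanged. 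So the entire content of the theorem is this conversion of the three operators, with Lemma \ref{res1} supplying the rest.

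The main tool is the pair of left/right $\mathcal{F}$-resolvent equations of Theorem \ref{FRE}, which for $n=5$ (Sce exponent $h=(n-1)/2=2$) read $\gamma_5\mathcal{Q}_s^2(T)=s\mathcal{F}_5^R(s,T)-\mathcal{F}_5^R(s,T)T$ and $\gamma_5\mathcal{Q}_p^2(T)=\mathcal{F}_5^L(p,T)p-T\mathcal{F}_5^L(p,T)$, together with the factorisations $S^{-1}_R(s,T)=\mathcal{Q}_s(T)(s\mathcal{I}-\overline T)$ and $S^{-1}_L(p,T)=(p\mathcal{I}-\overline T)\mathcal{Q}_p(T)$ read off from the commutative $S$-resolvent definitions. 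For the first correction term I would first apply the factorisations to obtain $\gamma_5\mathcal{Q}_s^2(T)(s\mathcal{I}-\overline T)(p\mathcal{I}-\overline T)\mathcal{Q}_p^2(T)$, and then replace the two outer squared pseudo-resolvents using the $\mathcal{F}$-resolvent equations; this already yields a clean sandwich of $\mathcal{F}_5^R(s,T)$ and $\mathcal{F}_5^L(p,T)$. The two remaining terms carry \emph{unbalanced} powers $\mathcal{Q}_s^2\mathcal{Q}_p$ and $\mathcal{Q}_s\mathcal{Q}_p^2$; here I would insert the polynomial factor $\mathcal{Q}_p^{-1}(T)=p^2\mathcal{I}-p(T+\overline T)+T\overline T$ (respectively $\mathcal{Q}_s^{-1}(T)$) to raise the lone pseudo-resolvent to its square via $\mathcal{Q}_p^{-1}\mathcal{Q}_p^2=\mathcal{Q}_p$, and only then apply the two $\mathcal{F}$-resolvent equations. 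All three terms then collapse into the single block
{\small
\[
\gamma_5^{-1}\bigl(s\mathcal{F}_5^R(s,T)-\mathcal{F}_5^R(s,T)T\bigr)\bigl[(s\mathcal{I}-\overline T)(p\mathcal{I}-\overline T)+\mathcal{Q}_s^{-1}(T)+\mathcal{Q}_p^{-1}(T)\bigr]\bigl(\mathcal{F}_5^L(p,T)p-T\mathcal{F}_5^L(p,T)\bigr).
\]
}

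The final step is to expand this block. I would use that $T$ and $\overline T$ commute, that $T+\overline T$ and $T\overline T=|T|^2$ are real (hence central) operators commuting with $s$, $p$ and the $e_j$, and the characteristic identities $s^2=2\Re(s)\,s-|s|^2$, $p^2=2\Re(p)\,p-|p|^2$, so as to move every power of the paravectors $s$ and $p$ to the extreme left and extreme right respectively, leaving only monomials in $T,\overline T,|T|^2$ in the central slot; matching these monomials against the coefficients in the statement then finishes the proof. The hard part is exactly this reorganisation: because the Clifford numbers $s$ and $p$ do \emph{not} commute with the vector part of $\overline T$, extracting them from the central slot is not a formal cancellation but the very step that generates the large number of distinct monomials, and it is what makes the closed $\mathcal{F}$-resolvent form already heavy for $n=5$ and, as the case $n=7$ shows, impractical in general. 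As an internal check that I would run first to pin down all numerical coefficients, I would specialise to $\underline T=0$: there everything commutes, the three corrections reduce to $\gamma_5\bigl((s-T_0)^{-3}(p-T_0)^{-3}+(s-T_0)^{-4}(p-T_0)^{-2}+(s-T_0)^{-2}(p-T_0)^{-4}\bigr)$, and the converted block must reproduce exactly this, giving a quick test of the central polynomial before the full noncommutative expansion.
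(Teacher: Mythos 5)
Your proposal is correct and follows essentially the same route as the paper's proof of Theorem \ref{res3}: start from the pseudo $S$-resolvent form in Lemma \ref{res1}, rewrite each correction term using the left and right $\mathcal{F}$-resolvent equations of Theorem \ref{FRE} together with the insertions $\mathcal{Q}_p(T)=\mathcal{Q}_p^{2}(T)\mathcal{Q}_p^{-1}(T)$ and $\mathcal{Q}_s(T)=\mathcal{Q}_s^{-1}(T)\mathcal{Q}_s^{2}(T)$, and then expand. One practical remark on the last step: place the polynomial $\mathcal{Q}_s^{-1}(T)$ at the extreme left and $\mathcal{Q}_p^{-1}(T)$ at the extreme right (as the paper does in \eqref{eq13} and \eqref{eq14}) rather than in the central slot of your single block; then every power of $s$ (resp.\ $p$) is already adjacent to $\mathcal{Q}_s^{2}(T)$ (resp.\ $\mathcal{Q}_p^{2}(T)$), with which it commutes, so the stated form with all $s$-powers on the far left and all $p$-powers on the far right is reached without ever commuting $s$ or $p$ past $\overline{T}$ and without invoking the characteristic identity $s^2=2\Re(s)s-|s|^2$, which would introduce $|s|^2$ and $\Re(s)$ into a formula that does not contain them.
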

\begin{proof}
Firstly we remark that
{\small
 \begingroup\allowdisplaybreaks\begin{align}
& \gamma_5 \mathcal{Q}_s(T)S_R^{-1}(s,T)S_L^{-1}(p,T)\mathcal{Q}_p(T)=\gamma_5 \mathcal{Q}_s^2(T)(s \mathcal{I}- \overline{T})(p \mathcal{I}- \overline{T})\mathcal{Q}_p^2(T)\\ \nonumber
&=\gamma_5 \mathcal{Q}_s^2(T) \left(sp \mathcal{I}-s \overline{T}- \overline{T}p+ \overline{T}^2 \right) \mathcal{Q}_p^2(T)\\ \nonumber
&= \gamma_5 \left[s\mathcal{Q}_s^2(T)\mathcal{Q}_p^2(T)p-s\mathcal{Q}_s^2(T) \overline{T}\mathcal{Q}_p^2(T)-\mathcal{Q}_s^2(T) \overline{T}\mathcal{Q}_p^2(T)p+\mathcal{Q}_s^2(T) \overline{T}^2\mathcal{Q}_p^2(T) \right].
\end{align}\endgroup
}
Putting this in \eqref{F5} we deduce that
{\small

\begingroup\allowdisplaybreaks\begin{align}
& \mathcal{F}_5^R(s,T)S^{-1}_L(p,T)+S^{-1}_R(s,T)\mathcal{F}_{5}^L(p,T)+\gamma_5 \bigl[s\mathcal{Q}_s^2(T)\mathcal{Q}_p^2(T)p-s\mathcal{Q}_s^2(T) \overline{T}\mathcal{Q}_p^2(T)+\\
\nonumber
&-\mathcal{Q}_s^2(T) \overline{T}\mathcal{Q}_p^2(T)p+\mathcal{Q}_s^2(T) \overline{T}^2\mathcal{Q}_p^2(T) \bigl]+ \gamma_5[\mathcal{Q}_s^2(T) \mathcal{Q}_p(T)+\mathcal{Q}_s(T) \mathcal{Q}_p^2(T)]\\ \nonumber
& = \bigl \{ [\mathcal{F}_5^R(s,T)- \mathcal{F}_5^L(p,T)]p- \bar{s}[\mathcal{F}_5^R(s,T)- \mathcal{F}_5^L(p,T)]\} (p^2-2s_0 p+|s|^2)^{-1}.
\end{align}\endgroup

}
Now, we use the right and left $ \mathcal{F}$-resolvent equation for $n=5$
(see Theorem \ref{LRFRESEQ}) namely
\begin{equation}
\label{use1}
\mathcal{F}_{5}^L(p,T)p-T \mathcal{F}_{5}^L(p,T)= \gamma_5 Q_p^{2}(T),
\end{equation}
and
\begin{equation}
\label{use2}
s \mathcal{F}_{5}^R(s,T)- \mathcal{F}_{5}^R(s,T)T= \gamma_5 Q_s^{2}(T).
\end{equation}
We go through the computations terms by terms
{\small

\begingroup\allowdisplaybreaks\begin{align}
\label{eq9}
s \mathcal{Q}_s^2(T) \mathcal{Q}_p^2(T) p&= \gamma_5^{-2} s \left(s \mathcal{F}_{5}^R(s,T)- \mathcal{F}_{5}^R(s,T)T\right) \left(\mathcal{F}_{5}^L(p,T)p-T \mathcal{F}_{5}^L(p,T)\right)p\\ \nonumber
&=  \gamma_5^{-2} \bigl( s^{2} \mathcal{F}_{5}^R(s,T)\mathcal{F}_{5}^L(p,T) p^2- s^{2} \mathcal{F}_{5}^R(s,T)T\mathcal{F}_{5}^L(p,T) p  \\ \nonumber
&-s \mathcal{F}_{5}^R(s,T)T\mathcal{F}_{5}^L(p,T) p^2 +s\mathcal{F}_{5}^R(s,T)T^2 \mathcal{F}_{5}^L(p,T)p  \bigl),
\end{align}\endgroup

}

{\small

\begingroup\allowdisplaybreaks\begin{align}
\label{eq10}
\nonumber
s \mathcal{Q}_s^2(T) \overline{T} \mathcal{Q}_p^2(T) &= \gamma_5^{-2} s \left(s \mathcal{F}_{5}^R(s,T)- \mathcal{F}_{5}^R(s,T)T\right) \overline{T} \left(\mathcal{F}_{5}^L(p,T)p-T \mathcal{F}_{5}^L(p,T)\right)\\ \nonumber
&=  \gamma_5^{-2} \bigl( s^{2} \mathcal{F}_{5}^R(s,T) \overline{T}\mathcal{F}_{5}^L(p,T) p- s^{2} \mathcal{F}_{5}^R(s,T)|T|^2\mathcal{F}_{5}^L(p,T)
 \\
&- s \mathcal{F}_{5}^R(s,T)|T|^2\mathcal{F}_{5}^L(p,T) p +s\mathcal{F}_{5}^R(s,T)|T|^2T\mathcal{F}_{5}^L(p,T)  \bigl),
\end{align}\endgroup
}
{\small
\begingroup\allowdisplaybreaks\begin{align}
\nonumber
\label{eq11}
\mathcal{Q}_s^2(T) \overline{T} \mathcal{Q}_p^2(T)p&=  \gamma_5^{-2} \bigl( s \mathcal{F}_{5}^R(s,T) \overline{T}\mathcal{F}_{5}^L(p,T) p^2- s \mathcal{F}_{5}^R(s,T)|T|^2\mathcal{F}_{5}^L(p,T)p
\\
&- \mathcal{F}_{5}^R(s,T)|T|^2\mathcal{F}_{5}^L(p,T) p^2 +\mathcal{F}_{5}^R(s,T)|T|^2T\mathcal{F}_{5}^L(p,T)p  \bigl),
\end{align}\endgroup
}

{\small
\begingroup\allowdisplaybreaks\begin{align}
\nonumber
\label{eq12}
\mathcal{Q}_s^2(T) \overline{T}^2 \mathcal{Q}_p^2(T)&=  \gamma_5^{-2} \bigl( s \mathcal{F}_{5}^R(s,T) \overline{T}^2\mathcal{F}_{5}^L(p,T) p- s \mathcal{F}_{5}^R(s,T)|T|^2 \overline{T}\mathcal{F}_{5}^L(p,T) \\
&- \mathcal{F}_{5}^R(s,T)|T|^2 \overline{T}\mathcal{F}_{5}^L(p,T)p +\mathcal{F}_{5}^R(s,T)|T|^4\mathcal{F}_{5}^L(p,T)  \bigl).
\end{align}\endgroup
}

Moreover by \eqref{use1} and \eqref{use2} we have
{\small
\begingroup\allowdisplaybreaks\begin{align}
\mathcal{Q}_{p}(T) &= \mathcal{Q}_{p}(T)^2 \mathcal{Q}_p(T)^{-1}= \gamma_{5}^{-1} \left(\mathcal{F}_5^L(p,T)p-T \mathcal{F}_5^L(p,T) \right)\left( p^2-(T+ \overline{T})p+|T|^2 \right)
\\ \nonumber
&= \gamma_5^{-1} \biggl( \mathcal{F}_5^{L}(p,T) p^3-2T \mathcal{F}_5^{L}(p,T) p^2- \overline{T}\mathcal{F}_5^{L}(p,T)p^2+T^2 \mathcal{F}_5^{L}(p,T) p + 2|T|^2 \mathcal{F}_5^{L}(p,T) p
\\ \nonumber
& -T|T|^2 \mathcal{F}_5^{L}(p,T) \biggl),
\end{align}\endgroup

}
{\small

\begingroup\allowdisplaybreaks\begin{align}
\mathcal{Q}_s(T) &= \mathcal{Q}_s(T)^{-1} \mathcal{Q}_s(T)^2= \gamma_5^{-1} \left(s^2-s(T+ \overline{T})+|T|^2 \right) \left(s \mathcal{F}_5^R(s,T)- \mathcal{F}_5^R(s,T)T \right)
\\ \nonumber
&= \gamma_5^{-1}\biggl( s^3 \mathcal{F}_5^R(s,T)-2s^2 \mathcal{F}_5^R(s,T)T-s^2 \mathcal{F}_5^R(s,T) \overline{T}+s \mathcal{F}_5^R(s,T) T^2
+2s \mathcal{F}_5^R(s,T)|T|^2\\ \nonumber
& -\mathcal{F}_5^R(s,T) T |T|^2 \biggl).
\end{align}\endgroup

}

This implies that
{\small
\begingroup\allowdisplaybreaks\begin{align}
\label{eq13}
\nonumber
\mathcal{Q}_s^2(T) \mathcal{Q}_p(T) &= \gamma_5^{-2}\left(s \mathcal{F}_{5}^R(s,T) - \mathcal{F}_{5}^R(s,T)T\right) \bigl( \mathcal{F}_5^{L}(p,T) p^3-2T \mathcal{F}_5^{L}(p,T) p^2- \overline{T}\mathcal{F}_5^{L}(p,T)p^2 \\
\nonumber
& +T^2 \mathcal{F}_5^{L}(p,T) p +2|T|^2 \mathcal{F}_5^{L}(p,T) p-T|T|^2 \mathcal{F}_5^{L}(p,T) \bigl)\\
\nonumber
&= \gamma_5^{-2} \bigl( s  \mathcal{F}_5^{R}(s,T)\mathcal{F}_5^{L}(p,T)p^3
-2s\mathcal{F}_5^{R}(s,T)T\mathcal{F}_5^{L}(p,T)p^2
-s\mathcal{F}_5^{R}(s,T)\overline{T}\mathcal{F}_5^{L}(p,T)p^2\\
\nonumber
&+s\mathcal{F}_5^{R}(s,T)T^2\mathcal{F}_5^{L}(p,T)p+2s \mathcal{F}_5^{R}(s,T)|T|^2 \mathcal{F}_5^{L}(p,T)p-s\mathcal{F}_5^{R}(s,T)T|T|^2\mathcal{F}_5^{L}(p,T)\\
\nonumber
&-\mathcal{F}_5^{R}(s,T)T\mathcal{F}_5^{L}(p,T)p^3
+2\mathcal{F}_5^{R}(s,T)T^2\mathcal{F}_5^{L}(p,T)p^2
+\mathcal{F}_5^{R}(s,T)|T|^2\mathcal{F}_5^{L}(p,T)p^2\\
&-\mathcal{F}_5^{R}(s,T)T^3\mathcal{F}_5^{L}(p,T)p
-2\mathcal{F}_5^{R}(s,T)|T|^2T\mathcal{F}_5^{L}(p,T)p
+\mathcal{F}_5^{R}(s,T)T^2|T|^2\mathcal{F}_5^{L}(p,T),
\end{align}\endgroup

}
and
{\small

\begingroup\allowdisplaybreaks\begin{align}
\label{eq14}
\nonumber
\mathcal{Q}_s(T)\mathcal{Q}_p^2(T)&= \gamma_{5}^{-2} \bigl(s^3 \mathcal{F}_5^R(s,T)-2s^2 \mathcal{F}_5^R(s,T)T-s^2 \mathcal{F}_5^R(s,T) \overline{T}+s \mathcal{F}_5^R(s,T) T^2+2s \mathcal{F}_5^R(s,T)|T|^2
\\
\nonumber
& -\mathcal{F}_5^R(s,T) T |T|^2 \bigl) \bigl( \mathcal{F}_5^{L}(p,T)p-T \mathcal{F}_5^L(p,T) \bigl)\\
\nonumber
&= \gamma_5^{-2}\bigl(s^3 \mathcal{F}_5^R(s,T) \mathcal{F}_5^L(p,T) p-2s^2\mathcal{F}_5^R(s,T)T \mathcal{F}_5^L(p,T)p-s^2\mathcal{F}_5^R(s,T) \overline{T} \mathcal{F}_5^L(p,T)p\\
\nonumber
&+s\mathcal{F}_5^R(s,T)T^2 \mathcal{F}_5^L(p,T)p+2s \mathcal{F}_5^R(s,T) |T|^2 \mathcal{F}_5^L(p,T)p -\mathcal{F}_5^R(s,T) T |T|^2 \mathcal{F}_5^L(p,T)p\\
\nonumber
&-s^3 \mathcal{F}_5^R(s,T)T \mathcal{F}_5^L(p,T)+2s^2\mathcal{F}_5^R(s,T)T^2 \mathcal{F}_5^L(p,T)+s^2 \mathcal{F}_5^R(s,T) |T|^2 \mathcal{F}_5^L(p,T)\\
&-s\mathcal{F}_5^R(s,T) T^3 \mathcal{F}_5^L(p,T)-2s \mathcal{F}_5^R(s,T)|T|^2 T \mathcal{F}_5^L(p,T)+\mathcal{F}_5^R(s,T)T^2 |T|^2\mathcal{F}_5^L(p,T) \bigl).
\end{align}\endgroup
}
The statement is obtained by making the sum of \eqref{eq9}, \eqref{eq10}, \eqref{eq11}, \eqref{eq12}, \eqref{eq13}, \eqref{eq14}.
\end{proof}

\subsection{\bf The $\mathcal{F}$-resolvent equation for $n=7$}

As it is clearly visible from the case $n=5$, that there are
  intrinsic complications in the structure of the $\mathcal{F}$-resolvent equation.
  The case $n=7$ shows that it is not possible to have a reasonable
  closed form for the $\mathcal{F}$-resolvent equation just in terms of the $S$-resolvent operators
  and of the $\mathcal{F}$-resolvent operators. Instead, the use of the pseudo $S$-resolvent operators allows a reasonable structure of the resolvent equation.
 The point of the matter is that in this case it is not possible to obtain a $\mathcal{F}$-resolvent equation as a relation between $ \mathcal{F}_7^R(s,T) \mathcal{F}_7^L(p,T)$, and $\mathcal{F}_7^R(s,T) -\mathcal{F}_7^L(p,T)$.
 However, it is possible to prove a form of the
 $\mathcal{F}$- resolvent equation which will
  be fundamental in the section on the Riesz projects.
As before, we begin with a technical result.
\begin{lemma}[The $ \mathcal{F}$- resolvent equation for $n=7$ with the pseudo $S$-resolvent operators]
\label{res7}
Let $T \in \mathcal{BC}^{0,1}(V_7)$. Then for $p,s \in \rho_{\mathcal{F}}(T)$ the following equation holds
{\small
\begin{eqnarray}
\label{n70}
& \mathcal{F}_7^R(s,T)S^{-1}_L(p,T)+S^{-1}_R(s,T)\mathcal{F}_7^L(p,T)+ \gamma_7 \bigl[ \mathcal{Q}_s(T)S^{-1}_R(s,T)S^{-1}_L(p,T)\mathcal{Q}_p^2(T)\\
\nonumber
&+\mathcal{Q}_s^2(T)S^{-1}_R(s,T)S^{-1}_L(p,T)\mathcal{Q}_p(T)+\mathcal{Q}_s(T)\mathcal{Q}_p^3(T)+ \mathcal{Q}_s^3(T) \mathcal{Q}_p(T)+ \mathcal{Q}_s^2(T) \mathcal{Q}_p^2(T)]\\
\nonumber
& = \bigl \{ \bigl[\mathcal{F}_7^R(s,T)-\mathcal{F}_7^L(p,T) \bigl]p- \bar{s}\bigl[\mathcal{F}_7^R(s,T)-\mathcal{F}_7^L(p,T) \bigl] \bigl \} (p^2-2s_0p+|s|^2)^{-1}.
\end{eqnarray}
}
\end{lemma}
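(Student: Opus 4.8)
The plan is to mirror the proof of Lemma \ref{res1} for $n=5$, now with Sce exponent $h=(n-1)/2=3$, so that $\tfrac{n+1}{2}=4$. The starting point is that the commutative factorizations $S_R^{-1}(s,T)=\mathcal{Q}_s(T)(s\mathcal{I}-\overline{T})$ and $S_L^{-1}(p,T)=(p\mathcal{I}-\overline{T})\mathcal{Q}_p(T)$, combined with the definitions \eqref{0pre0}--\eqref{1pre1}, yield the two identities
\[
\mathcal{F}_7^R(s,T)=\gamma_7\,\mathcal{Q}_s^3(T)\,S_R^{-1}(s,T),\qquad
\mathcal{F}_7^L(p,T)=\gamma_7\,S_L^{-1}(p,T)\,\mathcal{Q}_p^3(T).
\]
These let me manufacture the two leading terms $\mathcal{F}_7^R(s,T)S_L^{-1}(p,T)$ and $S_R^{-1}(s,T)\mathcal{F}_7^L(p,T)$ out of the $S$-resolvent equation \eqref{reso} by one-sided multiplication.

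First I would multiply \eqref{reso} by four operator combinations: $\gamma_7\mathcal{Q}_s^3(T)$ on the left; $\gamma_7\mathcal{Q}_p^3(T)$ on the right; $\gamma_7\mathcal{Q}_s(T)$ on the left and $\mathcal{Q}_p^2(T)$ on the right; and $\gamma_7\mathcal{Q}_s^2(T)$ on the left and $\mathcal{Q}_p(T)$ on the right. The two cross-multipliers follow the scheme $\mathcal{Q}_s^{h-i-1}\cdot\,\cdot\mathcal{Q}_p^{i+1}$ for $i=0,1$, i.e.\ the $h=3$ instance of the general pattern. Summing the four resulting equations, the left-hand sides combine into exactly $\mathcal{F}_7^R S_L^{-1}+S_R^{-1}\mathcal{F}_7^L+\gamma_7\mathcal{Q}_s S_R^{-1}S_L^{-1}\mathcal{Q}_p^2+\gamma_7\mathcal{Q}_s^2 S_R^{-1}S_L^{-1}\mathcal{Q}_p$, which are the first four summands of \eqref{n70}.

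Next I would collect the four right-hand sides. Writing $D:=S_R^{-1}(s,T)-S_L^{-1}(p,T)$, each is of the form $\{[\mathcal{Q}_s^a D\,\mathcal{Q}_p^b]p-\bar{s}[\mathcal{Q}_s^a D\,\mathcal{Q}_p^b]\}(p^2-2s_0p+|s|^2)^{-1}$. In the two outer terms ($a=3,b=0$ and $a=0,b=3$) the factorizations above turn $\gamma_7\mathcal{Q}_s^3 S_R^{-1}$ into $\mathcal{F}_7^R$ and $\gamma_7 S_L^{-1}\mathcal{Q}_p^3$ into $\mathcal{F}_7^L$; retaining precisely these two pieces reconstitutes the target right-hand side $\{[\mathcal{F}_7^R-\mathcal{F}_7^L]p-\bar{s}[\mathcal{F}_7^R-\mathcal{F}_7^L]\}(p^2-2s_0p+|s|^2)^{-1}$. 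The remaining six pieces assemble into
\[
\gamma_7\bigl[-\mathcal{Q}_s^3 S_L^{-1}+S_R^{-1}\mathcal{Q}_p^3+\mathcal{Q}_s S_R^{-1}\mathcal{Q}_p^2-\mathcal{Q}_s S_L^{-1}\mathcal{Q}_p^2+\mathcal{Q}_s^2 S_R^{-1}\mathcal{Q}_p-\mathcal{Q}_s^2 S_L^{-1}\mathcal{Q}_p\bigr],
\]
(arguments suppressed) wrapped in the same $[\,\cdot\,p-\bar{s}\,\cdot\,](p^2-2s_0p+|s|^2)^{-1}$ bracket.

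Finally I would simplify this leftover exactly as in the $n=5$ case. Substituting $S_R^{-1}=\mathcal{Q}_s(s\mathcal{I}-\overline{T})$ and $S_L^{-1}=(p\mathcal{I}-\overline{T})\mathcal{Q}_p$ and grouping by the pairs $(\mathcal{Q}_s^a,\mathcal{Q}_p^b)$ with $a+b=4$, the telescoping $(s\mathcal{I}-\overline{T})-(p\mathcal{I}-\overline{T})=(s-p)\mathcal{I}$ collapses the bracket to $\gamma_7[\mathcal{Q}_s(s-p)\mathcal{Q}_p^3+\mathcal{Q}_s^2(s-p)\mathcal{Q}_p^2+\mathcal{Q}_s^3(s-p)\mathcal{Q}_p]$. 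Inserting this into the wrapper and using that $\mathcal{Q}_s(T)$ commutes with $s,\bar s$ while $\mathcal{Q}_p(T)$ commutes with $p$, each term contributes the scalar factor $sp-p^2-|s|^2+\bar{s}p=-(p^2-2s_0p+|s|^2)$, which cancels the denominator and leaves $-\gamma_7[\mathcal{Q}_s\mathcal{Q}_p^3+\mathcal{Q}_s^2\mathcal{Q}_p^2+\mathcal{Q}_s^3\mathcal{Q}_p]$. Transferring this to the left-hand side yields \eqref{n70}. The only genuine obstacle is bookkeeping: since $(s\mathcal{I}-\overline{T})$ does \emph{not} commute with $\mathcal{Q}_s(T)$, one must keep every $(s\mathcal{I}-\overline{T})$ and $(p\mathcal{I}-\overline{T})$ factor on its correct side throughout and track the order $a+b=4$ of the pseudo-resolvent powers so that all cross terms cancel as intended; no new idea beyond the $n=5$ argument is required, only careful control of the larger number of terms.
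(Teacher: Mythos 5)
Your proposal is correct and follows essentially the same route as the paper's proof: the same four one-sided multiplications of the $S$-resolvent equation (by $\gamma_7\mathcal{Q}_s^3$ on the left, $\gamma_7\mathcal{Q}_p^3$ on the right, and the two mixed pairs $\mathcal{Q}_s\,\cdot\,\mathcal{Q}_p^2$ and $\mathcal{Q}_s^2\,\cdot\,\mathcal{Q}_p$), followed by the same telescoping of $(s\mathcal{I}-\overline{T})-(p\mathcal{I}-\overline{T})=(s-p)\mathcal{I}$ across the pairs $\mathcal{Q}_s^a\,\cdot\,\mathcal{Q}_p^b$ with $a+b=4$ and the cancellation of the factor $-(p^2-2s_0p+|s|^2)$. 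Your closing remarks on the non-commutativity of $(s\mathcal{I}-\overline{T})$ with $\mathcal{Q}_s(T)$ and on the commutation of $s,\bar{s}$ with $\mathcal{Q}_s(T)$ (resp. $p$ with $\mathcal{Q}_p(T)$) are exactly the points the paper relies on implicitly.
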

\begin{proof}
First of all, we left multiply the S-resolvent equation \eqref{reso} by $ \gamma_7 \mathcal{Q}_s^3(T)$, so that we get
{\small
\begin{eqnarray}
\nonumber
\label{n71}
\mathcal{F}_7^R(s,T)S^{-1}_L(p,T)&= \left\{[\mathcal{F}_7^R(s,T)-\gamma_7 \mathcal{Q}_s^3(T)S^{-1}_L(p,T)]p- \bar{s}[\mathcal{F}_7^R(s,T)-\gamma_7 \mathcal{Q}_s^3(T)S^{-1}_L(p,T)]\right\}\cdot\\
& \cdot (p^2-2s_0p+|s|^2)^{-1}.
\end{eqnarray}
}
Now, we right multiply the S-resolvent equation \eqref{reso} by $ \gamma_7 \mathcal{Q}_p^3(T)$
{\small
\begin{eqnarray}
\nonumber
\label{n72}
S^{-1}_R(s,T)\mathcal{F}_7^L(p,T)&= \left\{[\gamma_7 S^{-1}_R(s,T)\mathcal{Q}_p^3(T)-\mathcal{F}_7^L(p,T)]p- \bar{s}[\gamma_7  S^{-1}_R(s,T) \mathcal{Q}_p^3(T)-\mathcal{F}_7^L(p,T)]\right\}\cdot\\
& \cdot (p^2-2s_0p+|s|^2)^{-1}.
\end{eqnarray}
}
Then we multiply the S-resolvent equation on the left by $ \mathcal{Q}_s(T)$ and on the right by $ \mathcal{Q}_p^2(T)$
{\small
\begin{eqnarray}
\label{n73}
&\mathcal{Q}_s(T)S^{-1}_R(s,T)S^{-1}_L(p,T)\mathcal{Q}_p^2(T)
\\ \nonumber
&= \bigl\{[ \mathcal{Q}_s(T)S^{-1}_R(s,T)\mathcal{Q}_p^2(T)- \mathcal{Q}_s(T)S^{-1}_L(p,T)\mathcal{Q}_p^2(T)]p \\ \nonumber
&-\bar{s}[ \mathcal{Q}_s(T)S^{-1}_R(s,T)\mathcal{Q}_p^2(T)- \mathcal{Q}_s(T)S^{-1}_L(p,T)\mathcal{Q}_p^2(T)]\bigl\} (p^2-2s_0p+|s|^2)^{-1}.
\end{eqnarray}
}
We now multiply the S-resolvent equation on the left by $ \mathcal{Q}_s^2(T)$ and on the right by $ \mathcal{Q}_p(T)$
{\small

\begingroup\allowdisplaybreaks\begin{align}
\label{n74}
&\mathcal{Q}_s^2(T)S^{-1}_R(s,T)S^{-1}_L(p,T)\mathcal{Q}_p(T)
\\ \nonumber
&= \bigl\{[ \mathcal{Q}_s^2(T)S^{-1}_R(s,T)\mathcal{Q}_p(T)- \mathcal{Q}_s^2(T)S^{-1}_L(p,T)\mathcal{Q}_p(T)]p \\ \nonumber
&-\bar{s}[ \mathcal{Q}_s^2(T)S^{-1}_R(s,T)\mathcal{Q}_p(T)- \mathcal{Q}_s^2(T)S^{-1}_L(p,T)\mathcal{Q}_p(T)]\bigl\} (p^2-2s_0p+|s|^2)^{-1}.
\end{align}\endgroup

}
We sum \eqref{n71}, \eqref{n72} and \eqref{n73}, \eqref{n74} multiplied by $ \gamma_7$, and we obtain
{\small

\begingroup\allowdisplaybreaks\begin{align}
& \mathcal{F}_7^R(s,T)S^{-1}_L(p,T)+S^{-1}_R(s,T)\mathcal{F}_7^L(p,T)+ \gamma_7\mathcal{Q}_s(T)S^{-1}_R(s,T)S^{-1}_L(p,T)\mathcal{Q}_p^2(T)
\\ \nonumber
&+\gamma_7 \mathcal{Q}_s^2(T)S^{-1}_R(s,T)S^{-1}_L(p,T)\mathcal{Q}_p(T)= \bigl \{ \bigl[\mathcal{F}_7^R(s,T)-\mathcal{F}_7^L(p,T)+\gamma_7 S^{-1}_R(s,T)\mathcal{Q}_p^3(T)
\\ \nonumber
& -\gamma_7 \mathcal{Q}_s^3(T)S^{-1}_L(p,T)+ \gamma_7 \mathcal{Q}_s(T)S^{-1}_R(s,T)\mathcal{Q}_p^2(T)- \gamma_7\mathcal{Q}_s(T)S^{-1}_L(p,T)\mathcal{Q}_p^2(T)  \\ \nonumber
&+\gamma_7 \mathcal{Q}_s^2(T)S^{-1}_R(s,T)\mathcal{Q}_p(T)-\gamma_7\mathcal{Q}_s^2(T)S^{-1}_L(p,T)\mathcal{Q}_p(T) \bigl] p- \bar{s}\bigl[\mathcal{F}_7^R(s,T)-\mathcal{F}_7^L(p,T)\\ \nonumber
& +\gamma_7 S^{-1}_R(s,T)\mathcal{Q}_p^3(T)-\gamma_7 \mathcal{Q}_s^3(T)S^{-1}_L(p,T)+ \gamma_7 \mathcal{Q}_s(T)S^{-1}_R(s,T)\mathcal{Q}_p^2(T)
  \\ \nonumber
&- \gamma_7\mathcal{Q}_s(T)S^{-1}_L(p,T)\mathcal{Q}_p^2(T)+\gamma_7 \mathcal{Q}_s^2(T)S^{-1}_R(s,T)\mathcal{Q}_p(T)-\gamma_7\mathcal{Q}_s^2(T)S^{-1}_L(p,T)\mathcal{Q}_p(T) \bigl](p^2-2s_0p+|s|^2)^{-1}.
\end{align}\endgroup

}
By the definition of $S$- resolvent operators we have
{\small

\begingroup\allowdisplaybreaks\begin{align}
& S^{-1}_R(s,T)\mathcal{Q}_p^3(T) -\mathcal{Q}_s^3(T)S^{-1}_L(p,T)+  \mathcal{Q}_s(T)S^{-1}_R(s,T)\mathcal{Q}_p^2(T)- \mathcal{Q}_s(T)S^{-1}_L(p,T)\mathcal{Q}_p^2(T)
\\ \nonumber
&+\mathcal{Q}_s^2(T)S^{-1}_R(s,T)\mathcal{Q}_p(T)-\mathcal{Q}_s^2(T)S^{-1}_L(p,T)\mathcal{Q}_p(T)\\ \nonumber
& = \mathcal{Q}_s(T)(s \mathcal{I}- \overline{T}) \mathcal{Q}_p^3(T)- \mathcal{Q}_s^3(T)(p \mathcal{I}- \overline{T}) \mathcal{Q}_p(T)+ \mathcal{Q}_s^{2}(T)(s \mathcal{I}- \overline{T}) \mathcal{Q}_p^2(T)- \mathcal{Q}_s(T)(p \mathcal{I}- \overline{T}) \mathcal{Q}_p^3(T)\\ \nonumber
&+ \mathcal{Q}_s^3(T)(s \mathcal{I}- \overline{T}) \mathcal{Q}_{p}(T)- \mathcal{Q}_s^2(T)(p \mathcal{I}- \overline{T})\mathcal{Q}_p^2(T)\\ \nonumber
&= \mathcal{Q}_s(T) s \mathcal{Q}_p^3(T)- \mathcal{Q}_s^3(T)p \mathcal{Q}_p(T)+ \mathcal{Q}_s^2(T)s \mathcal{Q}_p^2(T)-\mathcal{Q}_s(T) p \mathcal{Q}_p^3(T)+\mathcal{Q}_s^3(T)s \mathcal{Q}_p(T)- \mathcal{Q}_s^2(T)p \mathcal{Q}_p^2(T)\\ \nonumber
& = \mathcal{Q}_s(T)(s-p) \mathcal{Q}_p^3(T)+ \mathcal{Q}_s^3(T)(s-p) \mathcal{Q}_p(T)+ \mathcal{Q}_s^2(T)(s-p) \mathcal{Q}_p^2(T).
\end{align}\endgroup

}
Hence
{\small

\begingroup\allowdisplaybreaks\begin{align}
& \mathcal{F}_7^R(s,T)S^{-1}_L(p,T)+S^{-1}_R(s,T)\mathcal{F}_7^L(p,T)+ \gamma_7\mathcal{Q}_s(T)S^{-1}_R(s,T)S^{-1}_L(p,T)\mathcal{Q}_p^2(T)+\\ \nonumber
&+\gamma_7 \mathcal{Q}_s^2(T)S^{-1}_R(s,T)S^{-1}_L(p,T)\mathcal{Q}_p(T)= \bigl \{ \bigl[\mathcal{F}_7^R(s,T)-\mathcal{F}_7^L(p,T) \bigl]p- \bar{s}\bigl[\mathcal{F}_7^R(s,T)-\mathcal{F}_7^L(p,T) \bigl] \bigl \}\cdot\\ \nonumber
& \cdot (p^2-2s_0p+|s|^2)^{-1}+ \gamma_7 \bigl \{ \bigl[\mathcal{Q}_s(T)(s-p) \mathcal{Q}_p^3(T)+ \mathcal{Q}_s^3(T)(s-p) \mathcal{Q}_p(T)+ \mathcal{Q}_s^2(T)(s-p) \mathcal{Q}_p^2(T)\bigl]p
\\ \nonumber
&- \bar{s} \bigl[\mathcal{Q}_s(T)(s-p) \mathcal{Q}_p^3(T)+ \mathcal{Q}_s^3(T)(s-p) \mathcal{Q}_p(T)+ \mathcal{Q}_s^2(T)(s-p) \mathcal{Q}_p^2(T)\bigl]\bigl\} (p^2-2s_0p+|s|^2)^{-1}.
\end{align}\endgroup

}
Finally we have to verify that
{\small

\begingroup\allowdisplaybreaks\begin{align}
& \gamma_7 \bigl \{ \bigl[\mathcal{Q}_s(T)(s-p) \mathcal{Q}_p^3(T)+ \mathcal{Q}_s^3(T)(s-p) \mathcal{Q}_p(T)+ \mathcal{Q}_s^2(T)(s-p) \mathcal{Q}_p^2(T)\bigl]p+\\ \nonumber
&- \bar{s} \bigl[\mathcal{Q}_s(T)(s-p) \mathcal{Q}_p^3(T)+ \mathcal{Q}_s^3(T)(s-p) \mathcal{Q}_p(T)+ \mathcal{Q}_s^2(T)(s-p) \mathcal{Q}_p^2(T)\bigl]\bigl\} (p^2-2s_0p+|s|^2)^{-1}\\ \nonumber
&=- \gamma_7[\mathcal{Q}_s(T)\mathcal{Q}_p^3(T)+ \mathcal{Q}_s^3(T) \mathcal{Q}_p(T)+ \mathcal{Q}_s^2(T) \mathcal{Q}_p^2(T)].
\end{align}\endgroup

}
This follows from
{\small

\begingroup\allowdisplaybreaks\begin{align}
& \gamma_7 \bigl \{ \bigl[\mathcal{Q}_s(T)(s-p) \mathcal{Q}_p^3(T)+ \mathcal{Q}_s^3(T)(s-p) \mathcal{Q}_p(T)+ \mathcal{Q}_s^2(T)(s-p) \mathcal{Q}_p^2(T)\bigl]p
\\ \nonumber
&- \bar{s} \bigl[\mathcal{Q}_s(T)(s-p) \mathcal{Q}_p^3(T)+ \mathcal{Q}_s^3(T)(s-p) \mathcal{Q}_p(T)+ \mathcal{Q}_s^2(T)(s-p) \mathcal{Q}_p^2(T)\bigl]\bigl\} (p^2-2s_0p+|s|^2)^{-1}
\\ \nonumber
&= \gamma_7 \bigl[ \mathcal{Q}_s(T)(sp-p^2) \mathcal{Q}_p^3(T)+ \mathcal{Q}_s^3(T)(sp-p^2)\mathcal{Q}_p(T)+ \mathcal{Q}_s^2(T)(sp-p^2) \mathcal{Q}_p^2(T)- \mathcal{Q}_s(T)( \bar{s}s- \bar{s}p) \mathcal{Q}_p^3(T)
\\ \nonumber
&- \mathcal{Q}_s^3(T)( \bar{s}s- \bar{s}p) \mathcal{Q}_p(T)- \mathcal{Q}_s^2(T)( \bar{s}s- \bar{s}p) \mathcal{Q}_p^2(T) \bigl] (p^2-2s_0p+|s|^2)^{-1}\\ \nonumber
& = \gamma_7 \bigl[\mathcal{Q}_s(T)(sp-p^2- \bar{s}s+ \bar{s}p) \mathcal{Q}_p^3(T)+\mathcal{Q}_s^3(T)(sp-p^2- \bar{s}s+ \bar{s}p) \mathcal{Q}_p(T)
\\ \nonumber
&
+\mathcal{Q}_s^2(T)(sp-p^2- \bar{s}s+ \bar{s}p) \mathcal{Q}_p^2(T)] (p^2-2s_0p+|s|^2)^{-1}
\\ \nonumber
&= - \gamma_7[\mathcal{Q}_s(T)\mathcal{Q}_p^3(T)+ \mathcal{Q}_s^3(T) \mathcal{Q}_p(T)+ \mathcal{Q}_s^2(T) \mathcal{Q}_p^2(T)].
\end{align}\endgroup

}
\end{proof}
The results of the previous lemma allows us to obtain the so-called pseudo $\mathcal{F}$-resolvent equation for $n=7$.
\begin{theorem}[The pseudo $\mathcal{F}$-resolvent equation for $n=7$]
\label{pres1}
Let $T \in \mathcal{BC}^{0,1}(V_7)$. Then for $p,s \in \rho_{\mathcal{F}}(T)$ the following equation holds
{\small

\begingroup\allowdisplaybreaks\begin{align}
\label{n701}
&
\mathcal{F}_7^R(s,T)S^{-1}_L(p,T)+S^{-1}_R(s,T)\mathcal{F}_7^L(p,T)
\\ \nonumber
&
+\gamma_7^{-1}\bigl[\mathcal{Q}_s(T) S^{-1}_R(s,T) \mathcal{F}_7^L(p,T)p^2-\mathcal{Q}_s(T) S^{-1}_R(s,T)T \mathcal{F}_7^L(p,T)p\\
\nonumber
&             -\mathcal{Q}_s(T) S^{-1}_R(s,T) \overline{T} \mathcal{F}_7^L(p,T)p+\mathcal{Q}_s(T) S^{-1}_R(s,T)|T|^2 \mathcal{F}_7^L(p,T) +s^2 \mathcal{F}_7^R(s,T)S^{-1}_L(p,T) \mathcal{Q}_p^2(T)
\\ \nonumber
&             -s\mathcal{F}_7^R(s,T)TS^{-1}_L(p,T) \mathcal{Q}_p^2(T)-s \mathcal{F}_7^R(s,T) \overline{T}S^{-1}_L(p,T) \mathcal{Q}_p^2(T)
\\ \nonumber
&
+\mathcal{F}_7^R(s,T)|T|^2 S^{-1}_L(p,T) \mathcal{Q}_p^2(T)+\mathcal{Q}_s(T)\mathcal{F}_7^L(p,T)p \\ \nonumber
&             -\mathcal{Q}_s(T)T\mathcal{F}_7^L(p,T)+s\mathcal{F}_7^R(s,T)\mathcal{Q}_p(T)- \mathcal{F}_7^R(s,T)T\mathcal{Q}_p(T)\bigl]+ \gamma_7^{-2}\bigl[s^3\mathcal{F}_7^R(s,T)\mathcal{F}_7^L(p,T)p^3
\\ \nonumber
&
-s^3\mathcal{F}_7^R(s,T)T\mathcal{F}_7^L(p,T)p^2-s^2\mathcal{F}_7^R(s,T)T\mathcal{F}_7^L(p,T)p^3\\
\nonumber
& +s^2\mathcal{F}_7^R(s,T)T^2\mathcal{F}_7^L(p,T)p^2+ s^3 \mathcal{F}_7^R(s,T) \mathcal{F}_7^L(p,T)p[|T|^2-p(T+ \overline{T})]\\
\nonumber
& -s^3\mathcal{F}_7^R(s,T) T \mathcal{F}_7^L(p,T)[|T|^2-p(T+ \overline{T})] -s^2\mathcal{F}_7^R(s,T) T \mathcal{F}_7^L(p,T)p[|T|^2-p(T+ \overline{T})]\\
\nonumber
&+ s^2\mathcal{F}_7^R(s,T) T^2 \mathcal{F}_7^L(p,T)[|T|^2-p(T+ \overline{T})] +\left[|T|^2-s(T+ \overline{T})\right]s\mathcal{F}_7^R(s,T) \mathcal{F}_7^L(p,T) p^3\\
\nonumber
&-\left[|T|^2-s(T+ \overline{T})\right]s\mathcal{F}_7^R(s,T) T \mathcal{F}_7^L(p,T) p^2-\left[|T|^2-s(T+ \overline{T})\right]\mathcal{F}_7^R(s,T)T \mathcal{F}_7^L(p,T) p^3\\
\nonumber
&+\left[|T|^2-s(T+ \overline{T})\right]\mathcal{F}_7^R(s,T) T^2 \mathcal{F}_7^L(p,T)p^2 +\left[|T|^2-s(T+ \overline{T})\right]\bigl( s\mathcal{F}_7^R(s,T)\mathcal{F}_7^L(p,T)p
\\
\nonumber
&-s\mathcal{F}_7^R(s,T)T\mathcal{F}_7^L(p,T)
-\mathcal{F}_7^R(s,T)T\mathcal{F}_7^L(p,T)p+\mathcal{F}_7^R(s,T)T^2\mathcal{F}_7^L(p,T)\bigl)\left[|T|^2-p(T+ \overline{T})\right]\bigl\}
\\ \nonumber
&              = \bigl \{ \bigl[\mathcal{F}_7^R(s,T)-\mathcal{F}_7^L(p,T) \bigl]p- \bar{s}\bigl[\mathcal{F}_7^R(s,T)-\mathcal{F}_7^L(p,T) \bigl] \bigl \} (p^2-2s_0p+|s|^2)^{-1}.
\end{align}\endgroup

}
\end{theorem}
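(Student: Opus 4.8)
The plan is to derive the identity from the pseudo $S$-resolvent form already proved in Lemma \ref{res7}, repeating at level $n=7$ the passage carried out for $n=5$ in going from Lemma \ref{res1} to Theorem \ref{res3}. In Lemma \ref{res7} the two summands $\mathcal{F}_7^R(s,T)S^{-1}_L(p,T)$ and $S^{-1}_R(s,T)\mathcal{F}_7^L(p,T)$ and the entire right-hand side already appear in the required shape, so the only task is to rewrite the remaining bracket
\[
\gamma_7\bigl[\mathcal{Q}_s S^{-1}_R S^{-1}_L\mathcal{Q}_p^2+\mathcal{Q}_s^2 S^{-1}_R S^{-1}_L\mathcal{Q}_p+\mathcal{Q}_s\mathcal{Q}_p^3+\mathcal{Q}_s^3\mathcal{Q}_p+\mathcal{Q}_s^2\mathcal{Q}_p^2\bigr]
\]
(suppressing arguments for brevity) as the two brackets displayed in the statement. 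The engine for this rewriting is the pair of left and right $\mathcal{F}$-resolvent equations of Theorem \ref{LRFRESEQ} specialised to the Sce exponent $h=3$, namely $\mathcal{F}_7^L(p,T)p-T\mathcal{F}_7^L(p,T)=\gamma_7\mathcal{Q}_p^3$ and $s\mathcal{F}_7^R(s,T)-\mathcal{F}_7^R(s,T)T=\gamma_7\mathcal{Q}_s^3$, together with the explicit inverses $\mathcal{Q}_s^{-1}=s^2\mathcal{I}-s(T+\overline{T})+|T|^2$ and $\mathcal{Q}_p^{-1}=p^2\mathcal{I}-(T+\overline{T})p+|T|^2$, where $|T|^2=\overline{T}T$.

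First I would replace the $S$-resolvents inside the two mixed products by their commutative factorisations $S^{-1}_R(s,T)=\mathcal{Q}_s(s\mathcal{I}-\overline{T})$ and $S^{-1}_L(p,T)=(p\mathcal{I}-\overline{T})\mathcal{Q}_p$, so that each mixed product collapses to a string $\mathcal{Q}_s^{i}(s\mathcal{I}-\overline{T})(p\mathcal{I}-\overline{T})\mathcal{Q}_p^{j}$. Then, wherever a block $\gamma_7\mathcal{Q}_p^3$ or $\gamma_7\mathcal{Q}_s^3$ is available I substitute the corresponding $\mathcal{F}$-resolvent difference; wherever the power of $\mathcal{Q}$ present is only $1$ or $2$ I first raise it to the third power by inserting one or two factors $\mathcal{Q}_s^{-1}$ (on the $s$-side) or $\mathcal{Q}_p^{-1}$ (on the $p$-side), and only afterwards substitute. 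The scalar parts $s^2$, $p^2$ and the operator parts $|T|^2-s(T+\overline{T})$, $|T|^2-p(T+\overline{T})$ of these inverses are exactly the factors $\bigl[|T|^2-s(T+\overline{T})\bigr]$ and $\bigl[|T|^2-p(T+\overline{T})\bigr]$ that decorate the double products $\mathcal{F}_7^R(s,T)(\cdots)\mathcal{F}_7^L(p,T)$ in the statement, so the appropriate powers of $\gamma_7$ will be collected automatically.

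The resulting terms organise themselves by how many $\mathcal{F}$-resolvents they carry. The pure products $\mathcal{Q}_s\mathcal{Q}_p^3$ and $\mathcal{Q}_s^3\mathcal{Q}_p$ already contain one slot at the third power, so converting that single slot leaves a surviving pseudo resolvent and produces the terms $\mathcal{Q}_s\mathcal{F}_7^L(p,T)p-\mathcal{Q}_s T\mathcal{F}_7^L(p,T)$ and $s\mathcal{F}_7^R(s,T)\mathcal{Q}_p-\mathcal{F}_7^R(s,T)T\mathcal{Q}_p$ of the first bracket; the two mixed products, after converting the third-power slot on one side, yield the first-bracket terms carrying simultaneously an $S$-resolvent and an $\mathcal{F}$-resolvent, the decisive point being the reassembly identity $(\mathcal{F}_7^L p-T\mathcal{F}_7^L)p-\overline{T}(\mathcal{F}_7^L p-T\mathcal{F}_7^L)=\mathcal{F}_7^L p^2-T\mathcal{F}_7^L p-\overline{T}\mathcal{F}_7^L p+|T|^2\mathcal{F}_7^L$, which holds because $\mathcal{F}_7^L p-T\mathcal{F}_7^L=\gamma_7\mathcal{Q}_p^3$ commutes with $p$. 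Finally the product $\mathcal{Q}_s^2\mathcal{Q}_p^2$, whose two slots both require a $\mathcal{Q}^{-1}$-insertion, is the one that assembles the long second bracket of double $\mathcal{F}$-resolvent products. Matching these groups against the right-hand side inherited from Lemma \ref{res7} finishes the identity.

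The step I expect to be the genuine obstacle is the non-commutative bookkeeping that governs these power adjustments. In contrast with $n=5$, where every slot already carried the single power $2$, the powers occurring in Lemma \ref{res7} range over $1,2,3$, so almost every monomial must be brought to the third power by an insertion of $\mathcal{Q}^{-1}$, and these insertions interfere with the symbols that do \emph{not} commute: only the real operators $T+\overline{T}=2T_0$ and $|T|^2=\overline{T}T$ are central, $s$ commutes with $\mathcal{Q}_s$ and $p$ with $\mathcal{Q}_p$, but $s$ may never be moved past $p$ or past $\overline{T}$, and $\mathcal{Q}_s^{-1}$, $\mathcal{Q}_p^{-1}$ fail to commute with $(s\mathcal{I}-\overline{T})$ and $(p\mathcal{I}-\overline{T})$. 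One must therefore place every insertion on the correct side and expand it in the correct order, keeping $s$ to the left and $p$ to the right throughout; it is precisely this proliferation that makes a compact closed form in the $\mathcal{F}$-resolvent operators alone unmanageable for $n=7$ and forces the pseudo $S$-resolvent operators to be retained.
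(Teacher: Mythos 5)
Your proposal follows essentially the same route as the paper's proof: starting from Lemma \ref{res7}, substituting the left and right $\mathcal{F}$-resolvent equations of Theorem \ref{LRFRESEQ} with $h=3$ term by term, converting $\mathcal{Q}_s\mathcal{Q}_p^3$ and $\mathcal{Q}_s^3\mathcal{Q}_p$ directly, handling the mixed products via the factorisations $S_R^{-1}=\mathcal{Q}_s(s\mathcal{I}-\overline{T})$ and $S_L^{-1}=(p\mathcal{I}-\overline{T})\mathcal{Q}_p$, and writing $\mathcal{Q}_s^2\mathcal{Q}_p^2=\mathcal{Q}_s^{-1}\mathcal{Q}_s^3\mathcal{Q}_p^3\mathcal{Q}_p^{-1}$ with the explicit inverses to produce the long bracket of double $\mathcal{F}$-resolvent products. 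The outline and the grouping of the resulting terms match the paper's computation, so the proposal is correct.
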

\begin{proof}
We use the $\mathcal{F}$-resolvent equations \eqref{eq2} and \eqref{eq3} with $n=7$
\begin{equation}
\label{n79b}
\mathcal{F}_7^L(p,T)p-T \mathcal{F}_7^L(p,T)= \gamma_7 \mathcal{Q}_p^3(T),
\end{equation}
\begin{equation}
\label{n710b}
s \mathcal{F}_7^R(s,T)-\mathcal{F}_7^R(s,T)T= \gamma_7 \mathcal{Q}_s^3(T).
\end{equation}
Now we will substitute \eqref{n79b} and \eqref{n710b} in the equation of Lemma \ref{res7}. We go terms by terms
{\small

\begingroup\allowdisplaybreaks\begin{align}
\label{n711b}
\mathcal{Q}_s(T) S^{-1}_R(s,T) S^{-1}_L(p,T) \mathcal{Q}_p^2(T)&=\mathcal{Q}_s(T) S^{-1}_R(s,T) (p \mathcal{I}- \overline{T})\mathcal{Q}_p^3(T)\\
\nonumber
&=\mathcal{Q}_s(T) S^{-1}_R(s,T) \mathcal{Q}_p^3(T)p-\mathcal{Q}_s(T) S^{-1}_R(s,T) \overline{T}\mathcal{Q}_p^3(T)\\
\nonumber
&=\gamma_7^{-1}\bigl\{\mathcal{Q}_s(T) S^{-1}_R(s,T) \left[\mathcal{F}_7^L(p,T)p-T \mathcal{F}_7^L(p,T)\right]p
\\
\nonumber
&-\mathcal{Q}_s(T) S^{-1}_R(s,T) \overline{T}\left[\mathcal{F}_7^L(p,T)p-T \mathcal{F}_7^L(p,T)\right] \bigl\}\\
\nonumber
&= \gamma_7^{-1}\bigl[\mathcal{Q}_s(T) S^{-1}_R(s,T) \mathcal{F}_7^L(p,T)p^2-\mathcal{Q}_s(T) S^{-1}_R(s,T)T \mathcal{F}_7^L(p,T)p\\ \nonumber
&-\mathcal{Q}_s(T) S^{-1}_R(s,T) \overline{T} \mathcal{F}_7^L(p,T)p+\mathcal{Q}_s(T) S^{-1}_R(s,T)|T|^2 \mathcal{F}_7^L(p,T) \bigl],
\end{align}\endgroup

}
{\small

\begingroup\allowdisplaybreaks\begin{align}
\label{n712b}
\mathcal{Q}_s^2(T) S^{-1}_R(s,T) S^{-1}_L(p,T) \mathcal{Q}_p(T)&=\mathcal{Q}_s^3(T)(s \mathcal{I}- \overline{T}) S^{-1}_L(p,T) \mathcal{Q}_p^2(T)\\
\nonumber
&=s\mathcal{Q}_s^3(T) S^{-1}_L(p,T) \mathcal{Q}_p^2(T)-\mathcal{Q}_s^3(T) \overline{T} S^{-1}_L(p,T) \mathcal{Q}_p^2(T)\\
\nonumber
&=\gamma_{7}^{-1} \bigl\{s \left[s \mathcal{F}_7^R(s,T)-\mathcal{F}_7^R(s,T)T\right]S^{-1}_L(p,T) \mathcal{Q}_p^2(T)\\
\nonumber
&-\left[s \mathcal{F}_7^R(s,T)-\mathcal{F}_7^R(s,T)T\right]\overline{T} S^{-1}_L(p,T) \mathcal{Q}_p^2(T)\bigl\}\\
\nonumber
& =\gamma_{7}^{-1} \bigl[s^2 \mathcal{F}_7^R(s,T)S^{-1}_L(p,T) \mathcal{Q}_p^2(T)
-s\mathcal{F}_7^R(s,T)TS^{-1}_L(p,T) \mathcal{Q}_p^2(T)
\\ \nonumber
&
-s \mathcal{F}_7^R(s,T)\overline{T} S^{-1}_L(p,T) \mathcal{Q}_p^2(T)
+\mathcal{F}_7^R(s,T)|T|^2S^{-1}_L(p,T) \mathcal{Q}_p^2(T)\bigl],\\
\nonumber
\end{align}\endgroup

}
{\small
\begin{equation}
\label{n713b}
\mathcal{Q}_s(T)\mathcal{Q}_p^3(T)= \gamma_{7}^{-1}\left( \mathcal{Q}_s(T)\mathcal{F}_7^L(p,T)p-\mathcal{Q}_s(T)T \mathcal{F}_7^L(p,T) \right),
\end{equation}
}
{\small
\begin{equation}
\label{n714b}
\mathcal{Q}_s^3(T)\mathcal{Q}_p(T)= \gamma_{7}^{-1} \left(s\mathcal{F}_7^R(s,T)\mathcal{Q}_p(T)- \mathcal{F}_7^R(s,T)T\mathcal{Q}_p(T) \right).
\end{equation}
}
Now, since
{\small
 \begingroup\allowdisplaybreaks\begin{align}
\mathcal{Q}_s^3(T) \mathcal{Q}_p^3(T)&= \gamma_7^{-2} \bigl[s\mathcal{F}_7^R(s,T)\mathcal{F}_7^L(p,T)p-s\mathcal{F}_7^R(s,T)T\mathcal{F}_7^L(p,T)
\\ \nonumber
&-\mathcal{F}_7^R(s,T)T\mathcal{F}_7^L(p,T)p+\mathcal{F}_7^R(s,T)T^2\mathcal{F}_7^L(p,T) \bigl],
\end{align}\endgroup
}
we get
{\small

\begingroup\allowdisplaybreaks\begin{align}
\label{n715b}
\mathcal{Q}_s^2(T) \mathcal{Q}_p^2(T)&=
\mathcal{Q}_s^{-1}(T)\mathcal{Q}_s^3(T) \mathcal{Q}_p^3(T) \mathcal{Q}_{p}^{-1}(T)\\
\nonumber
&=(s^{2}-s(T+ \overline{T})+|T|^2)\mathcal{Q}_s^3(T) \mathcal{Q}_p^3(T) (p^{2}-p(T+ \overline{T})+|T|^2)
\\
\nonumber
& =s^{2} \mathcal{Q}_s^3(T) \mathcal{Q}_p^3(T) p^{2} +s^{2} \mathcal{Q}_s^3(T) \mathcal{Q}_p^3(T) \left(|T|^2-p(T+ \overline{T})\right)
\\
\nonumber
&  +\left(|T|^2-s(T+ \overline{T})\right)\mathcal{Q}_s^3(T) \mathcal{Q}_p^3(T)p^2+\left(|T|^2-s(T+ \overline{T})\right)\mathcal{Q}_s^3(T) \mathcal{Q}_p^3(T)\left(|T|^2-p(T+ \overline{T})\right)
\\
\nonumber
&=\gamma^{-2}_7 \bigl\{ s^3\mathcal{F}_7^R(s,T)\mathcal{F}_7^L(p,T)p^3
-s^3\mathcal{F}_7^R(s,T)T\mathcal{F}_7^L(p,T)p^2
-s^2\mathcal{F}_7^R(s,T)T\mathcal{F}_7^L(p,T)p^3\\
\nonumber
& +s^2\mathcal{F}_7^R(s,T)T^2\mathcal{F}_7^L(p,T)p^2+ s^3 \mathcal{F}_7^R(s,T) \mathcal{F}_7^L(p,T)p[|T|^2-p(T+ \overline{T})]\\
\nonumber
& -s^3\mathcal{F}_7^R(s,T) T \mathcal{F}_7^L(p,T)[|T|^2-p(T+ \overline{T})] -s^2\mathcal{F}_7^R(s,T) T \mathcal{F}_7^L(p,T)p[|T|^2-p(T+ \overline{T})]\\
\nonumber
&+ s^2\mathcal{F}_7^R(s,T) T^2 \mathcal{F}_7^L(p,T)[|T|^2-p(T+ \overline{T})] +\left[|T|^2-s(T+ \overline{T})\right]s\mathcal{F}_7^R(s,T) \mathcal{F}_7^L(p,T) p^3\\
\nonumber
&-\left[|T|^2-s(T+ \overline{T})\right]s\mathcal{F}_7^R(s,T) T \mathcal{F}_7^L(p,T) p^2-\left[|T|^2-s(T+ \overline{T})\right]\mathcal{F}_7^R(s,T)T \mathcal{F}_7^L(p,T) p^3\\
\nonumber
&+\left[|T|^2-s(T+ \overline{T})\right]\mathcal{F}_7^R(s,T) T^2 \mathcal{F}_7^L(p,T)p^2 +\left[|T|^2-s(T+ \overline{T})\right]\bigl( s\mathcal{F}_7^R(s,T)\mathcal{F}_7^L(p,T)p
\\
\nonumber
&-s\mathcal{F}_7^R(s,T)T\mathcal{F}_7^L(p,T)
-\mathcal{F}_7^R(s,T)T\mathcal{F}_7^L(p,T)p+\mathcal{F}_7^R(s,T)T^2\mathcal{F}_7^L(p,T)\bigl)\cdot
\\ \nonumber
& \cdot \left[|T|^2-p(T+ \overline{T})\right]\bigl\}.
\end{align}\endgroup

}
Summing \eqref{n711b}, \eqref{n712b}, \eqref{n713b}, \eqref{n714b} and \eqref{n715b} we get the statement.
\end{proof}

\section{\bf The $\mathcal{F}$-resolvent equation for $n$ odd}\label{GENERALCASE}
\setcounter{equation}{0}

With the $\mathcal{F}$-resolvent equations in the cases $n=3,5,7$ we can now find
a reasonable form in the general case involving the pseudo $S$-resolvent operators when it is not necessary to replace the $\mathcal{F}$-resolvent operators.
  Using the pseudo $S$-resolvent operators there are interesting symmetries that allows to use the $\mathcal{F}$-resolvent equation for the applications, for example to compute the Riesz projectors. We start by proving a technical result which generalizes Lemma \ref{res1} and Lemma \ref{res7}, see Remark \ref{gen}.

\begin{lemma}[The general structure of the
 $\mathcal{F}$-resolvent equation with the pseudo $S$-resolvent operators]\label{PSGEN}
Let $n>3$ be an odd number, and let $ h= \frac{n-1}{2}$ be the Sce exponent. Let us consider $T \in \mathcal{BC}^{0,1}(V_n)$. Then for $p,s \in \rho_{\mathcal{F}}(T)$ the following equation holds
{\small

\begingroup\allowdisplaybreaks\begin{align}
\label{resn}
& \, \, \, \,\mathcal{F}_n^R(s,T)S^{-1}_L(p,T)+S^{-1}_R(s,T)\mathcal{F}_n^L(p,T)
\\
&+ \gamma_n \biggl[ \sum_{i=0}^{h-2} \mathcal{Q}^{h-i-1}_s(T) S^{-1}_R(s,T)S^{-1}_L(p,T) \mathcal{Q}_p^{i+1}(T)
\nonumber
+ \sum_{i=0}^{h-1}  \mathcal{Q}_s^{h-i}(T) \mathcal{Q}_p^{i+1}(T) \biggl]
\\
&
=\bigl \{ \bigl[\mathcal{F}_n^R(s,T)-\mathcal{F}_n^L(p,T) \bigl]p- \bar{s}\bigl[\mathcal{F}_n^R(s,T)-\mathcal{F}_n^L(p,T) \bigl] \bigl \} (p^2-2s_0p+|s|^2)^{-1}.
\nonumber
\end{align}\endgroup

}
\end{lemma}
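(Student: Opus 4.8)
The plan is to mimic the derivations of Lemma~\ref{res1} ($n=5$) and Lemma~\ref{res7} ($n=7$): produce a whole family of identities out of the $S$-resolvent equation \eqref{reso} by sandwiching it between \emph{matched} powers of the pseudo $S$-resolvent operators, and then sum them. Concretely, fix $h=\frac{n-1}{2}$ and, for every $a\in\{0,1,\dots,h\}$, left-multiply \eqref{reso} by $\mathcal{Q}_s^a(T)$ and right-multiply by $\mathcal{Q}_p^{h-a}(T)$, obtaining $h+1$ equations, each of the form $\mathcal{Q}_s^a S^{-1}_R(s,T)S^{-1}_L(p,T)\mathcal{Q}_p^{h-a}=\{[\mathcal{Q}_s^a(S^{-1}_R(s,T)-S^{-1}_L(p,T))\mathcal{Q}_p^{h-a}]p-\overline{s}[\cdots]\}(p^2-2s_0p+|s|^2)^{-1}$. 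Multiplying each by $\gamma_n$ and summing over $a$, the left-hand side is organized by the relations $\gamma_n\mathcal{Q}_s^h(T)S^{-1}_R(s,T)=\mathcal{F}_n^R(s,T)$ and $\gamma_n S^{-1}_L(p,T)\mathcal{Q}_p^h(T)=\mathcal{F}_n^L(p,T)$: the extreme indices $a=h$ and $a=0$ give exactly $\mathcal{F}_n^R(s,T)S^{-1}_L(p,T)$ and $S^{-1}_R(s,T)\mathcal{F}_n^L(p,T)$, while the interior indices $a=1,\dots,h-1$ (reindexed by $i=h-1-a$) reproduce the first sum $\gamma_n\sum_{i=0}^{h-2}\mathcal{Q}_s^{h-i-1}(T)S^{-1}_R(s,T)S^{-1}_L(p,T)\mathcal{Q}_p^{i+1}(T)$ appearing in the claim.

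Next I would analyze the summed right-hand side. The total bracket sandwiched inside the Cauchy kernel is $\gamma_n\sum_{a=0}^{h}\mathcal{Q}_s^a(S^{-1}_R(s,T)-S^{-1}_L(p,T))\mathcal{Q}_p^{h-a}$, which splits as $\mathcal{F}_n^R(s,T)-\mathcal{F}_n^L(p,T)+E$, where $E$ collects the ``leftover'' terms $\gamma_n\sum_{a=0}^{h-1}\mathcal{Q}_s^a S^{-1}_R(s,T)\mathcal{Q}_p^{h-a}-\gamma_n\sum_{a=1}^{h}\mathcal{Q}_s^a S^{-1}_L(p,T)\mathcal{Q}_p^{h-a}$. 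Here is the crucial algebraic step: inserting $S^{-1}_R(s,T)=\mathcal{Q}_s(T)(s\mathcal{I}-\overline{T})$ and $S^{-1}_L(p,T)=\mathcal{Q}_p(T)(p\mathcal{I}-\overline{T})$ and shifting the summation index in the first sum (setting $j=a+1$) turns \emph{both} sums into sums over the same range $j\in\{1,\dots,h\}$, namely $\gamma_n\sum_{j=1}^{h}\mathcal{Q}_s^j(s\mathcal{I}-\overline{T})\mathcal{Q}_p^{h-j+1}$ and $\gamma_n\sum_{j=1}^{h}\mathcal{Q}_s^j(p\mathcal{I}-\overline{T})\mathcal{Q}_p^{h-j+1}$. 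The $\overline{T}$ contributions therefore cancel term by term, leaving the clean expression $E=\gamma_n\sum_{j=1}^{h}\mathcal{Q}_s^j(T)(s-p)\mathcal{Q}_p^{h-j+1}(T)$, exactly as in the $n=5,7$ cases.

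Finally I would feed $E$ through the Cauchy-kernel entanglement. For each $j$, using that $\mathcal{Q}_p(T)$ commutes with the scalar $p$ and with $(p^2-2s_0p+|s|^2)$, that $\mathcal{Q}_s(T)$ commutes with $\overline{s}$, and that $s\overline{s}=\overline{s}s=|s|^2$, one computes $(s-p)p-\overline{s}(s-p)=(s+\overline{s})p-p^2-|s|^2=-(p^2-2s_0p+|s|^2)$, whence $\{[\mathcal{Q}_s^j(s-p)\mathcal{Q}_p^{h-j+1}]p-\overline{s}[\mathcal{Q}_s^j(s-p)\mathcal{Q}_p^{h-j+1}]\}(p^2-2s_0p+|s|^2)^{-1}=-\mathcal{Q}_s^j(T)\mathcal{Q}_p^{h-j+1}(T)$. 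Summing over $j$ and reindexing by $i=h-j$ gives $\{Ep-\overline{s}E\}(p^2-2s_0p+|s|^2)^{-1}=-\gamma_n\sum_{i=0}^{h-1}\mathcal{Q}_s^{h-i}(T)\mathcal{Q}_p^{i+1}(T)$. Moving this term to the left-hand side of the summed equation produces precisely the second sum in \eqref{resn}, while the surviving $\{[\mathcal{F}_n^R(s,T)-\mathcal{F}_n^L(p,T)]p-\overline{s}[\mathcal{F}_n^R(s,T)-\mathcal{F}_n^L(p,T)]\}(p^2-2s_0p+|s|^2)^{-1}$ is exactly the claimed right-hand side.

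I expect the main obstacle to be the careful index bookkeeping rather than any deep structural difficulty: one must verify that the shift $j=a+1$ really aligns the two leftover sums to the common range $\{1,\dots,h\}$ so that the $\overline{T}$-terms cancel, and one must justify all the commutations used to pull $p$, $\overline{s}$ and the Cauchy kernel through the operator powers. These commutativity facts hold because $T\in\mathcal{BC}^{0,1}(V_n)$ has commuting components (so that $T\overline{T}$ is a real-scalar operator and $\mathcal{Q}_s(T),\mathcal{Q}_p(T)$ are genuine functions of $s,p$ and $T$), but they are exactly the points that make the computation delicate and that forced the use of the pseudo $S$-resolvent operators instead of trying to eliminate them.
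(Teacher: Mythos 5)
Your proposal is correct and follows essentially the same route as the paper's proof: sandwich the $S$-resolvent equation \eqref{reso} between powers $\mathcal{Q}_s^a(T)$ and $\mathcal{Q}_p^{h-a}(T)$ with exponents summing to $h$, sum over $a$ (the paper separates the extreme cases $a=0,h$ as equations \eqref{n1}--\eqref{n2} and treats the interior ones as the sum \eqref{n3}, but this is only a difference of bookkeeping), cancel the $\overline{T}$ contributions by an index shift to obtain the leftover $\gamma_n\sum_{i=0}^{h-1}\mathcal{Q}_s^{h-i}(T)(s-p)\mathcal{Q}_p^{i+1}(T)$, and collapse $(s-p)$ against the Cauchy kernel via $(s-p)p-\bar{s}(s-p)=-(p^2-2s_0p+|s|^2)$. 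All the index ranges and the commutativity facts you invoke check out against the paper's computation.
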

\begin{proof}
We left multiply the S-resolvent equation  \eqref{reso} by $ \gamma_n \mathcal{Q}_s^{h}(T)$
{\small

\begingroup\allowdisplaybreaks\begin{align}
\label{n1}
\mathcal{F}_n^R(s,T)S^{-1}_L(p,T)&= \left\{[\mathcal{F}_n^R(s,T)-\gamma_n \mathcal{Q}_s^h(T)S^{-1}_L(p,T)]p- \bar{s}[\mathcal{F}_n^R(s,T)-\gamma_n \mathcal{Q}_s^h(T)S^{-1}_L(p,T)]\right\}\cdot\\ \nonumber
&\cdot (p^2-2s_0p+|s|^2)^{-1}
\end{align}\endgroup

}

and we right multiply it by $ \gamma_n \mathcal{Q}_p^h(T)$, so we get
{\small

\begingroup\allowdisplaybreaks\begin{align}
\label{n2}
S^{-1}_R(s,T)\mathcal{F}_n^L(p,T)&= \left\{[\gamma_n S^{-1}_R(s,T)\mathcal{Q}_p^h(T)-\mathcal{F}_n^L(p,T)]p- \bar{s}[\gamma_n  S^{-1}_R(s,T) \mathcal{Q}_p^h(T)-\mathcal{F}_n^L(p,T)]\right\}\cdot\\ \nonumber
& \cdot (p^2-2s_0p+|s|^2)^{-1}.
\end{align}\endgroup

}
We now multiply S-resolvent equation on the left and on the right by $ \mathcal{Q}_s^{h-1-i}(T)$ and $ \mathcal{Q}_p^{i+1}(T)$, respectively. Then, we sum on the index $0 \leq i \leq h-2$ and we obtain

{\small

\begingroup\allowdisplaybreaks\begin{align}
\label{n3}
& \, \, \, \, \, \, \, \, \, \, \, \, \, \, \, \, \, \, \, \sum_{i=0}^{h-2} \mathcal{Q}_s^{h-1-i}(T) S^{-1}_R(s,T) S^{-1}_L(p,T)\mathcal{Q}_p^{i+1}(T)= \biggl\{ \biggl[ \sum_{i=0}^{h-2} \mathcal{Q}_s^{h-1-i}(T) S^{-1}_R(s,T)\mathcal{Q}_p^{i+1}(T)
\\ \nonumber
\nonumber
& - \mathcal{Q}_s^{h-1-i}(T) S^{-1}_L(p,T)\mathcal{Q}_p^{i+1}(T)\biggl]p- \bar{s} \biggl[ \sum_{i=0}^{h-2} \mathcal{Q}_s^{h-1-i}(T) S^{-1}_R(s,T)\mathcal{Q}_p^{i+1}(T)- \mathcal{Q}_s^{h-1-i}(T) S^{-1}_L(p,T)\mathcal{Q}_p^{i+1}(T)\biggl] \biggl\} \cdot
\\
\nonumber
& \cdot (p^2-2s_0p+|s|^2)^{-1}.
\end{align}\endgroup

}

Now we sum \eqref{n1}, \eqref{n2} and \eqref{n3} multiplied by $ \gamma_n$, and we get
{\small
\begingroup\allowdisplaybreaks\begin{align}
& \, \, \, \,\mathcal{F}_n^R(s,T)S^{-1}_L(p,T)+S^{-1}_R(s,T)\mathcal{F}_n^L(p,T)+ \gamma_n  \sum_{i=0}^{h-2} \mathcal{Q}^{h-i-1}_s(T) S^{-1}_R(s,T)S^{-1}_L(p,T) \mathcal{Q}_p^{i+1}(T)
\\ \nonumber
&
= \biggl \{ \biggl[\mathcal{F}_n^R(s,T)-\gamma_n \mathcal{Q}_s^h(T)S^{-1}_L(p,T)
\\ \nonumber
&
+\gamma_n S^{-1}_R(s,T)\mathcal{Q}_p^h(T)-\mathcal{F}_n^L(p,T) + \gamma_n\sum_{i=0}^{h-2} \mathcal{Q}_s^{h-1-i}(T) S^{-1}_R(s,T)\mathcal{Q}_p^{i+1}(T)
\\ \nonumber
&
-\mathcal{Q}_s^{h-1-i}(T) S^{-1}_L(p,T)\mathcal{Q}_p^{i+1}(T)\biggl]p
\\ \nonumber
&
- \bar{s}\biggl[\mathcal{F}_n^R(s,T)
-\gamma_n \mathcal{Q}_s^h(T)S^{-1}_L(p,T)+\gamma_n S^{-1}_R(s,T)\mathcal{Q}_p^h(T)-\mathcal{F}_n^L(p,T)
\\ \nonumber
&
 +\gamma_n\sum_{i=0}^{h-2} \mathcal{Q}_s^{h-1-i}(T) S^{-1}_R(s,T)\mathcal{Q}_p^{i+1}(T)
\\ \nonumber
&
-\mathcal{Q}_s^{h-1-i}(T) S^{-1}_L(p,T)\mathcal{Q}_p^{i+1}(T)\biggl]
\biggl\}(p^2-2s_0p+|s|^2)^{-1}.
\end{align}\endgroup

}
Putting in order the terms in the right hand side of the previous equation we get
{\small

\begingroup\allowdisplaybreaks\begin{align}
\label{n5}
& \mathcal{F}_n^R(s,T)S^{-1}_L(p,T)+S^{-1}_R(s,T)\mathcal{F}_n^L(p,T)
\\ \nonumber
&
+ \gamma_n  \sum_{i=0}^{h-2} \mathcal{Q}^{h-i-1}_s(T) S^{-1}_R(s,T)S^{-1}_L(p,T) \mathcal{Q}_p^{i+1}(T)\\ \nonumber
&
= \bigl \{ \bigl[\mathcal{F}_n^R(s,T)-\mathcal{F}_n^L(p,T) \bigl]p- \bar{s}\bigl[\mathcal{F}_n^R(s,T)-\mathcal{F}_n^L(p,T) \bigl] \bigl \} (p^2-2s_0p+|s|^2)^{-1}\\ \nonumber
& +  \gamma_n\bigl \{ \bigl[S^{-1}_R(s,T)\mathcal{Q}_p^h(T)- \mathcal{Q}_s^h(T)S^{-1}_L(p,T)+\sum_{i=0}^{h-2} \mathcal{Q}_s^{h-1-i}(T) S^{-1}_R(s,T)\mathcal{Q}_p^{i+1}(T)\\
\nonumber
& -\mathcal{Q}_s^{h-1-i}(T) S^{-1}_L(p,T)\mathcal{Q}_p^{i+1}(T)\bigl]p- \bar{s}\bigl[S^{-1}_R(s,T)\mathcal{Q}_p^h(T)- \mathcal{Q}_s^h(T)S^{-1}_L(p,T)\\
\nonumber
& + \sum_{i=0}^{h-2} \mathcal{Q}_s^{h-1-i}(T) S^{-1}_R(s,T)\mathcal{Q}_p^{i+1}(T)-\mathcal{Q}_s^{h-1-i}(T) S^{-1}_L(p,T)\mathcal{Q}_p^{i+1}(T)\bigl]\bigl\}(p^2-2s_0p+|s|^2)^{-1}.
\end{align}\endgroup

}
Now, using the definition of left and right S-resolvent operators we get
{\small

\begingroup\allowdisplaybreaks\begin{align}
&                   S^{-1}_R(s,T)\mathcal{Q}_p^h(T)- \mathcal{Q}_s^h(T)S^{-1}_L(p,T)+\sum_{i=0}^{h-2} \mathcal{Q}_s^{h-1-i}(T) S^{-1}_R(s,T)\mathcal{Q}_p^{i+1}(T) -\sum_{i=0}^{h-2}\mathcal{Q}_s^{h-1-i}(T) S^{-1}_L(p,T)\mathcal{Q}_p^{i+1}(T)
\\ \nonumber
&                   = \mathcal{Q}_s(T)(s \mathcal{I}- \overline{T}) \mathcal{Q}_p^{h}(T)-\mathcal{Q}_s^h(T) (p \mathcal{I}- \overline{T})
 \mathcal{Q}_p(T)+ \sum_{i=0}^{h-2} \mathcal{Q}_s^{h-i}(T)(s \mathcal{I}- \overline{T}) \mathcal{Q}_p^{i+1}(T)
\\ \nonumber
&                  -\sum_{i=0}^{h-2}\mathcal{Q}_s^{h-1-i}(T)(p \mathcal{I}- \overline{T}) \mathcal{Q}_p^{i+2}(T)\\ \nonumber
&                 = \mathcal{Q}_s(T)(s \mathcal{I}- \overline{T}) \mathcal{Q}_p^{h}(T)-\mathcal{Q}_s^h(T) (p \mathcal{I}- \overline{T}) \mathcal{Q}_p(T)+\mathcal{Q}_s^h(T) (s \mathcal{I}- \overline{T}) \mathcal{Q}_p(T)+ \sum_{i=1}^{h-2} \mathcal{Q}_s^{h-i}(T)(s \mathcal{I}- \overline{T}) \mathcal{Q}_p^{i+1}(T)
\\ \nonumber
&                 -\mathcal{Q}_s(T)(p \mathcal{I}- \overline{T}) \mathcal{Q}_p^{h}(T)-\sum_{i=0}^{h-3}\mathcal{Q}_s^{h-1-i}(T)(p \mathcal{I}- \overline{T}) \mathcal{Q}_p^{i+2}(T)\\ \nonumber
&                   =\mathcal{Q}_s(T)(s-p) \mathcal{Q}_p^h(T)+ \mathcal{Q}_s^h(T)(s-p) \mathcal{Q}_p(T)+\sum_{i=1}^{h-2} \mathcal{Q}_s^{h-i}(T)(s \mathcal{I}- \overline{T}) \mathcal{Q}_p^{i+1}(T) -\sum_{i=0}^{h-3}\mathcal{Q}_s^{h-1-i}(T)(p \mathcal{I}- \overline{T}) \mathcal{Q}_p^{i+2}(T)
\\ \nonumber
&
 = \mathcal{Q}_s(T)(s-p) \mathcal{Q}_p^h(T)+ \mathcal{Q}_s^h(T)(s-p) \mathcal{Q}_p(T)+\sum_{i=1}^{h-2} \mathcal{Q}_s^{h-i}(T)(s \mathcal{I}- \overline{T}) \mathcal{Q}_p^{i+1}(T)-\sum_{i=1}^{h-2} \mathcal{Q}_s^{h-i}(T)(p \mathcal{I}- \overline{T}) \mathcal{Q}_p^{i+1}(T)
 \\
 \nonumber
&                 =  \mathcal{Q}_s(T)(s-p) \mathcal{Q}_p^h(T)+ \mathcal{Q}_s^h(T)(s-p) \mathcal{Q}_p(T)+ \sum_{i=1}^{h-2} \mathcal{Q}_s^{h-i}(T)(s-p) \mathcal{Q}_p^{i+1}(T)
\\ \nonumber
&                 = \sum_{i=0}^{h-1} \mathcal{Q}_s^{h-i}(T)(s-p) \mathcal{Q}_p^{i+1}(T).
\end{align}\endgroup

}
Then we compute
{\small

\begingroup\allowdisplaybreaks\begin{align*}
& \gamma_n\bigl \{ \bigl[S^{-1}_R(s,T)\mathcal{Q}_p^h(T)- \mathcal{Q}_s^h(T)S^{-1}_L(p,T)+\sum_{i=0}^{h-2} \mathcal{Q}_s^{h-1-i}(T) S^{-1}_R(s,T)\mathcal{Q}_p^{i+1}(T)
\\ \nonumber
\nonumber
& -\mathcal{Q}_s^{h-1-i}(T) S^{-1}_L(p,T)\mathcal{Q}_p^{i+1}(T)\bigl]p- \bar{s}\bigl[S^{-1}_R(s,T)\mathcal{Q}_p^h(T)- \mathcal{Q}_s^h(T)S^{-1}_L(p,T)
\\ \nonumber
\nonumber
& + \sum_{i=0}^{h-2} \mathcal{Q}_s^{h-1-i}(T) S^{-1}_R(s,T)\mathcal{Q}_p^{i+1}(T)-\mathcal{Q}_s^{h-1-i}(T) S^{-1}_L(p,T)\mathcal{Q}_p^{i+1}(T) \bigl]\bigl\}(p^2-2s_0p+|s|^2)^{-1}
\\ \nonumber
&= \gamma_n\biggl \{ \biggl[\sum_{i=0}^{h-1} \mathcal{Q}_s^{h-i}(T)(s-p) \mathcal{Q}_p^{i+1}(T) \biggl]p- \bar{s}\biggl[\sum_{i=0}^{h-1} \mathcal{Q}_s^{h-i}(T)(s-p) \mathcal{Q}_p^{i+1}(T) \biggl]\biggl \}(p^2-2s_0p+|s|^2)^{-1}
\\ \nonumber
&= \gamma_n\biggl[\sum_{i=0}^{h-1} \mathcal{Q}_s^{h-i}(T)(sp-p^2) \mathcal{Q}_p^{i+1}(T) -\sum_{i=0}^{h-1} \mathcal{Q}_s^{h-i}(T)(|s|^2-\bar{s}p) \mathcal{Q}_p^{i+1}(T) \biggl](p^2-2s_0p+|s|^2)^{-1}
\\ \nonumber
&= \gamma_n\biggl[\sum_{i=0}^{h-1} \mathcal{Q}_s^{h-i}(T)(sp-p^2-|s|^2+\bar{s}p) \mathcal{Q}_p^{i+1}(T) \biggl](p^2-2s_0p+|s|^2)^{-1}
\\ \nonumber
&= - \gamma_n \sum_{i=0}^{h-1} \mathcal{Q}_s^{h-i}(T)\mathcal{Q}_p^{i+1}(T)(p^2-2s_0p+|s|^2)(p^2-2s_0p+|s|^2)^{-1}
\\ \nonumber
&=- \gamma_n \sum_{i=0}^{h-1} \mathcal{Q}_s^{h-i}(T)\mathcal{Q}_p^{i+1}(T).
\end{align*}\endgroup

}
Hence
{\small

\begingroup\allowdisplaybreaks\begin{align}
\label{n6}
& \gamma_n\bigl \{ \bigl[S^{-1}_R(s,T)\mathcal{Q}_p^h(T)- \mathcal{Q}_s^h(T)S^{-1}_L(p,T)+\sum_{i=0}^{h-2} \mathcal{Q}_s^{h-1-i}(T) S^{-1}_R(s,T)\mathcal{Q}_p^{i+1}(T)
\\
\nonumber
& -\mathcal{Q}_s^{h-1-i}(T) S^{-1}_L(p,T)\mathcal{Q}_p^{i+1}(T)\bigl]p- \bar{s}\bigl[S^{-1}_R(s,T)\mathcal{Q}_p^h(T)- \mathcal{Q}_s^h(T)S^{-1}_L(p,T)
\\
\nonumber
& + \sum_{i=0}^{h-2} \mathcal{Q}_s^{h-1-i}(T) S^{-1}_R(s,T)\mathcal{Q}_p^{i+1}(T)-\mathcal{Q}_s^{h-1-i}(T) S^{-1}_L(p,T)\mathcal{Q}_p^{i+1}(T) \bigl]\bigl\}(p^2-2s_0p+|s|^2)^{-1}\\
\nonumber
&=- \gamma_n \sum_{i=0}^{h-1} \mathcal{Q}_s^{h-i}(T)\mathcal{Q}_p^{i+1}(T).
\end{align}\endgroup

}
Finally, by substituting \eqref{n6} in \eqref{n5} we get \eqref{resn}.
\end{proof}
\begin{remark}
The proof of the previous lemma shows that the structure of the resolvent equations of the hyperholomorphic functional calculi is crucial. In fact the term
{\small
$$
 \{ \bigl[\mathcal{F}_n^R(s,T)-\mathcal{F}_n^L(p,T) \bigl]p- \bar{s}\bigl[\mathcal{F}_n^R(s,T)-\mathcal{F}_n^L(p,T) \bigl] \bigl \} (p^2-2s_0p+|s|^2)^{-1}
$$
}
involves the difference of the $\mathcal{F}$-resolvent operators entangled with the Cauchy kernel of slice monogenic functions. This term is equal to a function involving the products of the
$F$-resolvent operators and of the $S$-resolvent operators that appear in the term
{\small
$$
\mathcal{F}_n^R(s,T)S^{-1}_L(p,T)+S^{-1}_R(s,T)\mathcal{F}_n^L(p,T)
$$
}
and of a more complicated part that involves the
$S$-resolvent operators and the pseudo $S$-resolvent operators, namely
{\small
$$
\gamma_n \biggl[ \sum_{i=0}^{h-2} \mathcal{Q}^{h-i-1}_s(T) S^{-1}_R(s,T)S^{-1}_L(p,T) \mathcal{Q}_p^{i+1}(T)
+ \sum_{i=0}^{h-1}  \mathcal{Q}_s^{h-i}(T) \mathcal{Q}_p^{i+1}(T) \biggl].
$$
}
\end{remark}

\begin{remark}
\label{gen}
Equation \eqref{resn} generalizes \eqref{F5} and \eqref{n70}. Indeed if we put $n=5$, then $h=2$ and we get
{\small

\begingroup\allowdisplaybreaks\begin{align}
& \mathcal{F}_5^R(s,T)S^{-1}_L(p,T)+S^{-1}_R(s,T)\mathcal{F}_5^L(p,T)+ \gamma_5 \biggl[ \sum_{i=0}^{0} \mathcal{Q}^{1-i}_s(T) S^{-1}_R(s,T)S^{-1}_L(p,T) \mathcal{Q}_p^{i+1}(T)
\\
\nonumber
& + \sum_{i=0}^{1}  \mathcal{Q}_s^{2-i}(T) \mathcal{Q}_p^{i+1}(T) \biggl]=\bigl \{ \bigl[\mathcal{F}_5^R(s,T)-\mathcal{F}_5^L(p,T) \bigl]p- \bar{s}\bigl[\mathcal{F}_5^R(s,T)-\mathcal{F}_5^L(p,T) \bigl] \bigl \} (p^2-2s_0p+|s|^2)^{-1}.
\end{align}\endgroup

}
By developing the computations we obtain
{\small

\begingroup\allowdisplaybreaks\begin{align}
& \mathcal{F}_5^R(s,T)S^{-1}_L(p,T)+S^{-1}_R(s,T)\mathcal{F}_{5}^L(p,T)+\gamma_5 \bigl[ \mathcal{Q}_s(T) S^{-1}_R(s,T)S^{-1}_{L}(p,T)\mathcal{Q}_p(T)+ \mathcal{Q}_s^2(T) \mathcal{Q}_p(T)
\\
\nonumber
&+\mathcal{Q}_s(T) \mathcal{Q}_p^2(T)\bigl]= \bigl \{ [\mathcal{F}_5^R(s,T)- \mathcal{F}_5^L(p,T)]p- \bar{s}[\mathcal{F}_5^R(s,T)- \mathcal{F}_5^L(p,T)]\} (p^2-2s_0 p+|s|^2)^{-1},
\end{align}\endgroup

}
which is exactly \eqref{F5}.
\\ Now if we put $n=7$ in \eqref{resn}, then $h=3$ and we obtain
\vspace{-4mm}
{\small

\begingroup\allowdisplaybreaks\begin{align}
& \, \, \, \,\mathcal{F}_7^R(s,T)S^{-1}_L(p,T)+S^{-1}_R(s,T)\mathcal{F}_7^L(p,T)+ \gamma_7 \biggl[ \sum_{i=0}^{1} \mathcal{Q}^{2-i}_s(T) S^{-1}_R(s,T)S^{-1}_L(p,T) \mathcal{Q}_p^{i+1}(T)
\\
\nonumber
& + \sum_{i=0}^{2}  \mathcal{Q}_s^{3-i}(T) \mathcal{Q}_p^{i+1}(T) \biggl]=\bigl \{ \bigl[\mathcal{F}_7^R(s,T)-\mathcal{F}_7^L(p,T) \bigl]p- \bar{s}\bigl[\mathcal{F}_7^R(s,T)-\mathcal{F}_7^L(p,T) \bigl] \bigl \} (p^2-2s_0p+|s|^2)^{-1}.
\end{align}\endgroup

}
By developing the computations we get
{\small

\begingroup\allowdisplaybreaks\begin{align}
& \mathcal{F}_7^R(s,T)S^{-1}_L(p,T)+S^{-1}_R(s,T)\mathcal{F}_7^L(p,T)+ \gamma_7 \bigl[ \mathcal{Q}_s^2(T)S^{-1}_R(s,T)S^{-1}_L(p,T)\mathcal{Q}_p(T)\\
\nonumber
&+\mathcal{Q}_s(T)S^{-1}_R(s,T)S^{-1}_L(p,T)\mathcal{Q}_p^2(T)+\mathcal{Q}_s^3(T)\mathcal{Q}_p(T)+ \mathcal{Q}_s^2(T) \mathcal{Q}_p^2(T)+ \mathcal{Q}_s(T) \mathcal{Q}_p^3(T)]\\
\nonumber
& = \bigl \{ \bigl[\mathcal{F}_7^R(s,T)-\mathcal{F}_7^L(p,T) \bigl]p- \bar{s}\bigl[\mathcal{F}_7^R(s,T)-\mathcal{F}_7^L(p,T) \bigl] \bigl \} (p^2-2s_0p+|s|^2)^{-1},
\end{align}\endgroup

}
which is exactly \eqref{n70}.
\end{remark}

In order to find a pseudo $ \mathcal{F}$-resolvent equation we divide into two cases according to the parity of the Sce exponent
$h= \dfrac{n-1}{2}$.

\subsection{The general structure of the
pseudo $\mathcal{F}$-resolvent equation for $h$ odd}

The main result of this subsection is the following theorem.
\begin{theorem}[The general structure of the
pseudo $\mathcal{F}$-resolvent equation for $h$ odd number]\label{FREhodd}
\label{1T}
Let $n>3$ be an odd number as well as $h$. Let $T \in \mathcal{BC}^{0,1}(V_n)$. Then for $p,s \in \rho_{\mathcal{F}}(T)$ the following equation holds
{\small

\begingroup\allowdisplaybreaks\begin{align}
& \, \, \, \,\mathcal{F}_n^R(s,T)S^{-1}_L(p,T)+S^{-1}_R(s,T)\mathcal{F}_n^L(p,T)+ \gamma_n \biggl[ s \sum_{i=0}^{h-2} \mathcal{Q}^{h-i}_s(T)\mathcal{Q}_p^{i+2}(T)p-s \sum_{i=0}^{h-2} \mathcal{Q}^{h-i}_s(T) \overline{T}\mathcal{Q}_p^{i+2}(T)
\\ \nonumber
& -\sum_{i=0}^{h-2} \mathcal{Q}^{h-i}_s(T)\overline{T}\mathcal{Q}_p^{i+2}(T)p+ \sum_{i=0}^{h-2} \mathcal{Q}^{h-i}_s(T)\overline{T}^2\mathcal{Q}_p^{i+2}(T)+\sum_{i=0, i \neq \frac{h-1}{2}}^{h-1}  \mathcal{Q}_s^{h-i}(T) \mathcal{Q}_p^{i+1}(T) \biggl]
\\ \nonumber
&
+ \gamma_n^{-1} \biggl[\ s^{h} \mathcal{F}_{n}^{R}(s,T) \mathcal{F}_n^L(p,T) p^h
+\mathcal{A}_0(s,p,T)+\mathcal{B}_0(s,p,T)+\mathcal{C}_0(s,p,T) \biggl]
\\ \nonumber
&
=\bigl \{ \bigl[\mathcal{F}_n^R(s,T)-\mathcal{F}_n^L(p,T) \bigl]p- \bar{s}\bigl[\mathcal{F}_n^R(s,T)-\mathcal{F}_n^L(p,T) \bigl] \bigl \} (p^2-2s_0p+|s|^2)^{-1},
\end{align}\endgroup

}
where the three terms $\mathcal{A}_0(s,p,T)$, $\mathcal{B}_0(s,p,T)$ and $\mathcal{C}_0(s,p,T)$ are given by:
{\small

\begingroup\allowdisplaybreaks\begin{align}
 \mathcal{A}_0(s,p,T):&= -s^h \mathcal{F}_n^R(s,T)T \mathcal{F}_n^L(p,T) p^{h-1}-s^{h-1} \mathcal{F}_n^R(s,T)T \mathcal{F}_n^L(p,T) p^{h}
 \\ \nonumber
&
 +s^{h-1}\mathcal{F}_n^R(s,T)T^2 \mathcal{F}_n^L(p,T) p^{h-1}
 \\ \nonumber
&
+s^{h} \mathcal{F}_n^R(s,T) \sum_{k=1}^{\frac{h-1}{2}} \binom{\frac{h-1}{2}}{k}  (|T|^2-2T_0p)^k \mathcal{F}_n^L(p,T)p^{h-2k}
\\ \nonumber
&
-s^{h} \mathcal{F}_n^R(s,T) \sum_{k=1}^{\frac{h-1}{2}} \binom{\frac{h-1}{2}}{k}  (|T|^2-2T_0p)^k T\mathcal{F}_n^L(p,T)p^{h-1-2k}
\\ \nonumber
&
-s^{h-1} \mathcal{F}_n^{R}(s,T)\sum_{k=1}^{\frac{h-1}{2}} \binom{\frac{h-1}{2}}{k}  T(|T|^2-2T_0p)^k \mathcal{F}_n^L(p,T)p^{h-2k}
\\ \nonumber
&
+s^{h-1} \mathcal{F}_n^{R}(s,T)\sum_{k=1}^{\frac{h-1}{2}} \binom{\frac{h-1}{2}}{k}  T(|T|^2-2T_0p)^k T\mathcal{F}_n^L(p,T)p^{h-1-2k},
\end{align}\endgroup
}
{\small
\begingroup\allowdisplaybreaks\begin{align}
\mathcal{B}_0(s,p,T):&= \left(\sum_{k=1}^{\frac{h-1}{2}} \binom{\frac{h-1}{2}}{k} s^{h-2k} \mathcal{F}_n^R(s,T) (|T|^2-2T_0s)^k\right)\left(\sum_{k=1}^{\frac{h-1}{2}} \binom{\frac{h-1}{2}}{k}  (|T|^2-2T_0p)^k \mathcal{F}_n^L(p,T) p^{h-2k}\right)
\\ \nonumber
&
-\left(\sum_{k=1}^{\frac{h-1}{2}} \binom{\frac{h-1}{2}}{k} s^{h-2k} \mathcal{F}_n^R(s,T) (|T|^2-2T_0s)^k\right) \left(\sum_{k=1}^{\frac{h-1}{2}} \binom{\frac{h-1}{2}}{k}  (|T|^2-2T_0p)^k T\mathcal{F}_n^L(p,T) p^{h-1-2k}\right)
\\ \nonumber
&
-\left(\sum_{k=1}^{\frac{h-1}{2}} \binom{\frac{h-1}{2}}{k} s^{h-2k-1} \mathcal{F}_n^R(s,T) T(|T|^2-2T_0s)^k\right) \left(\sum_{k=1}^{\frac{h-1}{2}} \binom{\frac{h-1}{2}}{k}  (|T|^2-2T_0p)^k \mathcal{F}_n^L(p,T) p^{h-2k}\right)
\\ \nonumber
&
 +\left(\sum_{k=1}^{\frac{h-1}{2}} \binom{\frac{h-1}{2}}{k} s^{h-2k-1} \mathcal{F}_n^R(s,T) T(|T|^2-2T_0s)^k\right)\left(\sum_{k=1}^{\frac{h-1}{2}} \binom{\frac{h-1}{2}}{k}  (|T|^2-2T_0p)^k T \mathcal{F}_n^L(p,T) p^{h-1-2k}\right),
\end{align}\endgroup
}
and
{\small
\begingroup\allowdisplaybreaks\begin{align}
\mathcal{C}_0(s,p,T):&=
 \nonumber
  \left(\sum_{k=1}^{\frac{h-1}{2}} \binom{\frac{h-1}{2}}{k} s^{h-2k} \mathcal{F}_n^R(s,T) (|T|^2-2T_0s)^k \right) \mathcal{F}_n^{L}(p,T)p^h
  \\
  &
-\left(\sum_{k=1}^{\frac{h-1}{2}} \binom{\frac{h-1}{2}}{k} s^{h-2k} \mathcal{F}_n^R(s,T) (|T|^2-2T_0s)^k T \right)\mathcal{F}_n^{L}(p,T)p^{h-1}
\\ \nonumber
&
-\left(\sum_{k=1}^{\frac{h-1}{2}} \binom{\frac{h-1}{2}}{k} s^{h-1-2k} \mathcal{F}_n^R(s,T) T (|T|^2-2T_0s)^k \right) \mathcal{F}_n^{L}(p,T)p^h
\\ \nonumber
&
+\left(\sum_{k=1}^{\frac{h-1}{2}} \binom{\frac{h-1}{2}}{k} s^{h-1-2k} \mathcal{F}_n^R(s,T) T (|T|^2-2T_0s)^k T \right) \mathcal{F}_n^{L}(p,T)p^{h-1}.
\end{align}\endgroup

}
\end{theorem}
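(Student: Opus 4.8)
The plan is to start from the pseudo $S$-resolvent form of the $\mathcal{F}$-resolvent equation proved in Lemma \ref{PSGEN}, namely \eqref{resn}, leaving its right-hand side (the difference $\mathcal{F}_n^R(s,T)-\mathcal{F}_n^L(p,T)$ entangled with the slice Cauchy kernel) untouched, while rewriting the operator bracket on the left-hand side in two independent moves. The hypothesis that $h=\frac{n-1}{2}$ is odd enters only through the requirement that $\frac{h-1}{2}$ (and hence $\frac{h+1}{2}$) be an integer, so that the ``balanced'' index $i=\frac{h-1}{2}$ occurs in the sum $\sum_{i=0}^{h-1}\mathcal{Q}_s^{h-i}(T)\mathcal{Q}_p^{i+1}(T)$ and singles out the symmetric term $\mathcal{Q}_s^{(h+1)/2}(T)\mathcal{Q}_p^{(h+1)/2}(T)$; this is the only term I shall convert into $\mathcal{F}$-resolvent operators.

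First I would dispose of the $S$-resolvent products. Inserting the commutative expressions $S^{-1}_R(s,T)=\mathcal{Q}_s(T)(s\mathcal{I}-\overline{T})$ and $S^{-1}_L(p,T)=(p\mathcal{I}-\overline{T})\mathcal{Q}_p(T)$, each summand of $\gamma_n\sum_{i=0}^{h-2}\mathcal{Q}_s^{h-i-1}(T)S^{-1}_R(s,T)S^{-1}_L(p,T)\mathcal{Q}_p^{i+1}(T)$ collapses to $\gamma_n\mathcal{Q}_s^{h-i}(T)(s\mathcal{I}-\overline{T})(p\mathcal{I}-\overline{T})\mathcal{Q}_p^{i+2}(T)$. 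Expanding $(s\mathcal{I}-\overline{T})(p\mathcal{I}-\overline{T})=sp\,\mathcal{I}-s\overline{T}-\overline{T}p+\overline{T}^2$ and using that $s$ commutes with $\mathcal{Q}_s(T)$ and $p$ with $\mathcal{Q}_p(T)$ produces exactly the four $\overline{T}$-sums displayed inside the $\gamma_n$-bracket of the statement. The remaining piece $\gamma_n\sum_{i=0}^{h-1}\mathcal{Q}_s^{h-i}(T)\mathcal{Q}_p^{i+1}(T)$ is split as the term $i=\frac{h-1}{2}$ plus $\sum_{i\neq\frac{h-1}{2}}$; the latter sum is carried over verbatim and becomes the fifth sum of the bracket.

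It then remains to convert the balanced term. I would raise each factor to the $h$-th power through the pseudo-resolvent inverse $\mathcal{Q}_s^{-1}(T)=s^2\mathcal{I}-s(T+\overline{T})+T\overline{T}$, writing $\mathcal{Q}_s^{(h+1)/2}(T)=(\mathcal{Q}_s^{-1}(T))^{(h-1)/2}\mathcal{Q}_s^{h}(T)$ and $\mathcal{Q}_p^{(h+1)/2}(T)=\mathcal{Q}_p^{h}(T)(\mathcal{Q}_p^{-1}(T))^{(h-1)/2}$. Since $T_0=\tfrac12(T+\overline{T})$ and $|T|^2=T\overline{T}$ are scalar (hence central) operators commuting with $s$ and $p$, the binomial theorem gives $(\mathcal{Q}_s^{-1}(T))^{(h-1)/2}=\sum_{k=0}^{(h-1)/2}\binom{(h-1)/2}{k}s^{h-1-2k}(|T|^2-2T_0s)^k$ and the analogous expansion in $p$. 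Applying the left and right $\mathcal{F}$-resolvent equations \eqref{eq2} and \eqref{eq3} of Theorem \ref{LRFRESEQ}, which read $\gamma_n\mathcal{Q}_s^{h}(T)=s\mathcal{F}_n^R(s,T)-\mathcal{F}_n^R(s,T)T$ and $\gamma_n\mathcal{Q}_p^{h}(T)=\mathcal{F}_n^L(p,T)p-T\mathcal{F}_n^L(p,T)$, converts the central block $\mathcal{Q}_s^{h}(T)\mathcal{Q}_p^{h}(T)$ into $\gamma_n^{-2}(s\mathcal{F}_n^R-\mathcal{F}_n^RT)(\mathcal{F}_n^Lp-T\mathcal{F}_n^L)$; as the term carried the prefactor $\gamma_n$, the net coefficient is $\gamma_n^{-1}$, which is precisely the coefficient in front of the $\mathcal{F}$-block of the statement.

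The last step is purely organizational. Multiplying the two binomial sums against the four cross-terms of $(s\mathcal{F}_n^R-\mathcal{F}_n^RT)(\mathcal{F}_n^Lp-T\mathcal{F}_n^L)$ and sorting according to whether $k=0$ or $k\geq1$ on the $s$-side and on the $p$-side, the contribution with $k=0$ on both sides yields the leading term $s^{h}\mathcal{F}_n^R(s,T)\mathcal{F}_n^L(p,T)p^{h}$ together with the first three summands of $\mathcal{A}_0(s,p,T)$; the contributions with $k=0$ on the $s$-side and $k\geq1$ on the $p$-side furnish the remaining terms of $\mathcal{A}_0$, those with $k\geq1$ on the $s$-side and $k=0$ on the $p$-side furnish $\mathcal{C}_0$, and those with $k\geq1$ on both sides furnish $\mathcal{B}_0$; substituting back into \eqref{resn} gives the asserted identity. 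I expect this sorting to be the main obstacle: the ambient algebra is noncommutative, since $\overline{T}$ and the multiplications by $s$ and $p$ do not commute, so although $T_0$ and $|T|^2$ may be freely transported, every binomial factor $(|T|^2-2T_0s)^k$ and $(|T|^2-2T_0p)^k$ must be tracked carefully to land in the positions displayed in $\mathcal{A}_0$, $\mathcal{B}_0$ and $\mathcal{C}_0$. The fully worked cases $n=5$ and $n=7$ in Lemma \ref{res1}, Lemma \ref{res7} and Theorem \ref{pres1} serve as the template for this bookkeeping, and Remark \ref{gen} confirms that \eqref{resn} specializes correctly.
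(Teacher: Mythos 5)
Your proposal is correct and follows essentially the same route as the paper: isolate the balanced term $\mathcal{Q}_s^{(h+1)/2}(T)\mathcal{Q}_p^{(h+1)/2}(T)$ from $\sum_{i=0}^{h-1}\mathcal{Q}_s^{h-i}(T)\mathcal{Q}_p^{i+1}(T)$, rewrite it as $(\mathcal{Q}_s^{-1}(T))^{(h-1)/2}\mathcal{Q}_s^{h}(T)\mathcal{Q}_p^{h}(T)(\mathcal{Q}_p^{-1}(T))^{(h-1)/2}$, apply the binomial expansion and the left/right $\mathcal{F}$-resolvent equations of Theorem \ref{LRFRESEQ}, and expand the $S$-resolvent products via their commutative form to produce the $\overline{T}$-sums; the $k=0$/$k\geq 1$ sorting you describe is exactly how the paper assembles $\mathcal{A}_0$, $\mathcal{B}_0$ and $\mathcal{C}_0$.
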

\begin{proof}
We start by rewriting formula \eqref{resn} as
{\small

\begingroup\allowdisplaybreaks\begin{align}
& \, \, \, \,\mathcal{F}_n^R(s,T)S^{-1}_L(p,T)+S^{-1}_R(s,T)\mathcal{F}_n^L(p,T)
\\ \nonumber
&
+ \gamma_n \biggl[ \sum_{i=0}^{h-2} \mathcal{Q}^{h-i-1}_s(T) S^{-1}_R(s,T)S^{-1}_L(p,T) \mathcal{Q}_p^{i+1}(T)
 + \mathcal{Q}_s^{\frac{h+1}{2}}(T) \mathcal{Q}_p^{\frac{h+1}{2}}(T)+ \sum_{i=0, i \neq \frac{h-1}{2}}^{h-1}  \mathcal{Q}_s^{h-i}(T) \mathcal{Q}_p^{i+1}(T) \biggl]
\\ \nonumber
&
=\bigl \{ \bigl[\mathcal{F}_n^R(s,T)-\mathcal{F}_n^L(p,T) \bigl]p- \bar{s}\bigl[\mathcal{F}_n^R(s,T)-\mathcal{F}_n^L(p,T) \bigl] \bigl \} (p^2-2s_0p+|s|^2)^{-1}.
\end{align}\endgroup

}
Now, we focus on the term $\mathcal{Q}_s^{\frac{h+1}{2}}(T) \mathcal{Q}_p^{\frac{h+1}{2}}(T)$ and with some manipulations we obtain
{\small

\begingroup\allowdisplaybreaks\begin{align}
\mathcal{Q}_s^{\frac{h+1}{2}}(T) \mathcal{Q}_p^{\frac{h+1}{2}}(T)& =\mathcal{Q}_s^{\frac{h+1}{2}}(T) \mathcal{Q}_s^{\frac{h-1}{2}}(T) \mathcal{Q}_s^{\frac{1-h}{2}}(T) \mathcal{Q}_p^{\frac{1-h}{2}}(T)\mathcal{Q}_p^{\frac{h-1}{2}}(T)\mathcal{Q}_p^{\frac{h+1}{2}}(T)
\\ \nonumber
&
= \mathcal{Q}_s^{h}(T)\mathcal{Q}_s^{\frac{1-h}{2}}(T)\mathcal{Q}_p^{\frac{1-h}{2}}(T)\mathcal{Q}_p^{h}(T).
\end{align}\endgroup

}
By the binomial formula we get
{\small

\begingroup\allowdisplaybreaks\begin{align}
&
\mathcal{Q}_s^{\frac{h+1}{2}}(T) \mathcal{Q}_p^{\frac{h+1}{2}}(T)= \mathcal{Q}_s^{h}(T) (s^{2} \mathcal{I}-2sT_0+T \overline{T})^{\frac{h-1}{2}}(p^{2} \mathcal{I}-2pT_0+T \overline{T})^{\frac{h-1}{2}}\mathcal{Q}_p^{h}(T)
\\ \nonumber
&
=\mathcal{Q}_s^{h}(T) \left( \sum_{k=0}^{\frac{h-1}{2}} \binom{\frac{h-1}{2}}{k} s^{h-1-2k} (|T|^2-2T_0s)^k\right)\left( \sum_{k=0}^{\frac{h-1}{2}} \binom{\frac{h-1}{2}}{k}  (|T|^2-2T_0p)^k p^{h-1-2k}\right)\mathcal{Q}_p^{h}(T)
\\ \nonumber
&
=\mathcal{Q}_s^{h}(T) \left( s^{h-1}+\sum_{k=1}^{\frac{h-1}{2}} \binom{\frac{h-1}{2}}{k} s^{h-1-2k} (|T|^2-2T_0s)^k\right)
\left( \sum_{k=1}^{\frac{h-1}{2}}
\binom{\frac{h-1}{2}}{k}  (|T|^2-2T_0p)^k p^{h-1-2k}+p^{h-1}\right)\mathcal{Q}_p^{h}(T)
\\ \nonumber
&
=s^{h-1} \mathcal{Q}_s^{h}(T)  \mathcal{Q}_p^{h}(T)p^{h-1}+s^{h-1} \mathcal{Q}_s^{h}(T)\left( \sum_{k=1}^{\frac{h-1}{2}} \binom{\frac{h-1}{2}}{k}  (|T|^2-2T_0p)^kp^{h-1-2k} \right)\mathcal{Q}_p^{h}(T)
\\ \nonumber
& +\mathcal{Q}_s^{h}(T)\left(\sum_{k=1}^{\frac{h-1}{2}} \binom{\frac{h-1}{2}}{k} s^{h-1-2k} (|T|^2-2T_0s)^k\right) \left(\sum_{k=1}^{\frac{h-1}{2}} \binom{\frac{h-1}{2}}{k}  (|T|^2-2T_0p)^k p^{h-1-2k}\right) \mathcal{Q}_p^{h}(T)
\\ \nonumber
& +\mathcal{Q}_s^{h}(T)\left(\sum_{k=1}^{\frac{h-1}{2}} \binom{\frac{h-1}{2}}{k} s^{h-1-2k} (|T|^2-2T_0s)^k \right) \mathcal{Q}_p^h(T)p^{h-1}.
\end{align}\endgroup

}
Now, we use the left and right $ \mathcal{F}$-resolvent equations, (see Theorem \ref{FRE})
{\small
$$
\mathcal{F}_{n}^L(p,T)p-T \mathcal{F}_{n}^L(p,T)= \gamma_n Q_p^{h}(T)
$$
}
and
{\small
$$
s \mathcal{F}_{n}^R(s,T)- \mathcal{F}_{n}^R(s,T)T= \gamma_n Q_s^{h}(T).
$$
}
We go through the computations term by term
{\small

\begingroup\allowdisplaybreaks\begin{align}
\label{R0}
&               s^{h-1} \mathcal{Q}_s^{h}(T)  \mathcal{Q}_p^{h}(T)p^{h-1}
\\ \nonumber
&
= \gamma_{n}^{-2} \biggl[ s^{h} \mathcal{F}_{n}^{R}(s,T) \mathcal{F}_n^L(p,T) p^h-s^h \mathcal{F}_n^R(s,T)T \mathcal{F}_n^L(p,T) p^{h-1}
\\
\nonumber
&
-s^{h-1} \mathcal{F}_n^R(s,T)T \mathcal{F}_n^L(p,T) p^{h}+s^{h-1}\mathcal{F}_n^R(s,T)T^2 \mathcal{F}_n^L(p,T) p^{h-1}\biggl].
\end{align}\endgroup

}
Then we consider
{\small

\begingroup\allowdisplaybreaks\begin{align}
&              s^{h-1} \mathcal{Q}_s^{h}(T)\left( \sum_{k=1}^{\frac{h-1}{2}} \binom{\frac{h-1}{2}}{k}  (|T|^2-2T_0p)^k\mathcal{Q}_p^{h}(T)p^{h-1-2k} \right)
\\ \nonumber
&
= \gamma_n^{-2} \biggl[s^{h} \mathcal{F}_n^R(s,T) \sum_{k=1}^{\frac{h-1}{2}} \binom{\frac{h-1}{2}}{k}  (|T|^2-2T_0p)^k \mathcal{F}_n^L(p,T)p^{h-2k}
\\ \nonumber
&
-s^{h} \mathcal{F}_n^R(s,T) \sum_{k=1}^{\frac{h-1}{2}} \binom{\frac{h-1}{2}}{k}  (|T|^2-2T_0p)^k T\mathcal{F}_n^L(p,T)p^{h-1-2k}
\\ \nonumber
&
-s^{h-1} \mathcal{F}_n^{R}(s,T)\sum_{k=1}^{\frac{h-1}{2}} \binom{\frac{h-1}{2}}{k}  T(|T|^2-2T_0p)^k \mathcal{F}_n^L(p,T)p^{h-2k}
\\ \nonumber
&
+s^{h-1} \mathcal{F}_n^{R}(s,T)\sum_{k=1}^{\frac{h-1}{2}} \binom{\frac{h-1}{2}}{k}  T(|T|^2-2T_0p)^k T\mathcal{F}_n^L(p,T)p^{h-1-2k}\biggl].
\end{align}\endgroup

}
Then we compute the term
{\small

\begingroup\allowdisplaybreaks\begin{align}
&
\left(\sum_{k=1}^{\frac{h-1}{2}} \binom{\frac{h-1}{2}}{k} s^{h-1-2k}\mathcal{Q}_s^{h}(T) (|T|^2-2T_0s)^k\right) \left(\sum_{k=1}^{\frac{h-1}{2}}
 \binom{\frac{h-1}{2}}{k}  (|T|^2-2T_0p)^k \mathcal{Q}_p^{h}(T)p^{h-1-2k}\right)
\\ \nonumber
&
 =\gamma_n^{-2} \biggl[\left(\sum_{k=1}^{\frac{h-1}{2}} \binom{\frac{h-1}{2}}{k} s^{h-2k} \mathcal{F}_n^R(s,T) (|T|^2-2T_0s)^k\right)\left(\sum_{k=1}^{\frac{h-1}{2}} \binom{\frac{h-1}{2}}{k}  (|T|^2-2T_0p)^k \mathcal{F}_n^L(p,T) p^{h-2k}\right)
 \\ \nonumber
&
-\left(\sum_{k=1}^{\frac{h-1}{2}} \binom{\frac{h-1}{2}}{k} s^{h-2k} \mathcal{F}_n^R(s,T) (|T|^2-2T_0s)^k\right) \left(\sum_{k=1}^{\frac{h-1}{2}}
\binom{\frac{h-1}{2}}{k}
 (|T|^2-2T_0p)^k T\mathcal{F}_n^L(p,T) p^{h-1-2k}\right)
\\ \nonumber
&
-\left(\sum_{k=1}^{\frac{h-1}{2}} \binom{\frac{h-1}{2}}{k} s^{h-2k-1} \mathcal{F}_n^R(s,T) T(|T|^2-2T_0s)^k\right) \left(\sum_{k=1}^{\frac{h-1}{2}}
\binom{\frac{h-1}{2}}{k}  (|T|^2-2T_0p)^k \mathcal{F}_n^L(p,T) p^{h-2k}\right)
\\ \nonumber
&
 +\left(\sum_{k=1}^{\frac{h-1}{2}} \binom{\frac{h-1}{2}}{k} s^{h-2k-1} \mathcal{F}_n^R(s,T) T(|T|^2-2T_0s)^k\right) \left(\sum_{k=1}^{\frac{h-1}{2}}
  \binom{\frac{h-1}{2}}{k}  (|T|^2-2T_0p)^k T \mathcal{F}_n^L(p,T) p^{h-1-2k}\right)\biggl].
\end{align}\endgroup

}
We have also
{\small

\begingroup\allowdisplaybreaks\begin{align}
&
\left(\sum_{k=1}^{\frac{h-1}{2}} \binom{\frac{h-1}{2}}{k} s^{h-1-2k}\mathcal{Q}_s^{h}(T) (|T|^2-2T_0s)^k \right) \mathcal{Q}_p^h(T)p^{h-1}
\\ \nonumber
&
=\gamma_n^{-2} \biggl[\left(\sum_{k=1}^{\frac{h-1}{2}} \binom{\frac{h-1}{2}}{k} s^{h-2k} \mathcal{F}_n^R(s,T) (|T|^2-2T_0s)^k \right) \mathcal{F}_n^{L}(p,T)p^{h}
\\ \nonumber
&
-\left(\sum_{k=1}^{\frac{h-1}{2}} \binom{\frac{h-1}{2}}{k} s^{h-2k} \mathcal{F}_n^R(s,T) (|T|^2-2T_0s)^k T \right)\mathcal{F}_n^{L}(p,T)p^{h-1}
\\ \nonumber
&
-\left(\sum_{k=1}^{\frac{h-1}{2}} \binom{\frac{h-1}{2}}{k} s^{h-1-2k} \mathcal{F}_n^R(s,T) T (|T|^2-2T_0s)^k \right) \mathcal{F}_n^{L}(p,T)p^h
\\ \nonumber
&
+\left(\sum_{k=1}^{\frac{h-1}{2}} \binom{\frac{h-1}{2}}{k} s^{h-1-2k} \mathcal{F}_n^R(s,T) T (|T|^2-2T_0s)^k T \right) \mathcal{F}_n^{L}(p,T)p^{h-1}\biggl].
\end{align}\endgroup

}
Finally by using the definition of left and right S-resolvent operators we get
{\small

\begingroup\allowdisplaybreaks\begin{align}
& \sum_{i=0}^{h-2} \mathcal{Q}^{h-i-1}_s(T) S^{-1}_R(s,T)S^{-1}_L(p,T) \mathcal{Q}_p^{i+1}(T)=\sum_{i=0}^{h-2} \mathcal{Q}^{h-i}_s(T)(s \mathcal{I}- \overline{T})(p \mathcal{I}- \overline{T}) \mathcal{Q}_p^{i+2}(T)\\ \nonumber
&=s \sum_{i=0}^{h-2} \mathcal{Q}^{h-i}_s(T)\mathcal{Q}_p^{i+2}(T)p-s \sum_{i=0}^{h-2} \mathcal{Q}^{h-i}_s(T) \overline{T}\mathcal{Q}_p^{i+2}(T)-\sum_{i=0}^{h-2} \mathcal{Q}^{h-i}_s(T)\overline{T}\mathcal{Q}_p^{i+2}(T)p
\\ \nonumber
& \, \, \, \, \,+ \sum_{i=0}^{h-2} \mathcal{Q}^{h-i}_s(T)\overline{T}^2\mathcal{Q}_p^{i+2}(T),
\end{align}\endgroup

}
and this concludes the proof.
\end{proof}

\subsection{The general structure of the
pseudo $\mathcal{F}$-resolvent equation for $h$ even number}
In this last subsection we consider the case in which
 $h=(n-1)/2$ is an even number.

\begin{theorem}[The general structure of the
pseudo $\mathcal{F}$-resolvent equation for $h$ even number]\label{FREeven}
\label{2T}
Let $n>3$ be an odd number and $h$  be even. Let $T \in \mathcal{BC}^{0,1}(V_n)$. Then for $p,s \in \rho_{\mathcal{F}}(T)$ the following equation holds
{\small
\begingroup
\allowdisplaybreaks
\begin{align}
&
\mathcal{F}_n^R(s,T)S^{-1}_L(p,T)+S^{-1}_R(s,T)\mathcal{F}_n^L(p,T)
\\
\nonumber
&
+ \gamma_n \biggl[-s\mathcal{Q}_s^{\frac{h+2}{2}}(T)
\overline{T}\mathcal{Q}_p^{\frac{h+2}{2}}(T)
-\mathcal{Q}_s^{\frac{h+2}{2}}(T)\overline{T}\mathcal{Q}_p^{\frac{h+2}{2}}(T)p+
\mathcal{Q}_s^{\frac{h+2}{2}}(T)\overline{T}^2\mathcal{Q}_p^{\frac{h+2}{2}}(T)
\\ \nonumber
&
+  \sum_{i=0}^{h-1}  \mathcal{Q}_s^{h-i}(T) \mathcal{Q}_p^{i+1}(T)
+s    \sum_{i=0, i \neq \frac{h-2}{2}}^{h-2} \mathcal{Q}^{h-i}_s(T)\mathcal{Q}_p^{i+2}(T)p-s
       \sum_{i=0, i \neq \frac{h-2}{2}}^{h-2} \mathcal{Q}^{h-i}_s(T) \overline{T}\mathcal{Q}_p^{i+2}(T)
 \\ \nonumber
&
-\sum_{i=0, i \neq \frac{h-2}{2}}^{h-2} \mathcal{Q}^{h-i}_s(T)\overline{T}\mathcal{Q}_p^{i+2}(T)p+ \sum_{i=0, i \neq \frac{h-2}{2}}^{h-2} \mathcal{Q}^{h-i}_s(T)\overline{T}^2\mathcal{Q}_p^{i+2}(T) \biggl]
\\ \nonumber
&
+ \gamma_n^{-1}\left [\mathcal{A}_1(s,p,T) +\mathcal{B}_1(s,p,T) +\mathcal{C}_1(s,p,T) +s^{h} \mathcal{F}_{n}^{R}(s,T) \mathcal{F}_n^L(p,T) p^h\right]\\ \nonumber
&=\bigl \{ \bigl[\mathcal{F}_n^R(s,T)-\mathcal{F}_n^L(p,T) \bigl]p
- \bar{s}\bigl[\mathcal{F}_n^R(s,T)-\mathcal{F}_n^L(p,T) \bigl] \bigl\}  (p^2-2s_0p+|s|^2)^{-1},
\end{align}
\endgroup
}
where
{\small

\begingroup\allowdisplaybreaks\begin{align}
 \mathcal{A}_1(s,p,T):&= -s^h \mathcal{F}_n^R(s,T)T \mathcal{F}_n^L(p,T) p^{h-1}-s^{h-1} \mathcal{F}_n^R(s,T)T \mathcal{F}_n^L(p,T) p^{h}
 \\ \nonumber
 &
 +s^{h-1}\mathcal{F}_n^R(s,T)T^2 \mathcal{F}_n^L(p,T) p^{h-1}
\end{align}\endgroup

}
and
{\small

\begingroup\allowdisplaybreaks\begin{align}
\mathcal{B}_1(s,p,T):&=
s^{h} \mathcal{F}_n^R(s,T) \sum_{k=1}^{\frac{h-2}{2}} \binom{\frac{h-2}{2}}{k}  (|T|^2-2T_0p)^k \mathcal{F}_n^L(p,T)p^{h-2k}
\\ \nonumber
&
 -s^{h} \mathcal{F}_n^R(s,T) \sum_{k=1}^{\frac{h-2}{2}} \binom{\frac{h-2}{2}}{k}  (|T|^2-2T_0p)^k T\mathcal{F}_n^L(p,T)p^{h-1-2k}+
 \\ \nonumber
&
-s^{h-1} \mathcal{F}_n^{R}(s,T)\sum_{k=1}^{\frac{h-2}{2}} \binom{\frac{h-2}{2}}{k}  T(|T|^2-2T_0p)^k \mathcal{F}_n^L(p,T)p^{h-2k}
\\ \nonumber
&
+s^{h-1} \mathcal{F}_n^{R}(s,T)\sum_{k=1}^{\frac{h-2}{2}} \binom{\frac{h-2}{2}}{k}  T(|T|^2-2T_0p)^k T\mathcal{F}_n^L(p,T)p^{h-1-2k}
\end{align}\endgroup

}
and
{\small
\begingroup
\allowdisplaybreaks
\begin{align}
   &\mathcal{C}_1(s,p,T):=
\left(\sum_{k=1}^{\frac{h-2}{2}} \binom{\frac{h-2}{2}}{k} s^{h-2k} \mathcal{F}_n^R(s,T) (|T|^2-2T_0s)^k\right)\left(\sum_{k=1}^{\frac{h-2}{2}} \binom{\frac{h-2}{2}}{k}  (|T|^2-2T_0p)^k \mathcal{F}_n^L(p,T) p^{h-2k}\right)
\\ \nonumber
&
-\left(\sum_{k=1}^{\frac{h-2}{2}} \binom{\frac{h-2}{2}}{k} s^{h-2k} \mathcal{F}_n^R(s,T) (|T|^2-2T_0s)^k\right) \left(\sum_{k=1}^{\frac{h-2}{2}} \binom{\frac{h-2}{2}}{k}  (|T|^2-2T_0p)^k T\mathcal{F}_n^L(p,T) p^{h-1-2k}\right)
\\ \nonumber
&
-\left(\sum_{k=1}^{\frac{h-2}{2}} \binom{\frac{h-2}{2}}{k} s^{h-2k-1} \mathcal{F}_n^R(s,T) T(|T|^2-2T_0s)^k\right) \left(\sum_{k=1}^{\frac{h-1}{2}} \binom{\frac{h-2}{2}}{k}  (|T|^2-2T_0p)^k \mathcal{F}_n^L(p,T) p^{h-2k}\right)\\ \nonumber
&
+\left(\sum_{k=1}^{\frac{h-2}{2}} \binom{\frac{h-2}{2}}{k} s^{h-2k-1} \mathcal{F}_n^R(s,T) T(|T|^2-2T_0s)^k\right)\left(\sum_{k=1}^{\frac{h-2}{2}} \binom{\frac{h-2}{2}}{k}  (|T|^2-2T_0p)^k T \mathcal{F}_n^L(p,T) p^{h-1-2k}\right)
\\ \nonumber
&
+
\left(\sum_{k=1}^{\frac{h-2}{2}} \binom{\frac{h-2}{2}}{k} s^{h-2k} \mathcal{F}_n^R(s,T) (|T|^2-2T_0s)^k \right) \mathcal{F}_n^{L}(p,T)p^h
\\ \nonumber
&
-\left(\sum_{k=1}^{\frac{h-2}{2}} \binom{\frac{h-2}{2}}{k} s^{h-2k} \mathcal{F}_n^R(s,T) (|T|^2-2T_0s)^k T \right)\mathcal{F}_n^{L}(p,T)p^{h-1}
\\ \nonumber
&
-\left(\sum_{k=1}^{\frac{h-2}{2}} \binom{\frac{h-2}{2}}{k} s^{h-1-2k} \mathcal{F}_n^R(s,T) T (|T|^2-2T_0s)^k \right) \mathcal{F}_n^{L}(p,T)p^h
\\ \nonumber
&
+\left(\sum_{k=1}^{\frac{h-2}{2}} \binom{\frac{h-2}{2}}{k} s^{h-1-2k} \mathcal{F}_n^R(s,T) T (|T|^2-2T_0s)^k T \right) \mathcal{F}_n^{L}(p,T)p^{h-1}.
\end{align}
\endgroup
}
\end{theorem}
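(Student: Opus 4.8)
The plan is to start from the general pseudo $S$-resolvent form of the $\mathcal{F}$-resolvent equation established in Lemma~\ref{PSGEN}, namely equation~\eqref{resn}, and to convert exactly one distinguished product of pseudo $S$-resolvent operators into $\mathcal{F}$-resolvent operators while keeping every other product in pseudo $S$-resolvent form. First I rewrite the mixed sum $\sum_{i=0}^{h-2}\mathcal{Q}_s^{h-i-1}(T)S^{-1}_R(s,T)S^{-1}_L(p,T)\mathcal{Q}_p^{i+1}(T)$ appearing in~\eqref{resn}. Using the factorisations $S^{-1}_R(s,T)=\mathcal{Q}_s(T)(s\mathcal{I}-\overline{T})$ and $S^{-1}_L(p,T)=(p\mathcal{I}-\overline{T})\mathcal{Q}_p(T)$, together with $(s\mathcal{I}-\overline{T})(p\mathcal{I}-\overline{T})=sp\mathcal{I}-s\overline{T}-\overline{T}p+\overline{T}^2$, each summand becomes
\[
s\mathcal{Q}_s^{h-i}(T)\mathcal{Q}_p^{i+2}(T)p-s\mathcal{Q}_s^{h-i}(T)\overline{T}\mathcal{Q}_p^{i+2}(T)-\mathcal{Q}_s^{h-i}(T)\overline{T}\mathcal{Q}_p^{i+2}(T)p+\mathcal{Q}_s^{h-i}(T)\overline{T}^2\mathcal{Q}_p^{i+2}(T),
\]
where I use that $s$ commutes with $\mathcal{Q}_s(T)$ and $p$ with $\mathcal{Q}_p(T)$, since $T+\overline{T}=2T_0$ and $T\overline{T}=|T|^2$ are real operators.

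Because $h$ is \emph{even}, the first family $s\mathcal{Q}_s^{h-i}(T)\mathcal{Q}_p^{i+2}(T)p$ contains a unique index, $i=\frac{h-2}{2}$, for which the two exponents coincide, giving the symmetric central term $s\mathcal{Q}_s^{(h+2)/2}(T)\mathcal{Q}_p^{(h+2)/2}(T)p$. This is the only term I convert; the three $\overline{T}$-families at $i=\frac{h-2}{2}$, and all four families for $i\neq\frac{h-2}{2}$, are left untouched and reproduce the explicit $\gamma_n$-bracket in the statement, together with the full sum $\sum_{i=0}^{h-1}\mathcal{Q}_s^{h-i}(T)\mathcal{Q}_p^{i+1}(T)$, which for $h$ even has no equal-exponent summand and is therefore kept in its entirety. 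To convert the central term I write
\[
\mathcal{Q}_s^{(h+2)/2}(T)\mathcal{Q}_p^{(h+2)/2}(T)=\mathcal{Q}_s^{h}(T)\,(s^2\mathcal{I}-2sT_0+|T|^2)^{(h-2)/2}\,(p^2\mathcal{I}-2pT_0+|T|^2)^{(h-2)/2}\,\mathcal{Q}_p^{h}(T),
\]
expand the two middle factors by the binomial theorem, and replace $\mathcal{Q}_s^{h}(T)$ and $\mathcal{Q}_p^{h}(T)$ via the left and right $\mathcal{F}$-resolvent equations of Theorem~\ref{FRE}, i.e.\ $\gamma_n\mathcal{Q}_s^{h}(T)=s\mathcal{F}_n^R(s,T)-\mathcal{F}_n^R(s,T)T$ and $\gamma_n\mathcal{Q}_p^{h}(T)=\mathcal{F}_n^L(p,T)p-T\mathcal{F}_n^L(p,T)$.

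Sorting the resulting double sum according to whether the two binomial indices $k,j$ vanish then produces the four pieces in the statement: the outer factors $s,p$ absorb into the powers, the summands with $k=j=0$ give the leading term $\gamma_n^{-1}s^{h}\mathcal{F}_n^R(s,T)\mathcal{F}_n^L(p,T)p^{h}$ together with $\mathcal{A}_1(s,p,T)$; the summands with $k=0$, $j\geq 1$ give $\mathcal{B}_1(s,p,T)$; and the summands with $k\geq 1$ give $\mathcal{C}_1(s,p,T)$. Substituting back into~\eqref{resn} and collecting the pseudo $S$-resolvent terms yields the claimed identity. The main obstacle is the bookkeeping in this last step: since $s$ and $p$ are paravectors that do \emph{not} commute with $T$, $\overline{T}$ nor with the factors $(|T|^2-2T_0s)$ and $(|T|^2-2T_0p)$, every binomial factor must be kept on the prescribed side of $\mathcal{F}_n^R(s,T)$, $T$ and $\mathcal{F}_n^L(p,T)$, and it is precisely this ordering that fixes the positions of the powers of $s$, $p$ and of $T$ inside $\mathcal{A}_1$, $\mathcal{B}_1$ and $\mathcal{C}_1$. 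The computation is organised exactly as in the proof of Theorem~\ref{FREhodd}, the only structural difference being that for $h$ even the symmetric term arises from the mixed sum (at $i=\frac{h-2}{2}$) rather than from the pure pseudo $S$-resolvent sum, which explains why the three $\overline{T}$-terms at that index now appear separately in the $\gamma_n$-bracket.
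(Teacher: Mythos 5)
Your proposal is correct and follows essentially the same route as the paper's proof: both start from the pseudo $S$-resolvent form \eqref{resn} of Lemma \ref{PSGEN}, isolate the symmetric term $\mathcal{Q}_s^{h/2}(T)S_R^{-1}(s,T)S_L^{-1}(p,T)\mathcal{Q}_p^{h/2}(T)$ arising from the mixed sum at $i=\tfrac{h-2}{2}$, expand only its $s\mathcal{Q}_s^{(h+2)/2}(T)\mathcal{Q}_p^{(h+2)/2}(T)p$ piece via the binomial formula and the left and right $\mathcal{F}$-resolvent equations of Theorem \ref{FRE}, and leave all remaining products in pseudo $S$-resolvent form. Your structural remark about why the central term comes from the mixed sum (rather than from $\sum_{i=0}^{h-1}\mathcal{Q}_s^{h-i}(T)\mathcal{Q}_p^{i+1}(T)$, which has no equal-exponent summand for $h$ even) correctly identifies the only difference from the $h$ odd case treated in Theorem \ref{FREhodd}.
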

\begin{proof}
Let us begin by writing formula \eqref{resn} as
{\small

\begingroup\allowdisplaybreaks\begin{align}
& \, \, \, \,\mathcal{F}_n^R(s,T)S^{-1}_L(p,T)+S^{-1}_R(s,T)\mathcal{F}_n^L(p,T)+ \gamma_n \biggl[\mathcal{Q}_s^{\frac{h}{2}}(T) S^{-1}_R(s,T)S^{-1}_L(p,T) \mathcal{Q}_p^{\frac{h}{2}}(T)
\\ \nonumber
& + \sum_{i=0, i \neq  \frac{h-2}{2}}^{h-2} \mathcal{Q}^{h-i-1}_s(T) S^{-1}_R(s,T)S^{-1}_L(p,T) \mathcal{Q}_p^{i+1}(T)+ \sum_{i=0}^{h-1}  \mathcal{Q}_s^{h-i}(T) \mathcal{Q}_p^{i+1}(T) \biggl]
\\ \nonumber
&
=\bigl \{ \bigl[\mathcal{F}_n^R(s,T)-\mathcal{F}_n^L(p,T) \bigl]p
- \bar{s}\bigl[\mathcal{F}_n^R(s,T)-\mathcal{F}_n^L(p,T) \bigl] \bigl \}  (p^2-2s_0p+|s|^2)^{-1}.
\end{align}\endgroup

}
Now, we focus on the $\mathcal{Q}_s^{\frac{h}{2}}(T) S^{-1}_R(s,T)S^{-1}_L(p,T) \mathcal{Q}_p^{\frac{h}{2}}(T)$. By definition of left and right S-resolvent operators we get
{\small

\begingroup\allowdisplaybreaks\begin{align}
& \mathcal{Q}_s^{\frac{h}{2}}(T) S^{-1}_R(s,T)S^{-1}_L(p,T) \mathcal{Q}_p^{\frac{h}{2}}(T)=\mathcal{Q}_s^{\frac{h+2}{2}}(T)(s \mathcal{I}-\overline{T})(p \mathcal{I}- \overline{T})\mathcal{Q}_p^{\frac{h+2}{2}}(T)
 \\ \nonumber
&
=s\mathcal{Q}_s^{\frac{h+2}{2}}(T)\mathcal{Q}_p^{\frac{h+2}{2}}(T)p
-s\mathcal{Q}_s^{\frac{h+2}{2}}(T)\overline{T}\mathcal{Q}_p^{\frac{h+2}{2}}(T)
-\mathcal{Q}_s^{\frac{h+2}{2}}(T)\overline{T}\mathcal{Q}_p^{\frac{h+2}{2}}(T)p
+\mathcal{Q}_s^{\frac{h+2}{2}}(T)\overline{T}^2\mathcal{Q}_p^{\frac{h+2}{2}}(T).
\end{align}\endgroup

}
We continue the calculations only on the term $s\mathcal{Q}_s^{\frac{h+2}{2}}(T)\mathcal{Q}_p^{\frac{h+2}{2}}(T)p$.
By the binomial formula we get
{\small

\begingroup\allowdisplaybreaks\begin{align}
&
s \mathcal{Q}_s^{\frac{h+2}{2}}(T) \mathcal{Q}_p^{\frac{h+2}{2}}(T)p=s \mathcal{Q}_s^{h}(T) (s^{2} \mathcal{I}-2sT_0+T \overline{T})^{\frac{h-2}{2}}(p^{2} \mathcal{I}-2pT_0+T \overline{T})^{\frac{h-2}{2}}\mathcal{Q}_p^{h}(T)p
\\ \nonumber
&
=s\mathcal{Q}_s^{h}(T) \left( \sum_{k=0}^{\frac{h-2}{2}} \binom{\frac{h-2}{2}}{k} s^{h-2-2k} (|T|^2-2T_0s)^k\right)\left( \sum_{k=0}^{\frac{h-2}{2}} \binom{\frac{h-2}{2}}{k}  (|T|^2-2T_0p)^k p^{h-2-2k}\right)\mathcal{Q}_p^{h}(T)p
\\ \nonumber
&
=s\mathcal{Q}_s^{h}(T) \left( s^{h-2}+\sum_{k=1}^{\frac{h-2}{2}} \binom{\frac{h-2}{2}}{k} s^{h-2-2k} (|T|^2-2T_0s)^k\right)\times
\\ \nonumber
&
\times\left( \sum_{k=1}^{\frac{h-1}{2}} \binom{\frac{h-1}{2}}{k}  (|T|^2-2T_0p)^k p^{h-2-2k}+p^{h-2}\right)\mathcal{Q}_p^{h}(T)p
\\ \nonumber
&
=s^{h-1} \mathcal{Q}_s^{h}(T)  \mathcal{Q}_p^{h}(T)p^{h-1}+s^{h-1} \mathcal{Q}_s^{h}(T)\left( \sum_{k=1}^{\frac{h-2}{2}} \binom{\frac{h-2}{2}}{k}  (|T|^2-2T_0p)^kp^{h-1-2k} \right)\mathcal{Q}_p^{h}(T)
\\ \nonumber
&
+\mathcal{Q}_s^{h}(T)\left(\sum_{k=1}^{\frac{h-2}{2}} \binom{\frac{h-1}{2}}{k} s^{h-1-2k} (|T|^2-2T_0s)^k\right) \left(\sum_{k=1}^{\frac{h-2}{2}} \binom{\frac{h-2}{2}}{k}  (|T|^2-2T_0p)^k p^{h-1-2k}\right) \mathcal{Q}_p^{h}(T)
\\ \nonumber
& +\mathcal{Q}_s^{h}(T)\left(\sum_{k=1}^{\frac{h-2}{2}} \binom{\frac{h-2}{2}}{k} s^{h-1-2k} (|T|^2-2T_0s)^k \right) \mathcal{Q}_p^h(T)p^{h-1}.
\end{align}\endgroup

}
Now, we use the left and right $ \mathcal{F}$-resolvent equations in Theorem \ref{FRE} and
we go through the computations term by term
{\small
 \begingroup\allowdisplaybreaks\begin{align}
\, \, \, \, \, \,  \, \, \, \, \,s^{h-1} \mathcal{Q}_s^{h}(T)  \mathcal{Q}_p^{h}(T)p^{h-1} &= \gamma_{n}^{-2} \biggl[ s^{h} \mathcal{F}_{n}^{R}(s,T) \mathcal{F}_n^L(p,T) p^h-s^h \mathcal{F}_n^R(s,T)T \mathcal{F}_n^L(p,T) p^{h-1}
\\ \nonumber
\nonumber
& \, \, \, \, \, \, \, \, \, \, \,-s^{h-1} \mathcal{F}_n^R(s,T)T \mathcal{F}_n^L(p,T) p^{h}+s^{h-1}\mathcal{F}_n^R(s,T)T^2 \mathcal{F}_n^L(p,T) p^{h-1}\biggl],
\end{align}\endgroup
}
and
{\small

\begingroup\allowdisplaybreaks\begin{align}
&              s^{h-1} \mathcal{Q}_s^{h}(T)\left( \sum_{k=1}^{\frac{h-2}{2}} \binom{\frac{h-2}{2}}{k}  (|T|^2-2T_0p)^k\mathcal{Q}_p^{h}(T)p^{h-1-2k} \right)
\\ \nonumber
&
= \gamma_n^{-2} \biggl[s^{h} \mathcal{F}_n^R(s,T) \sum_{k=1}^{\frac{h-2}{2}} \binom{\frac{h-2}{2}}{k}  (|T|^2-2T_0p)^k \mathcal{F}_n^L(p,T)p^{h-2k}
\\ \nonumber
&             -s^{h} \mathcal{F}_n^R(s,T) \sum_{k=1}^{\frac{h-2}{2}} \binom{\frac{h-2}{2}}{k}  (|T|^2-2T_0p)^k T\mathcal{F}_n^L(p,T)p^{h-1-2k}
\\ \nonumber
&
-s^{h-1} \mathcal{F}_n^{R}(s,T)\sum_{k=1}^{\frac{h-2}{2}} \binom{\frac{h-2}{2}}{k}  T(|T|^2-2T_0p)^k \mathcal{F}_n^L(p,T)p^{h-2k}
\\ \nonumber
&
+s^{h-1} \mathcal{F}_n^{R}(s,T)\sum_{k=1}^{\frac{h-2}{2}} \binom{\frac{h-2}{2}}{k}  T(|T|^2-2T_0p)^k T\mathcal{F}_n^L(p,T)p^{h-1-2k}\biggl],
\end{align}\endgroup

}
then we consider the term
{\small

\begingroup\allowdisplaybreaks\begin{align}
&
\left(\sum_{k=1}^{\frac{h-2}{2}} \binom{\frac{h-2}{2}}{k} s^{h-1-2k}\mathcal{Q}_s^{h}(T) (|T|^2-2T_0s)^k\right) \left(\sum_{k=1}^{\frac{h-2}{2}} \binom{\frac{h-2}{2}}{k}  (|T|^2-2T_0p)^k \mathcal{Q}_p^{h}(T)p^{h-1-2k}\right) \\ \nonumber
& =\gamma_n^{-2} \biggl[\left(\sum_{k=1}^{\frac{h-2}{2}} \binom{\frac{h-2}{2}}{k} s^{h-2k} \mathcal{F}_n^R(s,T) (|T|^2-2T_0s)^k\right)\left(\sum_{k=1}^{\frac{h-2}{2}} \binom{\frac{h-2}{2}}{k}  (|T|^2-2T_0p)^k \mathcal{F}_n^L(p,T) p^{h-2k}\right)
\\ \nonumber
&-\left(\sum_{k=1}^{\frac{h-2}{2}} \binom{\frac{h-1}{2}}{k} s^{h-2k} \mathcal{F}_n^R(s,T) (|T|^2-2T_0s)^k\right) \left(\sum_{k=1}^{\frac{h-2}{2}} \binom{\frac{h-2}{2}}{k}  (|T|^2-2T_0p)^k T\mathcal{F}_n^L(p,T) p^{h-1-2k}\right)
\\ \nonumber
& -\left(\sum_{k=1}^{\frac{h-2}{2}} \binom{\frac{h-2}{2}}{k} s^{h-2k-1} \mathcal{F}_n^R(s,T) T(|T|^2-2T_0s)^k\right) \left(\sum_{k=1}^{\frac{h-2}{2}} \binom{\frac{h-2}{2}}{k}  (|T|^2-2T_0p)^k \mathcal{F}_n^L(p,T) p^{h-2k}\right)
\\ \nonumber
& +\left(\sum_{k=1}^{\frac{h-2}{2}} \binom{\frac{h-2}{2}}{k} s^{h-2k-1} \mathcal{F}_n^R(s,T) T(|T|^2-2T_0s)^k\right) \left(\sum_{k=1}^{\frac{h-2}{2}} \binom{\frac{h-2}{2}}{k}  (|T|^2-2T_0p)^k T \mathcal{F}_n^L(p,T) p^{h-1-2k}\right)\biggl],
\end{align}\endgroup

}
and the other term
{\small

\begingroup\allowdisplaybreaks\begin{align}
&                         \left(\sum_{k=1}^{\frac{h-2}{2}} \binom{\frac{h-2}{2}}{k} s^{h-1-2k}\mathcal{Q}_s^{h}(T) (|T|^2-2T_0s)^k \right) \mathcal{Q}_p^h(T)p^{h-1}
\\ \nonumber
&
=\gamma_n^{-2} \biggl[\left(\sum_{k=1}^{\frac{h-2}{2}} \binom{\frac{h-2}{2}}{k} s^{h-2k} \mathcal{F}_n^R(s,T) (|T|^2-2T_0s)^k \right) \mathcal{F}_n^{L}(p,T)p^h
\\ \nonumber
&
-\left(\sum_{k=1}^{\frac{h-2}{2}} \binom{\frac{h-2}{2}}{k} s^{h-2k} \mathcal{F}_n^R(s,T) (|T|^2-2T_0s)^k T \right)\mathcal{F}_n^{L}(p,T)p^{h-1}
\\ \nonumber
&
-\left(\sum_{k=1}^{\frac{h-2}{2}} \binom{\frac{h-2}{2}}{k} s^{h-1-2k} \mathcal{F}_n^R(s,T) T (|T|^2-2T_0s)^k \right) \mathcal{F}_n^{L}(p,T)p^h
\\ \nonumber
&
+\left(\sum_{k=1}^{\frac{h-2}{2}} \binom{\frac{h-2}{2}}{k} s^{h-1-2k} \mathcal{F}_n^R(s,T) T (|T|^2-2T_0s)^k T \right) \mathcal{F}_n^{L}(p,T)p^{h-1}\biggl].
\end{align}\endgroup

}
Finally by using the definition of left and right S-resolvent operators we obtain
{\small

\begingroup\allowdisplaybreaks\begin{align}
& \sum_{i=0, i \neq \frac{h-2}{2}}^{h-2} \mathcal{Q}^{h-i-1}_s(T) S^{-1}_R(s,T)S^{-1}_L(p,T) \mathcal{Q}_p^{i+1}(T)=\sum_{i=0, i \neq \frac{h-2}{2}}^{h-2} \mathcal{Q}^{h-i}_s(T)(s \mathcal{I}- \overline{T})(p \mathcal{I}- \overline{T}) \mathcal{Q}_p^{i+2}(T)\\ \nonumber
&=s       \sum_{i=0, i \neq \frac{h-2}{2}}^{h-2} \mathcal{Q}^{h-i}_s(T)\mathcal{Q}_p^{i+2}(T)p-s   \sum_{i=0, i \neq \frac{h-2}{2}}^{h-2} \mathcal{Q}^{h-i}_s(T) \overline{T}\mathcal{Q}_p^{i+2}(T)-\sum_{i=0, i \neq \frac{h-2}{2}}^{h-2} \mathcal{Q}^{h-i}_s(T)\overline{T}\mathcal{Q}_p^{i+2}(T)p
\\ \nonumber
& \, \, \, \, \,+ \sum_{i=0, i \neq \frac{h-2}{2}}^{h-2} \mathcal{Q}^{h-i}_s(T)\overline{T}^2\mathcal{Q}_p^{i+2}(T)
\end{align}\endgroup
}
and this concludes the proof.
\end{proof}

\section{\bf The Riesz projectors for the $\mathcal{F}$-functional calculus for $n=5$}\label{RPROJ5}
\setcounter{equation}{0}

In this section we study the
Riesz projectors for the $\mathcal{F}$-functional calculus for $n=5$. This case is contained in the general result in the next section, but the explicit computation that can be done in this particular case
shows the path for the general case and why we have introduced the pseudo $\mathcal{F}$-resolvent equation.

We recall two preliminary lemmas that will be useful in the sequel and in the next section.
\begin{lemma}
\label{res2}
Let $B \in \mathcal{B}(V_n)$. Let $G$ be a bounded slice Cauchy domain and let $f$ be an intrinsic slice monogenic function whose domain contains $G$. Then for $p \in G$, and for any $I\in\mathbb S$ we have
$$ \frac{1}{2 \pi} \int_{\partial(G \cap \mathbb{C}_I)}f(s) d s_I( \bar{s}B-Bp)(p^{2}-2s_0p+|s|^2)^{-1}=Bf(p).$$
\end{lemma}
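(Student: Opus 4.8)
The plan is to reduce the operator identity to a Clifford-module computation and then to evaluate it on the slice passing through $p$. First I would apply both sides to an arbitrary $v\in V_n$ and set $w:=Bv\in V_n$. Since $B$ is right linear and $f(p)$ is a Clifford scalar, the right-hand side applied to $v$ is $(Bf(p))v=(Bv)f(p)=wf(p)$, while the left-hand side applied to $v$ becomes $\frac{1}{2\pi}\int f(s)\,ds_I\,(\bar s w-wp)\Pi(s,p)^{-1}$, where I write $\Pi(s,p):=p^{2}-2s_0p+|s|^2$ and use $(\bar s B-Bp)v=\bar s(Bv)-(Bv)p=\bar s w-wp$. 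Thus the lemma is equivalent to the module identity
\[ \frac{1}{2\pi}\int_{\partial(G\cap\mathbb{C}_I)} f(s)\,ds_I\,(\bar s w-wp)\,\Pi(s,p)^{-1}=wf(p),\qquad(\star) \]
to be shown for every $w\in V_n$; since $V_n=V\otimes\mathbb{R}_n$ and the whole Clifford action sits on the $\mathbb{R}_n$-factor, this is genuinely a Clifford-algebra statement.

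Next I would show the integral is independent of $I$, so that I may compute on the most convenient slice. The kernel $g(s):=(\bar s w-wp)\Pi(s,p)^{-1}$ is left slice monogenic in $s$: writing $s=u+Jv$ with $u=s_0$, $v=|\,\underline{s}\,|$, one has $\Pi(s,p)=(u-p)^{2}+v^{2}$, and a direct check of the Cauchy--Riemann system for the components $f_0(u,v)=(uw-wp)\Pi(s,p)^{-1}$, $f_1(u,v)=-vw\,\Pi(s,p)^{-1}$, using $\partial_u\Pi^{-1}=-2(u-p)\Pi^{-2}$ and $\partial_v\Pi^{-1}=-2v\,\Pi^{-2}$, verifies the system. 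As $f$ is intrinsic (hence right slice monogenic), the Cauchy integral theorem for slice monogenic functions makes $\int_{\partial(G\cap\mathbb{C}_I)}f(s)\,ds_I\,g(s)$ independent of $I\in\mathbb{S}$. I therefore take $I=J_p$, so that $s,\bar s,p$ all lie in the commuting plane $\mathbb{C}_I$ and $\Pi(s,p)=(p-s)(p-\bar s)$.

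The heart of the argument is then the computation on $\mathbb{C}_{J_p}$. I decompose $w=w'+w''$ with $w'=\tfrac12(w-IwI)$ and $w''=\tfrac12(w+IwI)$, so that $w'a=aw'$ and $w''a=\bar a\,w''$ for every $a\in\mathbb{C}_I$. A short manipulation gives $\bar s w-wp=w'(\bar s-p)+w''(s-p)$, and multiplying by $\Pi(s,p)^{-1}=(p-s)^{-1}(p-\bar s)^{-1}$ and cancelling yields the clean expression $g(s)=w'(s-p)^{-1}+w''(\bar s-p)^{-1}$. For the $w'$ part, $w'$ commutes through $f(s)\,ds_I$ and the classical Cauchy formula on $\mathbb{C}_I$ gives $\frac{1}{2\pi}\int f(s)\,ds_I\,(s-p)^{-1}=f(p)$, contributing $w'f(p)$. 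For the $w''$ part I pull $w''$ to the far left, which conjugates the scalar factor, $f(s)\,ds_I\,w''=w''\,\overline{f(s)}\,\overline{ds_I}$; then the intrinsic property $\overline{f(s)}=f(\bar s)$ together with the substitution $t=\bar s$ (which reverses the orientation of the symmetric contour and sends $\overline{ds_I}=I\,d\bar s$ back to $dt_I$ up to sign) recovers $\frac{1}{2\pi}\int f(t)\,dt_I\,(t-p)^{-1}=f(p)$, contributing $w''f(p)$. Summing gives $(w'+w'')f(p)=wf(p)$, which is $(\star)$.

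The main obstacle is the $w''$ term: one must carefully track the conjugation of $f(s)\,ds_I$, the orientation reversal under $s\mapsto\bar s$ on the axially symmetric contour, and the sign hidden in $\overline{ds_I}=I\,d\bar s$, and verify that these signs conspire to return exactly $f(p)$ rather than a conjugated value. A secondary point requiring care is the verification that $g$ is left slice monogenic in $s$, since this is what licenses the reduction to the convenient slice $\mathbb{C}_{J_p}$; the decomposition $w=w'+w''$ relative to $I=J_p$ is precisely the device that resolves the non-commutativity of $B$ with the Clifford scalars.
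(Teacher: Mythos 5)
The paper states Lemma \ref{res2} without proof (it is recalled from the $S$-functional-calculus literature), and your argument is, in substance, the standard one and is correct: the identity is really a statement about an element $w$ of a two-sided Clifford module, the splitting $w=w'+w''$ with $w'=\tfrac12(w-IwI)$, $w''=\tfrac12(w+IwI)$ for $I=J_p$ turns the kernel into $w'(s-p)^{-1}+w''(\bar s-p)^{-1}$, and the two classical Cauchy integrals on $G\cap\mathbb{C}_I$ (the second after conjugating $f(s)\,ds_I$ past $w''$, substituting $t=\bar s$ and using $\overline{f(s)}=f(\bar s)$, which is where intrinsicness is essential) return $w'f(p)+w''f(p)=wf(p)$; your sign bookkeeping in the $w''$ term is right. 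Two points should be tightened. First, the reduction ``apply to $v$ and set $w=Bv$'' tacitly assumes $(Bp)v=(Bv)p$; with the convention that makes $Bp$ a right-linear operator, namely $(Bp)v=B(pv)$, that computation is not available --- but nothing is lost, because $\mathcal{B}(V_n)$ is itself a two-sided Banach module over $\mathbb{R}_n$ and your identity $(\star)$ applied directly to $w=B$ inside that module is verbatim the lemma; the decomposition and every later step go through unchanged. Second, the independence of $I$ is not a consequence of the Cauchy integral theorem: Lemma \ref{cau} only yields vanishing when both factors are slice hyperholomorphic on all of $\overline{G}$, whereas here $g$ is singular on $[p]\subset G$. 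You need either the standard representation-formula argument from the literature, or the shorter observation that, $f$ being intrinsic, $f(s)\,ds_I\,g(s)=ds_I\,(fg)(s)$ with $fg$ left slice hyperholomorphic away from $[p]$; writing $(fg)(u+Iv)=h_0(u,v)+Ih_1(u,v)$ and using the symmetry of $\partial(G\cap\mathbb{C}_I)$ under conjugation (which reverses orientation and flips the sign of $h_1$ and of $dv$) shows the integral equals $\int (h_1\,du+h_0\,dv)$, which manifestly does not depend on $I$. With these two repairs the proof is complete.
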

We need also the following consequence of the the Stokes's theorem
\begin{lemma}
\label{cau}
Let $f$ and $g$ be left slice monogenic and right slice monogenic functions, respectively, defined on an open set $U$. For any $I \in \mathbb{S}$ and any open bounded set $D_I$ in $U \cap \mathbb{C}_I$ whose boundary is a finite union of continuously differentiable Jordan curves, we have
$$ \int_{\partial D_I} g(s) ds_I f(s)=0.$$
\end{lemma}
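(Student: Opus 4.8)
The plan is to identify the integrand with a smooth $1$-form on the slice $U\cap\mathbb{C}_I$ whose exterior derivative vanishes, and then to invoke the classical planar Stokes (Green) theorem on the region $D_I$. First I would fix $I\in\mathbb{S}$ and use the complex coordinate $s=u+Iv$ on $\mathbb{C}_I$, so that $ds=du+I\,dv$ and, since $ds_I=ds(-I)$, one has $ds_I=dv-I\,du$. The key observation is that the intrinsic slice monogenicity conditions become the familiar Cauchy--Riemann operators on this slice: writing the restriction of the left slice monogenic $f(x)=f_0(u,v)+Jf_1(u,v)$ as $f(u+Iv)=f_0+If_1$, the system $\partial_u f_0-\partial_v f_1=0$, $\partial_v f_0+\partial_u f_1=0$ is exactly equivalent to $(\partial_u+I\partial_v)f=0$ on the slice; symmetrically, the restriction of the right slice monogenic $g(x)=g_0(u,v)+g_1(u,v)J$ satisfies $\partial_u g+(\partial_v g)\,I=0$, with $I$ now multiplying from the right.

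Next I would form the $1$-form $\omega:=g(s)\,ds_I\,f(s)$. Since $du$ and $dv$ are scalar (real) forms they commute past the Clifford-valued factors, so $\omega=(gf)\,dv-(gIf)\,du$. A direct computation of the exterior derivative, collecting the coefficient of $du\wedge dv$, gives
$$d\omega=\bigl[(\partial_u g+(\partial_v g)\,I)\,f+g\,(\partial_u f+I\,\partial_v f)\bigr]\,du\wedge dv.$$
Both bracketed factors vanish by the monogenicity identities recorded above, hence $d\omega=0$ identically on $U\cap\mathbb{C}_I$. Because $\partial D_I$ is a finite union of continuously differentiable Jordan curves bounding the relatively compact $D_I\subset U\cap\mathbb{C}_I$, Stokes' theorem applies and yields
$$\int_{\partial D_I}g(s)\,ds_I\,f(s)=\int_{D_I}d\omega=0,$$
which is the assertion.

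The genuinely delicate step is the translation between the intrinsic definition of slice monogenicity (the even/odd symmetry of $f_0,f_1$ together with the Cauchy--Riemann system) and the planar operators $\partial_u+I\partial_v$ and $\partial_u+(\,\cdot\,)I\partial_v$ acting on the slice restriction; once this is in place, together with the commutation of the scalar forms $du,dv$ past Clifford factors, the remainder is a routine application of the classical Stokes theorem. Two minor points must be checked: that the differentiability hypothesis on $f_0,f_1$ indeed furnishes the $C^1$ regularity required for Stokes, and that the orientation induced on $\partial D_I$ by $D_I$ is compatible with the one implicit in $ds_I$ --- although, since $d\omega\equiv 0$, the vanishing of the integral is in fact insensitive to the choice of orientation.
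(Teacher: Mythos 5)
Your proof is correct and follows exactly the route the paper intends: the paper states this lemma precisely as ``a consequence of the Stokes's theorem,'' and your argument --- identifying $g(s)\,ds_I\,f(s)$ with the closed $1$-form $(gf)\,dv-(gIf)\,du$ on the slice, verifying $d\omega=0$ from the Cauchy--Riemann systems for the left and right slice monogenic restrictions, and applying the planar Stokes theorem --- is the standard proof behind that citation. No gaps; your two flagged checkpoints (real analyticity giving $C^1$ regularity, and insensitivity to orientation since $d\omega\equiv 0$) are handled correctly.
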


In this section we give an answer to \cite[Rem. 5.5]{CG}. Indeed, we prove that due to the equations showed in the previous section the operator defined in \eqref{Rp} are projectors.
  We begin by recalling the definition of projectors.
\begin{definition}
Let $V_n$ be a Banach module and let $P: V_n \to V_n$ be a linear operator. If $P^2=P$ we say that $P$ is a projector.
\end{definition}

In the sequel we need the following lemma, which is based on the monogenic functional calculus.

\begin{lemma}
\label{res32}
Let $ T \in \mathcal{BC}(V_5)$. Suppose that $G$ contains just some points of the $ \mathcal{F}$-spectrum of $T$ and assume that the closed smooth curve $ \partial(G \cap \mathbb{C}_I)$ belongs to the $ \mathcal{F}$-resolvent set of $T$, for every $I \in \mathbb{S}$. Then
{\small
$$ \int_{\partial(G \cap \mathbb{C}_I)}s^{n} ds_I \mathcal{F}_5^{R}(s,T)=0,$$
$$ \int_{\partial(G \cap \mathbb{C}_I)} \mathcal{F}_5^{L}(p,T) d p_I p^n=0,$$
}
for $n=0,1,2,3$.
\end{lemma}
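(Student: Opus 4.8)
The plan is to read the integrand as a function of the integration variable, observe that it is left slice hyperholomorphic on $\rho_{\mathcal{F}}(T)$, and reduce the computation to an integral over a large sphere, on which the series expansion of the $\mathcal{F}$-resolvent operator converges. The decisive structural fact is that for $n=5$ the Sce exponent is $h=2$, so $2h=4$, and by the series expansion of the $\mathcal{F}$-resolvent operators (which rests on Theorem \ref{aak}, where $\Delta^h x^m=0$ precisely for $m<2h$) one has, for $|s|>\|T\|$,
\[
\mathcal{F}_5^R(s,T)=\sum_{m=4}^{+\infty}s^{-1-m}\sum_{\ell=1}^{m-3}K_\ell(m,2)\,T^{m-3-\ell+1}\overline{T}^{\ell-1}.
\]
The summation begins at $m=2h=4$, which is exactly why the statement concerns $n=0,1,2,3<2h$; these are the degrees annihilated by $\Delta^h$.

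First I would record that $s\mapsto s^n\mathcal{F}_5^R(s,T)$ is left slice hyperholomorphic on $\rho_{\mathcal{F}}(T)$ (by Proposition \ref{Laplacian} and the definition of the $\mathcal{F}$-resolvent), so that $s^n$ can play the role of the right slice monogenic factor and $\mathcal{F}_5^R(s,T)$ that of the left slice monogenic factor in Lemma \ref{cau}. Since $\partial(G\cap\mathbb{C}_I)$ lies in $\rho_{\mathcal{F}}(T)$, it separates $\sigma_{\mathcal{F}}(T)$ into the compact piece $\sigma_1:=\sigma_{\mathcal{F}}(T)\cap G$ inside $G$ and $\sigma_2:=\sigma_{\mathcal{F}}(T)\setminus\overline{G}$ outside, which are at strictly positive distance. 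Invoking the spectral decomposition furnished by the $S$-functional calculus I would write $V_5=X_1\oplus X_2$, $T=T_1\oplus T_2$ with $\sigma_{\mathcal{F}}(T_j)=\sigma_j$, compatibly with the $\mathcal{F}$-resolvent, so that $\mathcal{F}_5^R(s,T)=\mathcal{F}_5^R(s,T_1)\oplus\mathcal{F}_5^R(s,T_2)$. On $X_2$ the integrand is slice hyperholomorphic throughout $G$, so Lemma \ref{cau} yields a vanishing contribution; on $X_1$ the set $G$ now contains all of $\sigma_{\mathcal{F}}(T_1)$, so a further application of Lemma \ref{cau} lets me deform $\partial(G\cap\mathbb{C}_I)$ to a large sphere $\partial(B_R\cap\mathbb{C}_I)$ with $R>\|T_1\|$ without crossing the spectrum.

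On that sphere the series converges uniformly and may be integrated term by term; each term produces the scalar integral $\int_{\partial(B_R\cap\mathbb{C}_I)}s^{n-1-m}\,ds_I$, which vanishes unless $n-1-m=-1$, that is $m=n$. Since $m\ge 4$ while $n\le 3$, every term is zero, and the first identity follows. The identity for $\mathcal{F}_5^L$ is obtained by the symmetric argument, with the operator coefficients placed on the left and the corresponding left series. I expect the main obstacle to be precisely this reduction step: because $G$ need contain only part of $\sigma_{\mathcal{F}}(T)$, one cannot deform directly to a sphere enclosing the whole spectrum, and it is the spectral splitting, together with the fact that the $\mathcal{F}$-series starts at $m=2h$, that makes the argument close — this is the sense in which the proof rests on the monogenic functional calculus.
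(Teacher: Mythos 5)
Your proof is correct, but it reaches the conclusion by a genuinely different route from the paper. The paper first proves the \emph{pointwise} identity $\int_{\partial(G\cap\mathbb{C}_I)}\mathcal{F}_5^L(p,x)\,dp_I\,p^n=0$ for paravectors $x$ (the integral computes $2\pi\Delta^2 x^n=0$ when $x$ lies inside $G$, and vanishes by Cauchy's theorem when it lies outside), and then lifts this to the operator level by representing $\mathcal{F}_5^L(p,T)=\int_{\partial\Omega}\mathcal{G}_\omega(T)\mathbf{n}(\omega)\mathcal{F}_5^L(p,\omega)\,d\mu(\omega)$ through the monogenic functional calculus and applying Fubini. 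You instead transfer the same underlying fact --- that $\Delta^h$ annihilates $x^m$ for $m<2h$, so the $\mathcal{F}$-resolvent series starts at $m=2h=4$ --- by splitting $T=T_1\oplus T_2$ with the Riesz projectors of the $S$-functional calculus, killing the $X_2$-part by Lemma \ref{cau}, and deforming the contour for the $X_1$-part to a large sphere where the series of Section \ref{newserF} can be integrated term by term. Both arguments import an external tool: the paper needs the monogenic functional calculus, you need the (standard, but not stated in this paper) theorem that $\chi_G(T)$ is a projection commuting with $T$ whose ranges carry the two pieces of the $S$-spectrum. Your route has the merit of actually using the new series expansion the paper develops, and of avoiding the monogenic calculus entirely; its one point deserving an explicit line is that the projector must commute with each \emph{component} $T_\ell$ (not merely with $T$) so that $T|_{X_j}$ remains a paravector operator with commuting components to which $\mathcal{F}_5^R(\cdot,T_j)$ and its series apply --- this does hold, since the projector is built from $T$ and $\overline{T}$ and every $T_\ell$ commutes with both, but it should be said. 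Your closing remark that the argument ``rests on the monogenic functional calculus'' does not describe your own proof (which uses only the $S$-calculus); it happens to describe the paper's.
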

\begin{proof}
Since $ \Delta^2 x^0=\Delta^2 x^1=\Delta^2 x^2=\Delta^2 x^3=0$ we have that
{\small
$$ \int_{\partial(G \cap \mathbb{C}_I)}s^{n} ds_I \mathcal{F}_5^{R}(s,x)=0,$$
$$ \int_{\partial(G \cap \mathbb{C}_I)} \mathcal{F}_5^{L}(p,x) d p_I p^n=0,$$
}
for $n=0,1,2,3$ and for all $x$ such that $x \notin [s]$ if $s \in \partial(G \cap \mathbb{C}_I)$ (respectively, for all $x$ such that $x \notin [p]$ if $p \in \partial(G \cap \mathbb{C}_I)$). We will work only on the case $ \mathcal{F}_5^L(p,x)$, since the other is similar. We recall that $ \mathcal{F}_5^L(p,x)$ is left monogenic in $x$ for every $p$, such that $x \notin [p]$. Therefore we can use the definition of the monogenic functional calculus
{\small
$$ \mathcal{F}_5^L(p,T)= \int_{\partial \Omega} \mathcal{G}_{\omega}(T) \textbf{n}(\omega) \mathcal{F}_5^L(p, \omega) d \mu (\omega),$$
}
where the open set $ \Omega$ contains the monogenic spectrum of $T$, $ \mathcal{G}_{\omega}(T)$ is the monogenic resolvent operator, $ \textbf{n}(\omega)$ is the unit normal vector to $ \partial \Omega$ and $ d \mu(\omega)$ is the surface element. From the Fubini's theorem it follows that
{\small
 \begingroup\allowdisplaybreaks\begin{align}
\int_{\partial(G \cap \mathbb{C}_I)} \mathcal{F}_5^{L}(p,T) d p_I p^n &= \int_{\partial(G \cap \mathbb{C}_I)} \int_{\partial \Omega}\left(\mathcal{G}_{\omega}(T) \textbf{n}(\omega) \mathcal{F}_5^L(p, \omega) d \mu (\omega)\right) p^n dp_I\\ \nonumber
&= \int_{\partial \Omega} \mathcal{G}_{\omega}(T)\textbf{n}(\omega) \left(\int_{\partial(G \cap \mathbb{C}_I)} \mathcal{F}_5^{L}(s,x) p^ndp_I\right)d \mu( \omega)\\ \nonumber
&= 0,
\end{align}\endgroup
}
which concludes the proof.
\end{proof}

\begin{theorem}
Let $ T \in \mathcal{BC}^{0.1}(V_5)$ be such that $T= \sum_{\ell =1}^5 e_i T_i$. Let $ \sigma_{\mathcal{F}}(T)= \sigma_{\mathcal{F},1}(T) \cup \sigma_{\mathcal{F},2}(T)$ with
$$
\hbox{dist} \left(\sigma_{\mathcal{F},1}(T),\sigma_{\mathcal{F},2}(T)\right)>0
$$
and with
$$
\sigma(T_\ell)\subset \mathbb{R}\ \  {\rm for \ all} \ \ell=1,...,5.
$$
Let $G_1$, $G_2$ be two admissible sets for $T$ such that $ \sigma_{\mathcal{F},1}(T) \subset G_1$ and $ \bar{G}_1 \subset G_2$ and such that $dist \left(G_2, \sigma_{\mathcal{F},2}(T) \right)>0$. Then the operator
{\small
$$\check{P}= \frac{1}{\gamma_5 (2 \pi)} \int_{\partial(G_1 \cap \mathbb{C}_I)} \mathcal{F}_5^L(p,T) dp_I p^4=\frac{1}{\gamma_5 (2 \pi)} \int_{\partial(G_2 \cap \mathbb{C}_I)}  s^4 ds_I \mathcal{F}_5^R(s,T) $$
}
is a projector.
\end{theorem}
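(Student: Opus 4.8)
The plan is to establish $\check{P}^2=\check{P}$ by using the two equivalent integral representations of $\check{P}$ simultaneously. Taking the right representation over $G_2$ for the first factor, the left representation over $G_1$ for the second, and exchanging the order of integration, I would write
\[
\check{P}^2=\frac{1}{(2\pi\gamma_5)^2}\int_{\partial(G_2\cap\mathbb{C}_I)}\int_{\partial(G_1\cap\mathbb{C}_I)} s^4\,ds_I\,\mathcal{F}_5^R(s,T)\,\mathcal{F}_5^L(p,T)\,dp_I\,p^4 .
\]
All the difficulty sits in the operator product $\mathcal{F}_5^R(s,T)\mathcal{F}_5^L(p,T)$, which must be disentangled in the two variables; note that the opposite ordering $\mathcal{F}_5^L\mathcal{F}_5^R$, which the Remark after Theorem \ref{2T} warns against, cannot be used, so the choice of representations is forced.

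To disentangle the product I would factor $\mathcal{F}_5^R(s,T)=\gamma_5\mathcal{Q}_s^2(T)S^{-1}_R(s,T)$ and $\mathcal{F}_5^L(p,T)=\gamma_5 S^{-1}_L(p,T)\mathcal{Q}_p^2(T)$, insert the ordinary $S$-resolvent equation \eqref{reso} for $S^{-1}_R(s,T)S^{-1}_L(p,T)$, and pull the outer pseudo-resolvents back in. This reproduces the structure of Lemma \ref{res1} and yields the identity
\[
\mathcal{F}_5^R(s,T)\mathcal{F}_5^L(p,T)=\gamma_5\bigl\{[\mathcal{F}_5^R(s,T)\mathcal{Q}_p^2(T)-\mathcal{Q}_s^2(T)\mathcal{F}_5^L(p,T)]p-\bar{s}[\mathcal{F}_5^R(s,T)\mathcal{Q}_p^2(T)-\mathcal{Q}_s^2(T)\mathcal{F}_5^L(p,T)]\bigr\}(p^2-2s_0p+|s|^2)^{-1}.
\]
A point I would stress is that the standing hypothesis $\sigma(T_\ell)\subset\mathbb{R}$ makes $T+\overline{T}=2T_0$ and $T\overline{T}$ real scalar operators, so $\mathcal{Q}_s(T)$ and $\mathcal{Q}_p(T)$ commute with the Clifford numbers $s,p,\bar{s}\in\mathbb{C}_I$. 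This commutativity is precisely what lets the scalar factors cross the pseudo-resolvents when performing the reorganization above, and it is where the real-spectrum assumption is spent.

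I would then integrate the disentangled expression and sort the resulting terms using the three available tools. Using the left and right $\mathcal{F}$-resolvent equations \eqref{eq2} and \eqref{eq3} to trade the factors $\gamma_5\mathcal{Q}_p^2(T)=\mathcal{F}_5^L(p,T)p-T\mathcal{F}_5^L(p,T)$ and $\gamma_5\mathcal{Q}_s^2(T)=s\mathcal{F}_5^R(s,T)-\mathcal{F}_5^R(s,T)T$, the integrand becomes a finite sum of contributions of the form (power of $s$)$\,\mathcal{F}_5^R(s,T)\,w(T)\,\mathcal{F}_5^L(p,T)\,$(power of $p$) entangled with the Cauchy kernel. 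For each such contribution, one variable enters a factor that is genuinely slice monogenic on the enclosed domain, so its contour integral vanishes by the Stokes/Cauchy Lemma \ref{cau}, or else the associated power of $s$ or $p$ lies below the threshold $2h=4$ and the integral is killed by the moment relations of Lemma \ref{res32}, namely $\int_{\partial(G\cap\mathbb{C}_I)} s^m\,ds_I\,\mathcal{F}_5^R(s,T)=0$ and $\int_{\partial(G\cap\mathbb{C}_I)}\mathcal{F}_5^L(p,T)\,dp_I\,p^m=0$ for $m\le 3$ (a consequence of $\Delta^2 x^m=0$ for $m\le 3$ via the monogenic functional calculus). The one group that is not annihilated is the one presenting the reproducing kernel of Lemma \ref{res2}, whose $s$-integration returns, together with the surviving moment at the critical power, exactly the left representation of $\check{P}$; here one uses that $p\in\partial(G_1\cap\mathbb{C}_I)$ lies in $G_2$.

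Collecting these contributions leaves $\check{P}$, so that $\check{P}^2=\check{P}$. I expect the main obstacle to be the power-counting bookkeeping: the correction terms are numerous, each must be regrouped so that one factor is left (respectively right) slice monogenic in the integration variable, and one must verify that after the substitutions \eqref{eq2} and \eqref{eq3} every residual exponent of $s$ and $p$ falls in the range annihilated by Lemma \ref{res32}, leaving precisely the critical moment. The normalization $1/\gamma_5$ and the weight $p^4=p^{n-1}$ are dictated by the identity $\Delta^2 p^4=\gamma_5$, that is, by the fact that $2h=4$ is the first exponent not killed by $\Delta^2$; this is exactly what makes the surviving term reproduce $\check{P}$ rather than collapse to $0$.
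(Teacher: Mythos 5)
Your skeleton --- write $\check{P}^2$ as a double contour integral of $s^4\,ds_I\,\mathcal{F}_5^R(s,T)\mathcal{F}_5^L(p,T)\,dp_I\,p^4$ and then reduce the integrand with the moment relations of Lemma \ref{res32}, the Cauchy vanishing of Lemma \ref{cau} and the reproducing formula of Lemma \ref{res2} --- is the right one, and your endgame coincides with the paper's. The gap is in the central algebraic step. Your ``disentangling'' identity is correct (it is just the $S$-resolvent equation \eqref{reso} sandwiched between $\gamma_5\mathcal{Q}_s^2(T)$ on the left and $\gamma_5\mathcal{Q}_p^2(T)$ on the right, using that the pseudo-resolvents commute with $s,\bar s,p$ when $T_0=0$), but it does not reproduce the structure of Lemma \ref{res1} and it does not disentangle anything: its right-hand side contains $\mathcal{F}_5^R(s,T)\mathcal{Q}_p^2(T)$ and $\mathcal{Q}_s^2(T)\mathcal{F}_5^L(p,T)$, which are still operator products of a nonconstant $s$-part with a nonconstant $p$-part, rather than the difference $\mathcal{F}_5^R(s,T)-\mathcal{F}_5^L(p,T)$ that the integration lemmas require. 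Worse, the follow-up you propose is circular: substituting $\gamma_5\mathcal{Q}_p^2(T)=\mathcal{F}_5^L(p,T)p-T\mathcal{F}_5^L(p,T)$ and $\gamma_5\mathcal{Q}_s^2(T)=s\mathcal{F}_5^R(s,T)-\mathcal{F}_5^R(s,T)T$ into $\gamma_5\bigl[\mathcal{F}_5^R\mathcal{Q}_p^2-\mathcal{Q}_s^2\mathcal{F}_5^L\bigr]$ cancels the two copies of $\mathcal{F}_5^RT\mathcal{F}_5^L$ and leaves $\mathcal{F}_5^R\mathcal{F}_5^Lp-s\mathcal{F}_5^R\mathcal{F}_5^L$, so your identity collapses to the tautology $A=\{[Ap-sA]p-\bar{s}[Ap-sA]\}(p^2-2s_0p+|s|^2)^{-1}$ with $A=\mathcal{F}_5^R\mathcal{F}_5^L$. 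If instead you integrate the unsubstituted form, you face integrals such as $\int_{\partial(G_1\cap\mathbb{C}_I)}\mathcal{F}_5^L(p,T)\,dp_I\,p^{j}(p^2-2s_0p+|s|^2)^{-1}p^4$, which are killed neither by Lemma \ref{cau} (since $\mathcal{F}_5^L(p,T)$ is singular on $\sigma_{\mathcal{F},1}(T)\subset G_1$) nor by Lemma \ref{res32} (the weight is a rational function, not a monomial of degree at most $3$), and which are nonzero in general; so the dichotomy you invoke fails exactly on these terms.

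The missing idea is the summation device behind Lemma \ref{res1} and Theorem \ref{res3}: one must add \emph{several} differently sandwiched copies of \eqref{reso}, namely $\gamma_5\mathcal{Q}_s^2(T)\cdot\eqref{reso}$, $\eqref{reso}\cdot\gamma_5\mathcal{Q}_p^2(T)$ and $\gamma_5\mathcal{Q}_s(T)\cdot\eqref{reso}\cdot\mathcal{Q}_p(T)$. In the sum the diagonal terms assemble into the genuine difference $\mathcal{F}_5^R(s,T)-\mathcal{F}_5^L(p,T)$ entangled with the Cauchy kernel, while the cross terms telescope into factors of $(s-p)$ that absorb $(p^2-2s_0p+|s|^2)^{-1}$ and leave the manageable corrections $\gamma_5[\mathcal{Q}_s^2\mathcal{Q}_p+\mathcal{Q}_s\mathcal{Q}_p^2]$ and $\gamma_5\mathcal{Q}_sS_R^{-1}S_L^{-1}\mathcal{Q}_p$ on the other side. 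Rewriting that side via \eqref{eq2}--\eqref{eq3} (Theorem \ref{res3}), every term except $\gamma_5^{-1}s^2\mathcal{F}_5^R\mathcal{F}_5^Lp^2$ carries a power of $s$ or of $p$ of degree at most $3$, so after multiplying by $s^2$ on the left and $p^2$ on the right and integrating, Lemma \ref{res32} annihilates them and only $(2\pi)^2\gamma_5\check{P}^2$ survives; the right-hand side is then evaluated exactly as you describe with Lemmas \ref{cau} and \ref{res2}. In short, your proposal supplies the correct beginning and end but not the passage from the product to the difference, which is where all the work of the $\mathcal{F}$-resolvent equation lies.
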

\begin{proof}
If we multiply the $ \mathcal{F}$- resolvent equation in Theorem \ref{res3} by $s^2$ on the left and $p^2$ on the right we get
{\small
 \begingroup\allowdisplaybreaks\begin{align}
&
  s^2 \mathcal{F}_5^R(s,T)S^{-1}_L(p,T)p^2+s^2 S^{-1}_R(s,T)\mathcal{F}_{5}^L(p,T)p^2
\\ \nonumber
&
+\gamma_5^{-1} \biggl( s^{4} \mathcal{F}_{5}^R(s,T)\mathcal{F}_{5}^L(p,T) p^4- 3s^{4} \mathcal{F}_{5}^R(s,T)T\mathcal{F}_{5}^L(p,T) p^3
-3s^3 \mathcal{F}_{5}^R(s,T)T\mathcal{F}_{5}^L(p,T) p^4
\\ \nonumber
&
+3s^3\mathcal{F}_{5}^R(s,T) T^2\mathcal{F}_{5}^L(p,T)p^3
-2s^3\mathcal{F}_{5}^R(s,T)|T|^2T\mathcal{F}_{5}^L(p,T)p^2
+2s^3 \mathcal{F}_{5}^R(s,T)|T|^2\mathcal{F}_{5}^L(p,T)p^3
 \\ \nonumber
&
-2s^2\mathcal{F}_{5}^R(s,T)|T|^2T\mathcal{F}_{5}^L(p,T)p^3 +s^3 \mathcal{F}_{5}^R(s,T) T^2\mathcal{F}_{5}^L(p,T) p^3+ s^3 \mathcal{F}_{5}^R(s,T)|T|^2 T\mathcal{F}_{5}^L(p,T)p^2
\\ \nonumber
&
+s^2 \mathcal{F}_{5}^R(s,T)|T|^2 T \mathcal{F}_{5}^L(p,T)p^3+ s^2 \mathcal{F}_{5}^R(s,T)|T|^4\mathcal{F}_{5}^L(p,T)p^2+s^3 \mathcal{F}_5^R(s,T) \mathcal{F}_5^L(p,T)p^5
\\ \nonumber
&
-s^2 \mathcal{F}_5^R(s,T) T\mathcal{F}_5^L(p,T)p^5+2s^2 \mathcal{F}_5^R(s,T) T^2\mathcal{F}_5^L(p,T)p^4-s^2\mathcal{F}_5^R(s,T) T^3 \mathcal{F}_5^L(p,T)p^3
\\ \nonumber
&
+2s^2 \mathcal{F}_5^R(s,T) T^2|T|^2 \mathcal{F}_5^L(p,T)p^2+s^5\mathcal{F}_5^R(s,T) \mathcal{F}_5^L(p,T)p^3-s^5\mathcal{F}_5^R(s,T) T\mathcal{F}_5^L(p,T)p^2
\\ \nonumber
&
+2s^4\mathcal{F}_5^R(s,T) T^2 \mathcal{F}_5^L(p,T)p^2
-s^3\mathcal{F}_5^R(s,T) T^3 \mathcal{F}_5^L(p,T)p^2\biggl)= s^2 \bigl \{[\mathcal{F}_5^R(s,T)- \mathcal{F}_5^L(p,T)]p
\\ \nonumber
&
- \bar{s}[\mathcal{F}_5^R(s,T)- \mathcal{F}_5^L(p,T)]\}  (p^2-2s_0 p+|s|^2)^{-1} p^2.
\end{align}\endgroup
}
Now, we multiply the equation by $ds_I$ on the left, integrate it over $ \partial(G_2 \cap \mathbb{C}_I)$ with respect to $ds_I$ and then we multiply it by $ dp_I$ on the right and integrate over $ \partial (G_1 \cap \mathbb{C}_I)$ with respect to $dp_I$, we obtain
{\footnotesize
 \begingroup\allowdisplaybreaks\begin{align}
&   \int_{\partial (G_2 \cap \mathbb{C}_I)} s^2 ds_I \mathcal{F}_5^R(s,T)\int_{\partial (G_1 \cap \mathbb{C}_I)}S^{-1}_L(p,T)dp_Ip^2
+\int_{\partial (G_2 \cap \mathbb{C}_I)} s^2 ds_I S^{-1}_R(s,T)\int_{\partial (G_1 \cap \mathbb{C}_I)}\mathcal{F}_{5}^L(p,T)dp_I p^2
\\
\nonumber
&
+\gamma_5^{-1} \biggl( \int_{\partial (G_2 \cap \mathbb{C}_I)}  s^{4} ds_I \mathcal{F}_{5}^R(s,T)\int_{\partial (G_1 \cap \mathbb{C}_I)} \mathcal{F}_{5}^L(p,T) dp_I p^4
- 3\int_{\partial (G_2 \cap \mathbb{C}_I)} s^{4}ds_I \mathcal{F}_{5}^R(s,T)T \int_{\partial (G_2 \cap \mathbb{C}_I)} \mathcal{F}_{5}^L(p,T) p^3
\\ \nonumber
&
-3\int_{\partial (G_2 \cap \mathbb{C}_I)} s^3 ds_I \mathcal{F}_{5}^R(s,T)T\int_{\partial (G_1 \cap \mathbb{C}_I)} \mathcal{F}_{5}^L(p,T) dp_I p^4
+3 \int_{\partial (G_2 \cap \mathbb{C}_I)} s^3 ds_I \mathcal{F}_{5}^R(s,T) T^2\int_{\partial (G_1 \cap \mathbb{C}_I)} \mathcal{F}_{5}^L(p,T)dp_Ip^3 \\ \nonumber
&
-2\int_{\partial (G_2 \cap \mathbb{C}_I)}s^3ds_I \mathcal{F}_{5}^R(s,T)|T|^2T \int_{\partial (G_1 \cap \mathbb{C}_I)}\mathcal{F}_{5}^L(p,T)dp_Ip^2+2\int_{\partial (G_2 \cap \mathbb{C}_I)}s^3 ds_I\mathcal{F}_{5}^R(s,T)|T|^2 \int_{\partial (G_1 \cap \mathbb{C}_I)}\mathcal{F}_{5}^L(p,T)dp_I p^3
\\ \nonumber
&
-2 \int_{\partial (G_2 \cap \mathbb{C}_I)}s^2ds_I \mathcal{F}_{5}^R(s,T)|T|^2T \int_{\partial (G_1 \cap \mathbb{C}_I)}\mathcal{F}_{5}^L(p,T) dp_I p^3+\int_{\partial (G_2 \cap \mathbb{C}_I)} s^3 ds_I \mathcal{F}_{5}^R(s,T) T^2\int_{\partial (G_1 \cap \mathbb{C}_I)} \mathcal{F}_{5}^L(p,T) dp_I p^3
\\ \nonumber
&
+ \int_{\partial (G_2 \cap \mathbb{C}_I)} s^3 ds_I \mathcal{F}_{5}^R(s,T)|T|^2 T \int_{\partial (G_1 \cap \mathbb{C}_I)} \mathcal{F}_{5}^L(p,T) dp_I p^2+\int_{\partial (G_2 \cap \mathbb{C}_I)} s^2 ds_I \mathcal{F}_{5}^R(s,T)|T|^2 T \int_{\partial (G_1 \cap \mathbb{C}_I)} \mathcal{F}_{5}^L(p,T) dp_Ip^3
\\ \nonumber
&
+ \int_{\partial (G_2 \cap \mathbb{C}_I)} s^2 ds_I \mathcal{F}_{5}^R(s,T)|T|^4\int_{\partial (G_1 \cap \mathbb{C}_I)} \mathcal{F}_{5}^L(p,T)dp_Ip^2+\int_{\partial (G_2 \cap \mathbb{C}_I)}s^3 ds_I \mathcal{F}_5^R(s,T) \int_{\partial (G_1 \cap \mathbb{C}_I)}\mathcal{F}_5^L(p,T) dp_I p^5
\\ \nonumber
&
-\int_{\partial (G_2 \cap \mathbb{C}_I)}s^2 ds_I \mathcal{F}_5^R(s,T) T \int_{\partial (G_1 \cap \mathbb{C}_I)}\mathcal{F}_5^L(p,T) dp_Ip^5+2\int_{\partial (G_2 \cap \mathbb{C}_I)}s^2 ds_I \mathcal{F}_5^R(s,T) T^2\int_{\partial (G_1 \cap \mathbb{C}_I)}\mathcal{F}_5^L(p,T)dp_I p^4
\\ \nonumber
&
-\int_{\partial (G_2 \cap \mathbb{C}_I)}s^2ds_I\mathcal{F}_5^R(s,T) T^3 \int_{\partial (G_1 \cap \mathbb{C}_I)}\mathcal{F}_5^L(p,T)dp_Ip^3+2\int_{\partial (G_2 \cap \mathbb{C}_I)}s^2 ds_I \mathcal{F}_5^R(s,T) T^2|T|^2 \int_{\partial (G_1 \cap \mathbb{C}_I)}\mathcal{F}_5^L(p,T) dp_Ip^2
\\ \nonumber
&
+\int_{\partial (G_2 \cap \mathbb{C}_I)}s^5 ds_I \mathcal{F}_5^R(s,T) \int_{\partial (G_1 \cap \mathbb{C}_I)}\mathcal{F}_5^L(p,T) dp_Ip^3-\int_{\partial (G_2 \cap \mathbb{C}_I)}s^5 ds_I\mathcal{F}_5^R(s,T) T \int_{\partial (G_1 \cap \mathbb{C}_I)}\mathcal{F}_5^L(p,T)dp_Ip^2
\\ \nonumber
&
+2\int_{\partial (G_2 \cap \mathbb{C}_I)}s^4ds_I \mathcal{F}_5^R(s,T) T^2 \int_{\partial (G_1 \cap \mathbb{C}_I)}\mathcal{F}_5^L(p,T)dp_I p^2 -\int_{\partial (G_2 \cap \mathbb{C}_I)}s^3 ds_I \mathcal{F}_5^R(s,T) T^3 \int_{\partial (G_1 \cap \mathbb{C}_I)}\mathcal{F}_5^L(p,T) dp_Ip^2\biggl)
\\ \nonumber
&
= \int_{\partial (G_2 \cap \mathbb{C}_I)} ds_I \int_{\partial (G_1 \cap \mathbb{C}_I)} s^2 \bigl \{[\mathcal{F}_5^R(s,T)- \mathcal{F}_5^L(p,T)]p
- \bar{s}[\mathcal{F}_5^R(s,T)- \mathcal{F}_5^L(p,T)]\}  (p^2-2s_0 p+|s|^2)^{-1}dp_I p^2.
\end{align}\endgroup
}

From Lemma \ref{res32} the expression simplifies to
{\small
 \begingroup\allowdisplaybreaks\begin{align}
& \gamma_5 (2 \pi)^2 \frac{1}{\gamma_5} \left( \frac{1}{2 \pi} \int_{G_2 \cap \mathbb{C}_I} s^4 ds_I \mathcal{F}_5^R(s,T)\right) \frac{1}{\gamma_5}\left( \frac{1}{2 \pi} \int_{G_1 \cap \mathbb{C}_I} \mathcal{F}_5^R(p,T)dp_Ip^4  \right)\\ \nonumber
& =\int_{\partial(G_2 \cap \mathbb{C}_I)}ds_I \int_{\partial(G_1 \cap \mathbb{C}_I)}s^2\bigl \{ [\mathcal{F}_5^R(s,T)- \mathcal{F}_5^L(p,T)]p- \bar{s}[\mathcal{F}_5^R(s,T)- \mathcal{F}_5^L(p,T)]\} (p^2-2s_0 p+|s|^2)^{-1} p^2dp_I.
\end{align}\endgroup
}
By definition of projectors we have
{\small
$$ \frac{(2 \pi)^2}{\gamma_5^{-1}} \check{P}^2=\int_{\partial(G_2 \cap \mathbb{C}_I)}ds_I \int_{\partial(G_1 \cap \mathbb{C}_I)}s^2\bigl \{ [\mathcal{F}_5^R(s,T)- \mathcal{F}_5^L(p,T)]p- \bar{s}[\mathcal{F}_5^R(s,T)- \mathcal{F}_5^L(p,T)]\} (p^2-2s_0 p+|s|^2)^{-1} p^2dp_I.$$
}
Now, we work on the integral on the right hand side. As $ \bar{G}_1 \subset G_2$, for any $s \in \partial(G_2 \cap \mathbb{C}_I)$ the functions
{\small
$$ p \mapsto p(p^2-2s_0p+|s|^2)^{-1}p^2,$$
$$ p \mapsto (p^2-2s_0p+|s|^2)^{-1}p^2$$
}
are slice monogenic on $ \bar{G_1}$. By Lemma \ref{cau} we have
{\small
$$ \int_{\partial (G_1 \cap \mathbb{C}_I)} p(p^2-2s_0p+|s|^2)^{-1}dp_Ip^2=0,$$
$$ \int_{\partial (G_1 \cap \mathbb{C}_I)}(p^2-2s_0p+|s|^2)^{-1}p^2dp_I=0.$$
}
This implies that
{\small
$$\int_{\partial(G_2 \cap \mathbb{C}_I)}ds_I \int_{\partial(G_1 \cap \mathbb{C}_I)} s^2 \mathcal{F}_5^{R}(s,T)p(p^2-2s_0p+|s|^2)^{-1}dp_Ip^2=0$$
}
and
{\small
$$\int_{\partial(G_2 \cap \mathbb{C}_I)}ds_I \int_{\partial(G_1 \cap \mathbb{C}_I)} s^2 \bar{s} \mathcal{F}_5^{R}(s,T)(p^2-2s_0p+|s|^2)^{-1}dp_Ip^2=0,$$
}
from which we deduce
{\small
$$ \frac{(2 \pi)^2}{\gamma_5^{-1}} \check{P}^2=\int_{\partial(G_2 \cap \mathbb{C}_I)}s^2 ds_I \int_{\partial(G_1 \cap \mathbb{C}_I)}[\bar{s} \mathcal{F}_5^L(p,T)- \mathcal{F}_5^L(p,T)p] (p^2-2s_0 p+|s|^2)^{-1} dp_Ip^2.$$
}

From Lemma \ref{res2} with $B=:\mathcal{F}_5^L(p,T)$ and $f(s):=s^2$ we get
{\small
$$ \check{P}^2 = \frac{1}{(2 \pi) \gamma_5} \int_{\partial(G_1 \cap \mathbb{C}_I)} \mathcal{F}_5^L(p,T) dp_I p^4= \check{P}.$$
}
\end{proof}

The above computations can be generalized to study the general case where, however, we have to consider separately the
 case in which the Sce exponent is even or odd.

\section{\bf The Riesz Projectors for the $\mathcal{F}$-functional calculus: the general case of $n$ odd}\label{RPROJ}
\setcounter{equation}{0}

Next result follows as in the case $n=5$, see the proof of Lemma \ref{res32}.
\begin{lemma}
\label{ann}
Let $ T \in \mathcal{BC}(V_n)$ and $ h= \frac{n-1}{2}$. Suppose that $G$ contains just some points of the $ \mathcal{F}$-spectrum of $T$ and assume that the closed smooth curve $ \partial(G \cap \mathbb{C}_I)$ belongs to the $ \mathcal{F}$-resolvent set of $T$, for every $I \in \mathbb{S}$. Then
{\small
$$ \int_{\partial(G \cap \mathbb{C}_I)}s^{m} ds_I \mathcal{F}_n^{R}(s,T)=0,$$
$$ \int_{\partial(G \cap \mathbb{C}_I)} \mathcal{F}_n^{L}(p,T) d p_I p^m=0,$$
}
for all $m \leq 2h-1$.
\end{lemma}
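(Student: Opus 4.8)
The plan is to follow verbatim the strategy used for $n=5$ in the proof of Lemma \ref{res32}, replacing the Sce exponent $2$ by $h=\frac{n-1}{2}$ and the range $\{0,1,2,3\}$ by $\{0,1,\dots,2h-1\}$. First I would establish the scalar counterpart of the statement: for every $x\in\mathbb{R}^{n+1}$ with $x\notin[p]$ for all $p\in\partial(G\cap\mathbb{C}_I)$, and for every $m\le 2h-1$,
$$\int_{\partial(G\cap\mathbb{C}_I)}\mathcal{F}_n^L(p,x)\,dp_I\,p^m=0,\qquad \int_{\partial(G\cap\mathbb{C}_I)}s^m\,ds_I\,\mathcal{F}_n^R(s,x)=0.$$
The key point is that $\mathcal{F}_n^L(p,x)=\Delta^h S_L^{-1}(p,x)$, where $\Delta$ acts on the variable $x$; pulling $\Delta^h$ out of the integral and invoking the slice monogenic Cauchy formula \eqref{cauchynuovo} with $f(p)=p^m$ gives $\int_{\partial(G\cap\mathbb{C}_I)}S_L^{-1}(p,x)\,dp_I\,p^m=2\pi x^m$ when $[x]\subset G$ and $0$ otherwise (the latter by the Cauchy integral theorem, since $p\mapsto S_L^{-1}(p,x)$ is right slice monogenic and $p\mapsto p^m$ is left slice monogenic with no singularity inside $G$). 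Hence the integral equals $\Delta^h(2\pi x^m)$ or $\Delta^h(0)$, and by Theorem \ref{aak} we have $\Delta^h x^m=0$ for every $m<2h$; in both cases the scalar integral vanishes.

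Next I would lift the scalar identity to the operator level through the monogenic functional calculus, exactly as in the proof of Lemma \ref{res32}. Since by Proposition \ref{Laplacian} the function $\omega\mapsto\mathcal{F}_n^L(p,\omega)$ is left monogenic in its second argument (away from $[p]$), it admits the representation
$$\mathcal{F}_n^L(p,T)=\int_{\partial\Omega}\mathcal{G}_\omega(T)\,\mathbf{n}(\omega)\,\mathcal{F}_n^L(p,\omega)\,d\mu(\omega),$$
with $\Omega$ a suitable open set containing the monogenic spectrum of $T$, $\mathcal{G}_\omega(T)$ the monogenic resolvent operator, $\mathbf{n}(\omega)$ the unit normal and $d\mu$ the surface element. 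Integrating in $p$ over $\partial(G\cap\mathbb{C}_I)$ and applying Fubini's theorem to exchange the two integrations moves the scalar integral $\int_{\partial(G\cap\mathbb{C}_I)}\mathcal{F}_n^L(p,\omega)\,dp_I\,p^m$ inside, which is zero by the scalar step; thus the operator integral vanishes. The identity for $\mathcal{F}_n^R$ is obtained by the same argument, using the right-monogenic version of Proposition \ref{Laplacian}, the right monogenic functional calculus, and the corresponding scalar identity.

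The only genuinely delicate points are book-keeping ones: one must check that $\partial(G\cap\mathbb{C}_I)$ lies both in the domain of slice monogenicity of $p\mapsto\mathcal{F}_n^L(p,x)$ and in $\rho_{\mathcal{F}}(T)$, so that all integrands are well defined, and that the hypotheses permit Fubini (joint continuity of the integrands on the compact contours). I expect the main obstacle to be verifying that the monogenic functional calculus representation applies uniformly as $p$ ranges over the compact contour $\partial(G\cap\mathbb{C}_I)$, i.e. that $\Omega$ can be chosen independently of $p$; this is precisely the step taken for granted in the $n=5$ case and it carries over unchanged, since the constraint $m\le 2h-1$ plays no role here, entering only through the vanishing $\Delta^h x^m=0$ in the scalar reduction.
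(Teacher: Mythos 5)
Your proposal is correct and follows exactly the route the paper takes: the paper proves this lemma by simply remarking that it ``follows as in the case $n=5$'' (Lemma \ref{res32}), whose proof is precisely your two-step argument --- the scalar integrals vanish because $\Delta^h x^m=0$ for $m<2h$ (Theorem \ref{aak}), and the operator statement is then obtained by writing $\mathcal{F}_n^L(p,T)$ through the monogenic functional calculus representation and exchanging the two integrals by Fubini. Your added justification of the scalar step via the slice monogenic Cauchy formula is exactly the reasoning the paper leaves implicit.
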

We now prove the following

\begin{theorem}
Let $n>3$ be an odd number and let $ T=\sum_{i=1}^{n} e_i T_i \in \mathcal{BC}^{0,1}(V_n)$. Let $ \sigma_{\mathcal{F}}(T)= \sigma_{\mathcal{F},1}(T) \cup \sigma_{\mathcal{F},2}(T)$ with
$$ \hbox{dist} \left(\sigma_{\mathcal{F},1}(T),\sigma_{\mathcal{F},2}(T)\right)>0,$$
and
$$
\sigma(T_\ell)\subset \mathbb{R}\ \  {\rm for \ all} \ \ell=1,...,n.
$$
Let $G_1$, $G_2$ be two admissible sets for $T$ such that $ \sigma_{\mathcal{F},1}(T) \subset G_1$ and $ \bar{G}_1 \subset G_2$ and such that $dist \left(G_2, \sigma_{\mathcal{F},2}(T) \right)>0$. Then the operator
{\small
\begin{equation}
\label{Rp}
\check{P}= \frac{1}{\gamma_n (2 \pi)} \int_{\partial(G_1 \cap \mathbb{C}_I)} \mathcal{F}_n^L(p,T) dp_I p^{n-1}=\frac{1}{\gamma_n (2 \pi)} \int_{\partial(G_2 \cap \mathbb{C}_I)}  s^{n-1} ds_I \mathcal{F}_n^R(s,T).
\end{equation}
}
is a projector.
\end{theorem}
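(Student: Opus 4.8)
The plan is to establish $\check{P}^2=\check{P}$ by integrating a weighted version of the pseudo $\mathcal{F}$-resolvent equation, exactly as in the case $n=5$ treated above, organized so that a single surviving product of resolvents reproduces $\check{P}^2$. Write $h=(n-1)/2$ and recall the two expressions $\gamma_n(2\pi)\check{P}=\int_{\partial(G_1\cap\mathbb{C}_I)}\mathcal{F}_n^L(p,T)\,dp_I\,p^{2h}=\int_{\partial(G_2\cap\mathbb{C}_I)}s^{2h}\,ds_I\,\mathcal{F}_n^R(s,T)$. I would start from the pseudo $\mathcal{F}$-resolvent equation in the form adapted to the parity of $h$, namely Theorem~\ref{FREhodd} when $h$ is odd and Theorem~\ref{FREeven} when $h$ is even, multiply it by $s^h$ on the left and by $p^h$ on the right, then multiply by $ds_I$ on the left and integrate over $\partial(G_2\cap\mathbb{C}_I)$, and finally multiply by $dp_I$ on the right and integrate over $\partial(G_1\cap\mathbb{C}_I)$. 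Since the components of $T$ commute and all scalar factors lie in $\mathbb{C}_I$, each resulting term splits into an $s$-integral and a $p$-integral that can be handled independently.

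Next I would isolate the only surviving contributions. On the right-hand side, the part carrying $\mathcal{F}_n^R(s,T)$ produces, for fixed $s\in\partial(G_2\cap\mathbb{C}_I)$, the $p$-integrals of $p(p^2-2s_0p+|s|^2)^{-1}p^h$ and $(p^2-2s_0p+|s|^2)^{-1}p^h$, which are slice monogenic on $\bar{G}_1$ because $\bar{G}_1\subset G_2$ keeps $[s]$ outside $\bar{G}_1$; by Lemma~\ref{cau} they vanish. The part carrying $\mathcal{F}_n^L(p,T)$ has exactly the shape $\bar{s}B-Bp$ with $B=\mathcal{F}_n^L(p,T)$ and weight $f(s)=s^h$, so Lemma~\ref{res2} collapses the $s$-integral to $\mathcal{F}_n^L(p,T)p^h$ and the remaining $p$-integral gives $\gamma_n(2\pi)^2\check{P}$. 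On the left-hand side, the single product term $\gamma_n^{-1}s^h\mathcal{F}_n^R(s,T)\mathcal{F}_n^L(p,T)p^h$ factors, after weighting, into $\gamma_n^{-1}\big(\int s^{2h}ds_I\,\mathcal{F}_n^R\big)\big(\int\mathcal{F}_n^L\,dp_I\,p^{2h}\big)=\gamma_n(2\pi)^2\check{P}^2$. Equating the two sides then yields $\check{P}^2=\check{P}$, provided every remaining term integrates to zero.

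I would then dispose of the remaining left-hand terms. The mixed terms $\mathcal{F}_n^R(s,T)S^{-1}_L(p,T)$ and $S^{-1}_R(s,T)\mathcal{F}_n^L(p,T)$ vanish because $\int s^h\,ds_I\,\mathcal{F}_n^R=0$ and $\int\mathcal{F}_n^L\,dp_I\,p^h=0$ by Lemma~\ref{ann}, since $h\le 2h-1$. The correction blocks $\mathcal{A}_i,\mathcal{B}_i,\mathcal{C}_i$ vanish as well: because $T=\sum_{\ell}e_\ell T_\ell$ is a vector operator we have $T_0=0$, so $|T|^2-2T_0s=|T|^2$ and $|T|^2-2T_0p=|T|^2$ are constant operators; consequently every summand carries a power $s^{2h-2k}$ glued to an $\mathcal{F}_n^R$ or a power $p^{2h-2k}$ glued to an $\mathcal{F}_n^L$ with $k\ge1$ (or one of the isolated terms with exponent $2h-1$), and each such exponent is $\le 2h-1$, so Lemma~\ref{ann} annihilates it.

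The main obstacle, and precisely the reason the pseudo $S$-resolvent form must be kept, is the block of residual terms built from the commutative pseudo $S$-resolvents $\mathcal{Q}_s^{\,j}(T)$ and $\mathcal{Q}_p^{\,j}(T)$. For these I would first prove the pseudo-resolvent analogue of Lemma~\ref{ann}: since $\mathcal{Q}_s^{\,j}(T)=s^{-2j}\mathcal{I}+O(s^{-2j-1})$ at $s=\infty$, the same slice/monogenic argument gives $\int_{\partial(G\cap\mathbb{C}_I)}s^m\,ds_I\,\mathcal{Q}_s^{\,j}(T)=0$ and $\int_{\partial(G\cap\mathbb{C}_I)}\mathcal{Q}_p^{\,j}(T)\,dp_I\,p^m=0$ for all $m\le 2j-2$. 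A direct power count then shows that for each residual term, indexed by $i$, after weighting by $s^h$ and $p^h$ either the $s$-exponent falls at or below $2(h-i)-2$ or the $p$-exponent falls at or below $2(i+2)-2$ (respectively $2(i+1)-2$ for the second family), so one of the two factors vanishes; the unique index for which neither bound applies is exactly the middle index ($i=(h-1)/2$ for $h$ odd, $i=(h-2)/2$ for $h$ even), and that is precisely the term that was extracted and rewritten, through the $\mathcal{F}$-resolvent relations of Theorem~\ref{FRE}, into the product term producing $\check{P}^2$. Verifying this dichotomy cleanly — that the excluded index is exactly the surviving one, while every other $\mathcal{Q}$-term lands in the vanishing range — is the delicate bookkeeping step, and once it is in place the identity $\check{P}^2=\check{P}$ follows.
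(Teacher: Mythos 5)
Your proposal follows the paper's proof essentially step for step: weight the pseudo $\mathcal{F}$-resolvent equation of Theorem \ref{FREhodd} (resp.\ Theorem \ref{FREeven}) by $s^h$ on the left and $p^h$ on the right, integrate over $\partial(G_2\cap\mathbb{C}_I)$ and $\partial(G_1\cap\mathbb{C}_I)$, annihilate the mixed terms and the $\mathcal{A},\mathcal{B},\mathcal{C}$ blocks with Lemma \ref{ann}, treat the right-hand side with Lemma \ref{cau} and Lemma \ref{res2}, and recognize $\gamma_n(2\pi)^2\check{P}^2$ in the single surviving product term. The only point where you diverge is the disposal of the residual pseudo-resolvent terms: you package it as an auxiliary vanishing statement $\int_{\partial(G\cap\mathbb{C}_I)}s^m\,ds_I\,\mathcal{Q}_s^{\,j}(T)=0$ for $m\le 2j-2$ (and its $p$-analogue), whereas the paper converts each $\mathcal{Q}$-power back to $\mathcal{F}$-resolvents inline, via the binomial expansion of $\mathcal{Q}^{-1}$ with $T_0=0$ and the relations of Theorem \ref{FRE}, and then invokes Lemma \ref{ann} directly; your lemma is true and your exponent dichotomy matches the paper's bookkeeping, but be aware that it cannot be justified by ``the same slice/monogenic argument'' as Lemma \ref{ann}, since the scalar kernel $(s^2-2x_0s+|x|^2)^{-j}$ is not monogenic in $x$, so the monogenic-functional-calculus/Fubini proof does not transfer verbatim --- the correct derivation of your auxiliary lemma is precisely the paper's inline conversion through Theorem \ref{FRE}, after which your argument closes.
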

\begin{proof}
We divide the proof in two cases, according to the parity of $h= \frac{n-1}{2}$.

{\bf CASE I: The Sce exponent $h$ is odd.}

We start by multiplying the equation of Theorem \ref{1T} by $s^h$ on the left and $p^h$ on the right, and since $T_0=0$ we get
{\small
\begin{eqnarray}
\label{eqg1}
& \, \, \, \, s^h\mathcal{F}_n^R(s,T)S^{-1}_L(p,T)p^h+s^hS^{-1}_R(s,T)\mathcal{F}_n^L(p,T)p^h+ \gamma_n \biggl[ s^{h+1} \sum_{i=0}^{h-2} \mathcal{Q}^{h-i}_s(T)\mathcal{Q}_p^{i+2}(T)p^{h+1}+\\ \nonumber
\nonumber
&+s^{h+1} \sum_{i=0}^{h-2} \mathcal{Q}^{h-i}_s(T) T\mathcal{Q}_p^{i+2}(T)p^h +s^h\sum_{i=0}^{h-2} \mathcal{Q}^{h-i}_s(T)T\mathcal{Q}_p^{i+2}(T)p^{h+1}+s^h  \sum_{i=0}^{h-2} \mathcal{Q}^{h-i}_s(T)T^2\mathcal{Q}_p^{i+2}(T)p^h+\\ \nonumber
\nonumber
&+s^h      \sum_{i=0, i \neq \frac{h-1}{2}}^{h-1}  \mathcal{Q}_s^{h-i}(T) \mathcal{Q}_p^{i+1}(T) p^h \biggl]+ \gamma_n^{-1} \biggl[\ s^{2h} \mathcal{F}_{n}^{R}(s,T) \mathcal{F}_n^L(p,T) p^{2h}+s^h\mathcal{A}_0(s,p,T)p^h +s^h\mathcal{B}_0(s,p,T)p^h\\ \nonumber
\nonumber
& +s^h\mathcal{C}_0(s,p,T)p^h \biggl]=s^h\bigl \{ \bigl[\mathcal{F}_n^R(s,T)-\mathcal{F}_n^L(p,T) \bigl]p- \bar{s}\bigl[\mathcal{F}_n^R(s,T)-\mathcal{F}_n^L(p,T) \bigl] \bigl \} (p^2-2s_0p+|s|^2)^{-1} p^h.
\end{eqnarray}
}
Now, we multiply equation \eqref{eqg1} by $ds_I$ on the left, integrate it over $ \partial(G_2 \cap \mathbb{C}_I)$ with respect to $ds_I$ and then we multiply it by $ dp_I$ on the right and integrate over $ \partial (G_1 \cap \mathbb{C}_I)$ with respect to $dp_I$. We obtain
{\small
 \begingroup\allowdisplaybreaks\begin{align}
\label{new1}
&  \int_{\partial(G_2 \cap \mathbb{C}_I)}s^h ds_I\mathcal{F}_n^R(s,T)\int_{\partial(G_1 \cap \mathbb{C}_I)}S^{-1}_L(p,T)dp_Ip^h
+\int_{\partial(G_2 \cap \mathbb{C}_I)}s^hds_IS^{-1}_R(s,T)\int_{\partial(G_1 \cap \mathbb{C}_I)}\mathcal{F}_n^L(p,T)dp_Ip^h
\\ \nonumber
& + \gamma_n \biggl[ \int_{\partial(G_2 \cap \mathbb{C}_I)} s^{h+1} \sum_{i=0}^{h-2} \mathcal{Q}^{h-i}_s(T)ds_I \int_{\partial(G_1 \cap \mathbb{C}_I)}\mathcal{Q}_p^{i+2}(T) dp_I p^{h+1}
\\ \nonumber
&
+\int_{\partial(G_2 \cap \mathbb{C}_I)}s^{h+1} \sum_{i=0}^{h-2} \mathcal{Q}^{h-i}_s(T)ds_I T \int_{\partial(G_1 \cap \mathbb{C}_I)}\mathcal{Q}_p^{i+2}(T)dp_Ip^h
\\ \nonumber
\nonumber
& +\int_{\partial(G_2 \cap \mathbb{C}_I)}s^h \sum_{i=0}^{h-2} \mathcal{Q}^{h-i}_s(T)ds_IT \int_{\partial(G_1 \cap \mathbb{C}_I)}\mathcal{Q}_p^{i+2}(T)dp_Ip^{h+1}
\\ \nonumber
&
+\int_{\partial(G_2 \cap \mathbb{C}_I)}s^h  \sum_{i=0}^{h-2} \mathcal{Q}^{h-i}_s(T)ds_IT^2\int_{\partial(G_1 \cap \mathbb{C}_I)}\mathcal{Q}_p^{i+2}(T)dp_Ip^h
\\
\nonumber
&+\int_{\partial(G_2 \cap \mathbb{C}_I)}s^h\sum_{i=0, i \neq \frac{h-1}{2}}^{h-1}  \mathcal{Q}_s^{h-i}(T) ds_I \int_{\partial(G_1 \cap \mathbb{C}_I)}\mathcal{Q}_p^{i+1}(T) dp_Ip^h \biggl]
\\ \nonumber
&
+ \gamma_n^{-1} \biggl[\int_{\partial(G_2 \cap \mathbb{C}_I)} s^{2h} ds_I\mathcal{F}_{n}^{R}(s,T) \int_{\partial(G_1 \cap \mathbb{C}_I)}\mathcal{F}_n^L(p,T) dp_I p^{2h}+\\ \nonumber
&+\int_{\partial(G_2 \cap \mathbb{C}_I)} \int_{\partial(G_1 \cap \mathbb{C}_I)}s^h ds_I \mathcal{A}_0(s,p,T)dp_Ip^h+s^h ds_I \mathcal{B}_0(s,p,T)dp_Ip^h+s^h ds_I \mathcal{C}_0(s,p,T)dp_Ip^h\biggl]
\\ \nonumber
& = \int_{\partial(G_2 \cap \mathbb{C}_I)} ds_I \int_{\partial(G_1 \cap \mathbb{C}_I)}s^h\bigl \{ \bigl[\mathcal{F}_n^R(s,T)-\mathcal{F}_n^L(p,T) \bigl]p -\bar{s}\bigl[\mathcal{F}_n^R(s,T)-\mathcal{F}_n^L(p,T) \bigl] \bigl \} (p^2-2s_0p+|s|^2)^{-1} dp_I p^h.
\end{align}\endgroup
}
 Recalling the definition of $ \mathcal{A}_0$, $\mathcal{B}_0$, $\mathcal{C}_0$ and the fact that $T_0=0$ we have
 {\small
 \begingroup\allowdisplaybreaks\begin{align}
&                               \int_{\partial(G_2 \cap \mathbb{C}_I)} \int_{\partial(G_1 \cap \mathbb{C}_I)}s^h ds_I \mathcal{A}_0(s,p,T)dp_Ip^h+s^h ds_I \mathcal{B}_0(s,p,T)dp_Ip^h+s^h ds_I \mathcal{C}_0(s,p,T)dp_Ip^h
\\ \nonumber
&
= -\int_{\partial(G_2 \cap \mathbb{C}_I)}s^{2h} \mathcal{F}_n^R(s,T)ds_I T \int_{\partial(G_1 \cap \mathbb{C}_I)}\mathcal{F}_n^L(p,T)dp_I p^{2h-1}
\\ \nonumber
&                              -\int_{\partial(G_2 \cap \mathbb{C}_I)}s^{2h-1} \mathcal{F}_n^R(s,T)ds_IT \int_{\partial(G_1 \cap \mathbb{C}_I)} \mathcal{F}_n^L(p,T)dp_I p^{2h}
\\ \nonumber
&
+\int_{\partial(G_2 \cap \mathbb{C}_I)}s^{2h-1}ds_I\mathcal{F}_n^R(s,T)T^2 \int_{\partial(G_1 \cap \mathbb{C}_I)}\mathcal{F}_n^L(p,T)dp_I p^{2h-1}
\\ \nonumber
&
+\int_{\partial(G_2 \cap \mathbb{C}_I)}s^{2h} ds_I\mathcal{F}_n^R(s,T) \sum_{k=1}^{\frac{h-1}{2}} \binom{\frac{h-1}{2}}{k}  |T|^{2k}\int_{\partial(G_1 \cap \mathbb{C}_I)} \mathcal{F}_n^L(p,T)dp_Ip^{2h-2k}
\\ \nonumber
&
 -\int_{\partial(G_2 \cap \mathbb{C}_I)}s^{2h}ds_I \mathcal{F}_n^R(s,T) \sum_{k=1}^{\frac{h-1}{2}} \binom{\frac{h-1}{2}}{k}  |T|^{2k} T\int_{\partial(G_1 \cap \mathbb{C}_I)}\mathcal{F}_n^L(p,T)p^{2h-1-2k}
 \\ \nonumber
 &
 -\int_{\partial(G_2 \cap \mathbb{C}_I)}s^{2h-1} ds_I \mathcal{F}_n^{R}(s,T)\sum_{k=1}^{\frac{h-1}{2}} \binom{\frac{h-1}{2}}{k}  T|T|^{2k} \int_{\partial(G_1 \cap \mathbb{C}_I)} \mathcal{F}_n^L(p,T)dp_Ip^{2h-2k}
 \\ \nonumber
&
+ \int_{\partial(G_2 \cap \mathbb{C}_I)}s^{2h-1} \mathcal{F}_n^{R}(s,T)ds_I\sum_{k=1}^{\frac{h-1}{2}} \binom{\frac{h-1}{2}}{k}  T|T|^{2k}T \int_{\partial(G_1 \cap \mathbb{C}_I)}\mathcal{F}_n^L(p,T)dp_Ip^{2h-1-2k}
\\ \nonumber
&
+\left(\sum_{k=1}^{\frac{h-1}{2}} \binom{\frac{h-1}{2}}{k}  \int_{\partial(G_2 \cap \mathbb{C}_I)}s^{2h-2k}ds_I \mathcal{F}_n^R(s,T) |T|^{2k}\right)\left(\sum_{k=1}^{\frac{h-1}{2}} \binom{\frac{h-1}{2}}{k}  |T|^{2k}  \int_{\partial(G_1 \cap \mathbb{C}_I)}\mathcal{F}_n^L(p,T) dp_Ip^{2h-2k}\right)
\\ \nonumber
&
-\left(\sum_{k=1}^{\frac{h-1}{2}} \binom{\frac{h-1}{2}}{k}  \int_{\partial(G_2 \cap \mathbb{C}_I)}s^{2h-2k} ds_I\mathcal{F}_n^R(s,T) |T|^{2k}\right) \left(\sum_{k=1}^{\frac{h-1}{2}} \binom{\frac{h-1}{2}}{k} |T|^{k} T  \int_{\partial(G_1 \cap \mathbb{C}_I)}\mathcal{F}_n^L(p,T) dp_Ip^{2h-1-2k}\right)
\\
\nonumber
&
-\left(\sum_{k=1}^{\frac{h-1}{2}} \binom{\frac{h-1}{2}}{k}  \int_{\partial(G_2 \cap \mathbb{C}_I)}s^{2h-2k-1}ds_I\mathcal{F}_n^R(s,T) T|T|^{2k}\right) \left(\sum_{k=1}^{\frac{h-1}{2}} \binom{\frac{h-1}{2}}{k} |T|^{2k}  \int_{\partial(G_1 \cap \mathbb{C}_I)}\mathcal{F}_n^L(p,T) dp_I p^{2h-2k}\right)
\\
\nonumber
&
+\left(\sum_{k=1}^{\frac{h-1}{2}} \binom{\frac{h-1}{2}}{k}  \int_{\partial(G_2 \cap \mathbb{C}_I)}s^{2h-2k-1}ds_I \mathcal{F}_n^R(s,T) T|T|^{2k}\right)\left(\sum_{k=1}^{\frac{h-1}{2}} \binom{\frac{h-1}{2}}{k}  |T|^{2k} T  \int_{\partial(G_1 \cap \mathbb{C}_I)}\mathcal{F}_n^L(p,T)dp_I p^{2h-1-2k}\right)
\\
\nonumber
&
+\left(\sum_{k=1}^{\frac{h-1}{2}} \binom{\frac{h-1}{2}}{k}  \int_{\partial(G_2 \cap \mathbb{C}_I)} s^{2h-2k} ds_I\mathcal{F}_n^R(s,T) |T|^{2k}\right) \int_{\partial(G_1 \cap \mathbb{C}_I)} \mathcal{F}_n^{L}(p,T)dp_I p^{2h}
\\ \nonumber
&
-\left(\sum_{k=1}^{\frac{h-1}{2}} \binom{\frac{h-1}{2}}{k} \int_{\partial(G_2 \cap \mathbb{C}_I)}s^{2h-2k}ds_I \mathcal{F}_n^R(s,T) |T|^{2k} T \right)\int_{\partial(G_1 \cap \mathbb{C}_I)}\mathcal{F}_n^{L}(p,T)dp_Ip^{2h-1}
\\
\nonumber
&
-\left(\sum_{k=1}^{\frac{h-1}{2}} \binom{\frac{h-1}{2}}{k}\int_{\partial(G_2 \cap \mathbb{C}_I)}s^{2h-1-2k} ds_I \mathcal{F}_n^R(s,T) T |T|^{2k} \right) \int_{\partial(G_1 \cap \mathbb{C}_I)}\mathcal{F}_n^{L}(p,T)dp_Ip^{2h}
\\ \nonumber
&
+\left(\sum_{k=1}^{\frac{h-1}{2}} \binom{\frac{h-1}{2}}{k} \int_{\partial(G_2 \cap \mathbb{C}_I)} s^{2h-1-2k} ds_I \mathcal{F}_n^R(s,T) T^2 |T|^{2k}  \right)\int_{\partial(G_1 \cap \mathbb{C}_I)} \mathcal{F}_n^{L}(p,T)dp_I p^{2h-1}.
\end{align}\endgroup
}
Now, since $h \leq 2h-1$ by Lemma \ref{ann} we get
{\small
$$
\int_{\partial(G_2 \cap\mathbb{C}_I)}s^h ds_I\mathcal{F}_n^R(s,T)\int_{\partial(G_1 \cap \mathbb{C}_I)}S^{-1}_L(p,T)dp_Ip^h=\int_{\partial(G_2 \cap \mathbb{C}_I)}s^hds_IS^{-1}_R(s,T)\int_{\partial(G_1 \cap \mathbb{C}_I)}\mathcal{F}_n^L(p,T)dp_Ip^h=0.$$
}
Moreover, since $2h-2k \leq 2h-1$ and $2h-1-2k \leq 2h-1$ we obtain
{\small
$$ \int_{\partial(G_2 \cap \mathbb{C}_I)} \int_{\partial(G_1 \cap \mathbb{C}_I)}s^h ds_I \mathcal{A}_0(s,p,T)dp_Ip^h+s^h ds_I \mathcal{B}_0(s,p,T)dp_Ip^h+s^h ds_I \mathcal{C}_0(s,p,T)dp_Ip^h=0.$$
}
Now, we focus on the term
{\small
$$ \int_{\partial(G_2 \cap \mathbb{C}_I)} s^{h+1} \sum_{i=0}^{h-2} \mathcal{Q}^{h-i}_s(T)ds_I\int_{\partial(G_1 \cap \mathbb{C}_I)}\mathcal{Q}_p^{i+2}(T) dp_I p^{h+1}.$$
}
First of all we split the sum in two parts and write
{\small
$$ \sum_{i=0}^{h-2} \mathcal{Q}_s^{h-i}(T) \mathcal{Q}_p^{i+2}(T)=\sum_{i=0}^{ \lfloor \frac{h-2}{2} \rfloor} \mathcal{Q}_s^{h-i}(T) \mathcal{Q}_p^{i+2}(T)+\sum_{i=\lfloor \frac{h-2}{2} \rfloor +1}^{ h-2} \mathcal{Q}_s^{h-i}(T) \mathcal{Q}_p^{i+2}(T),
$$
}
where $ \lfloor. \rfloor$ is the floor of a number. In the first sum the powers of $ \mathcal{Q}_s(T)$ are more than the powers of $ \mathcal{Q}_p(T)$, and conversely in the second sum.

\medskip
Since $T_0=0$, by the binomial formula we get
{\small
 \begingroup\allowdisplaybreaks\begin{align}
\sum_{i=0}^{ \lfloor \frac{h-2}{2} \rfloor} \mathcal{Q}_s^{h-i}(T) \mathcal{Q}_p^{i+2}(T)&= \mathcal{Q}_s^{h}(T)\sum_{i=0}^{ \lfloor \frac{h-2}{2} \rfloor} \sum_{k=0}^i \binom{i}{k}s^{2k} |T|^{2(i-k)}  \mathcal{Q}_p^{i+2}(T)+\\ \nonumber
&+\sum_{i=\lfloor \frac{h-2}{2} \rfloor +1}^{ h-2}   \mathcal{Q}_s^{h-i}(T) \sum_{k=0}^{h-2-i} \binom{h-2-i}{k} p^{2k} |T|^{2(h-2-i-k)} \mathcal{Q}_p^{h}(T).
\end{align}\endgroup
}
Consider the first sum. By the $ \mathcal{F}$- resolvent equation, see \eqref{eq3} we get
{\small
$$\sum_{i=0}^{ \lfloor \frac{h-2}{2} \rfloor} \sum_{k=0}^i \binom{i}{k}s^{2k}\mathcal{Q}_s^{h}(T) |T|^{2(i-k)} \mathcal{Q}_p^{i+2}(T)=\gamma_n^{-1} \sum_{i=0}^{ \lfloor \frac{h-2}{2} \rfloor} \sum_{k=0}^i \binom{i}{k} s^{2k}\left( s \mathcal{F}_n^R(s,T)- \mathcal{F}_n^R(s,T)T \right) |T|^{2(i-k)} \mathcal{Q}_p^{i+2}(T).$$
}
Hence we have to compute the following integrals
{\small
$$
\gamma_{n}^{-1}\sum_{i=0}^{ \lfloor \frac{h-2}{2} \rfloor} \sum_{k=0}^i \binom{i}{k} \int_{\partial(G_2 \cap \mathbb{C}_I)} s^{h+2+2k}ds_I \mathcal{F}_n^R(s,T)  |T|^{2(i-k)}\int_{\partial(G_1 \cap \mathbb{C}_I)}\mathcal{Q}_p^{i+2}(T) dp_I p^{h+1},
$$
$$
\gamma_{n}^{-1}\sum_{i=0}^{ \lfloor \frac{h-2}{2} \rfloor} \sum_{k=0}^i \binom{i}{k} \int_{\partial(G_2 \cap \mathbb{C}_I)} s^{h+1+2k}ds_I \mathcal{F}_n^R(s,T) T|T|^{2(i-k)} \int_{\partial(G_1 \cap \mathbb{C}_I)}\mathcal{Q}_p^{i+2}(T) dp_I p^{h+1}.
$$
}
Now, since $h$ is odd then we can write $h=2N+1$, with $N \in \mathbb{N}$. This implies that
{\small
$$ h+2+2k \leq 2i+2+2N+1 \leq 2 \lfloor \frac{h-2}{2} \rfloor +2+2N+1=2(N-1)+2+2N+1
=4N+1=2h-1.$$
}
Similarly we get
{\small
$$ h+1+2k \leq 2h-1.$$
}
Therefore by Lemma \ref{ann} we get
{\small
 \begingroup\allowdisplaybreaks\begin{align}
& \gamma_{n}^{-1}\sum_{i=0}^{ \lfloor \frac{h-2}{2} \rfloor} \sum_{k=0}^i \binom{i}{k} \int_{\partial(G_2 \cap \mathbb{C}_I)} s^{h+2+2k}ds_I \mathcal{F}_n^R(s,T) \int_{\partial(G_1 \cap \mathbb{C}_I)}\mathcal{Q}_p^{i+2}(T) dp_I p^{h+1}=0\\ \nonumber
&\gamma_{n}^{-1}\sum_{i=0}^{ \lfloor \frac{h-2}{2} \rfloor} \sum_{k=0}^i \binom{i}{k} \int_{\partial(G_2 \cap \mathbb{C}_I)} s^{h+1+2k}ds_I \mathcal{F}_n^R(s,T) \int_{\partial(G_1 \cap \mathbb{C}_I)}\mathcal{Q}_p^{i+2}(T) dp_I p^{h+1}=0.
\end{align}\endgroup
}
Now, we focus on the second sum. By the $ \mathcal{F}$- resolvent equation, see \eqref{eq2}, we get
{\small
 \begingroup\allowdisplaybreaks\begin{align}
&                \sum_{i=\lfloor \frac{h-2}{2} \rfloor +1}^{ h-2}   \mathcal{Q}_s^{h-i}(T) \sum_{k=0}^{h-2-i} \binom{h-2-i}{k} |T|^{2(h-2-i-k)}\mathcal{Q}_p^{h}(T)p^{2k} \\ \nonumber
&               =\gamma_{n}^{-1} \sum_{i=\lfloor \frac{h-2}{2} \rfloor +1}^{ h-2}   \mathcal{Q}_s^{h-i}(T) \sum_{k=0}^{h-2-i} \binom{h-2-i}{k} |T|^{2(h-2-i-k)}\left( \mathcal{F}_{n}^R(p,T)p- T\mathcal{F}_{n}^R(p,T) \right) p^{2k} .
\end{align}\endgroup
}
Hence we have to compute the following integrals
{\small
$$ \gamma_n^{-1}\sum_{i=\lfloor \frac{h-2}{2} \rfloor +1}^{ h-2} \sum_{k=0}^{2+i-h} \binom{h-2-i}{k} \int_{\partial(G_2 \cap \mathbb{C}_I)} s^{h+1}ds_I\mathcal{Q}_s^{h-i}(T)  |T|^{2(h-2-i-k)}\int_{\partial(G_1 \cap \mathbb{C}_I)}\mathcal{F}_{n}^R(p,T)dp_Ip^{h+2k+2} ,
$$
}
{\small
$$ \gamma_n^{-1}\sum_{i=\lfloor \frac{h-2}{2} \rfloor +1}^{ h-2} \sum_{k=0}^{h-2-i} \binom{h-2-i}{k} \int_{\partial(G_2 \cap \mathbb{C}_I)} s^{h+1}ds_I\mathcal{Q}_s^{h-i}(T)  |T|^{2(h-2-i-k)}T\int_{\partial(G_1 \cap \mathbb{C}_I)}\mathcal{F}_{n}^R(p,T)dp_Ip^{h+2k+1}.$$
}
Since $h=2N+1$, with $N \in \mathbb{N}$ we get
{\small
$$ 2k+2+h \leq 2(h-2-i)+2+h= 2h-4-2i+2+h \leq 4N+2-4-2 \left( \lfloor \frac{h-2}{2} \rfloor+1 \right)+2+2N+1=4N+1=2h-1,$$
}
and similarly
{\small
$$ 2k+1+h \leq 2h-1,$$
}
together with Lemma \ref{ann} we get
{\small
 \begingroup\allowdisplaybreaks\begin{align}
&\gamma_n^{-1}\sum_{i=\lfloor \frac{h-2}{2} \rfloor +1}^{ h-2} \sum_{k=0}^{h-2-i} \binom{h-2-i}{k}  \int_{\partial(G_2 \cap \mathbb{C}_I)} s^{h+1}ds_I\mathcal{Q}_s^{h-i}(T)  |T|^{2(h-2-i-k)}\int_{\partial(G_1 \cap \mathbb{C}_I)}\mathcal{F}_{n}^R(p,T)dp_Ip^{h+2k+2}=0\\ \nonumber
&\gamma_n^{-1}\sum_{i=\lfloor \frac{h-2}{2} \rfloor +1}^{ h-2} \sum_{k=0}^{h-2-i} \binom{h-2-i}{k}  \int_{\partial(G_2 \cap \mathbb{C}_I)} s^{h+1}ds_I\mathcal{Q}_s^{h-i}(T)  |T|^{2(h-2-i-k)}T\int_{\partial(G_1 \cap \mathbb{C}_I)}\mathcal{F}_{n}^R(p,T)dp_Ip^{h+2k+1}=0.
\end{align}\endgroup
}
Similar arguments applied to the other members of \eqref{new1} lead to
{\small
 \begingroup\allowdisplaybreaks\begin{align}
&
\gamma_n^{-1} \int_{\partial(G_2 \cap \mathbb{C}_I)} s^{2h} ds_I\mathcal{F}_{n}^{R}(s,T) \int_{\partial(G_1 \cap \mathbb{C}_I)}\mathcal{F}_n^L(p,T) dp_I p^{2h}
\\ \nonumber
&
=\int_{\partial(G_2 \cap \mathbb{C}_I)} ds_I \int_{\partial(G_1 \cap \mathbb{C}_I)}s^h\bigl \{ \bigl[\mathcal{F}_n^R(s,T)-\mathcal{F}_n^L(p,T) \bigl]p -\bar{s}\bigl[\mathcal{F}_n^R(s,T)-\mathcal{F}_n^L(p,T) \bigl] \bigl \} (p^2-2s_0p+|s|^2)^{-1} dp_I p^h.
\end{align}\endgroup
}
Since $h= \frac{n-1}{2}$, by formula \eqref{Rp} we get
{\small
$$\frac{(2 \pi)^{2}}{\gamma_{n}^{-1}} \check{P}^2=\int_{\partial(G_2 \cap \mathbb{C}_I)} ds_I \int_{\partial(G_1 \cap \mathbb{C}_I)}s^h\bigl \{ \bigl[\mathcal{F}_n^R(s,T)-\mathcal{F}_n^L(p,T) \bigl]p-\bar{s}\bigl[\mathcal{F}_n^R(s,T)-\mathcal{F}_n^L(p,T) \bigl] \bigl \} (p^2-2s_0p+|s|^2)^{-1}  p^hdp_I.$$
}
Now, we work on the integral on the right hand side. As $ \bar{G}_1 \subset G_2$, for any $s \in \partial(G_2 \cap \mathbb{C}_I)$ the functions
{\small
$$ p \mapsto p(p^2-2s_0p+|s|^2)^{-1}p^h,$$
$$ p \mapsto (p^2-2s_0p+|s|^2)^{-1}p^h$$
}
are slice monogenic on $ \bar{G_1}$. By Lemma \ref{cau} we have
{\small
$$ \int_{\partial (G_1 \cap \mathbb{C}_I)} p(p^2-2s_0p+|s|^2)^{-1}dp_Ip^h=0,$$
$$ \int_{\partial (G_1 \cap \mathbb{C}_I)}(p^2-2s_0p+|s|^2)^{-1}p^hdp_I=0.$$
}
This implies that
$$\int_{\partial(G_2 \cap \mathbb{C}_I)}ds_I \int_{\partial(G_1 \cap \mathbb{C}_I)} s^h \mathcal{F}_n^{R}(s,T)p(p^2-2s_0p+|s|^2)^{-1}dp_Ip^h=0$$
and
$$\int_{\partial(G_2 \cap \mathbb{C}_I)}ds_I \int_{\partial(G_1 \cap \mathbb{C}_I)} s^h \bar{s} \mathcal{F}_n^{R}(s,T)(p^2-2s_0p+|s|^2)^{-1}dp_Ip^h=0.$$

Then we have
$$ \frac{(2 \pi)^2}{\gamma_n^{-1}} \check{P}^2=\int_{\partial(G_2 \cap \mathbb{C}_I)}s^h ds_I \int_{\partial(G_1 \cap \mathbb{C}_I)}[\bar{s} \mathcal{F}_n^L(p,T)- \mathcal{F}_7^L(p,T)p] (p^2-2s_0 p+|s|^2)^{-1} dp_Ip^h.$$

From Lemma \ref{res2} with $B=:\mathcal{F}_n^L(p,T)$ and $f(s):=s^h$ we get
$$ \check{P}^2 = \frac{1}{(2 \pi) \gamma_n} \int_{\partial(G_1 \cap \mathbb{C}_I)} \mathcal{F}_n^L(p,T) dp_I p^{2h}= \check{P}.
$$

{\bf CASE II: The Sce exponent $h$ is even.}

We multiply the equation of Theorem \ref{2T} by $s^h$ left and $p^h$ on the right, and since $T_0=0$ we get

{\small
 \begingroup\allowdisplaybreaks\begin{align}
& s^h\mathcal{F}_n^R(s,T)S^{-1}_L(p,T)p^h+s^hS^{-1}_R(s,T)\mathcal{F}_n^L(p,T)p^h
\\ \nonumber
&
+ \gamma_n \biggl[s^{h+1}\mathcal{Q}_s^{\frac{h+2}{2}}(T)T\mathcal{Q}_p^{\frac{h+2}{2}}(T)p^h
+s^h\mathcal{Q}_s^{\frac{h+2}{2}}(T)T\mathcal{Q}_p^{\frac{h+2}{2}}(T)p^{h+1}
\\
\nonumber
&
+s^h\mathcal{Q}_s^{\frac{h+2}{2}}(T)T^2\mathcal{Q}_p^{\frac{h+2}{2}}(T)p^h+  s^h\sum_{i=0}^{h-1}  \mathcal{Q}_s^{h-i}(T) \mathcal{Q}_p^{i+1}(T)p^h +s^{h+1}
\sum_{i=0, i \neq \frac{h-2}{2}}^{h-2} \mathcal{Q}^{h-i}_s(T)\mathcal{Q}_p^{i+2}(T)p^{h+1}
\\
\nonumber
&
+s^{h+1}        \sum_{i=0, i \neq \frac{h-2}{2}}^{h-2} \mathcal{Q}^{h-i}_s(T) T\mathcal{Q}_p^{i+2}(T)p^{h}+s^h        \sum_{i=0, i \neq \frac{h-2}{2}}^{h-2} \mathcal{Q}^{h-i}_s(T)\mathcal{Q}_p^{i+2}(T)p^{h+1}
\\ \nonumber
&
+ s^h        \sum_{i=0, i \neq \frac{h-2}{2}}^{h-2} \mathcal{Q}^{h-i}_s(T)T^2\mathcal{Q}_p^{i+2}(T) p^{h} \biggl]
\\
\nonumber
&
+ \gamma_n^{-1}\left [s^h\mathcal{A}_1(s,p,T)p^h+ s^h\mathcal{B}_1(s,p,T)p^h+s^h\mathcal{C}_1(s,p,T)p^h+s^{2h} \mathcal{F}_{n}^{R}(s,T) \mathcal{F}_n^L(p,T) p^{2h}\right]
\\
\nonumber
&
=s^h\bigl \{ \bigl[\mathcal{F}_n^R(s,T)-\mathcal{F}_n^L(p,T) \bigl]p
- \bar{s}\bigl[\mathcal{F}_n^R(s,T)-\mathcal{F}_n^L(p,T) \bigl] (p^2-2s_0p+|s|^2)^{-1} p^h.
\end{align}\endgroup
}
Now, we multiply  by $ds_I$ on the left, integrate it over $ \partial(G_2 \cap \mathbb{C}_I)$ with respect to $ds_I$ and then we multiply it by $ dp_I$ on the right and integrate over $ \partial (G_1 \cap \mathbb{C}_I)$ with respect to $dp_I$, and we obtain
{\small

\begingroup\allowdisplaybreaks\begin{align}
&                               \int_{\partial (G_2 \cap \mathbb{C}_I)}s^h\mathcal{F}_n^R(s,T)ds_I \int_{\partial (G_1 \cap \mathbb{C}_I)}S^{-1}_L(p,T)dp_Ip^h
\\ \nonumber
&
+\int_{\partial (G_2 \cap \mathbb{C}_I)}s^h ds_I S^{-1}_R(s,T)\int_{\partial (G_1 \cap \mathbb{C}_I)}\mathcal{F}_n^L(p,T)p^h
+
\\ \nonumber
&
+\gamma_n \biggl[\int_{\partial (G_2 \cap \mathbb{C}_I)}s^{h+1}ds_I\mathcal{Q}_s^{\frac{h+2}{2}}(T)T\int_{\partial (G_1 \cap \mathbb{C}_I)}\mathcal{Q}_p^{\frac{h+2}{2}}(T)dp_Ip^h
\\ \nonumber
&
+\int_{\partial (G_2 \cap \mathbb{C}_I)}s^h ds_I\mathcal{Q}_s^{\frac{h+2}{2}}(T)T\int_{\partial (G_1 \cap \mathbb{C}_I)}\mathcal{Q}_p^{\frac{h+2}{2}}(T)dp_Ip^{h+1}+
\\ \nonumber
&
+\int_{\partial (G_2 \cap \mathbb{C}_I)}s^h ds_I\mathcal{Q}_s^{\frac{h+2}{2}}(T)T^2\int_{\partial (G_1 \cap \mathbb{C}_I)}\mathcal{Q}_p^{\frac{h+2}{2}}(T)dp_Ip^h
\\ \nonumber
&
+  \int_{\partial (G_2 \cap \mathbb{C}_I)}s^h ds_I \sum_{i=0}^{h-1}  \mathcal{Q}_s^{h-i}(T) \int_{\partial (G_1 \cap \mathbb{C}_I)}\mathcal{Q}_p^{i+1}(T)dp_Ip^h +
\\ \nonumber
&
+\int_{\partial (G_2 \cap \mathbb{C}_I)}s^{h+1} ds_I\sum_{i=0, i \neq \frac{h-2}{2}}^{h-2} \mathcal{Q}^{h-i}_s(T)\int_{\partial (G_2 \cap \mathbb{C}_I)}\mathcal{Q}_p^{i+2}(T)dp_I p^{h+1}
\\ \nonumber
&
+\int_{\partial (G_2 \cap \mathbb{C}_I)}s^{h+1} \sum_{i=0, i \neq \frac{h-2}{2}}^{h-2} \mathcal{Q}^{h-i}_s(T)T\int_{\partial (G_1 \cap \mathbb{C}_I)} \mathcal{Q}_p^{i+2}(T)dp_Ip^{h}
\\ \nonumber
&
+\int_{\partial (G_2 \cap \mathbb{C}_I)}s^h ds_I\sum_{i=0, i \neq \frac{h-2}{2}}^{h-2} \mathcal{Q}^{h-i}_s(T)\int_{\partial (G_1 \cap \mathbb{C}_I)}\mathcal{Q}_p^{i+2}(T)dp_I p^{h+1}
\\ \nonumber
&
+ \int_{\partial (G_2 \cap \mathbb{C}_I)}s^h ds_I\sum_{i=0, i \neq \frac{h-2}{2}}^{h-2} \mathcal{Q}^{h-i}_s(T)T^2\int_{\partial (G_1 \cap \mathbb{C}_I)}\mathcal{Q}_p^{i+2}(T)dp_I p^{h} \biggl]+
\\ \nonumber
&
+ \gamma_n^{-1}\Big[\int_{\partial (G_2 \cap \mathbb{C}_I)}\int_{\partial (G_1 \cap \mathbb{C}_I)}s^h ds_I\mathcal{A}_1(s,p,T)dp_Ip^h+s^h ds_I\mathcal{B}_1(s,p,T)dp_Ip^h+s^h ds_I\mathcal{C}_1(s,p,T)dp_Ip^h
\\ \nonumber
&
+\int_{\partial (G_2 \cap \mathbb{C}_I)}s^{2h} ds_I\mathcal{F}_{n}^{R}(s,T) \int_{\partial (G_1 \cap \mathbb{C}_I)}\mathcal{F}_n^L(p,T)dp_I p^{2h}\Big]
\\ \nonumber
&
=\int_{\partial (G_2 \cap \mathbb{C}_I)}ds_I \int_{\partial (G_1 \cap \mathbb{C}_I)}s^h\bigl \{ \bigl[\mathcal{F}_n^R(s,T)-\mathcal{F}_n^L(p,T) \bigl]p
- \bar{s}\bigl[\mathcal{F}_n^R(s,T)-\mathcal{F}_n^L(p,T) \bigl]  (p^2-2s_0p+|s|^2)^{-1} dp_Ip^h.
\end{align}\endgroup
}
From the definition of $ \mathcal{A}_1$, $ \mathcal{B}_1$, $\mathcal{C}_1$ and recalling that $T_0=0$ we have
{\small
 \begingroup\allowdisplaybreaks\begin{align}
&
\int_{\partial(G_2 \cap \mathbb{C}_I)} \int_{\partial(G_1 \cap \mathbb{C}_I)}s^h ds_I \mathcal{A}_1(s,p,T)dp_Ip^h+s^h ds_I \mathcal{B}_1(s,p,T)dp_Ip^h+s^h ds_I \mathcal{C}_1(s,p,T)dp_Ip^h
\\ \nonumber
&
= -\int_{\partial(G_2 \cap \mathbb{C}_I)}s^{2h} \mathcal{F}_n^R(s,T)ds_I T \int_{\partial(G_1 \cap \mathbb{C}_I)}\mathcal{F}_n^L(p,T)dp_I p^{2h-1}+
\\ \nonumber
&
-\int_{\partial(G_2 \cap \mathbb{C}_I)}s^{2h-1} \mathcal{F}_n^R(s,T)ds_IT \int_{\partial(G_1 \cap \mathbb{C}_I)} \mathcal{F}_n^L(p,T)dp_I p^{2h}
\\ \nonumber
&
+\int_{\partial(G_2 \cap \mathbb{C}_I)}s^{2h-1}ds_I\mathcal{F}_n^R(s,T)T^2 \int_{\partial(G_1 \cap \mathbb{C}_I)}\mathcal{F}_n^L(p,T)dp_I p^{2h-1}+
\\ \nonumber
&
+\int_{\partial(G_2 \cap \mathbb{C}_I)}s^{2h} ds_I\mathcal{F}_n^R(s,T) \sum_{k=1}^{\frac{h-2}{2}} \binom{\frac{h-2}{2}}{k}  |T|^{2k}\int_{\partial(G_1 \cap \mathbb{C}_I)} \mathcal{F}_n^L(p,T)dp_Ip^{2h-2k}
\\ \nonumber
&
-\int_{\partial(G_2 \cap \mathbb{C}_I)}s^{2h}ds_I \mathcal{F}_n^R(s,T) \sum_{k=1}^{\frac{h-2}{2}} \binom{\frac{h-2}{2}}{k} T |T|^{2k} \int_{\partial(G_1 \cap \mathbb{C}_I)}\mathcal{F}_n^L(p,T)p^{2h-1-2k}
\\ \nonumber
&
-\int_{\partial(G_2 \cap \mathbb{C}_I)}s^{2h-1} ds_I \mathcal{F}_n^{R}(s,T)\sum_{k=1}^{\frac{h-2}{2}} \binom{\frac{h-2}{2}}{k}  T|T|^{2k} \int_{\partial(G_1 \cap \mathbb{C}_I)} \mathcal{F}_n^L(p,T)dp_Ip^{2h-2k}
\\ \nonumber
&
 + \int_{\partial(G_2 \cap \mathbb{C}_I)}s^{2h-1} \mathcal{F}_n^{R}(s,T)ds_I\sum_{k=1}^{\frac{h-2}{2}} \binom{\frac{h-2}{2}}{k}  T^2|T|^{2k} \int_{\partial(G_1 \cap \mathbb{C}_I)}\mathcal{F}_n^L(p,T)dp_Ip^{2h-1-2k}
 +\\ \nonumber
&
+\left(\sum_{k=1}^{\frac{h-2}{2}} \binom{\frac{h-2}{2}}{k}  \int_{\partial(G_2 \cap \mathbb{C}_I)}s^{2h-2k}ds_I \mathcal{F}_n^R(s,T) |T|^{2k}\right)\left(\sum_{k=1}^{\frac{h-2}{2}} \binom{\frac{h-2}{2}}{k}  |T|^{2k}  \int_{\partial(G_1 \cap \mathbb{C}_I)}\mathcal{F}_n^L(p,T) dp_Ip^{2h-2k}\right)
\\ \nonumber
&                              -\left(\sum_{k=1}^{\frac{h-2}{2}} \binom{\frac{h-2}{2}}{k}  \int_{\partial(G_2 \cap \mathbb{C}_I)}s^{2h-2k} ds_I\mathcal{F}_n^R(s,T) |T|^{2k}\right) \left(\sum_{k=1}^{\frac{h-2}{2}} \binom{\frac{h-2}{2}}{k} |T|^{k} T  \int_{\partial(G_1 \cap \mathbb{C}_I)}\mathcal{F}_n^L(p,T) dp_Ip^{2h-1-2k}\right)
\\ \nonumber
&                               -\left(\sum_{k=1}^{\frac{h-2}{2}} \binom{\frac{h-2}{2}}{k}  \int_{\partial(G_2 \cap \mathbb{C}_I)}s^{2h-2k-1}ds_I\mathcal{F}_n^R(s,T) T|T|^{2k}\right) \left(\sum_{k=1}^{\frac{h-2}{2}} \binom{\frac{h-2}{2}}{k} |T|^{2k}  \int_{\partial(G_1 \cap \mathbb{C}_I)}\mathcal{F}_n^L(p,T) dp_I p^{2h-2k}\right)
\\ \nonumber
&
+\left(\sum_{k=1}^{\frac{h-2}{2}} \binom{\frac{h-2}{2}}{k}  \int_{\partial(G_2 \cap \mathbb{C}_I)}s^{2h-2k-1}ds_I \mathcal{F}_n^R(s,T) T|T|^{2k}\right)\left(\sum_{k=1}^{\frac{h-2}{2}} \binom{\frac{h-2}{2}}{k}  |T|^{2k} T  \int_{\partial(G_1 \cap \mathbb{C}_I)}\mathcal{F}_n^L(p,T)dp_I p^{2h-1-2k}\right)\\ \nonumber
& +\left(\sum_{k=1}^{\frac{h-2}{2}} \binom{\frac{h-2}{2}}{k}  \int_{\partial(G_2 \cap \mathbb{C}_I)} s^{2h-2k} ds_I\mathcal{F}_n^R(s,T) |T|^{2k}\right) \int_{\partial(G_1 \cap \mathbb{C}_I)} \mathcal{F}_n^{L}(p,T)dp_I p^{2h}
\\ \nonumber
&
 -\left(\sum_{k=1}^{\frac{h-2}{2}} \binom{\frac{h-2}{2}}{k} \int_{\partial(G_2 \cap \mathbb{C}_I)}s^{2h-2k}ds_I \mathcal{F}_n^R(s,T) |T|^{2k} T \right)\int_{\partial(G_1 \cap \mathbb{C}_I)}\mathcal{F}_n^{L}(p,T)dp_Ip^{2h-1}
\\ \nonumber
&                              -\left(\sum_{k=1}^{\frac{h-2}{2}} \binom{\frac{h-2}{2}}{k}\int_{\partial(G_2 \cap \mathbb{C}_I)}s^{2h-1-2k} ds_I \mathcal{F}_n^R(s,T) T |T|^{2k} \right) \int_{\partial(G_1 \cap \mathbb{C}_I)}\mathcal{F}_n^{L}(p,T)dp_Ip^{2h}
\\ \nonumber
&                              +\left(\sum_{k=1}^{\frac{h-2}{2}} \binom{\frac{h-2}{2}}{k} \int_{\partial(G_2 \cap \mathbb{C}_I)} s^{2h-1-2k} ds_I \mathcal{F}_n^R(s,T) T^2 |T|^{2k}  \right)\int_{\partial(G_1 \cap \mathbb{C}_I)} \mathcal{F}_n^{L}(p,T)dp_I p^{2h-1}.
\end{align}\endgroup
}
Now we observe that since $h \leq 2h-1$ by Lemma \ref{ann} we have
{\small
$$
\int_{\partial(G_2 \cap\mathbb{C}_I)}s^h ds_I\mathcal{F}_n^R(s,T)\int_{\partial(G_1 \cap \mathbb{C}_I)}S^{-1}_L(p,T)dp_Ip^h=\int_{\partial(G_2 \cap \mathbb{C}_I)}s^hds_IS^{-1}_R(s,T)\int_{\partial(G_1 \cap \mathbb{C}_I)}\mathcal{F}_n^L(p,T)dp_Ip^h=0.$$
}
Moreover, since $2h-2k \leq 2h-1$ and $2h-1-2k \leq 2h-1$ we get
{\small
$$ \int_{\partial(G_2 \cap \mathbb{C}_I)} \int_{\partial(G_1 \cap \mathbb{C}_I)}s^h ds_I \mathcal{A}_1(s,p,T)dp_Ip^h+s^h ds_I \mathcal{B}_1(s,p,T)dp_Ip^h+s^h ds_I \mathcal{C}_1(s,p,T)dp_Ip^h=0.$$
}
Now, we focus on computing the integral
{\small
$$\int_{\partial (G_2 \cap \mathbb{C}_I)}s^{h+1}ds_I\mathcal{Q}_s^{\frac{h+2}{2}}(T)T\int_{\partial (G_1 \cap \mathbb{C}_I)}\mathcal{Q}_p^{\frac{h+2}{2}}(T)dp_Ip^h.$$
}
By the binomial formula and recalling that $T_0=0$ we get
{\small
$$ \mathcal{Q}_p^{\frac{h+2}{2}}(T)=\mathcal{Q}_p^{\frac{2-h}{2}}(T)\mathcal{Q}_p^{\frac{h-2}{2}}(T)\mathcal{Q}_p^{\frac{h+2}{2}}(T)= (p^2+|T|^2)^{\frac{h-2}{2}}\mathcal{Q}_p^{h}(T)= \sum_{k=0}^{\frac{h-2}{2}} \binom{\frac{h-2}{2}}{k} |T|^{2 \left( \frac{h-2}{2}-k \right)}\mathcal{Q}_{p}^h(T) p^{2k} .$$
}
By the $ \mathcal{F}$-resolvent, see \eqref{eq2}, we deduce that
{\small
\begingroup
\allowdisplaybreaks
\begin{align}
\label{int1}
&\int_{\partial (G_2 \cap \mathbb{C}_I)}s^{h+1}ds_I\mathcal{Q}_s^{\frac{h+2}{2}}(T)T\int_{\partial (G_1 \cap \mathbb{C}_I)}\mathcal{Q}_p^{\frac{h+2}{2}}(T)dp_Ip^h
\\ \nonumber
\nonumber
&=\int_{\partial (G_2 \cap \mathbb{C}_I)}s^{h+1}ds_I\mathcal{Q}_s^{\frac{h+2}{2}}(T)T \sum_{k=0}^{\frac{h-2}{2}} \binom{\frac{h-2}{2}}{k} |T|^{2 \left( \frac{h-2}{2}-k \right)}  \int_{\partial (G_1 \cap \mathbb{C}_I)}\mathcal{Q}_p^{h}(T)dp_Ip^{2k+h}\\ \nonumber
\nonumber
&= \int_{\partial (G_2 \cap \mathbb{C}_I)}s^{h+1}ds_I\mathcal{Q}_s^{\frac{h+2}{2}}(T)T \sum_{k=0}^{\frac{h-2}{2}} \binom{\frac{h-2}{2}}{k} |T|^{2 \left( \frac{h-2}{2}-k \right)}  \int_{\partial (G_1 \cap \mathbb{C}_I)} \mathcal{F}_n^L(p,T) dp_I p^{2k+h+1}
\\ \nonumber
\nonumber
&-\int_{\partial (G_2 \cap \mathbb{C}_I)}s^{h+1}ds_I\mathcal{Q}_s^{\frac{h+2}{2}}(T)T^2 \sum_{k=0}^{\frac{h-2}{2}} \binom{\frac{h-2}{2}}{k} |T|^{2 \left( \frac{h-2}{2}-k \right)}  \int_{\partial (G_1 \cap \mathbb{C}_I)} \mathcal{F}_n^L(p,T) dp_I p^{2k+h}.
\end{align}
\endgroup
}
We observe that
{\small
$$ h+1+2k \leq h+1+h-2=2h-1,$$
}
similarly we have $h+2k \leq 2h-1$, so formula \eqref{int1} together with Lemma \ref{ann} imply that
{\small
$$\int_{\partial (G_2 \cap \mathbb{C}_I)}s^{h+1}ds_I\mathcal{Q}_s^{\frac{h+2}{2}}(T)T\int_{\partial (G_1 \cap \mathbb{C}_I)}\mathcal{Q}_p^{\frac{h+2}{2}}(T)dp_Ip^h=0.$$
}
Using similar arguments, we obtain
{\small
 \begingroup\allowdisplaybreaks\begin{align}
&\int_{\partial (G_2 \cap \mathbb{C}_I)}s^h ds_I\mathcal{Q}_s^{\frac{h+2}{2}}(T)T\int_{\partial (G_1 \cap \mathbb{C}_I)}\mathcal{Q}_p^{\frac{h+2}{2}}(T)dp_Ip^{h+1}=0\\ \nonumber
& \int_{\partial (G_2 \cap \mathbb{C}_I)}s^h ds_I\mathcal{Q}_s^{\frac{h+2}{2}}(T)T^2\int_{\partial (G_1 \cap \mathbb{C}_I)}\mathcal{Q}_p^{\frac{h+2}{2}}(T)dp_Ip^h=0.
\end{align}\endgroup
}
By similar computations made when $h$ is odd we get
{\small
$$\int_{\partial (G_2 \cap \mathbb{C}_I)}s^h ds_I \sum_{i=0}^{h-1}  \mathcal{Q}_s^{h-i}(T) \int_{\partial (G_1 \cap \mathbb{C}_I)}\mathcal{Q}_p^{i+1}(T)dp_Ip^h=0,$$
}
{\small
$$\int_{\partial (G_2 \cap \mathbb{C}_I)}s^{h+1} ds_I\sum_{i=0, i \neq \frac{h-2}{2}}^{h-2} \mathcal{Q}^{h-i}_s(T)\int_{\partial (G_2 \cap \mathbb{C}_I)}\mathcal{Q}_p^{i+2}(T)dp_I p^{h+1}=0,$$
$$ \int_{\partial (G_2 \cap \mathbb{C}_I)}s^{h+1} \sum_{i=0, i \neq \frac{h-2}{2}}^{h-2} \mathcal{Q}^{h-i}_s(T)T\int_{\partial (G_1 \cap \mathbb{C}_I)} \mathcal{Q}_p^{i+2}(T)dp_Ip^{h}=0,$$
$$\int_{\partial (G_2 \cap \mathbb{C}_I)}s^h ds_I\sum_{i=0, i \neq \frac{h-2}{2}}^{h-2} \mathcal{Q}^{h-i}_s(T)\int_{\partial (G_1 \cap \mathbb{C}_I)}\mathcal{Q}_p^{i+2}(T)dp_I p^{h+1}=0,$$
$$ \int_{\partial (G_2 \cap \mathbb{C}_I)}s^h ds_I\sum_{i=0, i \neq \frac{h-2}{2}}^{h-2} \mathcal{Q}^{h-i}_s(T)T^2\int_{\partial (G_1 \cap \mathbb{C}_I)}\mathcal{Q}_p^{i+2}(T)dp_I p^{h} =0.$$
}
By formula \eqref{Rp} we get
{\small
$$\frac{(2 \pi)^{2}}{\gamma_{n}^{-1}} \check{P}^2=\int_{\partial(G_2 \cap \mathbb{C}_I)} ds_I \int_{\partial(G_1 \cap \mathbb{C}_I)}s^h\bigl \{ \bigl[\mathcal{F}_n^R(s,T)-\mathcal{F}_n^L(p,T) \bigl]p-\bar{s}\bigl[\mathcal{F}_n^R(s,T)-\mathcal{F}_n^L(p,T) \bigl] \bigl \} (p^2-2s_0p+|s|^2)^{-1}  p^hdp_I.$$
}
Finally, by following exactly the same steps done when $h$ is odd we get
$$ \check{P}^2=\check{P}.$$
\end{proof}
\begin{remark}
Other properties of the Riesz projectors, are proved in \cite[Thm. 5.8]{CG}.
\end{remark}

\section{\bf Appendix}
\setcounter{equation}{0}
In this section we give the details of a result used in the paper.

\begin{lemma}
\label{app2}
Let $h= \frac{n-1}{2}$ be a fixed number. Then we have
{\small
\begin{equation}
\label{sum1}
\sum_{\ell=1}^{m-2h+1} \frac{(m- \ell-h+1)!( \ell+h-2)!}{(\ell-1)! (m- \ell-2h+1)!}= \frac{(h-1)! h! m!}{(2h)!(m-2h)!}, \qquad m \geq 2h
\end{equation}
}

\end{lemma}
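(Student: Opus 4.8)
The plan is to reduce the identity to a standard Vandermonde-type convolution of binomial coefficients by a change of summation index and a conversion of the factorial ratios into binomial coefficients. First I would substitute $j=\ell-1$, so that the index runs over $0\le j\le m-2h$ and the summand becomes
{\small
$$
\frac{(m-j-h)!\,(j+h-1)!}{j!\,(m-j-2h)!}.
$$
}
Since $m-j-2h\ge 0$ throughout the sum, the two factorial ratios are genuine (rising/falling) factorial products of fixed length, and I would rewrite them as
{\small
$$
\frac{(j+h-1)!}{j!}=(h-1)!\binom{j+h-1}{h-1},
\qquad
\frac{(m-j-h)!}{(m-j-2h)!}=h!\binom{m-j-h}{h}.
$$
}
Factoring out the constants $(h-1)!\,h!$, the claim is then equivalent to the purely combinatorial identity
{\small
$$
\sum_{j=0}^{m-2h}\binom{j+h-1}{h-1}\binom{m-j-h}{h}=\binom{m}{2h},
$$
}
because $\binom{m}{2h}=\dfrac{m!}{(2h)!\,(m-2h)!}$, which is exactly the right-hand side divided by $(h-1)!\,h!$.

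Next I would establish this last identity via the generating-function form of the negative-binomial convolution. Setting $a=h-1$ and $b=h$, and $n=m-2h$, the left-hand side is precisely $\sum_{j=0}^{n}\binom{j+a}{a}\binom{n-j+b}{b}$. Using the expansions
{\small
$$
\frac{1}{(1-x)^{a+1}}=\sum_{j\ge 0}\binom{j+a}{a}x^{j},
\qquad
\frac{1}{(1-x)^{b+1}}=\sum_{k\ge 0}\binom{k+b}{b}x^{k},
$$
}
I would multiply the two series and read off the coefficient of $x^{n}$, which gives $\sum_{j=0}^{n}\binom{j+a}{a}\binom{n-j+b}{b}$ on one side and, from $\frac{1}{(1-x)^{a+b+2}}=\sum_{n\ge 0}\binom{n+a+b+1}{a+b+1}x^{n}$, the value $\binom{n+a+b+1}{a+b+1}$ on the other. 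With $a=h-1$, $b=h$, $n=m-2h$ one computes $n+a+b+1=m$ and $a+b+1=2h$, so the sum equals $\binom{m}{2h}$, as required.

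Finally I would assemble the pieces, multiplying the established convolution identity by the factor $(h-1)!\,h!$ and substituting $\binom{m}{2h}=\frac{m!}{(2h)!\,(m-2h)!}$ to recover the asserted closed form in \eqref{sum1}. The computation is elementary once the reduction is made; the only genuinely nontrivial step is recognizing the Vandermonde-type convolution after the factorial-to-binomial conversion, so I would take care to verify the index shifts ($a=h-1$, $b=h$, $n=m-2h$) and the boundary condition $m\ge 2h$ that guarantees all binomial coefficients are well defined and the summation range is nonempty. No subtle analytic issues arise, so I do not anticipate an obstacle beyond bookkeeping of the indices.
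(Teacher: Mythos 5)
Your argument is correct, and every index shift checks out: with $j=\ell-1$ the summand is indeed $(h-1)!\,h!\binom{j+h-1}{h-1}\binom{m-j-h}{h}$, the range $0\le j\le m-2h$ guarantees $m-j-h\ge h$ so both binomial coefficients are genuine, and the parameters $a=h-1$, $b=h$, $n=m-2h$ give $n+a+b+1=m$, $a+b+1=2h$, so the convolution evaluates to $\binom{m}{2h}$ and the closed form follows. The route, however, is genuinely different from the paper's. The paper converts the factorial ratios into Gamma functions and Pochhammer symbols, packages the summand as $\binom{m-n+1}{\ell-1}\bigl(\tfrac{n+1}{2}\bigr)_{m-n-\ell+2}\bigl(\tfrac{n-1}{2}\bigr)_{\ell-1}/(n)_{m-n+1}$, and then invokes an external result (Theorem 1 of Ca\~{c}ao--Falc\~{a}o--Malonek on Vietoris-type coefficients) asserting that these quantities sum to $1$; that external theorem is itself the Chu--Vandermonde convolution $\sum_{\ell}\binom{N}{\ell}(a)_{N-\ell}(b)_{\ell}=(a+b)_{N}$ in disguise. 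Your version proves the same underlying identity directly via the negative-binomial generating functions $\smash{(1-x)^{-(a+1)}(1-x)^{-(b+1)}=(1-x)^{-(a+b+2)}}$, which makes the lemma self-contained and arguably more transparent, at the cost of not connecting it to the Vietoris-number literature the authors wished to cite. Either way the combinatorial content is identical, so your proof is a valid and somewhat more elementary substitute.
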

\begin{proof}
Firstly we substitute $ h= \frac{n-1}{2}$, we get
{\small
$$ \sum_{\ell=1}^{m-2h+1} \frac{(m- \ell-h+1)!( \ell+h-2)!}{(\ell-1)! (m- \ell-2h+1)!}=\sum_{\ell=1}^{m-n+2} \frac{\left(m- \ell- \frac{n-3}{2}\right)!\left( \ell+\frac{n-5}{2}\right)!}{(\ell-1)! (m- \ell-n+2)!}.$$
}
Now, we observe that
{\small
\begin{equation}
\label{gamma}
\left(\frac{n-1}{2}\right)\left(\frac{n-1}{2}\right)!\left(\frac{n-3}{2}\right)!= \left[\Gamma\left(\frac{n+1}{2}\right)\right]^2.
\end{equation}
}
We denote by $(a)_n:=\frac{\Gamma(a+n)}{\Gamma(a)}$ the Pochhammer symbol. Therefore, we have
{\small
 \begingroup\allowdisplaybreaks\begin{align}
&\sum_{\ell=1}^{m-n+2} \frac{\left(m- \ell- \frac{n-3}{2}\right)!\left( \ell+\frac{n-5}{2}\right)!}{(\ell-1)! (m- \ell-n+2)!}= \frac{\left[\Gamma\left(\frac{n+1}{2}\right)\right]^2}{\left(\frac{n-1}{2}\right)} \, \, \,\sum_{\ell=1}^{m-n+2} \frac{\left(m- \ell- \frac{n-3}{2}\right)!\left( \ell+\frac{n-5}{2}\right)!}{(\ell-1)! \left(\frac{n-1}{2}\right)! \left(\frac{n-3}{2}\right)! (m- \ell-n+2)!}\\ \nonumber
&=\frac{\left[\Gamma\left(\frac{n+1}{2}\right)\right]^2}{\left(\frac{n-1}{2}\right)} \, \, \,\sum_{\ell=1}^{m-n+2} \frac{\Gamma\left(m- \ell - \frac{n}{2}+ \frac{5}{2}\right)\Gamma \left(\ell+ \frac{n}{2}- \frac{3}{2}\right)}{(\ell-1)! \Gamma\left(\frac{n+1}{2}\right) \Gamma\left(\frac{n-1}{2}\right)(m- \ell-n+2)!}\\ \nonumber
&=\frac{\left[\Gamma\left(\frac{n+1}{2}\right)\right]^2}{\left(\frac{n-1}{2}\right)} \, \, \,\sum_{\ell=1}^{m-n+2} \frac{\Gamma\left(m- \ell - n+2+ \frac{n+1}{2}\right)\Gamma \left(\ell -1+ \frac{n-1}{2}\right)}{(\ell-1)! \Gamma\left(\frac{n+1}{2}\right) \Gamma\left(\frac{n-1}{2}\right) (m- \ell-n+2)!}\\ \nonumber
&=\frac{\left[\Gamma\left(\frac{n+1}{2}\right)\right]^2}{\left(\frac{n-1}{2}\right)} \, \, \,\sum_{\ell=1}^{m-n+2}  \frac{1}{( \ell-1)!(m-n- \ell+2)!} \left(\frac{n+1}{2}\right)_{m-n-\ell+2} \left(\frac{n-1}{2}\right)_{\ell-1}\\ \nonumber
&= \frac{\left[\Gamma\left(\frac{n+1}{2}\right)\right]^2}{\left(\frac{n-1}{2}\right)} \, \, \,\sum_{\ell=1}^{m-n+2}  \frac{1}{(m-n+1)!} \binom{m-n+1}{\ell-1} \left(\frac{n+1}{2}\right)_{m-n-\ell+2} \left(\frac{n-1}{2}\right)_{\ell-1}\\ \nonumber
&= \frac{\left[\Gamma\left(\frac{n+1}{2}\right)\right]^2}{\left(\frac{n-1}{2}\right)} \, \, \,\sum_{\ell=1}^{m-n+2}  \frac{(n)_{m-n+1}}{(m-n+1)!} \binom{m-n+1}{\ell-1} \frac{\left(\frac{n+1}{2}\right)_{m-n-\ell+2} \left(\frac{n-1}{2}\right)_{\ell-1}}{(n)_{m-n+1}}.
\end{align}\endgroup
}
If we put
{\small
$$ T_{\ell-1}^{m-n+1}(n):=\binom{m-n+1}{\ell-1} \frac{\left(\frac{n+1}{2}\right)_{m-n-\ell+2} \left(\frac{n-1}{2}\right)_{\ell-1}}{(n)_{m-n+1}},$$
}
we get
{\small
 \begingroup\allowdisplaybreaks\begin{align}
\sum_{\ell=1}^{m-n+2} \frac{\left(m- \ell- \frac{n-3}{2}\right)!\left( \ell+\frac{n-5}{2}\right)!}{(\ell-1)! (m- \ell-n+2)!}&=
\frac{\left[\Gamma\left(\frac{n+1}{2}\right)\right]^2}{\left(\frac{n-1}{2}\right)} \, \, \,\sum_{\ell=1}^{m-n+2}  \frac{(n)_{m-n+1}}{(m-n+1)!}T_{\ell-1}^{m-n+1}(n)
\\ \nonumber
&= \frac{\left[\Gamma\left(\frac{n+1}{2}\right)\right]^2}{\left(\frac{n-1}{2}\right)}\frac{(n)_{m-n+1}}{(m-n+1)!} \sum_{\ell=0}^{m-n+1}  T_{\ell}^{m-n+1}(n).
\end{align}\endgroup
}
By \cite[Theorem 1]{CFM} we know that
{\small
$$ \sum_{\ell=0}^{m-n+1}  T_{\ell}^{m-n+1}(n)=1.$$
}
Therefore
{\small
$$ \sum_{\ell=1}^{m-n+2} \frac{\left(m- \ell- \frac{n-3}{2}\right)!\left( \ell+\frac{n-5}{2}\right)!}{(\ell-1)! (m- \ell-n+2)!}=\frac{\left[\Gamma\left(\frac{n+1}{2}\right)\right]^2}{\left(\frac{n-1}{2}\right)}\frac{(n)_{m-n+1}}{(m-n+1)!} .$$
}
By \eqref{gamma} we get
{\small
 \begingroup\allowdisplaybreaks\begin{align}
\sum_{\ell=1}^{m-n+2} \frac{\left(m- \ell- \frac{n-3}{2}\right)!\left( \ell+\frac{n-5}{2}\right)!}{(\ell-1)! (m- \ell-n+2)!}&= \left(\frac{n-1}{2} \right)!\left(\frac{n-3}{2} \right)! \frac{1}{(m-n+1)!} \frac{\Gamma(m+1)}{\Gamma(n)}\\ \nonumber
&= \left(\frac{n-1}{2} \right)!\left(\frac{n-1}{2}-1 \right)! \frac{m!}{(m-n+1)!(n-1)!} .
\end{align}\endgroup
}
Since $n=2h+1$ we get
{\small
$$ \sum_{\ell=1}^{m-2h+1} \frac{(m- \ell-h+1)!( \ell+h-2)!}{(\ell-1)! (m- \ell-2h+1)!}= \frac{(h-1)! h! m!}{(2h)!(m-2h)!}.$$
}
\end{proof}

\medskip
{\em Acknowledgements}.

The first author is partially supported by the PRIN project Direct and inverse problems for partial differential equations: theoretical aspects and applications.

\end{document}